\documentclass[10pt,a4paper]{amsart}
\textheight=615pt
\textwidth=440pt
\oddsidemargin=10pt
\evensidemargin=10pt
\topmargin=0in
\headheight=.1in
\headsep=.5in
\footskip=.75in

\usepackage{amsmath, amssymb,amsthm}
\usepackage[all]{xy}
\usepackage{enumerate,enumitem}
\usepackage{mathrsfs}
\usepackage{graphicx}
\usepackage{tabularx,multicol}
\usepackage{tikz-cd} 
\usepackage{MnSymbol} 
\usepackage[T1]{fontenc}
\usepackage[hidelinks]{hyperref}

\newtheorem{thm}{Theorem}[section]
\newtheorem{lemma}[thm]{Lemma}
\newtheorem{prop}[thm]{Proposition}
\newtheorem{cor}[thm]{Corollary}

\theoremstyle{definition}
\newtheorem{defn}[thm]{Definition}
\newtheorem{rem}[thm]{Remark}
\newtheorem{theoremletter}{Theorem}

\newcommand{\A}{\mathbf{A}}

\newcommand{\Q}{\mathbf{Q}}
\newcommand{\R}{\mathbf{R}}
\newcommand{\Z}{\mathbf{Z}}
\newcommand{\rG}{\mathrm{G}}
\newcommand{\rH}{\mathrm{H}}
\newcommand{\rI}{\mathrm{I}}
\newcommand{\rM}{\mathrm{M}}
\newcommand{\rN}{\mathrm{N}}
\newcommand{\ga}{\mathfrak{a}}
\newcommand{\gb}{\mathfrak{b}}
\newcommand{\gc}{\mathfrak{c}}
\newcommand{\gd}{\mathfrak{d}}
\newcommand{\gl}{\mathfrak{l}}
\newcommand{\gn}{\mathfrak{n}}
\newcommand{\gm}{\mathfrak{m}}
\newcommand{\go}{\mathfrak{o}}
\newcommand{\gp}{\mathfrak{p}}
\newcommand{\gq}{\mathfrak{q}}
\newcommand{\cA}{\mathcal{A}}
\newcommand{\cB}{\mathcal{B}}
\newcommand{\cC}{\mathcal{C}}
\newcommand{\cD}{\mathcal{D}}
\newcommand{\cE}{\mathcal{E}}
\newcommand{\cF}{\mathcal{F}}
\newcommand{\cG}{\mathcal{G}}
\newcommand{\cH}{\mathcal{H}}
\newcommand{\cI}{\mathcal{I}}
\newcommand{\cJ}{\mathcal{J}}

\newcommand{\cO}{\mathcal{O}}

\newcommand{\cP}{\mathcal{P}}
\newcommand{\cR}{\mathcal{R}}
\newcommand{\cT}{\mathcal{T}}
\newcommand{\cU}{\mathcal{U}}
\newcommand{\cV}{\mathcal{V}}
\newcommand{\cW}{\mathcal{W}}
\newcommand{\cX}{\mathcal{X}}
\newcommand{\sL}{\mathscr{L}}
\newcommand{\sP}{\mathscr{P}}
\DeclareMathOperator{\eis}{eis}
\DeclareMathOperator{\Hom}{Hom}
\DeclareMathOperator{\End}{End}
\DeclareMathOperator{\ad}{ad}

\DeclareMathOperator{\Fil}{Fil}
\DeclareMathOperator{\Gal}{Gal}
\DeclareMathOperator{\ord}{ord}
\DeclareMathOperator{\cusp}{cusp}
\DeclareMathOperator{\full}{full}

\DeclareMathOperator{\GL}{GL}

\DeclareMathOperator{\Frob}{Frob}
\DeclareMathOperator{\cyc}{cyc}
\DeclareMathOperator{\loc}{loc}
\DeclareMathOperator{\rec}{rec}

\DeclareMathOperator{\tr}{tr}

\DeclareMathOperator{\res}{res}
\DeclareMathOperator{\val}{val}
\DeclareMathOperator{\Spec}{Spec}
\DeclareMathOperator{\Spf}{Spf}
\DeclareMathOperator{\nord}{no}
\DeclareMathOperator{\sw}{\mathsf{w}}
\DeclareMathOperator{\univ}{\mathrm{u}}

\newcommand{\Cl}{\mathscr{C}\!\ell}
\newcommand{\dedicationpage}{
  \begin{center}
  \usefont{\encodingdefault}{pzc}{m}{n}
Dedicated to the memory of Jo\"{e}l Bella\"{i}che.    
  \end{center}
}
\begin{document}
\title{Eisenstein points on the Hilbert cuspidal eigenvariety}
\author{Adel Betina, Mladen Dimitrov and Sheng-Chi Shih}
\address{Université Marie et Louis-Pasteur, LMB, 16 route de Gray, 25030 Besançon, France}
\email{adel.betina@univ-fcomte.fr}
\address{University of Lille, CNRS, UMR 8524 -- Laboratoire Paul Painlev\'e, 59000 Lille, France}
\email{mladen.dimitrov@univ-lille.fr}
\address{Faculty of Mathematics, University of Vienna, Oskar-Morgenstern-Platz 1, A-1090 Wien, Austria.}
\email{shengchishih@gmail.com}

\begin{abstract} 
We present a comprehensive study of the geometry of  Hilbert $p$-adic eigenvarieties at classical parallel weight one intersection points of their cuspidal and Eisenstein loci. For instance, we determine all such points at which the weight map is \'etale. 
The Galois theoretic approach presents  genuine difficulties due to the lack of good deformation theory for pseudo-characters irregular at $p$  and  reflects  the richness of the  local geometry. We believe that our geometric results lead to deeper insight into the arithmetic of Hilbert automorphic  forms and we produce in support  several applications in Iwasawa theory. 
\end{abstract}
\maketitle    
\addtocontents{toc}{\setcounter{tocdepth}{0}}
\dedicationpage
\section*{Introduction}
  
Families of $p$-adic cusp forms for $\GL_2$ over $\Q$ were first introduced by Hida in his seminal work \cite{Hida86} later leading to the construction of the eigencurve by Coleman and Mazur \cite{coleman-mazur}. Generalizations to reductive groups of higher rank, called eigenvarieties, were constructed using  overconvergent cohomology of locally symmetric spaces (e.g.~\cite{bellaiche}, \cite{bel-che09}, \cite{urban}) or coherent cohomology of Shimura varieties  (e.g.~\cite{AIPsiegel}, \cite{AIP}). These are  rigid analytic spaces providing  the correct setup for the study of  $p$-adic deformations of automorphic forms. In order to obtain  arithmetic applications, such  as constructing $p$-adic $L$-functions or proving explicit reciprocity laws  for Euler systems, one needs to  perform a meaningful limit process requiring to understand the geometry of the eigenvarieties at the points corresponding to the $p$-stabilizations of the  automorphic forms we are interested in. 
 
While the geometry of an eigenvariety at  points of cohomological weight is well understood  thanks to  classicality results for $p$-adic overconvergent modular forms of small slope \cite{bijakowski17,BPS16, Col96}, the study at  classical  points which are limit of discrete series (such as weight $1$ Hilbert modular forms or weight $(2,2)$ Siegel modular forms) is  more involved (e.g.~\cite{bellaiche-dimitrov}, \cite{bet19},  \cite{BD21}, \cite{BDP})  with richer consequences. For example, the smoothness at such points is a crucial input in the proof of many cases of the Bloch--Kato Conjecture, the Iwasawa Main Conjecture and Perrin-Riou's Conjecture (e.g.~\cite{bertolini-darmon-venerucci}, \cite{BSV22}, \cite{castella-wan}, \cite{LZ22}).
Far more fascinating is the study of the geometry at singular points, especially  at the intersection  between irreducible components 
of the eigenvariety, as such classical points are related to trivial zeros of adjoint $p$-adic $L$-functions (e.g.~\cite{bellaiche},  \cite{BD21}, \cite{BH22}, \cite{DDP}, \cite{DKV}). The aim of this paper is to study the geometry of the Hilbert  eigenvariety at classical intersection points of the cuspidal and the  Eisenstein loci. Such intersections occur at weight $1$ Eisenstein series which are cuspidal as $p$-adic modular forms, very similar to those considered in Ribet's famous proof of the converse to Herbrand's Theorem \cite{Ri76}.

 Let $F$ be a totally real number field of degree $d$ with ring of integers $\go$. Let $\Sigma$ be the set of embeddings of $F$ in  $\overline{\Q}$ and 
 $\Sigma_p$ denote the set of places of $F$ above an odd prime number $p$. 

 Let $\phi_1$ and $\phi_2$ be two finite order Hecke characters of  $F$ such that  $\phi=\phi_1^{-1}\phi_2$ is totally odd
and let $\gn$ denote the prime-to-$p$ part of the level  of  the corresponding weight $1$ Hilbert--Eisenstein series $E_1(\phi_1,\phi_2)$  
whose adelic $q$-expansion is given in  \eqref{eq:fourier_coeff_wt_1_eisen_series}. 
The  eigencurve $\cC$ parameterizing finite slope overconvergent Hilbert eigenforms of parallel weight and tame level $\gn$ is endowed with a locally finite flat map $\sw$ to the $1$-dimensional space $\cW^{\parallel}$ of parallel weights.  It contains a closed analytic subspace $\cC_{\cusp}$ parameterizing  cuspidal eigenforms (see \S\ref{sec:full_eigencurve}). In addition, the Hilbert cuspidal eigenvariety  $\cE$  parameterizing finite slope overconvergent Hilbert cuspidal eigenforms of tame level $\gn$ is endowed with a locally finite and torsion-free map 
$\kappa$  to the $(d+1)$-dimensional weight space $\cW\times \cW^{\parallel}$.  The functoriality properties of eigenvarieties yield the following commutative diagram 
\[\begin{tikzcd}
\cC \arrow{rd}[swap]{\sw} & \cC_{\cusp} \arrow[l, hook']  \arrow[r, hook]  \arrow[d, "\sw"]
& \cE \arrow[d, "\kappa" ] \\ & \cW^{\parallel}  \arrow[r, hook]
& \cW \times \cW^{\parallel}.
\end{tikzcd}
\] 
 
 As we are  interested in  points lying on an Eisenstein component, we have to additionally assume that one of the characters, say $\phi_1$, has conductor relatively prime to $p$. Then  
 the $p$-stabilization $f_{\phi_1}$ of $E_1(\phi_1,\phi_2)$, having $U_{v}$-eigenvalue $\phi_1(v)$ for all $v\in \Sigma_p$, belongs to an Eisenstein family $\cE_{\phi_1,\phi_2}$ whose geometric constant terms are  all multiples  of the imprimitive Deligne--Ribet  $p$-adic $L$-function $\zeta_{\phi_1,\phi_2} \in \overline{\Z}_p \lsem X \rsem$ (see \eqref{eq:imprimitive-zeta} and Proposition~\ref{const_Eisen_family}), and the interpolation formula \eqref{eq:interpolation_p-adic_L-fcn} implies that 
\[\zeta_{\phi_1,\phi_2}(0)= L(0,\phi) \cdot  \prod_{v\in \Sigma_p} (1-\phi(v))
\prod_{\gq|\gn, \gq\nmid \mathrm{cond}(\phi)}\left(1-\phi(\gq)^{-1}\rN_{F/\Q}(\gq)^{-1}\right)
\] 
vanishes if, and only if, the  set $\Sigma_p^{\mathrm{irr}}=\{  \gp \in \Sigma_p \mid \phi(\gp)=1 \}$  of  {\it irregular} primes for  $\phi$  is non-empty. The Gross--Kuzmin Conjecture \cite{kuzmin72} further predicts that  $\mathrm{ord}_{X=0}\zeta_{\phi_1,\phi_2}(X)=\#  \Sigma_p^{\mathrm{irr}}$ and  Gross   \cite{gross81}   proposed  a formula for the expected leading term at $X=0$. The formula was 
proven by Darmon, Dasgupta and Pollack \cite{DDP} when $\#  \Sigma_p^{\mathrm{irr}}=1$, and in general by  Dasgupta, Kakde and Ventullo \cite{DKV}. Henceforth, we will assume that  $\#  \Sigma_p^{\mathrm{irr}} \geqslant 1$, so that $f_{\phi_1}$ defines a point on $\cC_{\cusp}$.  We  note in passing that when $\#  \Sigma_p^{\mathrm{irr}} \geqslant 2$  one can deduce from  \cite[\S3.2]{DKV} that $\cC_{\cusp}$ is singular while  $\kappa:\cE \to \cW \times \cW^{\parallel}$  is ramified at $f_{\phi_1}$ (see Remark~\ref{rem:higher_rank}). 
It is important to observe that if  the restriction $\phi_p$ of the Hecke character $\phi$ to $(\go\otimes \Z_p)^\times$ 
does not  factor through the norm, then $f_{\phi_1}$  does not  define a point on the Andreatta--Iovita--Pilloni eigenvariety  \cite{AIP}, nor on the Kisin--Lai eigencurve \cite{KL}. To palliate this problem, we revisit  the former  construction in  \S\ref{sec:11} and  perform a twisted descent by an arbitrary finite order character of $(\go\otimes \Z_p)^\times$. 
Our main result describes the geometry of the cuspidal eigenvarieties  at $f_{\phi_1}$.  
 
 \begin{theoremletter}\label{main-thm}
Assume that $\Sigma_p^{\mathrm{irr}}=\{ \gp \}$ and  that the Leopoldt defect 
$\delta_{F,p}$ vanishes. Set $d_{\gp}=[F_{\gp}:\Q_p]$.
\begin{enumerate}
\item  If  $d_{\gp}= 1$, then $\cE$ is smooth at $f_{\phi_1}$ and  $\cC_{\cusp}$ is of complete intersections at $f_{\phi_1}$.
\item  If $d_{\gp} = d-1$, then the weight maps $\kappa:\cE \to \cW \times \cW^{\parallel}$ and  $\sw:\cC_{\cusp}\to \cW^{\parallel}$ are 
 \'etale at $f_{\phi_1}$.
 \item If $d_{\gp} = d$, then $\cE$ and $\cC_{\cusp}$  are smooth at  $f_{\phi_1}$. The weight maps $\kappa:\cE \to \cW \times \cW^{\parallel}$ and  $\sw:\cC_{\cusp}\to \cW^{\parallel}$ are  \'etale at $f_{\phi_1}$ if, and only if,  $\sL(\phi) +\sL(\phi^{-1})  \ne 0$ (see \eqref{eq:L-inv} for  definition of the $\mathscr{L}$-invariants).

\item  If $2 \leqslant  d_{\gp} \leqslant  d-2$, then the local ring of any irreducible component of $\cE$ at $f_{\phi_1}$ is not factorial (hence not regular). In particular, $\cE$ and $\cC_{\cusp}$ are not smooth at $f_{\phi_1}$.
\end{enumerate}
\end{theoremletter}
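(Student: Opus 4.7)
\emph{Proof strategy.} The plan is to study the completed local ring $\cO_{\cE,f_{\phi_1}}^{\wedge}$ via a Galois-theoretic avatar: the universal deformation ring $R^{\univ}$ of the reducible pseudo-character $\tr(\phi_1\oplus \phi_2)$ in the category of complete Noetherian local $\bar\Z_p$-algebras, with deformations unramified outside $\gn p$ and satisfying local conditions at each $v\in\Sigma_p$ encoding the finite-slope overconvergent behaviour read off from the $U_v$-eigenvalue $\phi_1(v)$ of $f_{\phi_1}$. Following the framework of \cite{bellaiche, bellaiche-dimitrov, BD21}, the universal pseudo-character carried by the eigenvariety produces a natural map $R^{\univ}\to \cO_{\cE,f_{\phi_1}}^{\wedge}$; the first task is to prove this map is an isomorphism, exploiting the density of classical irreducible points of $\cE$ and the torsion-freeness of $\kappa$ to control the image.

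Next, I would compute the tangent space of $R^{\univ}$. Infinitesimal deformations are classified by pairs of classes in Selmer-type Ext groups $\Ext^1_{\loc}(\phi_1,\phi_2)$ and $\Ext^1_{\loc}(\phi_2,\phi_1)$ with the prescribed local conditions, together with twists encoding the weight variables. Using global duality, class field theory and the vanishing of the Leopoldt defect $\delta_{F,p}$, one obtains explicit dimension formulas in terms of $d_{\gp}$: the regular places $v\in\Sigma_p\setminus\{\gp\}$ contribute rigidly because $\phi(v)\ne 1$, while at $\gp$ the reducible filtration can be deformed in $d_{\gp}$ additional directions coming from $p$-adic Hodge theory. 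The upshot is that the ``cuspidal'' Ext group has dimension $d_{\gp}$ and the ``Eisenstein'' Ext group has dimension $d-d_{\gp}+1$ (or the symmetric version), and the reducibility ideal $I\subset R^{\univ}$ cutting out the Eisenstein locus has generators matching these data.

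The case analysis then runs as follows. In case (i) one Ext group collapses to dimension $1$, so the reducibility ideal is principal on that side, $R^{\univ}$ is regular of dimension $d+1$ and $\cE$ is smooth; cutting by the $d$ equations defining parallel weight then yields the complete-intersection assertion for $\cC_{\cusp}$. In case (ii), a direct dimension count combined with surjectivity of $\kappa$ on tangent spaces forces $\kappa$ to be \'etale at $f_{\phi_1}$, and similarly for $\sw$. In case (iii), $R^{\univ}$ is again smooth of dimension $d+1$ and \'etaleness of $\kappa$ is equivalent to the non-vanishing of the derivative of the $U_{\gp}$-eigenvalue along the weight-one direction transverse to $\cW^{\parallel}$; by a computation modelled on the Greenberg--Stevens argument and its adaptation in \cite{bellaiche-dimitrov, BDP}, this derivative equals a non-zero constant times $\sL(\phi)+\sL(\phi^{-1})$, giving the stated criterion.

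The main obstacle is case (iv), where the inequality $2\le d_{\gp}\le d-2$ forces both Ext groups to have dimension at least $2$. Singularity follows quickly from a tangent-space count, but non-factoriality of \emph{each individual irreducible component} demands more: one must exhibit two distinct height-one prime ideals whose sum remains of height one, producing a non-trivial divisor class. I would approach this through a careful study of the reducibility filtration of the universal pseudo-character restricted to $\gp$: its trace-zero part carries a reflexive rank-one module on the component whose non-principality is detected precisely because, in this intermediate regime, both the sub and the quotient of the filtration carry non-trivial moduli. Producing and controlling this module — rather than merely detecting singularity — is the technical heart of the argument and will be the hardest step, requiring a refined structure theorem for $R^{\univ}$ that goes beyond the tangent space calculation.
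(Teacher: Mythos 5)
Your first step --- building the universal deformation ring $R^{\univ}$ of the reducible pseudo-character $\tr(\phi_1\oplus\phi_2)$ with local conditions at $\Sigma_p$ and proving $R^{\univ}\simeq \cO^{\wedge}_{\cE,f_{\phi_1}}$ --- is exactly what the paper identifies as unavailable in this setting, and the obstruction is not a technicality but the engine of the result. For a pseudo-character that is irregular at $\gp$ (i.e.\ $\phi|_{\rG_\gp}=\mathbf{1}$), there is no way to impose the nearly-ordinary condition at $\gp$ as a deformation condition on pseudo-characters, because the condition refers to a filtration on an actual $2$-dimensional representation, and for a reducible irregular pseudo-character no such representation is canonically available. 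Worse, Proposition~\ref{nearly_ordinary_deform_big_local_degree} shows that when $d_\gp\geq 2$ there is \emph{no} nearly-ordinary lift $\rho:\rG_F\to\GL_2(\cT)$ of the relevant residual representation with the prescribed trace, so a surjection from any deformation ring of actual representations onto $\cT$ cannot exist in case~(iv). The nonexistence of such a lift is precisely what the paper exploits: Corollary~\ref{dim_of_B_mod_mB} converts it into the inequality $\dim_{\bar\Q_p}\Hom(\mathbf{B}/\gm\mathbf{B},\bar\Q_p)\geq 2$, and then Gauss's lemma together with a tangent-space count on $\gm/(\mathbf{B}+\gm^2)$ delivers non-factoriality in Theorem~\ref{nonsm}. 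Your sketch of case~(iv) (``a reflexive rank-one module whose non-principality is detected'') does not contain this idea, and a tangent-space count alone will not establish non-factoriality of each irreducible component, as you yourself note.

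The paper's actual strategy is to work throughout with the generalized matrix algebra (GMA) attached to the pseudo-character on $\cT^{\nord}$, which exists unconditionally (properties \ref{(P1)}--\ref{(P4)} of \S\ref{sec:gma}), rather than with a deformation ring of representations. The deformation functor $\cD^{\nord}_\eta$ of a non-semisimple lift $\left(\begin{smallmatrix}\phi_1 & \phi_2\eta \\ 0 & \phi_2\end{smallmatrix}\right)$ and the $R=T$ theorem appear only in case~(i), where $F_\gp=\Q_p$ forces the relevant space of cocycles unramified outside $\gp$ to be one-dimensional, yielding a \emph{canonical} choice of $\eta=\eta^{(\gp)}_{\phi^{-1}}$; you gesture at this collapse but the canonicity of the line (Lemma~\ref{dimsplitcase}(ii)) is what makes the functor well defined. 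In cases (ii)--(iii) the key input is Lemma~\ref{key_lemma}, which shows that both $\mathbf{B}\subset\gm_{\nord}$ and $\mathbf{C}\subset\gm_{\nord}$, so the pseudo-character is the trace of a $\GL_2(\cT^{\nord})$-valued representation; the étaleness criterion then follows from a direct tangent-space computation using the GMA relations \eqref{eq:p-nearly_ord} and the $\sL$-invariant identities of Proposition~\ref{L-invariant}, not from a Greenberg--Stevens derivative-of-$U_\gp$ computation as you propose (though the answer is the same expression $\sL(\phi)+\sL(\phi^{-1})$). To repair your proposal you would need to replace the universal pseudo-character deformation ring with the GMA formalism and recognize that the failure of the $R\to T$ map to exist is the source of the singularity, not an obstacle to route around.
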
 
The  non-vanishing of $\sL(\phi) +\sL(\phi^{-1})$ is established  in Proposition~\ref{non-vanishing} in  the Galois case. 
Let us now briefly describe the techniques involved in the proof. 
Mazur's deformation theory provides a powerful tool to analyze the completed local rings $\cT$ of  eigenvarieties at classical points whose 
Galois representation are irreducible and $p$-regular, as one can then define a trianguline deformation functor which is representable by a ring  surjecting to $\cT$ and whose tangent space computes in terms of Galois cohomology. In the ordinary case, this further leads to modularity lifting 
$\cR=\cT$ theorems allowing precise control of $\cT$ even when the classicality theorems alluded to above are not available (see \cite{Wake2018TheEI} and \cite[\S7.6.3]{bel-che09}).  Some of this can be salvaged for representations which are  reducible but still regular at $p$ (see \cite{bellaiche-chenevier-eis}).
However, for reducible Galois representation which are  irregular at $p$, these approaches are no more viable. 
A tentative approach to palliate to this difficulty  is to instead consider a well tailored deformation functor of an indecomposable representation  having the same semi-simplification. This  was successfully used in \cite{bet19,BDP} for $\GL_2$  over $\Q$ and it generalizes well to the Hilbert modular setting  in case (i) of the above Theorem. The reason is that when $d_{\gp}=1$ the classes unramified away from $\gp$ produced by  Ribet's method lie on a specific line  in the $d$-dimensional cohomology group  $\rH^1(F,\phi^{-1})$. Beyond this case, there is no such canonical line and we have to resort to other methods. 
 
Pseudo-characters and generalized matrix algebras have efficiently been used to study deformations of reducible representations (see for example the book of Bella\"iche--Chenevier \cite{bel-che09}) and our completed local rings $\cT$ carry such objects. 
However, in  the absence of an $\cR=\cT$ theorem, there is no direct description of the tangent space of $\cT$ in Galois theoretic and eventually linear algebra terms. A genuine difficulty is the translation of the ordinarity condition at places in $\Sigma_p^{\mathrm{irr}}$ which is essential to shape the geometry. Ultimately our proof of cases (ii-iv) of Theorem~\ref{main-thm} uses in a delicate mix of Galois theoretic and modular properties of (nearly)-ordinary cuspidal Hida families. 
Our analysis of the geometry turns out to be closely related to the basic question in deformation theory  whether a pseudo-character is the trace of a representation. In cases (ii-iii), we can show that the $\cT$-valued pseudo-character does come from an actual representation, reflecting the fact that the extension group  $\rH^1(F,\phi)$ admits a `modular' parametrization (see \S\ref{sec:43}). In contrast, in case (iv),  $\rH^1(F,\phi)$ lacks canonical lines reflecting the non-existence of $\cT$-valued Galois representation ultimately forcing $\cT$ to be singular.
 
Describing   the geometry of $\cC$ at $f_{\phi_1}$ requires the additional  study of the congruence ideal 
between the Eisenstein and cuspidal families specializing to $f_{\phi_1}$.
For example, when $\sw:\cC_{\cusp}\to \cW^{\parallel}$ is  \'etale at $f_{\phi_1}$, 
we show in Corollary~\ref{TC} that  the complete local ring of $\cC$ at $f_{\phi_1}$ is isomorphic to $\varLambda \times_{\overline{\Q}_p} \varLambda \times_{\overline{\Q}_p} \varLambda$, if  $d_{\gp}=d$, and to $\varLambda \times_{\overline{\Q}_p} \varLambda$, 
if  $d_{\gp}=d-1$,  where $\varLambda\simeq \overline{\Q}_p\lsem X \rsem$ is the completion of $\cO_{\cW^{\parallel},\sw(f_{\phi_1})}$. 
Our next  result determines in full generality, i.e., assuming neither $\#\Sigma_p^{\mathrm{irr}}=1$ nor  $\delta_{F,p}=0$,  the congruence ideal $\cI_{\phi_1,\phi_2} \subset  \overline{\Z}_p \lsem X \rsem$ between the Eisenstein family $\cE_{\phi_1,\phi_2}$  and the cuspidal families (see \S\ref{sec:123}). 
 
\begin{theoremletter}\label{congruencemodule}(Theorem~\ref{congruencemodule-general})
If $\#\Sigma_p^{\mathrm{irr}}\geqslant 1$, then  one has $\displaystyle
\cI_{\phi_1,\phi_2} \cdot \overline{\Z}_p \lsem X \rsem[1/p]=(\zeta_{\phi_1,\phi_2} )$.
\end{theoremletter}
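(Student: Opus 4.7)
The plan is to derive Theorem~\ref{congruencemodule} from an explicit computation of the geometric constant terms of the Eisenstein family $\cE_{\phi_1,\phi_2}$ at each cusp of the relevant Hilbert modular variety. By the definition recalled in \S\ref{sec:123}, the congruence ideal $\cI_{\phi_1,\phi_2}$ is the ideal of $\bar\Z_p\lsem X \rsem$ generated by these constant terms, and Proposition~\ref{const_Eisen_family} asserts that each of them is a multiple of $\zeta_{\phi_1,\phi_2}$. This immediately yields the inclusion
\[
\cI_{\phi_1,\phi_2} \subseteq (\zeta_{\phi_1,\phi_2}),
\]
so that the content of the theorem is the reverse inclusion after inverting $p$.

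For this reverse inclusion, I would exhibit a single cusp whose geometric constant term equals $\zeta_{\phi_1,\phi_2}$ up to multiplication by a unit in $\bar\Z_p\lsem X\rsem[1/p]$. The natural candidate is the cusp corresponding to the ordered pair $(\phi_1,\phi_2)$ in the standard parametrization of the boundary, where at a classical weight $k\geqslant 2$ the specialization $E_k(\phi_1,\phi_2)$ has constant term essentially $L(1-k,\phi)$ together with precisely the Euler factors appearing in \eqref{eq:imprimitive-zeta}. Since the interpolation formula \eqref{eq:interpolation_p-adic_L-fcn} characterizes $\zeta_{\phi_1,\phi_2}$ as the unique element of $\bar\Z_p\lsem X\rsem$ interpolating these values, Zariski density of classical arithmetic weights forces the two power series to agree, showing that the constant term at this distinguished cusp coincides with $\zeta_{\phi_1,\phi_2}$ itself and hence lies in $\cI_{\phi_1,\phi_2}$.

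The main subtlety, I expect, is twofold. First, the twisted descent construction of \S\ref{sec:11} means that the parameter $X$ does not literally parametrize parallel weights but is a twist thereof, so the matching between boundary components and Eisenstein characters must be carried out with care in order to recognize which cusp returns exactly $\zeta_{\phi_1,\phi_2}$. Second, at the other cusps the constant term may carry additional multiplicative factors, for instance coming from the $p$-adic valuation of values of auxiliary Hecke characters, which become units only after inverting $p$; this is precisely why the statement is formulated after tensoring with $\bar\Q_p$ rather than integrally. Once these points are settled the equality $\cI_{\phi_1,\phi_2}\cdot \bar\Z_p\lsem X\rsem[1/p]=(\zeta_{\phi_1,\phi_2})$ follows with no use of the hypotheses $\#\Sigma_p^{\mathrm{irr}}=1$ or $\delta_{F,p}=0$ appearing in Theorem~\ref{main-thm}, which explains why the present statement holds in full generality.
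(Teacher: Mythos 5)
Your proposal rests on a fundamental misidentification of the congruence ideal. You assert that, by definition, $\cI_{\phi_1,\phi_2}$ is the ideal of $\bar\Z_p\lsem X \rsem$ generated by the geometric constant terms of $\cE_{\phi_1,\phi_2}$. This is not the definition. As recalled at the start of \S\ref{sec:123}, $\cI_{\phi_1,\phi_2}$ is defined via the fiber-product diagram \eqref{eq:congruencesEisenstein} involving the Hida Hecke algebras $\mathfrak{h}\twoheadrightarrow \mathfrak{h}^{\cusp}$: it measures the scheme-theoretic intersection between the Eisenstein component $\Spec(\varLambda_\cO)$ and the cuspidal locus $\Spec(\mathfrak{h}^{\cusp})$ inside $\Spec(\mathfrak{h})$. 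Were the congruence ideal literally the ideal of constant terms, the theorem would be a restatement of Proposition~\ref{const_Eisen_family} and there would be nothing to prove; the content of Theorem~\ref{congruencemodule} is precisely that these two ideals coincide after inverting $p$, and that equality is not a tautology.

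Once the definition is corrected, the picture reverses. What the constant-term computation (Proposition~\ref{const_Eisen_family}) together with the fundamental exact sequence \eqref{eq:fund_seq_ord} buys you directly is the \emph{easy} inclusion $\cI_{\phi_1,\phi_2,\cP} \subset (\zeta_{\phi_1,\phi_2})$: since $\res(\cE_{\phi_1,\phi_2})$ lies in $\zeta_{\phi_1,\phi_2}\cdot \varLambda^\times_{\sw(\cP)}\cdot c_{\phi_1,\phi_2}$, one can lift $\cE_{\phi_1,\phi_2}$ modulo $\zeta_{\phi_1,\phi_2}$ to a cuspidal $\varLambda$-adic form $\cH$, and via Hida duality \eqref{eq:cuspidal-duality} this congruence yields a surjection $\cT^{\cusp}_{\cP}/\mathcal{J}_{\phi_1,\phi_2,\cP}\twoheadrightarrow \varLambda_{\sw(\cP)}/(\zeta_{\phi_1,\phi_2})$. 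So what you describe as the reverse, hard inclusion ``$(\zeta_{\phi_1,\phi_2}) \subset \cI_{\phi_1,\phi_2}$ via a distinguished cusp'' does not follow from the constant-term formula at all: it requires showing the Eisenstein congruence cannot be improved beyond $\zeta_{\phi_1,\phi_2}$. The paper's argument for this is substantively different: it uses the freeness of $\cT^{\cusp}_{\cP}$ over $\varLambda_{\sw(\cP)}$ to lift the isomorphism \eqref{eq:comm_diag_cong_ideal0} to a $\varLambda$-linear (but not multiplicative) map, converts this via Hida duality into a non-eigen cuspidal form $\cG$ congruent to $\cE_{\phi_1,\phi_2}$ modulo $\cI_{\phi_1,\phi_2,\cP}$ away from the constant term, forms the quotient $\cF=(\cE_{\phi_1,\phi_2}-\cG)/(2^{-d}\zeta_{\phi_1,\phi_2})$, and then, when $\gn_1=\go$, derives a contradiction with $\phi\not\equiv\omega_p^{-1}\pmod\varpi$ from the irreducibility of the Igusa tower and the action of the diamond operators $S_\gq$ (an argument modeled on Emerton and Lafferty). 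None of these steps appears in your outline, and the inclusion you claim to reduce the problem to is in fact the one your first paragraph already established. Your remark that the theorem needs neither $\#\Sigma_p^{\mathrm{irr}}=1$ nor $\delta_{F,p}=0$ is correct, but not for the reason you give.
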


The points of intersection between the Eisenstein components  and the  Hilbert cuspidal eigenvariety correspond to zeros of the Deligne--Ribet $p$-adic $L$-function. In his proof of the  Iwasawa Main Conjecture over totally real fields Wiles
had to treat these  trivial zeros separately  (see \cite[\S11]{wiles90}), as the cohomological classes constructed by Ribet's method do not have the required local property at $p$. 
Such difficulties seem to be inherent to the non-Gorenstein nature of the singularity, as Hida's congruence ideal $\mathbf{C}^0$ and  Mazur's module $\mathbf{C}^1$, forming the bridge between the analytic and arithmetic  sides of Iwasawa's Main Conjecture, differ. 
Geometrically,  Theorem~\ref{congruencemodule} asserts that the  intersection multiplicity  between $\cE_{\phi_1,\phi_2}$ and $\cC_{\cusp}$ at  $f_{\phi_1}$  is given by  the order of vanishing of $\zeta_{\phi_1,\phi_2}$. Our proof crucially uses  the existence of a fundamental exact sequence for the space of Coleman families of parallel weight  in full generality (Theorem~\ref{fund_exact_seq}). To this end, we prove that parallel weight $p$-adic automorphic sheaves on the Hilbert modular varieties introduced in \cite{AIS} extend to  invertible  sheaves on the minimal compactification, allowing  to define geometric $q$-expansions of Coleman families. Our method is very general and has the potential to extend to unitary and symplectic Shimura varieties to study Eisenstein congruences in parallel weight.

The study of the geometry of the Hilbert eigenvariety presented here informed  the fundamental work \cite{DPV} of Darmon--Pozzi--Vonk   computing the Fourier coefficients of the infinitesimal cuspidal deformation of $f_{\phi_1}$ in the anti-parallel direction, and   further relating those  to values of the Dedekind--Rademacher cocycle at real multiplication points. Those values are shown to be Gross--Stark $p$-units in the narrow Hilbert class field of 
$F$ as predicted by the explicit class field theory for real quadratic fields proposed by Dasgupta, Darmon and Vonk \cite{DD06, DV21,DV22}
which can be seen as  an analogue of the singular moduli theory of elliptic units for  imaginary quadratic fields. Their approach 
relies on $p$-adic methods in the theory of automorphic forms and fits in  the emerging $p$-adic Kudla program where real analytic families of Eisenstein series are replaced by $p$-adic families parametrized by the weight. Using a vast generalization of Ribet’s method,  Dasgupta and Kakde \cite{DK21}  significantly refined the approach of  \cite{DKV} to prove the Brumer--Stark conjecture and thus generalized \cite[Thm.~B]{DPV}  to arbitrary totally real fields.  
Theorem~\ref{main-thm}(i) would also have applications to the arithmetic of elliptic curves over $F$ once the 
diagonal cycles method of \cite{BDR-II} becomes available for Shimura curves.

 We close the introduction by  describing two new applications in Iwasawa theory.  
\subsubsection*{CM congruences and Katz $p$-adic $L$-functions at $s=0$}
Suppose that $\phi$ is quadratic, denote by $H$ the corresponding CM quadratic extension of $F$ and by $\Cl_H(p^{\infty}) $  its ray class group of level $p^\infty$.  
Assuming that there is a unique prime $\gp$ of $F$ above $p$, the Eisenstein series $E_1(\mathbf{1},\phi)$,  which in this case is also a  theta series relatively to $H$, has a unique $p$-stabilization  $f_{\mathbf{1}}$.   As $\phi(\gp)=1$, $\gp$ splits  in $H$, hence there exists a nearly-ordinary theta family $\Theta_{\mathbf{1}}$ specializing to $f_{\mathbf{1}}$ in weight $1$. 
By Theorem~\ref{main-thm}(iii),  the congruence module $\mathbf{C}^0(\mathbf{1})$ attached to $\Theta_{\mathbf{1}}$ (see \cite[(6.9)]{HT})  is not supported at the  prime of   $\Z_p \lsem \Cl_H(p^{\infty}) \rsem$ corresponding to the trivial character $\mathbf{1}$, 
 as predicted by a conjecture of  Hida--Tilouine \cite[p.192]{HT}.  This implies that  the Katz \cite{katz}   $p$-adic $L$-function $\zeta_H \in \Z_p \lsem \Cl_H(p^{\infty}) \rsem$ does not vanish at $\mathbf{1}$. 
Recently, Hsieh and the first named author established  a $p$-adic analogue of Kronecker's second limit formula for the value $\zeta_H(\mathbf{1})$ involving the $p$-adic regulator of $F$ (see \cite[Prop.~4.8]{BH22}).

\subsubsection*{Iwasawa's growth for the compositum of all $\Z_p$-extensions}
 Suppose that $\#  \Sigma_p = 2$ and that $\phi$ is not quadratic. Let $H_{\infty}$ denote the compositum of all $\Z_p$-extensions of the  CM field   $H$ cut out by $\phi$, and let   $M_{\infty}$ be the maximal abelian unramified $p$-extension of $H_{\infty}$. Then $Y_{\infty}=\Gal(M_{\infty}/H_{\infty})$ is a  finitely generated torsion module over $\Z_p\lsem \Gal(H_{\infty}/H) \rsem= \Z_p\lsem X_1,\ldots,X_r \rsem$, whose study occupies a central  place in the classical Iwasawa theory. 
  Under the  assumptions of Theorem~\ref{main-thm}(iv)  we   show in  Corollary~\ref{prop_L_infty} that 
$Y_{\infty}/(X_1,\ldots,X_r)  Y_{\infty}$ is infinite.  This is in contrast with the case where $p$ is inert in $H$ and $H_\infty^{\cyc}$  the cyclotomic $\Z_p$-extension (so that $r=1$), as then $Y_{\infty}^{\cyc}/X\cdot Y_{\infty}^{\cyc}$ is finite \cite[Chap.13]{Wa97}. 

\subsubsection*{An algebro-geometric proof of the rank-one Gross--Stark conjecture}
In addition to the above applications, our work provides a new proof of the rank-one Gross--Stark conjecture over totally real fields (Corollary~\ref{Gross--Stark-rk-1}).  
A distinctive advantage of our approach is its natural potential to extend to higher-rank settings thereby offering a framework for studying trivial zeros of more general $p$-adic $L$-functions.

\vspace{-8mm}
\tableofcontents

{ \noindent{\it Acknolwedgements:} { \small
We would like to thank Fabrizio Andreatta, Jo\"{e} Bella\"{i}che, Denis Benois, Henri Darmon, Harald Grobner, Ming-Lun Hsieh, Chi-Yun Hsu, Adrian Iovita, Mahesh Kakde, David Loeffler, Alexandre Maksoud, Vincent Pilloni, Alice Pozzi,  Eric Urban, Jan Vonk and Sarah Zerbes for helpful comments and for many stimulating conversations. Finally, the authors thank the anonymous referees for their careful reading of the manuscript. The first and third named authors acknowledge  support from the START-Prize Y966 of the Austrian Science Fund (FWF).  The second named author was partially supported by  the Agence Nationale de la Recherche grants  ANR-18-CE40-0029 and ANR-16-IDEX-0004. }

\subsection*{General notation}
Given a local or a global  field $L$, we let $\cO_L$ denote its  ring of integers,  $\Sigma_L$ the set of infinite places   and  $\rG_L=\Gal(\overline{L}/L)$ the absolute Galois group. Given a finite field extension $K$ of $L$, we let  $\rN_{K/L}$ denote the relative norm. 
We fix an embedding $\iota_p:\overline{\Q} \to \overline{\Q}_p$  and consider the resulting partition 
$\Sigma=\coprod_{ \gp\in\Sigma_p} \Sigma_{\gp}$, where $\Sigma_{\gp}$ is canonically in bijection with 
$\Hom_{\Q_p-\text{alg}}(F_\gp,\overline{\Q}_p)$.  We let $\log_p:\overline{\Q}_p^\times\to \overline{\Q}_p$ be the $p$-adic logarithm normalized by $\log_p(p)=0$. For simplicity, we will sometimes write $\log_p$ for $\log_p\circ \iota_p$. 
  
Rigid analytic spaces will usually be denoted with calligraphic letters: $\cC$, $\cE$, etc. 
Given a reduced $\Q_p$-affinoid space $\cX$, we let  $\cO_{\cX}^\circ$ is the subring of  power-bounded elements in $\cO_{\cX}$. We endow the $\Q_p$-algebra $\cO(\cX)$ with the coarsest locally convex topology \cite{peter}. For any $x \in \cX$, the local ring $\cO_{\cX,x}$ is a Henselian local topological ring  endowed with the finest locally convex topology such that for any admissible affinoid $\cU$ of $X$ containing $x$, the localization morphism $\cO(\cU) \to \cO_{\cX,x}$ is continuous \cite[Chap.~I]{peter}. 
For any finite type $ \cO_{\cX,x} $-module $M$, we endow $M$ with the quotient locally convex topology associated to a surjection $  \cO_{\cX,x}^n \twoheadrightarrow M$ \cite[p.7]{bellaiche-chenevier-eis}.

\addtocontents{toc}{\setcounter{tocdepth}{2}}

\section{Twisted Coleman families and Hilbert eigenvarieties} \label{sec:11}
Weight $1$ Hilbert modular forms are limits of discrete series and as such  contribute to the coherent cohomology of the Hilbert modular variety but not to its Betti cohomology. In the ordinary case, a geometric definition of $p$-adic Hilbert modular forms is given in 
Hida's book \cite{hida-PAF} and a control theorem is  established under the assumption that $p\geqslant 5$  is unramified in $F$. Rather than extending his results and methods based on the study of the Igusa tower, we follow Andreatta--Iovita--Pilloni \cite{AIP} who  construct families of overconvergent modular sheaves at a fixed finite level. The classicity results are then unconditional (using the work of Bijakowski) and are well tailored for our applications, as inverting $p$ seems crucial to establish the fundamental exact sequence \eqref{eq:fund_seq}.  Furthermore, we  show that the overconvergent modular sheaves in parallel weight extend to invertible sheaves over the minimal compactification.

\subsection{Formal models for Hilbert modular schemes} \label{sec:formal} 
Following \cite{rap78}, \cite{cha90} and \cite{dim04}, 
we introduce the Hilbert modular schemes with $\mu_\gn$-level structure and their  various compactifications.
Let $T=\mathrm{Res}_{\go/\Z}\mathbf{G}_m$ and denote by $\mathscr{T}$  (resp.~$\widehat{\mathbf{G}}_m$)  the formal group obtained by  completing $T$ (resp.~$\mathbf{G}_m$)  along the identity section. Let $\cO$ be the ring of integers of a  finite extension $K$ of $\Q_p$  containing the values of $\phi_p$ and  all the embedding of $F$ into $\overline{\Q}_p$. 
We let $\gd$  denote the different ideal  of  $F$.

Let $\gn$ be an integral ideal of $\go$ relatively prime to $p$ and divisible by a rational integer $\geqslant 4$, and  
  $\mu_\gn$ be the Cartier dual of the constant group scheme $(\go/\gn)$ over  $\Z_p$. We set 
$\Delta_{\gn}=\go_{+}^\times/E_{\gn}^2$, where  $\go_{+}^\times$ denotes the group of totally positive units and $E_{\gn}=\{u\in \go^\times\mid u\equiv 1 \pmod{\gn}\}$.  Given a fractional ideal  $\gc$ of $F$, we consider the functor attaching to any $\Z_p$-scheme $S$, the set of  quadruplets $ (A, \iota, \alpha, \lambda)$ with
 \begin{itemize}
\item $A \rightarrow S$  an abelian scheme of relative dimension $d$ with real multiplication  $\iota:\go \hookrightarrow \End_S (A)$, 
\item $\alpha:\mu_\gn \otimes \gd^{-1} \hookrightarrow A[\gn]$  an $\go$-linear embedding of group schemes \'etale locally over $S$, 
\item $\lambda$  a $\gc$-polarization, i.e.,  an $\go$-linear isomorphism 
$\lambda:A^\vee\xrightarrow{\sim} A\otimes \gc$ satisfying the Deligne--Pappas condition.
\end{itemize}
Under the above assumptions on $\gn$, this functor is representable by a  scheme  $X_\gc$ 
of finite type over $\Z_p$, endowed  with a universal abelian scheme $\pi: A_{\gc} \rightarrow X_{\gc}$. 
Let $\omega $ be the conormal  $\go \otimes  \cO_{X_{\gc}} $-module 
$e^{*}(\Omega_{A_{\gc}/X_{\gc}})$, where $e:X_\gc\rightarrow A_{\gc}$ is the identity section.  Let $\overline{X}_\gc$ be a  toroidal compactification of $X_\gc$ and let $\pi: G_{\gc} \rightarrow \overline{X}_\gc$ be the semi-abelian scheme extending $A_{\gc}$ to $\overline{X}_\gc$. The conormal sheaf $\omega$ extends naturally to an $\go \otimes \cO_{\overline{X}_{\gc}}$-module which is also denoted by $\omega$ and we denote by $\displaystyle X^*_\gc=\mathrm{Proj}\left(\bigoplus_{k\geqslant 0} \rH^0(\overline{X}_{\gc}, \det(\omega)^k )\right)$ the  minimal compactification of $X_\gc$. 
Let $\overline{\mathscr{X}_{\gc}} $ (resp.~$\mathscr{X}^*_{\gc}$) denote the formal completion of $\overline{X}_\gc$ (resp.~$X_\gc^*$) along its special fiber. The natural group action of $\epsilon\in  \go_{+}^\times$ on $X_\gc$ sending $(A, \iota, \alpha, \lambda)$ to 
$(A, \iota, \alpha, \epsilon \cdot \lambda)$  factors through the finite group $\Delta_{\gn}$, because the 
multiplication by $\epsilon\in E_{\gn}$ induces an isomorphism between $(A, \iota, \alpha, \lambda)$ and $(A, \iota, \alpha, \epsilon^2 \cdot \lambda)$. This action extends to  both  $X_{\gc}^*$ and $\overline{X}_{\gc}$. 
All these constructions remain valid over $\Z[1/\rN_{F/\Q}(\gn)]$ ensuring the compatibility with the transcendental 
theory. Given $n\in\Z_{\geqslant 1}$ and  $v \in [0,\frac{1}{4 p^{n-1}}]\cap \Q$,   we let $\overline{\cX}_{\gc}(v)$ (resp.~$\cX^{*}_{\gc}(v)$)  denote the  $v$-overconvergent strict neighborhood of the  ordinary locus $\overline{\cX}_{\gc}(0) \subset \overline{X}_{\gc}^{\mathrm{rig}}$ (resp.~$\cX^{*}_{\gc}(0) \subset X^{*\mathrm{rig}}_{\gc}$),  i.e., the locus where  the troncated $p$-adic valuation of the total Hasse invariant is at most $v$. Under the assumption   $p\geqslant 3$, Fargues \cite[Thm.~7]{fargues} constructed a canonical $\go$-stable subgroup of rank $p^{nd}$ 
\[\mathscr{H}_n \hookrightarrow G_{\gc}[p^n]_{\mid \overline{\cX}_{\gc}(v)}.\] 

 Consider the  \'etale  covering 
\[
\overline{\cX}_{\gc}(\mu_{p^n},v)=\mathrm{Isom}^{\go}_{\overline{\cX}_{\gc}(v)}(\mathscr{H}_n,\mu_{p^n} \otimes \gd^{-1}) \to \overline{\cX}_{\gc}(v). 
\] 
with Galois group $T(\Z/p^{n}\Z)$.  After enlarging $K$ so that  $p^{v}\in K$, following \cite[\S 3]{AIP}, we consider the normal formal model 
$\overline{\mathscr{X}_{\gc}}(v)$ (resp.~$\mathscr{X}^*_{\gc}(v)$)  of $\overline{\cX}_{\gc}(v)$ (resp.~$\cX^{*}_{\gc}(v)$) obtained via normalization of a certain admissible blow-up of $\overline{\mathscr{X}_{\gc}}$ (resp.~$\mathscr{X}^*_{\gc}$)  \cite[\S5.2]{AIPsiegel},  then we define the formal  models $\overline{\mathscr{X}_{\gc}}(\mu_{p^n},v)$ and $\mathscr{X}^*_{\gc}(\mu_{p^n},v)$ by taking normalizations in the generic fiber, and finally we define  $\mathscr{X}_{\gc}(\mu_{p^n},v)$ as the largest open over which $G_{\gc}$ is an abelian scheme. To sum up, we have the following commutative diagram of  formal schemes
\begin{align}\label{eq:diag_formal_schs}
 \xymatrix{
 	\mathscr{X}_{\gc}(\mu_{p^n},v) \ar@{^{(}->}[r] \ar@{^{(}->}[dr]
 	& \overline{\mathscr{X}_{\gc}}(\mu_{p^n},v) \ar^{\overline{h}_n}[r]\ar[d]^{\overline{\pi}_n}& \overline{\mathscr{X}_{\gc}}(v) \ar[d]^{\bar{\pi}} \ar[r]  &  \overline{\mathscr{X}_{\gc}}  \ar[d] \\
 	& \mathscr{X}^*_{\gc}(\mu_{p^n},v) \ar^{h_n}[r] & \mathscr{X}^*_{\gc}(v)  \ar[r] &  \mathscr{X}^*_{\gc},
}
\end{align}
\subsection{The Hodge--Tate map}\label{HT-map}
As $\overline{\mathscr{X}_{\gc}}(v)$ is an admissible normal formal $\cO$-scheme, $\mathscr{H}_n$ extends to a finite flat $\go$-stable subgroup (see \cite[Prop.~4.1.3]{AIPsiegel}) 
\[
\mathscr{H}_n\hookrightarrow G_{\gc}[p^n]_{|\overline{\mathscr{X}_{\gc}}(v)}
\] 
whose generic fiber is  isomorphic to $\go/(p^n)$ for the \'etale topology (see  \cite[Lem.~3.3]{AIP}).
We consider  the conormal sheaf  $\omega_{\mathscr{H}_n}=e^*\left(\Omega_{\mathscr{H}_n/\overline{\mathscr{X}_{\gc}}(v)}\right)$ of $\mathscr{H}_n$ and we recall that the Hodge--Tate map  is a morphism of abelian sheaves over $\overline{\mathscr{X}_{\gc}}(v)$ for the fppf-topology
\[\mathrm{HT}:\mathscr{H}_n^D \to \omega_{\mathscr{H}_n},\]
sending an $S$-point $x \in \mathscr{H}_n^D(S)=\Hom_{\go}(\mathscr{H}_n, \mathbf{G}_m \otimes \gd^{-1})$ to $x^*( \tfrac{dt}{t} \otimes 1) \in \omega_{\mathscr{H}_n}(S)$, in view of the canonical 
isomorphism $\omega_{\mathbf{G}_m \otimes \gd^{-1} \mid S}=\go\otimes \cO_S\cdot  \tfrac{dt}{t}$. 
As  $n-\frac{1}{2} \leqslant n-v \frac{p^n-1}{p-1}$, arguing exactly as  in \cite[Prop.~4.2.1]{AIPsiegel} yields a canonical isomorphism 
\[
\omega\otimes_{\cO_{\overline{\mathscr{X}_{\gc}}(v)}} (\cO_{\overline{\mathscr{X}_{\gc}}(v)}/(p^{n-\frac{1}{2}}))=
\omega_{G_{\gc}[p^n]}\otimes_{\cO_{\overline{\mathscr{X}_{\gc}}(v)}} (\cO_{\overline{\mathscr{X}_{\gc}}(v)}/(p^{n-\frac{1}{2}}))\xrightarrow{\sim} 
\omega_{\mathscr{H}_n}\otimes_{\cO_{\overline{\mathscr{X}_{\gc}}(v)}} (\cO_{\overline{\mathscr{X}_{\gc}}(v)}/(p^{n-\frac{1}{2}}) )
\]
Moreover by \cite[Prop.~6.1]{AIS}, $p^{\frac{v}{p-1}}$ annihilates the cokernel of the linearized  Hodge-Tate map 
\begin{align}\label{HTcokernel}
\overline{\mathrm{HT}}: \mathscr{H}_n^D \otimes \cO_{\overline{\mathscr{X}_{\gc}}(v)}/(p^{n-\frac{1}{2}}) \to \omega\otimes_{\cO_{\overline{\mathscr{X}_{\gc}}(v)}}\cO_{\overline{\mathscr{X}_{\gc}}(v)}/(p^{n-\frac{1}{2}}).
\end{align}
We let $\mathscr{F}_{n}$ be the subsheaf of $\omega$ over  $\overline{\mathscr{X}_{\gc}}(\mu_{p^n},v)$ constructed in \cite[Prop.~3.4]{AIP}. It is locally free of rank $1$ over $\go \otimes \cO_{ \overline{\mathscr{X}_{\gc}}(\mu_{p^n},v)}$ and  uniquely characterized  by the facts that  it contains $p^{\frac{v}{p-1}} \cdot \omega$ and that  the induced Hodge--Tate map yields a  $\go$-linear  isomorphism
\begin{align}\label{HTF}
	\overline{\mathrm{HT}}: \mathscr{H}_n^D \otimes \cO_{\overline{\mathscr{X}_{\gc}}(\mu_{p^n},v)}/(p^{n-\frac{1}{2}}) \xrightarrow{\sim} \mathscr{F}_{n} /p^{n-\frac{1}{2}} \mathscr{F}_{n}. 
\end{align}
\subsection{Andreatta--Iovita--Pilloni--Stevens' overconvergent modular sheaves}\label{AIP-OCMS}
  The  weight space 
\[\cW=\Hom_{\mathrm{cont}}(T(\Z_p),\mathbf{G}_m)\] 
is  the rigid analytic generic fiber of the formal scheme $\Spf(\cO \lsem T(\Z_p) \rsem)$. The group of connected components of the weight space is isomorphic to the group of tamely ramified characters of the corresponding torus, while each connected component  is isomorphic to a rigid open polydisc.  Given an affinoid $\cV$ in $\cW$, the  universal character 
\begin{align}\label{universalcharacter}
\kappa_{\cV}: T(\Z_p)  \longrightarrow \cO_{\cV}^\times
\end{align} 
is $w$-analytic for some $w \in \Q_{>0}$ (see \cite[\S2.2]{AIPsiegel}), which after shrinking $\cV$  can be assumed to be   $n-\frac{1}{2}$.
Let $\mathscr{T}_{n}^\circ \subset \mathscr{T}$ denote the formal subgroup of units congruent to $1$ modulo $p^{n-\frac{1}{2}}$, and let  $\mathscr{T}_{n} \subset \mathscr{T}$ denote the formal subgroup generated by $T(\Z_p)$ and $\mathscr{T}_{n}^\circ$. 
By \cite[p.9]{AIP},   $\kappa_{\cV}$ extends uniquely  to a character 
\begin{align}
 \kappa_{\cV}:\mathscr{T}_{n} \times \mathscr{V} \to \widehat{\mathbf{G}}_m  \times \mathscr{V},
\end{align}
where $\mathscr{V}=\Spf(\cO_{\cV}^\circ)$ is a formal model of $\cV$. 
Henceforth we fix $n \in \Z_{\geqslant 1}$  such that $\phi_p$  factors through $T(\Z/p^{n-1}\Z)$, while keeping 
 the notation from previous subsections.  
Let $\gamma_{n}^{\circ}: \mathscr{I}_{n} \to \overline{\mathscr{X}_{\gc}}(\mu_{p^n},v)$ be the formal $\mathscr{T}_{n}^\circ $-torsor constructed in \cite[\S 3.4]{AIP}. A point $x\in  \mathscr{I}_{n}(R)$ over a normal admissible $\cO$-algebra $R$  corresponds to isomorphism classes of $(A,\iota,\alpha, \lambda, P, \Omega)$, where
\begin{itemize}
\item $P$ is an $\go$-generator of $\mathscr{H}^D_{n}(R)=\mathscr{H}^D_{n}(R\otimes_{\cO} K) \simeq \go/(p^n)$ such that  $(A,\iota,\alpha,\lambda, P)$ corresponds to the point $\gamma_{n}^{\circ}(x) \in \overline{\mathscr{X}_{\gc}}(\mu_{p^n},v)(R)$, and
\item $\Omega$ is an $\go \otimes R$-basis of $\mathscr{F}_{n}(R)$ such that $\overline{\mathrm{HT}}(P)=\Omega \mod p^{n-\frac{1}{2}}$ via \eqref{HTF}.
\end{itemize}
  We set  $\kappa_{\cV}^{\circ}= \kappa_{\cV  \mid  \mathscr{T}_{n}^\circ \times \mathscr{V} }$. 
  Andreatta--Iovita--Pilloni--Stevens \cite{AIP,AIS} defined the $p$-adic automorphic sheaf 
\begin{align}\label{AIPS-sheaf0}
\omega^{\kappa_{\cV}^{\circ}}=\left(\gamma_{n}^{\circ}\right)_*\left(\cO_{ \mathscr{I}_{n}} 
\widehat{\otimes} \cO_{\cV}^\circ\right)[-\kappa_{\cV}^{\circ}]
\end{align}  
which is invertible on $\overline{\mathscr{X}_{\gc}}(\mu_{p^n},v) \times  \mathscr{V}$. In addition, as  $\gamma_{n}^{\circ}$ is equivariant for the action of $t\in T(\Z_p)$ sending 
$(A,\iota,\alpha, \lambda, P, \Omega)$ to $(A,\iota,\alpha, \lambda, t\cdot P,  t\cdot \Omega)$
in a compatible way with the $\mathscr{T}_{n}^\circ$-torsor structure, we obtain a $\mathscr{T}_{n}$-morphism 
\[
\gamma_{n}: \mathscr{I}_{n} \xrightarrow{\gamma_{n}^{\circ}} \overline{\mathscr{X}_{\gc}}(\mu_{p^n},v) \to \overline{\mathscr{X}_{\gc}}(v), 
\]
allowing us to define a  coherent sheaf 
\begin{align}\label{AIPS-sheaf}
\omega^{\kappa_{\cV}}=\left(\gamma_{n}\right)_*
(\cO_{\mathscr{I}_{n}} \widehat{\otimes} \cO_{\cV}^\circ)[-\kappa_{\cV}]
\end{align}  
on $\overline{\mathscr{X}_{\gc}}(v) \times \mathscr{V}$ which is invertible on the generic fibre. There is an inclusion of coherent sheaves $\omega^{\kappa_{\cV}} \hookrightarrow \overline{h}_{n\ast}( \omega^{\kappa_{\cV}^{\circ}})$ over $\overline{\mathscr{X}_{\gc}}(v) \times \mathscr{V}$, where $\omega^{\kappa_{\cV}}$ is exactly the sheaf of invariant sections of $\overline{h}_{n\ast}( \omega^{\kappa_{\cV}^{\circ}})$ under the (twisted) action of $T(\Z/p^{n}\Z)$. This clarifies the relationship between these sheaves.

 The sheaf $\omega^{\kappa_{\cV}}$ can be used to give geometric definition of  Coleman families of overconvergent Hilbert modular forms. Namely, an element $f_{\cV} \in \rH^0(\overline{\cX}_{\gc}(v) \times \cV, \omega^{\kappa_{\cV}})$ 
can be seen as a rule assigning to each normal admissible $\cO_{\cV}^{\circ}$-algebra $R$
and  a tuple $(A,\iota,\alpha, \lambda, P, \Omega) \in \mathscr{I}_{n}(R)$ an  element $f_R(A,\iota,\alpha, \lambda,  P,  \Omega) \in R[1/p]$ such that  for all $ t \in T(\Z_p)$ 
\begin{align}\label{Katzcond}
f_R(A,\iota,\alpha, \lambda, t^{-1} P, t^{-1} \Omega) =\kappa_{\cV}(t) f_R(A,\iota,\alpha, \lambda,  P,  \Omega). 
\end{align}  
\subsection{Classicality for overconvergent modular forms}\label{classicalitycriterion}
For any $k\in \Z[\Sigma]$,  we let  \begin{align}\label{classicalautomorphicsheaf}
\omega^{k}=\bigotimes_{\sigma \in \Sigma} \omega_\sigma^{k_\sigma },\end{align} where 
$\omega=\bigoplus_{\sigma \in \Sigma} \omega_{\sigma}$  is the direct sum decomposition of the classical automorphic sheaf on any Hilbert modular variety over $K$.  We  let $X_{\gc,1}(p^n)$, resp. $X_{\gc,0}(p^n)$,  be  the  Hilbert modular variety over $K$ of level $\Gamma_1^1(\gc,\gn p^n)$, resp.~ $\Gamma_1^1(\gc,\gn ) \cap \Gamma_0(\gc,p^n)$ (see \cite{dim04} for a precise definition). 
As the covering  $\mathrm{pr}: X_{\gc,1}(p^n) \to X_{\gc,0}(p^n)$  is \'etale  with  Galois group $T(\Z/p^{n}\Z)$, there is a natural decomposition 
\begin{align}\label{decompositiontorsor}  
\mathrm{pr}_{*}( \mathrm{pr}^{*}(\omega^{k}))=\bigoplus_{\phi_p \,: \,T(\Z/p^{n}\Z) \to K^\times} \omega^{k}(\phi_p). 
\end{align}
It is unclear whether  $\omega^{k}(\phi_p)$ extends to an invertible sheaf on a toroidal compactification  $\overline{X}_{\gc,0}(p^n)$, as the morphism between the toroidal compactifications $\overline{X}_{\gc,1}(p^n) \to \overline{X}_{\gc,0}(p^n)$ may be ramified at the non-multiplicative cusps.

By  \eqref{AIPS-sheaf0},  the specialization of $\omega^{\kappa_{\cV}^{\circ}}$  at a point of $\cV(K)$ corresponding to an algebraic character $k \in \Z[\Sigma]$ is the invertible sheaf $\left(\gamma_{n}^{\circ}\right)_*(\cO_{\mathscr{I}_{n}})[-k^{\circ}]$ on $\overline{\cX}_{\gc}(\mu_{p^n},v)$ which  is canonically isomorphic to the classical automorphic sheaf  $\omega^{k}$ from \eqref{classicalautomorphicsheaf}. 
The \'etale  covering of rigid spaces $\overline{\cX}_{\gc}(\mu_{p^n},v)\to \overline{\cX}_{\gc}(v)$, defined using  
the canonical subgroup $\mathscr{H}_n$,  allows one  to similarly   decompose the push-forward of 
$\omega^{k}$ into a direct sum of invertible sheaves $\omega^{k}(\phi_p)$ over  $\overline{\cX}_{\gc}(v)$. 
By construction, for any $\phi_p:T(\Z_p) \to T(\Z/p^{n}\Z) \to K^\times$, $\omega^{k}(\phi_p)$ is the 
specialization of  $\omega^{\kappa_{\cV}}$ at the point of $\cV(K)$ corresponding to the locally algebraic character
\[T(\Z_p) \to \cO_K^\times, \quad x  \mapsto \phi_p(x) \cdot \prod_{\sigma \in \Sigma} \sigma(x)^{k_\sigma}. \]
The classicality results of \cite{pilloni-stroh}, \cite{bijakowski} and \cite{bijakowski17} imply that the natural inclusion 
\begin{align}\label{clvsov}
\rH^0(\overline{X}_{\gc,1}(p^n)^{\mathrm{an}}, \omega^{k})   \hookrightarrow \rH^0(\overline{\cX}_{\gc}(\mu_{p^n},v), \omega^{k})  
\end{align}
obtained from  the  restriction along the open immersion $\overline{\cX}_{\gc}(\mu_{p^n},v) \hookrightarrow  \overline{X}_{\gc,1}(p^n)^{\mathrm{an}}$  becomes an isomorphism when restricting to the part where for all $\gp\in \Sigma_p$, the Hecke operator $U_\gp$ has slope  strictly less than $\underset{\sigma \in \Sigma_\gp}{\min}(k_{\sigma})  - d_\gp$. One should be careful to use an $m$-th power of $U_\gp$ so that $\gc \gp^m$ and $\gc$ have the same image in the  narrow class group $\Cl_F^+$ of $F$. 
 Koecher's Principle and the rigid analytic GAGA yield  isomorphisms
\[ 
\rH^0(X_{\gc,0}(p^n),\omega^{k}(\phi_p)) \xrightarrow{\sim}  \rH^0(\overline{X}_{\gc,1}(p^n),\omega^{k})[\phi_p] \xrightarrow{\sim}  
\rH^0(\overline{X}_{\gc,1}(p^n)^{\mathrm{an}}, \omega^{k}) [\phi_p].
\]
Hence  the inclusion $ \rH^0(X_{\gc,0}(p^n),\omega^{k}(\phi_p)) \hookrightarrow \rH^0(\overline{\cX}_{\gc}(v), \omega^{k}(\phi_p)) $ obtained by taking the $\phi_p$-isotypic components  in \eqref{clvsov} restricted to the same small slope part for $U_\gp$ ($\gp\in \Sigma_p$) is an 
 isomorphism as well. 
  
\subsection{Twisted $p$-adic families of arithmetic Hilbert modular forms} \label{sec:twisted}
In order to define Coleman families interpolating eigenforms for $G=\mathrm{Res}_{F/\Q}\mathrm{GL}(2)$
one needs to perform a descent from the fine moduli space to the Shimura variety for $G$ using the action of $\Delta_\gn$, while moving to a $(d+1)$-dimensional weight space. 
Indeed, the algebraic automorphic forms for $G$ have weights $(k, \sw)\in \Z[\Sigma]\times \Z$, 
where for all $\sigma\in \Sigma$ we have $k_\sigma\geqslant 1$ and $k_\sigma\equiv \sw\pmod{2}$. 
Our normalization is such that $\sw$ is the purity weight, i.e., the automorphic form has central character
of the form $\phi \vert \cdot \vert_F^{\sw}$ for some finite order Hecke character $\phi$. 
 As a consequence we  will only consider $p$-adic weights $\kappa$ which are in the image of 
\begin{align} \label{d+1-weights}
 \cW\times \cW^{\parallel} \longrightarrow \cW \quad (\nu,\sw)\mapsto \phi_p\cdot \nu^{2} \cdot \sw \circ \rN_{F/\Q},
\end{align}
where  $\cW^{\parallel}=\Hom_{\mathrm{cont}}(\Z_p^\times,\mathbf{G}_m)$ is the rigid analytic generic fiber of the formal scheme $\Spf(\cO \lsem \Z_p^\times \rsem)$. As parallel  weight satisfy  $k_\sigma=2-\sw$ for all $\sigma\in \Sigma$ (see for example \cite[I.1]{BDJ}), we consider the following embedding of the subspace of parallel weights
\begin{align}\label{eq:rel_wt_sp}
\cW^{\parallel} \to \cW\times \cW^{\parallel}, \enspace \sw\mapsto (\rN_{F/\Q} \cdot\sw\circ \rN^{-1}_{F/\Q}, \sw) . 
\end{align}
We let  $\mathscr{G}_{n}^\circ \subset \widehat{\mathbf{G}}_m$ denote the formal subgroup of units congruent to $1$ modulo $p^{n-\frac{1}{2}}$ and  $\mathscr{G}_{n} \subset \widehat{\mathbf{G}}_m$ the formal subgroup generated by $\Z_p^\times$ and $\mathscr{G}_{n}^\circ$. We recall that $n\in\Z_{\geqslant 1}$ is chosen so that  $\phi_p$ factors through $T(\Z/p^{n-1}\Z)$, and  we fix an affinoid $\cU\subset  \cW\times \cW^{\parallel}$ whose image by 
\eqref{d+1-weights} is contained in an affinoid  
$\cV\subset \cW$  such that $\kappa_\cV$ is $\left(n-\frac{1}{2}\right)$-analytic. 
 By definition, the  push-forward $\kappa_{\cU}:\mathscr{T}_{n} \times \mathscr{U} \to \widehat{\mathbf{G}}_m  \times \mathscr{U}$ of  $\kappa_{\cV}$ via the natural map  $\cO_{\cV}^\circ \to \cO_{\cU}^\circ$ coming from \eqref{d+1-weights} factors through 
  \begin{align}\label{wtd+1character}
 (\nu_{\cU}, \sw_{\cU}):\mathscr{T}_{n} \times \mathscr{G}_{n}  \times\mathscr{U} \to \widehat{\mathbf{G}}_m  \times \mathscr{U},
\end{align}
i.e., $\kappa_{\cU}=\phi_p\cdot \nu_{\cU}^2\cdot \sw_{\cU}\circ\rN_{F/\Q}$. 
Henceforth we assume that $\phi_{p}(\epsilon)=1$ for all $\epsilon\in E_{\gn}$. It is worth noting  that this condition is  satisfied by the restriction to $T(\Z_p)$ of any totally odd finite order Hecke character $\phi:\A_F^\times/F^\times \to \overline{\Q}^\times$ having  tame conductor $\gn$. 
 As $\rN_{F/\Q}(\epsilon)=1$ for all $\epsilon\in E_{\gn}$, applying \eqref{Katzcond} to a global section $f_{\cU} \in \rH^0(\overline{\cX}_{\gc}(v) \times 
 \cU, \omega^{\kappa_{\cU}})$, where  $v \leqslant \frac{1}{4p^{n-1}}$,  for all $\epsilon\in E_{\gn}$ the automorphism $A \xrightarrow{\epsilon\cdot } A$ yields 
\[
\begin{split}
f_{\cU}(A,\iota,\alpha, \lambda, P, \Omega)
&= f_{\cU}(A,\iota,\alpha, \epsilon^2 \lambda, \epsilon P, \epsilon \Omega)
=\kappa_{\cU}(\epsilon^{-1}) f_{\cU}(A,\iota,\alpha, \epsilon^2 \lambda, P, \Omega)\\ 
&= \phi_p(\epsilon^{-1})\sw_{\cU}(\rN_{F/\Q}(\epsilon^{-1})) \cdot \nu_{\cU} (\epsilon^{-2})  f_{\cU}(A,\iota,\alpha, \epsilon^2 \lambda, P, \Omega)\\
&= \nu_{\cU}  (\epsilon^{-2})  f_{\cU}(A,\iota,\alpha, \epsilon^2 \lambda, P, \Omega).
\end{split}
\]
Hence, the following action of $\epsilon\in \go_{+}^\times$ on $\rH^0(\overline{\cX}_{\gc}(v) \times \cU, \omega^{\kappa_{\cU}})$  factors through the finite group $\Delta_{\gn}$  
\begin{align}\label{actionunit}
(\epsilon \cdot f_{\cU})(A,\iota,\alpha, \lambda, P, \Omega)=\nu_{\cU}  (\epsilon^{-1}) \cdot f_{\cU}(A,\iota,\alpha, \epsilon \lambda, P, \Omega). 
\end{align}
Let  $\mathbf{M}^{\dagger}_{\cU}(v)=\bigoplus_{\gc \in \Cl_F^+} \rH^0(\overline{\cX}_{\gc}(v) \times \cU, \omega^{\kappa_{\cU}})^{\Delta_{\gn}}$ denote the Banach $\cO(\cU)$-module of $\Delta_{\gn}$-invariants and let
\begin{align}\label{eq:coleman-families}
\mathbf{M}^{\dagger}_{\cU}= \varinjlim_{v>0} \mathbf{M}^{\dagger}_{\cU}(v)
\end{align}
be the Fr\'echet $\cO(\cU)$-module of Coleman families. The corresponding submodule of cuspidal Coleman families $\mathbf{S}^{\dagger}_{\cU}$ is defined by replacing the sheaf $\omega^{\kappa_{\cU}}$  with $ \omega^{\kappa_{\cU}}(-D)$, where $D$ is the divisor with normal crossings  at infinity of $\overline{\mathscr{X}_{\gc}}(v)$.
 For an affinoid  $\cU$ of  $\cW^\parallel$, we let   $\omega^{\sw_{\cU}}$ denote the sheaf of parallel weight 
$(\rN_{F/\Q} \cdot\sw_{\cU}\circ \rN^{-1}_{F/\Q}, \sw_{\cU})$ and we analogously define  $\mathbf{M}^{\dagger}_{\cU}$ and $\mathbf{S}^{\dagger}_{\cU}$. 

By \cite[\S 3.7]{AIP}  the Hecke correspondences  $T_v$,  for $v \nmid \gn p$, and $U_{v}$, for $v \in \Sigma_p$,  act  continuously on the Banach $\cO(\cU)$-module  $\bigoplus_{\gc \in \Cl_F^+} \rH^0(\overline{\cX}_{\gc}(v) \times \cU, \omega^{\kappa_{\cU}})$.  Moreover they commute  with the action of $\Delta_\gn$, hence also act continuously on the Fr\'echet $\cO(\cU)$-modules  $\mathbf{M}^{\dagger}_{\cU}$ and $\mathbf{S}^{\dagger}_{\cU}$,  generating a commutative algebra of endomorphisms. It is  shown in \cite[Lem.~3.27]{AIP} that $U_p=\prod_{v \in \Sigma_p} U_{v}^{e_{v}} $ is a compact operator.
 
\subsection{Adic version of the formal function theorem}
Recall that we have a proper morphism $\overline{\pi}_n: \overline{\mathscr{X}_{\gc}}(\mu_{p^n},v) \to  \mathscr{X}^*_{\gc}(\mu_{p^n},v)$  of admissible formal $\cO$-schemes.
Let $C$ be a cusp and $\overline{C}=\overline{\pi}_n^{-1}(C)$. Let
\begin{enumerate}
\item[$\bullet$]  $\widehat{\overline{\pi}}_n: \widehat{\overline{\mathscr{X}_{\gc}}}(\mu_{p^n},v)_{\mid \mathscr{V}} \longrightarrow \widehat{\mathscr{X}^*_{\gc}}(\mu_{p^n},v)_{\mid \mathscr{V}}$ be the formal completion of $\overline{\pi}_n$
 along $ \overline{C} \times \mathscr{V} \to C \times \mathscr{V}$.
\item[$\bullet$] $\widehat{\mathrm{R}^n \overline{\pi}_{n \ast} }(\omega^{\kappa_{\cV}^{\circ}})$ be the formal completion of the coherent $\cO_{ \mathscr{X}^*_{\gc}(\mu_{p^n},v)_{\mid \mathscr{V}}}$-module $\mathrm{R}^n \overline{\pi}_{n \ast} (\omega^{\kappa_{\cV}^{\circ}})$ along $C \times \mathscr{V} $ and 
 $\widehat{\omega^{\kappa_{\cV}^{\circ}}}$ be the completion of $\omega^{\kappa_{\cV}^{\circ}}$ along $ \overline{C} \times \mathscr{V}$.
\end{enumerate}
\begin{prop}\label{formalfunctionadmissible}
The canonical  homomorphism
\begin{align}\label{formalcompletion}
 \widehat{\mathrm{R}^n \overline{\pi}_{n \ast} }(\omega^{\kappa_{\cV}^{\circ}}) \longrightarrow \mathrm{R}^n \widehat{\overline{\pi}}_{n \ast} (\widehat{\omega^{\kappa_{\cV}^{\circ}}})\end{align}
is an isomorphism.
\end{prop}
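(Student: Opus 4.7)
The plan is to deduce the statement from the classical theorem on formal functions (EGA III.4.1.5) via a reduction modulo $p^m$ and a passage to the inverse limit.

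First, since the assertion is local on the target, I would restrict to an open affine formal neighborhood $\Spf(R) \subset \mathscr{X}^*_{\gc}(\mu_{p^n},v)_{\mid \mathscr{V}}$ of the cusp $C \times \mathscr{V}$, where $R$ is a topologically finitely generated $\cO_{\cV}^\circ$-algebra, and denote by $I \subset R$ the ideal cutting out $C \times \mathscr{V}$. Setting $R_m = R/p^m R$, $I_m = I R_m$, and pulling back yields a proper morphism of Noetherian schemes
\[
\overline{\pi}_{n,m} : \overline{X}_{n,m} = \overline{\mathscr{X}_{\gc}}(\mu_{p^n},v) \times_{\mathscr{X}^*_{\gc}(\mu_{p^n},v)} \Spec(R_m) \longrightarrow \Spec(R_m),
\]
together with a coherent sheaf $\omega_m = \omega^{\kappa_{\cV}^{\circ}}/p^m \omega^{\kappa_{\cV}^{\circ}}$.

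Second, for each fixed $m$, the classical theorem on formal functions applied to the proper morphism $\overline{\pi}_{n,m}$ and the coherent sheaf $\omega_m$ furnishes an isomorphism between the $I_m$-adic completion of $R^i\overline{\pi}_{n,m \ast}\omega_m$ and $R^i \widehat{\overline{\pi}}_{n,m \ast}\widehat{\omega_m}$, the latter computed along $\widehat{C \times \mathscr{V}} \otimes R_m$. These isomorphisms are functorial in $m$ and hence assemble into a compatible inverse system.

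The third and main step is to pass to the limit $m \to \infty$ and identify both sides of \eqref{formalcompletion} with $\varprojlim_m$ of the corresponding isomorphisms above. This requires two inputs: (a) the coherence of $R^i \overline{\pi}_{n \ast}\omega^{\kappa_{\cV}^{\circ}}$ as an $\cO_{\mathscr{X}^*_{\gc}(\mu_{p^n},v)_{\mid \mathscr{V}}}$-module, which follows from properness of $\overline{\pi}_n$ together with the admissible formal GAGA of Abbes and Fujiwara--Kato; and (b) the compatibility between $p$-adic completion and $I$-adic completion along the cuspidal locus. Input (a) yields the Mittag--Leffler property for the inverse systems $\{R^i \overline{\pi}_{n,m\ast}\omega_m\}_m$ (via Artin--Rees on the Noetherian slices $R_m$), and combined with (b) allows both sides of \eqref{formalcompletion} to be rewritten as $\varprojlim_m$ of the mod $p^m$ isomorphisms, whence they coincide.

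The principal obstacle is the non-Noetherian nature of the admissible formal $\cO$-schemes involved, which prevents a direct invocation of EGA. This is circumvented precisely by the $p$-adic devissage to $R_m$ above, but relies critically on the coherence (a) of higher direct images in the adic category: once this finiteness is secured, the Mittag--Leffler argument is routine, and the exchange of limits with cohomology goes through. In practice one can either cite the adic formal-scheme version established in Fujiwara--Kato's \emph{Foundations of Rigid Geometry}, or spell out the reduction above using the flatness of $\omega^{\kappa_{\cV}^{\circ}}$ over $\cO$.
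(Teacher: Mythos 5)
Your approach is sound in spirit but takes a genuinely different, and ultimately more laborious, route than the paper's. You work directly with the admissible formal schemes via a $p$-adic d\'evissage: truncate modulo $p^m$, apply the classical EGA III theorem on formal functions to the Noetherian schemes $\Spec(R_m)$, then pass to the limit over $m$. The paper instead exploits a feature of the geometry that you never invoke: the formal schemes here are \emph{algebraizable}. Because $\overline{\mathscr{X}_{\gc}}(\mu_{p^n},v)$ is the $p$-adic completion of an actual scheme of finite type over $\cO$, the local proper morphism $\pi:\mathscr{X}\to\Spf(\widehat{A})$ is the $p$-adic completion of a proper morphism of Noetherian schemes $\pi^{\mathrm{alg}}_{\mid\widehat{A}}:X_{\mid\widehat{A}}\to\Spec(\widehat{A})$, and by Grothendieck's formal existence theorem the $p$-adic formal sheaf $\omega^{\kappa_{\cV}^{\circ}}$ algebraizes to a coherent sheaf on $X_{\mid\widehat{A}}$. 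One can then stay entirely inside the Noetherian scheme category and apply the classical theorem on formal functions \emph{twice}: once with respect to the ideal $\gm_C$ (giving the completion along the cusp) and once with respect to $(p)$ (to translate back to the $p$-adic formal sheaf picture). No appeal to coherence of higher direct images for proper morphisms of adic formal schemes, and no Mittag--Leffler analysis of a double inverse system over $(m,k)$.

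Your route is more general in principle, but as you yourself flag, it rests on two nontrivial pillars: (a) coherence of $\mathrm{R}^i\overline{\pi}_{n\ast}$ in the adic category (a theorem of L\"utkebohmert / Abbes / Fujiwara--Kato), and (b) the commutation of the $p$-adic and $\gm_C$-adic inverse limits together with the exchange of $\varprojlim_m$ with $\mathrm{R}^i\widehat{\overline{\pi}}_{n\ast}$. Point (b) is where your sketch is thinnest: the Mittag--Leffler property for $\{\mathrm{R}^i\overline{\pi}_{n,m\ast}\omega_m\}_m$ is not an immediate consequence of Artin--Rees on each slice $R_m$ but requires the EGA-style comparison theorem relating $\mathrm{R}^i\overline{\pi}_{n\ast}\omega/p^m$ to $\mathrm{R}^i\overline{\pi}_{n,m\ast}\omega_m$, and then a separate argument for commuting $\varprojlim_m$ with the derived pushforward on the completed side. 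None of this is wrong, but it is precisely the technicality the paper's algebraization shortcut is designed to avoid. If you wanted to complete your version, the cleanest path is to cite the finiteness and comparison theorems in Fujiwara--Kato in full and then treat the double limit over $(m,k)$ using the inclusion $p\in\gm_C$ to collapse it to a single limit, rather than arguing through Mittag--Leffler slice by slice.
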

\begin{proof} The statement being  local it  suffices to consider  the case $\pi: \mathscr{X} \longrightarrow \Spf(\widehat{A})$, where
\begin{enumerate}
\item $\Spf(\widehat{A}) \subset \mathscr{X}^*_{\gc}(\mu_{p^n},v)_{\mid \mathscr{V}}$ is an  affine open containing the closed formal subscheme $C \times \mathscr{V} $, defined by an open ideal $\gm_C$, 
\item $\pi: \mathscr{X} \longrightarrow \Spf(\widehat{A})$ is algebraizable by a proper morphism $\pi^{\mathrm{alg}}: X \to \Spec(A)$ of schemes of finite type over $\cO$, and
\item  $\overline{C} \times \mathscr{V} =\pi^{-1}(C \times \mathscr{V} )$ a divisor of $\mathscr{X} $.
\end{enumerate}
 
One has the diagram of our schemes and their formal completions
\begin{align}\label{eq:diag_formal_schsGene}
 \xymatrix@-1.75pc{
 	 & &  \widehat{  \mathscr{X}}^{\overline{C}} \ar[rr]\ar[dd]^{\widehat{\pi}}  	& &  \mathscr{X}  \ar[rr]\ar[dd]^{\pi} \ar[rr]\ar[dd]^{\pi}& & X_{\mid \widehat{A}} \ar[dd]^{\pi^{\mathrm{alg}}_{\mid \widehat{A}}} \ar[rr]  &  &  X  
 	\ar[dd]^{\pi^{\mathrm{alg}}} \\ 
 	& & & \circlearrowleft &  & \circlearrowleft & \\ & & \Spf(\widehat{A}_{\gm_C}) \ar[rr] 
 	& & \Spf(\widehat{A}) \ar[rr] & & \Spec(\widehat{A}) \ar[rr] & & \Spec(A),
}
\end{align}
where $\widehat{  \mathscr{X}}^{\overline{C}}$ is the formal completion of $\mathscr{X} $ along $\overline{C}\times \mathscr{V}$ and  
$\widehat{A}_{\gm_C}$ is the $\gm_C$-adic completion of $\widehat{A}$. 
Note that  $\pi$ is also the $p$-adic completion of the proper morphism of noetherian schemes $\pi^{\mathrm{alg}}_{\mid \widehat{A}}$ which is obtained from $\pi^{\mathrm{alg}}$ by base change. 
By Grothendieck's formal existence theorem  \cite[Thm.~3]{GAGF}, the sheaf $\omega^{\kappa_{\cV}^{\circ}}$ is the $p$-adic completion of a coherent sheaf $\omega^{\kappa_{\cV}^{\circ}}_{\mathrm{alg}}$ on the scheme $X_{\mid \widehat{A} }$.
Nevertheless, since the completion is transitive with a finer topology, $\widehat{\omega^{\kappa_{\cV}^{\circ}}}$ is the completion $\omega^{\kappa_{\cV}^{\circ}}_{\mathrm{alg}}$ with respect to $\overline{C} \times \Spec(\cO( \mathscr{V})/(p))$.
Applying the theorem on formal functions  to $\pi^{\mathrm{alg}}_{\mid \widehat{A}}:X_{\mid  \widehat{A} }  \to \Spec(\widehat{A} )$ with respect to  the ideal $\gm_C \cdot( \widehat{A}  )$ yields 
\[ \widehat{\mathrm{R}^n (\pi^{\mathrm{alg}}}_{\mid \widehat{A} })_{\ast}(\omega^{\kappa_{\cV}^{\circ}}_{\mathrm{alg}}) \simeq \mathrm{R}^n (\widehat{\pi} )_{\ast} (\widehat{\omega^{\kappa_{\cV}^{\circ}}}). \]
The desired assertion then follows from the isomorphism 
\[ \mathrm{R}^n (\pi^{\mathrm{alg}}_{\mid \widehat{A} })_{\ast}(\omega^{\kappa_{\cV}^{\circ}}_{\mathrm{alg}}) \simeq \mathrm{R}^n (\pi)_{\ast}(\omega^{\kappa_{\cV}^{\circ}})\]
resulting again from the theorem on formal functions, this time applied to $\pi^{\mathrm{alg}}_{\mid \widehat{A}}: X_{\mid \widehat{A}  } \to \Spec(\widehat{A}  )$ and the ideal $(p)$.
\end{proof}
   
\subsection{Toroidal compactifications and descent to the minimal compactification}\label{torcom}
Throughout this subsection, we only consider  parallel weights. We will  recall the geometric $q$-expansions of Coleman families at various cusps.
 For any  $U\subset F\otimes_{\Q} \R$, we let $U_+$ denote the subset consisting of totally positive elements.  
By \cite[Def.~2.2]{dim04},  any cusp $C$ of $\mathscr{X}^*_{\gc}(\mu_{p^n},v)$ gives rise to:
\begin{enumerate}
\item  Two fractional ideals $\ga$ and $\gb$ of $F$ and a positive polarization $\beta:\gb^{-1}\ga \simeq \gc$.
\item  A decomposition $\gn=\gn' \gn''$ with  isomorphisms $\gn'' \gb/\gb \simeq \mu_{\gn''} \otimes \gd^{-1}$ and 
 $\ga/p^n\gn' \ga \simeq \go/p^n \gn'  $ (the cusp is unramified at $\gn'$).
\end{enumerate}
We denote by $\sP(\gc,p^n)$ the set of  cusps on $\mathscr{X}^*_{\gc}(\mu_{p^n},v)$ and denote by $\sP(\gc)=\sP(\gc,1)$ the set of cusps on $\mathscr{X}_{\gc}^*(v)$, which can be described in the same manner as above by replacing $n\in \Z_{ \geqslant 1}$ by $0$. Set $M=\ga\gb\gn''{}^{-1}$ and $M^* = \Hom(\ga\gb\gn''{}^{-1},\R)$.  
The construction of the toroidal compactification $\overline{\mathscr{X}_{\gc}}(\mu_{p^n},v)$ of 
$\mathscr{X}_{\gc}(\mu_{p^n},v)$ at the cusp $C \in \sP(\gc,p^n)$ involves the following data (see \cite{rap78})  
\begin{enumerate}[label=\textbf{(C\arabic*)}]
\item  A smooth projective rational polyhedral cone decomposition $\Sigma^{C}$ of $M_{+}^*$ invariant under  $E_{\gn p^n}$ with finitely many orbits. We recall that  $\epsilon \in E_{\gn p^n}$ acts  by multiplication by $\epsilon^2$. 
\item  A toric $\cO$-scheme $S_{\Sigma^C}$  covered by 
$S_{\sigma}=\Spec(\cO[q^\xi; \xi \in M, \sigma(\xi) \geqslant 0 ])$ for $\sigma \in \Sigma^C$, and  
endowed with a natural toric embedding $S^0_{\Sigma^C}=\Spec(\cO[q^\xi]_{\xi \in M}) \hookrightarrow  S_{\sigma}$. 
\item \label{C3} The formal completion of $\overline{\mathscr{X}_{\gc}}(\mu_{p^n},v)$ along the  divisor $\overline{\pi}_n^{-1}(C)$ is isomorphic to  $\widehat{S}_{\Sigma^{C}}/E_{\gn p^n}$, where  $\widehat{S}_{\Sigma^{C}}$  is  the formal completion of $S_{\Sigma^{C}}$ along the special fiber of the divisor  $S_{\Sigma^C} \setminus S^0_{\Sigma^C}$.
\item The completed local ring $\widehat{\cO}_{\mathscr{X}^*_{\gc}(\mu_{p^n},v),C}$ at the  point $C:\Spf(\cO) \to \mathscr{X}^*_{\gc}(\mu_{p^n},v)$ is isomorphic to the $E_{\gn p^n}$-invariant global sections 
$\rH^0(\widehat{S}_{\Sigma^{C}},\cO_{\widehat{S}_{\Sigma^{C}}})^{E_{\gn p^n}}=\cO\lsem q^\xi \rsem_{\xi \in M_{+} \cup \{0\}}^{E_{\gn p^n}}$. 
\item   A Tate object $\left(G_{\gc}\right)_{\mid \widehat{S}_{\Sigma^{C}}}$  uniformized  by the $1$-motive $[\gb \xrightarrow{q} \mathbb{G}_m \otimes \ga^{-1}\gd^{-1}]$. In particular, one has $\omega_{\mid \widehat{S}_{\Sigma^{C}}} \simeq \omega_{\mathbb{G}_m \otimes \ga^{-1}\gd^{-1} }\simeq \ga \otimes \cO_{\widehat{S}_{\Sigma^{C}}} \cdot \frac{dt}{t}$, where $\frac{dt}{t}$ is the canonical invariant differential of $\mathbb{G}_m$. 
\item \label{(C6)} The action of $\epsilon \in E_{\gn p^n}$ induces at the level of the conormal sheaves
\[  
\omega_{\mid \widehat{S}_{\epsilon^2 \cdot \sigma}}  \longrightarrow  \omega_{\mid \widehat{S}_{ \sigma}},  \quad
 \tfrac{dt}{t}  \longmapsto \epsilon \cdot \tfrac{dt}{t}.\]
\end{enumerate}

Recall  the canonical proper  adic morphism $\overline{\pi}_n: \overline{\mathscr{X}_\gc}(\mu_{p^n},v) \to \mathscr{X}^*_\gc(\mu_{p^n},v)$ 
from \eqref{eq:diag_formal_schs}.  Choose an   affinoid $\cU$ in $\cW^{\parallel}$ such that the universal character $\sw_{\mathcal{U}}$ is   
$\left(n-\frac{1}{2}\right)$-analytic and let  $\mathscr{U}=\Spf(\cO(\cU)^\circ)$. 

\begin{prop}\label{trivialtorsor} 
Assume that $\phi_p$ factors through $T(\Z/p^{n-1}\Z)$. The restriction $\mathscr{I}_{n \mid \widehat{S}_{\Sigma^{C}}}$ of the $\mathscr{T}_{n}^\circ$-torsor $\mathscr{I}_{n} \to \overline{\mathscr{X}_{\gc}}(\mu_{p^n},v)$ along $ \widehat{S}_{\Sigma^{C}} \to \overline{\mathscr{X}_{\gc}}(\mu_{p^n},v)$  is the trivial $\mathscr{T}_{n}^\circ$-torsor, i.e.,  $\mathscr{I}_{n \mid \widehat{S}_{\Sigma^{C}}} \simeq \widehat{S}_{\Sigma^{C}} \times \mathscr{T}_{n}^\circ$.  In particular, the direct image $(\overline{\pi}_n)_* (\omega^{\sw_{\cU}^{\circ}})$ is an invertible sheaf on $\mathscr{X}^*_{\gc}(\mu_{p^n},v) \times \mathscr{U}$.
\end{prop}

\begin{proof}
To prove the assertion, it is sufficient to produce a section  of $\mathscr{I}_{n \mid \widehat{S}_{\Sigma^{C}}} \to  \widehat{S}_{\Sigma^{C}}$. The isomorphism $\ga/p^n\gn' \ga \simeq \go/p^n\gn' $ given by the level structure of the cusp $C$ yields  $\omega_{\mid \widehat{S}_{\Sigma^{C}}}= \ga \otimes \cO_{\widehat{S}_{\Sigma^{C}}} \cdot \frac{dt}{t} = \go \otimes \cO_{\widehat{S}_{\Sigma^{C}}} \cdot \frac{dt^{\xi}}{t^{\xi}}$, where $\xi \in \ga$ is such that its image under the identification $\ga/p^n \ga \simeq \go/p^n \go $ equals $1$  and $\frac{dt^{\xi}}{t^\xi}= \xi \otimes 1 \cdot \frac{dt}{t}$. 
 
The canonical subgroup $\mathscr{H}_{n \mid \widehat{S}_{\Sigma^{C}}} \hookrightarrow \left(G_{\gc}\right)_{\mid \widehat{S}_{\Sigma^{C}}}$ is the multiplicative group given by an injection 
\[
\mu_{p^n} \otimes \gd^{-1} \ga^{-1} \hookrightarrow \mathbf{G}_m \otimes \gd^{-1}\ga^{-1}.
\] 
It follows from \cite[Lem.~2.9, \S4.1]{AIS} that we have $\go$-linear maps: 
\begin{equation}\label{HTcusps}
\begin{tikzcd}
 &  
 \mathscr{F}_{n}(\widehat{S}_{\Sigma^{C}})=\omega(\widehat{S}_{\Sigma^{C}})=  \go \otimes \cO(\widehat{S}_{\Sigma^{C}}) \cdot \frac{dt^{\xi}}{t^{\xi}} \arrow{d}{\pmod{p^n}}
 \\
  \mathscr{H}_{n}^D(\widehat{S}_{\Sigma^{C}})=  \ga/p^n \ga \arrow{r} & \omega_{\mathscr{H}_n}( \widehat{S}_{\Sigma^{C}})=\ga  \otimes  \frac{\cO_{\widehat{S}_{\Sigma^{C}}}}{p^n \cO_{\widehat{S}_{\Sigma^{C}}}}(\widehat{S}_{\Sigma^{C}})  \cdot \frac{dt}{t}=\go\otimes  \frac{\cO_{\widehat{S}_{\Sigma^{C}}}}{p^n \cO_{\widehat{S}_{\Sigma^{C}}}}(\widehat{S}_{\Sigma^{C}}) \cdot \frac{dt^{\xi}}{t^{\xi}},
 \end{tikzcd}
\end{equation}
 where the horizontal  Hodge--Tate map sends the  generator  $(\xi\mod{p^n})$ of  $\ga/p^n \ga$ to  $\frac{dt^\xi}{t^\xi} \pmod{p^n}$. According to the universal property of $\mathscr{I}_{n}$, the data $(\left(G_{\gc}\right)_{\mid \widehat{S}_{\Sigma^{C}}},\xi  \in \mathscr{H}_{n}^D(\widehat{S}_{\Sigma^{C}}), \frac{dt^{\xi}}{t^{\xi}})$ gives a section $\widehat{S}_{\Sigma^{C}} \to \mathscr{I}_{n \mid \widehat{S}_{\Sigma^{C}}} $. This implies that $\mathscr{I}_{n \mid \widehat{S}_{\Sigma^{C}}} \simeq \widehat{S}_{\Sigma^{C}} \times \mathscr{T}_{n}^\circ$ and that  $\omega^{\sw_{\cU}^{\circ}}$ can be trivialized on $\widehat{S}_{\Sigma^{C}} \times \mathscr{U}$, namely, 
\begin{align}\label{firsttrivialization}
\omega^{\sw_{\cU}^{\circ}}_{\mid \widehat{S}_{\Sigma^{C}} \times \mathscr{U}} = \cO_{\widehat{S}_{\Sigma^{C}}\times \mathscr{U}}\cdot \sw_{\cU}^{\circ}, 
\end{align}  
where $\sw_{\cU}^{\circ}: \mathscr{T}_{n}^\circ \times \mathscr{U} \to \widehat{\mathbf{G}}_m \times \mathscr{U}$ is defined as the pull back of the $\left(n-\tfrac{1}{2}\right)$-analytic character $\kappa_{\cV}^{\circ}$ from \S\ref{AIP-OCMS} by the composed map of \eqref{d+1-weights} and \eqref{eq:rel_wt_sp}.  As $\sw_{\cU}^{\circ}(\epsilon)=1$ for all $\epsilon \in E_{\gn p^n}$ (because $\sw_{\cU}^{\circ}$ factors through  $\rN_{F/\Q}$), \ref{(C6)}
 yields that the trivialization \eqref{firsttrivialization}  descends to a trivialization on $\widehat{S}_{\Sigma^{C}}/E_{\gn p^n}  \times \mathscr{U}$, namely, 
\begin{align}\label{trivializationmain}
\omega^{\sw_{\cU}^{\circ}}_{\mid \widehat{S}_{\Sigma^{C}/E_{\gn p^n}}\times \mathscr{U}} = 
\cO_{\widehat{S}_{\Sigma^{C}/E_{\gn p^n}}\times \mathscr{U}}\cdot \sw_{\cU}^{\circ}.  
\end{align}
Thus, \ref{C3} and Proposition~\ref{formalfunctionadmissible} yield that the completed stalk of $(\overline{\pi}_n)_* (\omega^{\sw_{\cU}^{\circ}})$ is  free of rank $1$ at any cusp $C$, and hence, $(\overline{\pi}_n)_* (\omega^{\sw_{\cU}^{\circ}})$ is invertible on $\mathscr{X}^*_{\gc}(\mu_{p^n},v)\times \mathscr{U}$.
\end{proof}
\begin{rem} In the non-parallel setting given an affinoid $\cU$ in $\cW\times \cW^{\parallel}$,  the sheaf
  $(\overline{\pi}_n)_* (\omega^{\kappa_{\cU}^{\circ}})$ on $\mathscr{X}^*_{\gc}(\mu_{p^n},v) \times \cU$ 
  is not necessarily  invertible   as the monodromy action of $E_{\gn p^n}$ on $\widehat{S}_{\Sigma^{C}}$ forces the 
constant term of any sections of $\omega^{\kappa_{\cU}^{\circ}}$ to vanish.  
\end{rem}
\subsection{The specialization map for overconvergent $p$-adic modular forms}
The geometric results of the previous subsection have the following important consequence. 
\begin{cor}\label{special-surj} For any  $\sw \in \mathscr{U}(\cO)$,  the natural  specialization map 
$\mathbf{M}^{\dagger}_{\cU}\to  \mathbf{M}^{\dagger}_{\sw}$   is surjective.
\end{cor}
\begin{proof} Consider the adjunction homomorphism for   $n$ such that $\phi_p$ factors through $T(\Z/p^{n-1}\Z)$:
\begin{align}
\nu_n: (\overline{\pi}_n)_* (\omega^{\sw_{\cU}^{\circ}})\otimes_{\cO_{\mathscr{U}},\sw^{\circ}}\cO \longrightarrow 
(\overline{\pi}_n)_* \left(\omega^{\sw_{\cU}^{\circ}}\otimes_{\cO_{ \mathscr{U}},\sw^{\circ}}\cO\right)=(\overline{\pi}_n)_*(\omega^{\sw^{\circ}}).  
\end{align}
Proposition~\ref{trivialtorsor}  asserts that $(\overline{\pi}_n)_* (\omega^{\sw_{\cU}^{\circ}})$ is an invertible sheaf on $\mathscr{X}^*_{\gc}(\mu_{p^n},v) \times \mathscr{U}$, hence its specialization $(\overline{\pi}_n)_* (\omega^{\sw_{\cU}^{\circ}})\otimes_{\cO_{ \mathscr{U}},\sw^{\circ}} \cO$ is an invertible sheaf on $\mathscr{X}^*_{\gc}(\mu_{p^n},v)$. Similarly, $(\overline{\pi}_n)_*(\omega^{\sw^{\circ}})$ is an invertible sheaf on $\mathscr{X}^*_{\gc}(\mu_{p^n},v)$.
 As $\mathscr{X}^*_{\gc}(\mu_{p^n},v)$ is normal and  $\nu_n$ is an isomorphism over the open $\mathscr{X}_{\gc}(\mu_{p^n},v)$ whose complement has codimension at least $2$, it follows that $\nu_n$ is an isomorphism of invertible sheaves. 
 Passing to the generic fiber $\cX^*_{\gc}(\mu_{p^n},v)$ and using the finiteness of $h_n$ from \eqref{eq:diag_formal_schs}, we obtain a canonical isomorphism $\nu: \overline{\pi}_{*} (\omega^{\sw_{\cU}})\otimes_{\cO(\cU),\sw} K \xrightarrow{\sim}
\overline{\pi}_{*}(\omega^{\sw})$ of coherent sheaves on $\cX^*_{\gc} (v)$. As $\cX^*_\gc(v)$ is an affinoid, the 
resulting specialization map  is surjective:  
\[
 \rH^0\left(\cX^{*}_\gc (v)\times \cU, \overline{\pi}_{*}(\omega^{\sw_{\cU}}) \right) \twoheadrightarrow \rH^0(\cX^{*}_\gc (v), \overline{\pi}_{*}( \omega^{\sw}) ). \qedhere
\]
\end{proof}
\subsection{Geometric $q$-expansion, the  residue map and the fundamental exact sequence}\label{sec:19}
Keeping the notation from the previous subsections, we obtain  from \eqref{trivializationmain} and \ref{C3}  a geometric $q$-expansion at a cusp $C \in \sP(\gc,p^n)$
\begin{equation}\label{q-expanv}
\mathrm{Ev}_{C,v}: \rH^0(\overline{\mathscr{X}_\gc}(\mu_{p^n},v)\times \mathscr{U},\omega^{\sw_{\cU}^{\circ}}) \longrightarrow \cO(\mathscr{U}) \lsem q^\xi \rsem_{\xi \in M_{+} \cup \{0\}}^{E_{\gn p^n}}, 
\end{equation}
and we denote its geometric constant term by $a_{C}(0,\cdot)$.  Since all the cusps of $\overline{\mathscr{X}_\gc}(\mu_{p^n},v)$ belong to  the ordinary locus $\overline{\mathscr{X}_\gc}(\mu_{p^n},0) \subset \overline{\mathscr{X}_\gc}(\mu_{p^n},v)$,  one has a commutative diagram
\begin{equation}\label{injectq-exp}
\xymatrix{
\rH^0(\overline{\mathscr{X}_\gc}(\mu_{p^n},v)\times  \mathscr{U},\omega^{\sw_{\cU}^{\circ}})   \otimes_{\Z_p} \Q_p    \ar@{->}[dr]_{\mathrm{Ev}_{C,v}} \ar@{^{(}->}[r]&  \rH^0(\overline{\mathscr{X}_\gc}(\mu_{p^n},0)\times \mathscr{U},\omega^{\sw_{\cU}^{\circ}}) \otimes_{\Z_p} \Q_p   
\ar@{->}[d]^{\mathrm{Ev}_{C,0}}   \\
& \cO(\mathscr{U}) \lsem q^\xi \rsem_{\xi \in M_{+} \cup \{0\}}^{E_{\gn p^n}} \otimes_{\Z_p} \Q_p,  }  
\end{equation}
where the  horizontal map corresponds  to the restriction along the open immersion $\overline{\cX_\gc}(\mu_{p^n},0) \hookrightarrow \overline{\cX_\gc}(\mu_{p^n},v)$ at the level of the generic fiber. By the irreducibility of the Igusa tower \cite{Hida09,ribet1975p}, the special fiber of $\overline{\mathscr{X}_\gc}(\mu_{p^n},0) $ is geometrically irreducible and the map \eqref{q-expanv} is injective for  $v=0$; it then follows from the diagram \eqref{injectq-exp} that injectivity also holds for $v>0$ after inverting $p$. Combining these observations with \eqref{AIPS-sheaf}, we obtain an injective map
\[
\mathrm{Ev}_{C}: \rH^0(\overline{\cX}_{\gc}(v) \times \cU ,\omega^{\sw_{\cU}}) \hookrightarrow  \rH^0(\overline{\mathscr{X}_\gc}(\mu_{p^n},v)\times \mathscr{U},\omega^{\sw_{\cU}^{\circ}}) \otimes_{\Z_p} \Q_p \hookrightarrow \cO(\mathscr{U}) \lsem q^\xi \rsem_{\xi \in M_{+} \cup \{0\}}^{E_{\gn p^n}}\otimes_{\Z_p} \Q_p.  
\]
Note that these Fourier coefficients are unnormalized, while we will use the normalized Fourier coefficient in next subsection. We refer the reader to \cite[\S2]{DDP} or \cite[\S2]{Shih} for their relationship.  

Recall that for $n\in \Z_{ \geqslant 1}$, we denote by $\sP(\gc,p^n)$ the set of cusps of the formal scheme $\mathscr{X}^*_{\gc}(\mu_{p^n},v)$. 
 Set $\sP_1(\gc,p^n)=\sP(\gc,p^n)/\Delta_{\gn}$ and $\cO(\cU)[\sP_1(p^n)]=\bigoplus_{\gc\in \Cl_F^+} \cO(\cU)[\sP_1(\gc,p^n)]$. We have $\cO(\cU)[\sP_1(\gc,p^n)]=\cO(\cU)[\sP(\gc,p^n)]^{\Delta_{\gn}}$. By the definition of the space $\mathbf{M}^{\dagger}_{\cU}$ and the above discussion, we obtain a morphism 
\begin{align}\label{geom_res}
\res: \mathbf{M}^{\dagger}_{\cU} \to \cO(\cU)[\sP_1(p^n)]^{(\phi_p)}, \qquad \cF \mapsto \sum_{C\in \sP_1(p^n)} a_{C}(0,\cF) \cdot [C], 
\end{align}
where $\cO(\cU)[\sP_1(p^n)]^{(\phi_p)}$ is   the subgroup of $\cO(\cU)[\sP_1(p^n)]$  on which $T(\Z/p^{n}\Z)$  acts via $\phi_p$. 

\begin{thm}\label{fund_exact_seq} One has a Hecke equivariant exact sequence of $\cO(\cU)$-modules 
\begin{align}\label{eq:fund_seq}
0\to \mathbf{S}^{\dagger}_{\cU} \to \mathbf{M}^{\dagger}_{\cU} \xrightarrow{\res} \cO(\cU)[\sP_1(p^n)]^{(\phi_p)} \to 0.
\end{align}
\end{thm}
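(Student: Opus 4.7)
\textbf{Proof proposal for Theorem~\ref{fund_exact_seq}.} The inclusion $\mathbf{S}^{\dagger}_{\cU} \hookrightarrow \mathbf{M}^{\dagger}_{\cU}$ and exactness in the middle are built into the definitions: cuspidal Coleman families are, by construction, those with vanishing constant term $a_{C}(0,\cdot)$ at every cusp, which is precisely the kernel of $\res$. The substantive content is the surjectivity of $\res$, which I would deduce from the geometry of \S\ref{torcom}.

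The plan is to realize $\res$ at the sheaf level on the minimal compactification $\mathscr{X}^*_{\gc}(\mu_{p^n},v) \times \mathscr{U}$, and then descend. By Theorem~\ref{trivialtorsor}, the pushforward $\overline{\pi}_{n,*}(\omega^{\sw_{\cU}^{\circ}})$ is an invertible sheaf there, and the trivialization \eqref{trivializationmain} (which descends from $\widehat{S}_{\Sigma^{C}}$ to $\widehat{S}_{\Sigma^{C}}/E_{\gn p^n}$ precisely because $\sw_{\cU}^{\circ}$ factors through $\rN_{F/\Q}$) identifies its fiber at each cusp $C\in\sP(\gc,p^n)$ canonically with $\cO_{\cU}\cdot\sw_{\cU}^{\circ}$. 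Let $\sP\subset \mathscr{X}^*_{\gc}(\mu_{p^n},v)$ denote the cuspidal divisor, with ideal sheaf $\cI_{\sP}$. One obtains a short exact sequence of coherent sheaves
\begin{equation*}
0\to \cI_{\sP}\cdot\overline{\pi}_{n,*}(\omega^{\sw_{\cU}^{\circ}}) \to \overline{\pi}_{n,*}(\omega^{\sw_{\cU}^{\circ}}) \to \bigoplus_{C\in\sP(\gc,p^n)}\cO_{\cU}\cdot[C] \to 0,
\end{equation*}
in which the right-hand arrow is the evaluation-at-cusps map of \eqref{q-expanv}.

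I would then pass to the rigid generic fiber. Since $\cX^*_{\gc}(v)$ is affinoid (as an overconvergent neighborhood of the ordinary locus in the minimal compactification) and $h_n$ is finite, $\cX^*_{\gc}(\mu_{p^n},v)\times\cU$ is affinoid as well; Tate acyclicity forces the relevant $\rH^1$ to vanish, so the sequence remains exact on global sections. The final descent step is to sum over $\gc\in\Cl_F^+$, take $\Delta_{\gn}$-invariants, and extract the $\phi_p$-isotypic part under the twisted $T(\Z/p^n\Z)$-action. Both operations are exact because $|\Delta_{\gn}|$ and $|T(\Z/p^n\Z)|$ are units in the $\Q_p$-algebra $\cO_{\cU}$; using the identification of $\omega^{\kappa_{\cU}}$ as the $T(\Z/p^n\Z)$-invariants of $\overline{h}_{n,*}(\omega^{\kappa_{\cU}^{\circ}})$ under the twisted action (cf.~\eqref{AIPS-sheaf}), the middle and left terms become $\mathbf{M}^{\dagger}_{\cU}(v)$ and $\mathbf{S}^{\dagger}_{\cU}(v)$ respectively, and the right term becomes $\cO_{\cU}[\sP_1(p^n)]^{(\phi_p)}$ by the definition $\sP_1(p^n)=\sP(p^n)/\Delta_{\gn}$. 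A direct limit over $v\to 0$ produces \eqref{eq:fund_seq}; Hecke equivariance is automatic because the evaluation at cusps is read off the geometric $q$-expansions, on which the Hecke correspondences act in the usual way.

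The main obstacle is the invertibility of $\overline{\pi}_{n,*}(\omega^{\sw_{\cU}^{\circ}})$ on the minimal compactification, which is precisely the content of Theorem~\ref{trivialtorsor}. Without it, the cokernel of the cuspidal inclusion would fail to be locally free along $\sP$, and one could not extract a clean surjection onto $\cO_{\cU}[\sP_1(p^n)]^{(\phi_p)}$. This is also the reason for the restriction to parallel weight: in the non-parallel setting the monodromy action of $E_{\gn p^n}$ obstructs the descent from the toroidal to the minimal compactification, as noted in the remark following Theorem~\ref{trivialtorsor}.
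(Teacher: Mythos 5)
Your proposal is correct and yields the theorem; the route is closely parallel to the paper's but not identical, and the difference is worth noting. Both proofs rest on Theorem~\ref{trivialtorsor}. The paper works on the \emph{toroidal} compactification: it tensors $0\to\cO(-D)\to\cO\to\mathbf{j}_*\cO_D\to 0$ with $\omega^{\sw_{\cU}^{\circ}}$, uses \eqref{trivializationmain} to identify the boundary term, and kills the $\rH^1$-obstruction to surjectivity by combining the vanishing $\mathrm{R}^1(h_n\circ\overline{\pi}_n)_*\bigl(\omega^{\sw_{\cU}^{\circ}}(-D)\bigr)=0$ from \cite[Cor.~3.20]{AIP} with the affinoidness of $\cX^*_{\gc}(v)$. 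You instead push everything down to the \emph{minimal} compactification first, form the ideal-sheaf sequence
$0\to \cI_{\sP}\cdot\overline{\pi}_{n,*}(\omega^{\sw_{\cU}^{\circ}})\to\overline{\pi}_{n,*}(\omega^{\sw_{\cU}^{\circ}})\to\overline{\pi}_{n,*}(\omega^{\sw_{\cU}^{\circ}})\otimes\cO_{\sP}\to 0$
(exact since the middle sheaf is invertible by Theorem~\ref{trivialtorsor}, hence flat), and then get surjectivity on global sections purely from Tate acyclicity over the affinoid $\cX^*_{\gc}(\mu_{p^n},v)\times\cU$. This exchanges the citation of \cite[Cor.~3.20]{AIP} for a heavier reliance on the invertibility statement of Theorem~\ref{trivialtorsor}, which is a mild economy but a genuine one. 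Two points deserve to be made explicit rather than left implicit: first, the identification $\overline{\pi}_{n,*}\bigl(\omega^{\sw_{\cU}^{\circ}}(-D)\bigr)=\cI_{\sP}\cdot\overline{\pi}_{n,*}(\omega^{\sw_{\cU}^{\circ}})$ as subsheaves of $\overline{\pi}_{n,*}(\omega^{\sw_{\cU}^{\circ}})$ — needed to read the left term as $\mathbf{S}^{\dagger}_{\cU}(v)$ since the paper defines cuspidality via $\omega^{\kappa_{\cU}}(-D)$, not by vanishing constant term — is checked on completed stalks at the cusps via Proposition~\ref{formalfunctionadmissible} and the $q$-expansion \eqref{q-expanv}, where both conditions become $a_0=0$; second, the passage from the formal sequence to its rigid generic fiber (where Tate acyclicity lives) should be stated as a step, since the invertibility in Theorem~\ref{trivialtorsor} is established on the admissible normal formal model.
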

\begin{proof} 
Using the notation of diagram \eqref{eq:diag_formal_schs}, it has been shown in \cite[Cor.~3.20]{AIP}  that 
\[\mathrm{R}^1 (h_n\circ \overline{\pi}_n)_{*}(\omega^{\sw_{\cU}^{\circ}}(-D)) =0.\]
As the rigid analytic space $\cX^*_\gc(v)$ is an affinoid, it follows from \cite[\S6.3]{Bosch14}, that  
\begin{align}\label{vanishingh1} 
\rH^1(\overline{\cX}_\gc(\mu_{p^n},v)\times \cU,\omega^{\sw_{\cU}^{\circ}}(-D))=
\mathrm{R}^1 (h_n\circ \overline{\pi}_n)_{*}(\omega^{\sw_{\cU}^{\circ}}(-D))(\cX^*_{\gc}(v)\times \cU)=
0.
\end{align}
We have an exact sequence 
\begin{align}\label{exactsequbound} 
0 \to \cO_{\overline{\mathscr{X}_\gc}(\mu_{p^n},v)\times \mathscr{U}} (-D) \to \cO_{\overline{\mathscr{X}_\gc}(\mu_{p^n},v)\times \mathscr{U}} \to (\mathbf{j} \times \mathbf{1})_{*}(\cO_{D \times \mathscr{U}}) \to 0, 
\end{align}
where $\mathbf{j}: D \hookrightarrow \overline{\mathscr{X}_\gc}(\mu_{p^n},v) $ is the closed immersion of the 
divisor with normal crossings at infinity. 
The following exact sequence is obtained by taking the tensor product of \eqref{exactsequbound} with the invertible sheaf $\omega^{\sw_{\cU}^{\circ}}$ and the projection formula:
 \begin{align}\label{exactsequbound2} 0 \to \omega^{\sw_{\cU}^{\circ}} (-D) \to \omega^{\sw_{\cU}^{\circ}} \to (\mathbf{j} \times \mathbf{1})_{*}\left( (\mathbf{j} \times \mathbf{1})^{*}(\omega^{\sw_{\cU}^{\circ}} ) \right) \to 0. \end{align}
On the other hand, it follows  from \eqref{trivializationmain} that  \begin{align}\label{trivialatbundaryformal}
(\mathbf{j} \times \mathbf{1})^{*}(\omega^{\sw_{\cU}^{\circ}} ) \simeq \cO_{D \times \mathscr{U}}. \end{align}
Finally, \eqref{vanishingh1},  \eqref{exactsequbound2} and \eqref{trivialatbundaryformal}  yield the following short exact sequence (we take the generic fiber)
\begin{align}\label{fondexact1}
\begin{split}
0 \to \rH^0(\overline{\cX}_\gc(\mu_{p^n},v)  \times \cU,\omega^{\sw_{\cU}^{\circ}}(-D)) &\to \rH^0(\overline{\cX}_\gc(\mu_{p^n},v)  \times \cU,\omega^{\sw_{\cU}^{\circ}})\\ &\to \rH^0(\overline{\cX}_\gc(\mu_{p^n},v)  \times \cU, (\mathbf{j} \times \mathbf{1})_{*}(\cO_{D \times \cU  }) ) \to 0. 
\end{split}
\end{align}
Since $D$ is proper over $\Spf(\cO)$ and $\mathbf{j}$ is affine, it follows that 
\begin{align}\label{bord}
\begin{split}
\rH^0(\overline{\cX}_\gc(\mu_{p^n},v) \times \cU, (\mathbf{j} \times \mathbf{1})_{*}(\cO_{D \times \cU  }) )
&=\rH^0(D \times \cU, \cO_{D \times \cU})\\
&= \rH^0(D, \cO_{D}) \otimes  \cO(\cU)   = \cO(\cU)[\sP(\gc,p^n)].
\end{split}
\end{align} 
 The desired short exact sequence \eqref{eq:fund_seq} is obtained from \eqref{fondexact1} and \eqref{bord} by taking a direct sum over 
$\Cl_F^+$, then a  direct limit on $v$, and finally  $\Delta_{\gn}$-invariants.
\end{proof}

\section{The Eisenstein congruence ideal}\label{Congruenceidealsection}
The main goal of this section is to prove Theorem~\ref{congruencemodule}. The proof presented in  \S\ref{sec:123} mixes three ingredients. The first is the surjectivity of the geometric residue map established in \S\ref{sec:19}. 
The second is the duality \eqref{eq:cuspidal-duality} between the cuspidal Hecke algebra and the space of cuspidal Hida  families based on the fact, 
established in \S\ref{sec:full_eigencurve}, that  the eigencurves $\cC$ and $\cC^{\full}$ are locally isomorphic at $f$. The third is a formula for the  constant terms of  the Eisenstein families presented in \S\ref{sec:122}.

 Along the way, we extend the existing  construction \cite{AIP} of  an eigenvariety for $\GL_{2/F}$ by introducing a twist by  a finite order character $\phi_p$ of  $(\go\otimes \Z_p)^\times$. The novelty is best illustrated when restricting to parallel weights, where the construction in {\it loc. cit.} boils down exactly to the cuspidal locus of the Kisin--Lai eigencurve \cite{KL}, whereas our eigencurve has no dense subset of crystalline points and also interpolates  $p$-ordinary Eisenstein series. Although rather formal, this extension is crucial for our global applications unless the level of the weight $1$ Eisenstein series is relatively prime to  $p$.

\subsection{Eisenstein families}\label{sec:eisenstein_series_def}

In this section, we  recall the definitions of weight $1$ Eisenstein series and  Eisenstein families
with coefficients in the  Iwasawa algebra $\varLambda_{\cO}=\cO \lsem X \rsem$.  
We fix a  topological generator $\gamma$  of the Galois group $\Gal(F_{\infty}/F)$, where  $F_{\infty}$ is the cyclotomic $\Z_p$-extension of $F$, and we let $u\in 1+p\Z_p$ be such that $\gamma\cdot \zeta=\zeta^u$ for all $\zeta\in \mu_{p^{\infty}}$. For any integer $k\in \Z_{\geqslant 1}$, the weight $k$ specialization $\varLambda_{\cO}\to \cO$ map sends $X$ to $(u^{k-1}-1)$. 

Let $\phi_1$ and $\phi_2$ be two primitive finite order Hecke characters of $F$ having conductors $\gn_1$ and $\gn_2$, respectively, such that  $\phi=\phi_1^{-1}\phi_2$ is totally odd and $\phi_1$ has conductor relatively prime to $p$. By enlarging the field $K$, we may assume that its  ring of integers $\cO$ contains all the values of $\phi_1$ and $\phi_2$.
We assume in addition that $\phi\not\equiv\omega_p^{-1}\pmod{\varpi}$,  with $\varpi$  a uniformizer of $\cO$, which is automatically satisfied if 
$\Sigma_p^{\mathrm{irr}}\ne\varnothing$.  The weight $1$ Eisenstein series  $E_1(\phi_1,\phi_2)$ has 
 level $\gn_1\gn_2$ and central character $\phi_1\phi_2$ which we will see as a (possibly imprimitive) 
 character of modulus $\gn_1\gn_2$. Let  $\delta(\phi_i)=1$, if $\gn_i=\go$, and  $0$ otherwise ($i=1,2$).
 Following \cite[Prop.~2.1]{DDP}, the adelic Fourier coefficients of   $E_1(\phi_1,\phi_2)$   are given by 
\begin{align}\label{eq:fourier_coeff_wt_1_eisen_series}
C(\gb,E_1(\phi_1,\phi_2)) & =\sum_{\ga|\gb} \phi_1(\tfrac{\gb}{\ga})\phi_2(\ga), 
\text{ for } (0)\ne \gb\subset \go, \text{ and} \\
C((0),E_1(\phi_1,\phi_2)) & =2^{-d}\sum_{[\gc]\in\Cl_F^+}\left(\delta(\phi_1)\phi_1^{-1}(\gc\gd)L(0,\phi)+\delta(\phi_2)\phi_2^{-1}(\gc\gd)L(0,\phi^{-1})\right)[\gc]. 
\end{align}

As $\gn_1$ is  relatively prime to $p$, the $p$-stabilization $f_{\phi_1}$ of $E_1(\phi_1,\phi_2)$, defined recursively for all $v\in \Sigma_p$ relatively prime to $\gn_2$ as
\[ E_1(\phi_1,\phi_2)-\phi_2(v) \left(\begin{smallmatrix} \varpi_v & 0 \\ 0 & 1 \end{smallmatrix}\right)\cdot E_1(\phi_1,\phi_2),\]
is a $p$-ordinary modular form.  Following \cite[\S1.3]{wiles} there exists a  $\varLambda_{\cO}$-adic Eisenstein series $\cE_{\phi_1,\phi_2}$  having adelic Fourier coefficients  given by
\begin{align}  \label{Eisensteinnonconstanterms}
C(\gb,\cE_{\phi_1,\phi_2}) & =\sum_{\ga|\gb, (p)+\ga=\go} \phi_1\left(\tfrac{\gb}{\ga}\right) \phi_2(\ga)(1+X)^{\frac{\log_p(\rN_{F/\Q}(\ga))}{\log_p(u)}}, 
\text{ for } (0)\ne \gb\subset \go, \text{ and} \\ \label{Eisensteinconstanterms}
C((0),\cE_{\phi_1,\phi_2}) & =2^{-d} \zeta_{\phi_1,\phi_2}(X)\sum_{[\gc]\in\Cl_F^+} \delta(\phi_1)  \phi_1^{-1}(\gc\gd)  [\gc], 
\end{align}

 The  constant term of $\cE_{\phi_1,\phi_2}$  is related to the {\it imprimitive} $p$-adic  zeta function 
\begin{align}\label{eq:imprimitive-zeta}
\zeta_{\phi_1,\phi_2}(X)=\zeta_{\phi}(X) \prod_{\gq|\gn, \gq\nmid \mathrm{cond}(\phi)}(1-\phi(\gq)^{-1}(1+X)^{-{\frac{\log_p(\rN_{F/\Q}(\gq))}{\log_p(u)}}}\rN_{F/\Q}(\gq)^{-1})\in \varLambda_{\cO}, \text{ where}  
\end{align}
we recall that   $\zeta_{\phi}(X)$ is the $p$-adic inverse Mellin transform of the Deligne--Ribet $p$-adic $L$-function $L_p(s,\phi\omega_p)$ satisfying the interpolation property that, for all positive integers $k$, one has
\begin{align}\label{eq:interpolation_p-adic_L-fcn}
	\zeta_{\phi}(u^{k-1}-1)=L_p(1-k,\phi\omega_p)=L(1-k,\phi\omega_p^{1-k})\cdot
	\prod_{ \gq\in\Sigma_p} (1-\phi\omega_p^{1-k}(\gq) \rN_{F/\Q}(\gq)^{k-1}).
\end{align}

  One sees that  the  weight $1$ specialization of $\cE_{\phi_1,\phi_2}$  is  $f_{\phi_1}$.  As the Eisenstein family $\cE_{\phi_1,\phi_2}$ is $p$-ordinary, the  argument of \cite[\S6]{Pi13} shows that for any open admissible  affinoid $\cU$ of $\cW^{\parallel}$, the push-forward  of the Eisenstein family $\cE_{\phi_1,\phi_2}$ along the  natural inclusion $\Lambda_\cO \hookrightarrow \cO(\cU)$ belongs to the module $\mathbf{M}^{\dagger}_{\cU}$ introduced in \eqref{eq:coleman-families}, for   $\gn$   the prime-to-$p$ part of $\gn_1\gn_2$ and  $n$ such that  $\gn_1\gn_2$ divides  $\gn p^n$.

\subsection{Eigenvarieties}\label{sec:full_eigencurve}
We recall that a Banach $\cO(\cU)$-module $P$ satisfies the property $(\mathbf{Pr})$ if there is a Banach $\cO(\cU)$-module $Q$ such that $P \oplus Q$ is potentially orthonormalizable \cite[\S2]{Bu07}.
\begin{lemma} The Banach $\cO(\cU)$-modules  $\mathbf{M}^{\dagger}_{\cU}(v)$ and $\mathbf{S}^{\dagger}_{\cU}(v)$ satisfy $(\mathbf{Pr})$.
\end{lemma}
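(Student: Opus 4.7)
The plan is to establish property $(\mathbf{Pr})$ first for $\mathbf{M}^{\dagger}_{\cU}(v)$, then to transfer it to $\mathbf{S}^{\dagger}_{\cU}(v)$ via the short exact sequence at fixed level $v$ underlying Theorem~\ref{fund_exact_seq}. I would rely on two elementary facts from \cite{Bu07}: the class of Banach $\cO(\cU)$-modules satisfying $(\mathbf{Pr})$ is stable under finite direct sums and closed under passing to direct summands. In particular, it is preserved under taking invariants or isotypic components for a finite group acting continuously by $\cO(\cU)$-linear automorphisms, provided the order of the group is invertible in $K$, as the corresponding projector is then a continuous idempotent endomorphism.

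For $\mathbf{M}^{\dagger}_{\cU}(v)$, the argument proceeds by two successive reductions followed by an explicit input. First, by definition $\mathbf{M}^{\dagger}_{\cU}(v) = \bigoplus_{\gc \in \Cl_F^+} \rH^0(\overline{\cX}_\gc(v) \times \cU, \omega^{\kappa_{\cU}})^{\Delta_{\gn}}$ is a finite direct sum of $\Delta_{\gn}$-invariants, so it suffices to treat each $\rH^0(\overline{\cX}_\gc(v) \times \cU, \omega^{\kappa_{\cU}})$. Second, by \eqref{AIPS-sheaf} this module is the $\phi_p$-isotypic component for the finite group $T(\Z/p^n\Z)$ inside $\rH^0(\overline{\cX}_\gc(\mu_{p^n},v) \times \cU, \omega^{\kappa_{\cU}^{\circ}})$, so it is enough to establish $(\mathbf{Pr})$ for the latter. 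At this point one invokes the construction of Andreatta--Iovita--Pilloni: $\omega^{\kappa_{\cU}^{\circ}}$ is an invertible sheaf on the formal model $\overline{\mathscr{X}_\gc}(\mu_{p^n},v) \times \mathscr{U}$, and the normal formal scheme $\overline{\mathscr{X}_\gc}(\mu_{p^n},v)$ admits a finite admissible affine covering trivialising the $\mathscr{T}_n^\circ$-torsor $\mathscr{I}_n$. The analysis of \cite[\S3]{AIP}---which I would cite rather than repeat---then shows that the global sections of such an invertible sheaf form a Banach $\cO(\cU)$-module satisfying $(\mathbf{Pr})$.

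Finally, for $\mathbf{S}^{\dagger}_{\cU}(v)$, the argument of Theorem~\ref{fund_exact_seq} at fixed level $v$ produces a short exact sequence
\[
0 \to \mathbf{S}^{\dagger}_{\cU}(v) \to \mathbf{M}^{\dagger}_{\cU}(v) \xrightarrow{\res} \cO(\cU)[\sP_1(p^n)]^{(\phi_p)} \to 0
\]
of Banach $\cO(\cU)$-modules whose right-hand term is a finite free $\cO(\cU)$-module, since the set of cusps $\sP_1(p^n)$ is finite. Being topologically projective, this quotient splits off continuously, exhibiting $\mathbf{S}^{\dagger}_{\cU}(v)$ as a direct summand of $\mathbf{M}^{\dagger}_{\cU}(v)$ and hence inheriting $(\mathbf{Pr})$. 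The only substantive step in the whole argument is the appeal to \cite[\S3]{AIP} to verify $(\mathbf{Pr})$ for the invertible sheaf on the formal model; everything upstream and downstream is a formal manipulation with finite group actions and split exact sequences.
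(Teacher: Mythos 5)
Your two reduction steps (passing to $\Delta_{\gn}$-invariants and $\phi_p$-isotypic components) are fine, as is your observation that $\mathbf{S}^{\dagger}_{\cU}(v)$ is a direct summand of $\mathbf{M}^{\dagger}_{\cU}(v)$ because the quotient in the residue exact sequence is finite free hence projective. The gap is the central step: you assert that ``the analysis of \cite[\S3]{AIP}'' shows that the global sections of the invertible sheaf $\omega^{\kappa_{\cU}^{\circ}}$ over $\overline{\cX}_\gc(\mu_{p^n},v) \times \cU$ form a Banach $\cO(\cU)$-module satisfying $(\mathbf{Pr})$, but AIP do not prove this for the non-cuspidal sheaf. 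The toroidal compactification $\overline{\cX}_\gc(\mu_{p^n},v)$ is a proper, non-affinoid rigid space, so an invertible sheaf being trivialised by a finite affinoid cover does not by itself give $(\mathbf{Pr})$ for $\rH^0$: one needs the augmented \v{C}ech complex to be exact, i.e.\ $\rH^{i}=0$ for $i\geqslant 1$, and for the non-cuspidal sheaf this vanishing is not automatic. What AIP establish (their Cor.~3.20) is $\mathrm{R}^1(h_n\circ\overline{\pi}_n)_*(\omega^{\kappa^{\circ}_{\cU}}(-D))=0$, i.e.\ vanishing only after twisting by $-D$, which is exactly why their eigenvariety is the cuspidal one.

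The paper's proof is instead built on the genuinely new Theorem~\ref{trivialtorsor}: for parallel weight the sheaf pushes forward to an \emph{invertible} (not merely coherent) sheaf $\overline{\pi}_{n,*}(\omega^{\sw_{\cU}^{\circ}})$ on the minimal compactification $\mathscr{X}^*_\gc(\mu_{p^n},v)\times\mathscr{U}$. Since $\cX^*_\gc(\mu_{p^n},v)$ is finite over the affinoid $\cX^*_\gc(v)$, one can take a finite affinoid cover trivialising this line bundle and the augmented \v{C}ech complex is exact by \cite[\S6.3]{Bosch14}; then \cite[Lem.~5.2]{PilAIF12} gives $(\mathbf{Pr})$ for $\rH^0$. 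That descent to the minimal compactification is the missing ingredient in your argument, and it is not something AIP provide for the Eisenstein (non-cuspidal) sheaf -- it is one of the main technical novelties of the present paper, precisely because the authors need to include Eisenstein families in the eigenvariety machine.
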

\begin{proof}We recall that $(\overline{\pi}_n)_* (\omega^{\sw_{\cU}^{\circ}})$ is an invertible sheaf on $\mathscr{X}^*_{\gc}(\mu_{p^n},v) \times \mathscr{U}$ by Proposition~\ref{trivialtorsor}. We choose a finite affinoid  cover $\{ \cB_i \}_{1\leqslant i \leqslant r}$  of  $\cX^*_{\gc}(\mu_{p^n},v)$ such that $(\overline{\pi}_n)_* (\omega^{\sw_{\cU}^{\circ}})_{\mid \cB_i\times\cU }$ is a free sheaf of rank $1$ for every $i$, and we consider 
the augmented \v{C}ech complex associated to the line bundle $(\overline{\pi}_n)_* (\omega^{\sw_{\cU}^{\circ}})$:
\[ 0 \to M_0=\rH^0(\cX^*_{\gc}(\mu_{p^n},v) \times \cU, (\overline{\pi}_n)_* (\omega^{\sw_{\cU}^{\circ}})) \to M_1 \to \cdots \to M_r \to 0. \]
This complex is exact because $h_n:\cX^*_{\gc}(\mu_{p^n},v) \to \cX^*_\gc(v)$ is finite and  $\cX^*_\gc(v)$ is an affinoid (so it has no cohomology in degree $\geqslant 1$ by \cite[\S6]{Bosch14}), hence  $M_0$ satisfies $(\mathbf{Pr})$ by  \cite[Lem.5.2]{Pi13}.  We deduce that both  $\mathbf{M}^{\dagger}_{\cU}(v)$ and $\mathbf{S}^{\dagger}_{\cU}(v)$ satisfy $(\mathbf{Pr})$ as direct $\cO(\cU)$-factors in 
$\bigoplus_{\gc \in \Cl_F^+} \rH^0(\cX^*_{\gc}(\mu_{p^n},v) \times \cU, (\overline{\pi}_n)_* (\omega^{\sw_{\cU}^{\circ}}))$. 
\end{proof}
 Buzzard's eigenvariety machine \cite[\S5]{Bu07} allows then to associate an eigencurve $\cC$ to the data  
\[
(\cO_{\cU}, \mathbf{M}^{\dagger}_{\cU}, \Z[T_{\gl}, U_{\gp}: \gl \nmid \gn p, \gp\in \Sigma_p]), \] 
where the affinoids  $\cU$ form an admissible cover of $\cW^{\parallel}$ such that for $v$ sufficiently small the $\cO(\cU)$-module $\mathbf{M}^{\dagger}_{\cU}(v)$ admits  a slope decomposition with respect to the compact operator $U_p$. 
 Let $\mathbf{M}^{  \leqslant s}_{\cU}(v)$ denote the projective finite type $\cO(\cU)$-module of elements of  slope at most $s\in \Q_{ \geqslant 0}$
 and consider   the  finite and flat  affinoid $\cO(\cU)$-algebra  $\cT^{ \leqslant s}_{\cU}$    defined as the image of $\cO(\cU)[T_{\gl}, U_{\gp}: \gl \nmid \gn p, \gp \in \Sigma_p]$ in $\End_{\cO(\cU)}(\mathbf{M}^{  \leqslant s}_{\cU}(v))$. The Hilbert eigencurve $\cC$, obtained by glueing together
 $\mathrm{Spm}(\cT^{ \leqslant s}_{\cU})$ above the spectral curve for $U_p$,  is reduced and  endowed with a  locally finite flat morphism $\sw: \cC \rightarrow \cW^{\parallel}$ called the weight map. 
 
 Similarly, we obtain a cuspidal Hilbert eigencurve $\sw: \cC_{\cusp} \rightarrow \cW^{\parallel}$ which is locally finite and flat and a Hilbert cuspidal eigenvariety $\kappa:\cE\to \cW\times\cW^{\parallel}$ of dimension $d+1$ which is locally finite and torsion-free. 
The ordinary locus (i.e., the locus where the $U_p$-slope is zero) of  $\cC$ (resp.~$\cC_{\cusp}$ and $\cE$) has a formal model given by the ordinary Hida Hecke algebra $\mathfrak{h}$ \cite{hida90} (resp.~cuspidal ordinary Hecke algebra $\mathfrak{h}^{\cusp}$ and the nearly-ordinary Hecke algebra $\mathfrak{h}^{\nord}$) of level $\gn p^\infty$ constructed using the Hecke operators $\{T_{\gl}, U_{\gp}: \gl \nmid \gn p, \gp\in \Sigma_p\}$.
By construction of $\cC$, there exist bounded  global sections $\{ T_{\gl},U_{\gp}\}_{\gl \nmid \gn p, \gp\in \Sigma_p}$. A fundamental  tool in the study of the geometry of $\cC$ is the  universal $2$-dimensional pseudo-character  
\begin{align}\label{pseudo-char}
	\tau_{\cC}:\rG_{F} \rightarrow  \cO_{\cC}(\cC),
\end{align}
which is unramified at all primes $\gl\nmid p\gn$ and maps an arithmetic Frobenius  $\Frob_\gl$ to $T_{\gl}$ (see 
\cite[Prop.~7.1.1]{Chenevier}). While $\tau_{\cC}$ interpolates $p$-adically the traces of semi-simple $p$-adic Galois representations attached to the classical points of $\cC$ which are De Rham at $p$, the semi-simple $p$-adic Galois representation attached to an arbitrary specialization of $\tau_{\cC}$  is only trianguline at $v\in\Sigma_p$ and $U_{v}$ interpolates the crystalline period (see, for example \cite[\S5.3]{BDJ}). The above discussion  also holds if one replaces $\cC$ by $\cE$.
Let $\cC^{\full}$ (resp.~$\cE^{\full}$) be the $p$-adic eigencurve (resp.~eigenvariety) over $\cW^{\parallel}$ (resp.~$\cW\times\cW^{\parallel}$) of tame level $\gn$ constructed by the 
 data 
\[
(\cO_{\cU}, \mathbf{M}^{\dagger}_{\cU}, \Z[T_{\gl}, U_{\gp}, U_{\gq} : \gl \nmid \gn p, \gp\in \Sigma_p, \gq \mid \gn ]), 
 \] 
It is endowed with a locally finite surjective morphism $\cC^{\full}\to \cC$ (resp.~$\cE^{\full}\to \cE$). 
We recall that $\gn_1$ is relatively prime to $p$. The following proposition asserts that these surjective morphisms are indeed  isomorphisms locally at (not necessarily classical)  intersection points of $\cE_{\phi_1,\phi_2}$ with  $\cC_{\cusp}$. 
	
\begin{prop}\label{bad-Hecke-ops}
Let $\cP$ be a maximal ideal of $\mathfrak{h}[1/p]$ corresponding to an  intersection point of $\cE_{\phi_1,\phi_2}$ and $\cC_{\cusp}$. Then $\cC^{\full}\to \cC$ is \'etale at $\cP$. An analogous  assertion also holds for $\cE$.
\end{prop}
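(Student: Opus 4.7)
The morphism $\cC^{\full}\to \cC$ is by construction finite and surjective, induced by the inclusion $\mathfrak{h}\hookrightarrow\mathfrak{h}^{\full}$ which merely adjoins the operators $U_v$ for $v\mid \gn$ coprime to~$p$. The plan is to check that the induced map on completed local rings $R:=\widehat\cO_{\cC,\mathcal{P}}\hookrightarrow \widehat\cO_{\cC^{\full},\mathcal{Q}}=:S$ is an isomorphism at any $\mathcal{Q}$ lying above $\mathcal{P}$; as $R\to S$ is finite with trivial residue-field extension, any such isomorphism gives \'etaleness.

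First I would check that the set-theoretic fibre over $\mathcal{P}$ is a singleton. The point $\mathcal{P}$ corresponds to the $p$-stabilisation $f_{\phi_1}$ (or to a cuspidal form sharing its small eigensystem), whose prime-to-$p$ level is exactly $\gn$. Atkin--Lehner newform theory for $\GL_{2/F}$ then pins down the $U_v$-eigenvalue for each $v\mid \gn$ uniquely in terms of the local component of the associated automorphic representation, so the small Hecke eigensystem at $\mathcal{P}$ admits a unique extension to $\mathfrak{h}^{\full}$.

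The main step is then to identify $R$ with $S$. I would use that the irreducible components of $\cC$ through $\mathcal{P}$ are exactly the Eisenstein family $\cE_{\phi_1,\phi_2}$ and the (nearly-)ordinary cuspidal Hida families specialising to $f_{\phi_1}$. On each such family the eigenvalue of $U_v$ for $v\mid \gn$ is an explicit analytic function of the weight parameter --- read off \eqref{Eisensteinnonconstanterms} on the Eisenstein side, and obtained from the $p$-adic interpolation of classical Fourier coefficients on the cuspidal side --- so each family lifts canonically to a morphism into $\cC^{\full}$ which factors the corresponding component of $\cC$. Consequently $R$ and $S$ have matching minimal primes, with $R/\gp\xrightarrow{\sim} S/\gq$ on every component. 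Since all these families specialise at $\mathcal{P}$ to the same single newform $U_v$-eigenvalue $a_v$, the element $U_v-a_v\in \gm_S$ should be the image under $R\to S$ of a well-defined element of $\gm_R$, which gives surjectivity of $R\to S$ and hence the desired isomorphism.

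The hardest point is precisely this last one: reducedness together with matching components only forces $S$ to embed in the integral closure of $R$ in its total ring of fractions, which for singular curves can be strictly bigger (as the example of a cusp already shows). Bridging the gap requires either explicitly producing the sought lift of $U_v$ inside $R$ by piecing together the Hida-family parametrisations with the newform-uniqueness of $a_v$, or a multiplicity-one argument: the generalised $\mathfrak{h}_{\mathcal{P}}$-eigenspace in the relevant space of overconvergent Hilbert modular forms coincides with the generalised $\mathfrak{h}^{\full}_{\mathcal{Q}}$-eigenspace, and both algebras act faithfully on it, forcing $R=S$. The same strategy, with $\cW^{\parallel}$ replaced by $\cW\times\cW^{\parallel}$ and ordinary Hida families replaced by nearly-ordinary ones, handles $\cE^{\full}\to \cE$.
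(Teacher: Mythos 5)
Your proposal correctly frames the issue — showing the finite surjection $\cC^{\full}\to\cC$ induces an isomorphism on completed local rings at $\mathcal{P}$ — and you are right to flag that matching irreducible components with $R/\gp\xrightarrow{\sim} S/\gq$ does \emph{not} force $R=S$ when $R$ is singular; that is exactly the substance of the proposition, and many proposals gloss over it. However, as literally written, both of your routes to close the gap fall short. For the multiplicity-one route, ``both algebras act faithfully on the same generalized eigenspace, forcing $R=S$'' is not a valid inference: $R\subsetneq S$ can perfectly well both act faithfully on a common module. What actually closes the argument is Hida duality: the perfect pairing $\cT^{\full}_{\mathcal{P}}\times\rM_{\mathcal{P}}\to\varLambda$ given by $(h,F)\mapsto C(\go,h\cdot F)$, combined with the $q$-expansion principle for $\gn$-new forms (every classical point nearby is $\gn$-new after shrinking), shows the pairing stays perfect already on the subalgebra $\cT_{\mathcal{P}}$, hence $\mathrm{rank}_\varLambda\cT_{\mathcal{P}}=\mathrm{rank}_\varLambda\rM_{\mathcal{P}}=\mathrm{rank}_\varLambda\cT^{\full}_{\mathcal{P}}$, which with $\varLambda$-flatness forces equality. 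Your explicit-lift route is precisely the patching problem on a singular local ring you identified as hard, and the paper does not attempt it.

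The paper's own proof is a compressed version of your second route: by functoriality there are closed immersions $\iota_{\gn'}:\cC_{\gn'}\hookrightarrow\cC$ for $\gn'\mid\gn$, and $\mathcal{P}$ has tame level exactly $\gn$ so it avoids $\iota_{\gn'}(\cC_{\gn'})$ for $\gn'\supsetneq\gn$; shrinking $\cU$ to miss these loci makes every classical point $\gn$-new, and then Zariski density of classical points together with Strong Multiplicity One (the mechanism behind your ``newform-uniqueness of $a_v$'') shows $\cC^{\full}$ is reduced at $\mathcal{P}$ and that each $U_v$, $v\mid\gn$, already lies in $\cO_\cC(\cU)$, citing \cite[Prop.~4.4]{BDP} for this descent. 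So the difference is mostly one of packaging: you work component-by-component and try to recombine at the singularity, whereas the paper gets the descent of the global section $U_v$ over $\cU$ all at once, so the local singularity structure never needs to be touched.
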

\begin{proof} We will only deal with the case of $\cC$,  the proof for $\cE$ being very similar. 
By functoriality of the eigenvariety machine, for each $\gn' \mid \gn$, one has a closed immersion $\iota_{\gn'} : \cC_{\gn'} \hookrightarrow \cC$ from the eigencurve $\cC_{\gn'}$ of tame level $\gn'$. As $\cP$ has  tame level  exactly $\gn$, it does  not belong to the image of $\iota_{\gn'}$ for any $\gn' \supsetneq \gn$. After shrinking $\cU$ to avoid the image of $\iota_{\gn'}$ for any  $\gn' \supsetneq \gn$,  the classical points in $\cU$ are all $\gn$-new. Using the Zariski density of classical points and  the  Strong Multiplicity One Theorem, one concludes   that $\cC^{\full}$ is reduced at $\cP$, and $U_\gq\in \cO_{\cC}(\cU)$ for all $\gq\mid  \gn$ as in  \cite[Prop.~4.4]{BDP}.
\end{proof}

For $\cP$ as in the above proposition, we  denote respectively by $\varLambda_{\sw(\cP)}$ (resp.  $\cT_{\cP}$ and $\cT_{\cP}^{\cusp}$) the complete localization of $\varLambda_{\cO}[1/p]$  (resp. $\cC^{\full}$ and $\cC^{\cusp,\full}$) at $\sw(\cP)$ (resp. at  $\cP$). Set $\mathbf{M}^{\ord}_{\cU}=\mathbf{M}_{\cU}^{\dagger, \leqslant 0}$ and 
$\mathbf{S}^{\ord}_{\cU}=\mathbf{S}_{\cU}^{\dagger, \leqslant 0}$, and  let  $\mathbf{S}^{\ord}_{\cP}=\mathbf{S}^{\ord}_{\cU} \widehat{\otimes}_{\cT^{\cusp,\leqslant 0}_{\cU}} \cT_{\cP}^{\cusp} $ and $\mathbf{M}^{\ord}_{\cP}=\mathbf{M}^{\ord}_{\cU} \widehat{\otimes}_{\cT^{\leqslant 0}_{\cU}} \cT_{\cP}$.
By  \cite[Thm.~5]{hida93} (or \cite[Thm.~4.30]{hida-PAF}), we have the following perfect pairing
\begin{align}\label{eq:cuspidal-duality}
\cT_{\cP}^{\cusp} \times \mathbf{S}^{\ord}_{\cP}\to \varLambda_{\sw(\cP)} \quad
(h,\cF)\mapsto C(\go,h\cdot \cF).
\end{align}

\subsection{Constant terms of Eisenstein families}\label{sec:122}
Applying Hida's ordinary idempotent $e=\displaystyle\lim_{n\to\infty} U_p^{n!}$ to \eqref{eq:fund_seq} yields a short exact sequence
\begin{align}\label{eq:fund_seq_ord}
0\to \mathbf{S}^{\ord}_{\cU} \to \mathbf{M}^{\ord}_{\cU} \xrightarrow{\res} e\cdot \cO(\cU)[\sP_1(p^n)]^{(\phi_p)} \to 0.
\end{align}
We let $(e_1,e_2)$ be the canonical basis of $F^2$.  The group $\GL_2(F)$ acts on the left on $F^2$ by 
\[ 
g \ast (x,y)= (x,y) \cdot g^{-1} \text{ for all }g \in \GL_2(F).
\] 
This action extends to an action of $\GL_2(\A_{F,f})$ on $\A_{F,f}^2$.
 As we have seen in \S\ref{torcom}, a cusp of level $\Gamma_1(\gn p^n)$ corresponds to two fractional ideals $\ga$ and $\gb$ with extra data (see \cite[Def.~2.1]{dim04} for more details). In fact, we can associate to a cusp an $\go$-lattice $L$ of $F^2$ \cite[p.37]{dim04}  and  $\GL_2(\A_{F,f})$ acts on such lattices (hence acts on the cusps) as follows
\[ 
g \ast L =  F^2 \cap (L \cdot g^{-1}) \text{ for all } g \in \GL_2(\A_{F,f}).
\]

In particular, $\gb= g \ast L \cap (F\cdot e_1)$ and $ \ga^{-1}\gd^{-1}= g \ast L \cap (F\cdot e_2)$. 
For $[\gc]\in \Cl_F^+$, we let  $\infty(\gc)=\left( \go \oplus \gd^{-1}\gc^{-1},  \alpha_{\gn p^n}: \gc /\gn p^n \gc  \simeq \go/\gn p^n  \right)$ be the {\it unramified} cusp  in the sense of \cite[Def.~3.2]{dim04} associated to the pair of ideals $(\ga,\gb) = (\gc, \go)$.
As $\GL_2(\A_{F,f}) $ acts transitively, it follows  that the set of cusps of level $\Gamma_1(\gn p^n)
$ can be identified with  the orbit of  $\infty=\infty(\gd^{-1})$ given, up to isomorphisms, by
\begin{align}\label{eq:cusp_adelic_form}
B(F)^+  \backslash \GL_2(\A_{F,f})/\Gamma_1(\gn p^n),
\end{align}
where  $B(F)^+$ is the group of upper triangular matrices having a  totally positive determinant. 
\begin{lemma}\label{cusp_repr}
Representatives of \eqref{eq:cusp_adelic_form} can be taken of the form $\left(\begin{smallmatrix}
	t_1 & 0 \\ 0 & t_2\end{smallmatrix}\right)\cdot g$, where 
\begin{enumerate}
\item $g\in \GL_2(\widehat{\go})$ with  $g_v=\left(\begin{smallmatrix} 1 & 0 \\ 0 & 1\end{smallmatrix}\right)$ for all $v\nmid \gn p$,  and
\item $t_i\in \A_{F,f}^\times$ with $\val_v(t_i)=0$ for all  $v\mid \gn p$   ($i=1,2$).
\end{enumerate}
\end{lemma}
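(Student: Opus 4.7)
The plan is to combine the adelic Iwasawa decomposition $\GL_2(\A_{F,f})=B(\A_{F,f})\cdot\GL_2(\widehat{\go})$ with two elementary approximation arguments — one for the diagonal torus and one for the unipotent radical of $B$ — and then exploit the fact that $\Gamma_1(\gn p^n)_v=\GL_2(\go_v)$ for all $v\nmid\gn p^n$.

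Starting from an arbitrary $\gamma\in\GL_2(\A_{F,f})$, I would write $\gamma=\left(\begin{smallmatrix} t_1 & u\\ 0 & t_2\end{smallmatrix}\right)k$ with $k\in\GL_2(\widehat{\go})$ by Iwasawa. First, left-multiplying by a diagonal matrix in $B(F)^+$ scales $t_1$ and $t_2$ by totally positive elements of $F^\times$; since every narrow ideal class in $\Cl_F^+$ admits a representative whose fractional ideal is coprime to $\gn p^n$ (a standard consequence of weak approximation in $F^\times$), one can arrange $\gcd(t_i\widehat{\go},\gn p^n)=1$, giving condition (ii).

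Next, to eliminate the unipotent part $u$, I would left-multiply by $\left(\begin{smallmatrix} 1 & \alpha\\ 0 & 1\end{smallmatrix}\right)\in B(F)^+$, which replaces $u$ by $u+\alpha t_2$. Picking $\alpha\in F$ congruent to $-u/t_2$ modulo $(t_1/t_2)\widehat{\go}$ yields $u+\alpha t_2\in t_1\widehat{\go}$; the existence of such $\alpha$ is the standard identity $\A_{F,f}=F+\widehat{J}$, valid for any fractional ideal $J\subset F$ (where $\widehat{J}=J\widehat{\go}$), equivalent to the exactness of $0\to J\to F\to \A_{F,f}/\widehat{J}\to 0$. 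Once this holds, the factorization
\[
\left(\begin{smallmatrix} t_1 & u+\alpha t_2\\ 0 & t_2\end{smallmatrix}\right) = \left(\begin{smallmatrix} t_1 & 0\\ 0 & t_2\end{smallmatrix}\right)\left(\begin{smallmatrix} 1 & (u+\alpha t_2)/t_1\\ 0 & 1\end{smallmatrix}\right)
\]
lets the second factor be absorbed into a new $k'\in\GL_2(\widehat{\go})$, so that modulo $B(F)^+$ on the left, $\gamma=\left(\begin{smallmatrix} t_1 & 0\\ 0 & t_2\end{smallmatrix}\right)k'$.

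Finally, to achieve (i), I would right-multiply $k'$ by the element of $\Gamma_1(\gn p^n)$ whose $v$-component is $(k'_v)^{-1}$ at each $v\nmid\gn p^n$ — permissible because $\Gamma_1(\gn p^n)_v=\GL_2(\go_v)$ there — and the identity at $v\mid\gn p^n$. This does not disturb the diagonal factor, and the resulting right factor $g$ satisfies $g_v=\mathrm{Id}$ for $v\nmid\gn p^n$. The main (mild) obstacle is the verification of $\A_{F,f}=F+\widehat{J}$ for arbitrary fractional $J$, which follows from the classical case $J=\go$ by a short diagram chase using the surjection $\go\twoheadrightarrow\widehat{\go}/\widehat{J}$.
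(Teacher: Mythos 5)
The paper states Lemma~\ref{cusp_repr} without giving a proof, treating it as a standard fact about adelic double cosets of cusps. Your argument is a correct and complete proof along the canonical lines: the local Iwasawa decompositions give $\GL_2(\A_{F,f})=B(\A_{F,f})\cdot\GL_2(\widehat{\go})$; left multiplication by $B(F)^+$ then normalises the Borel factor, first using weak approximation in $F^\times$ to make the diagonal entries coprime to $\gn p^n$, then using $\A_{F,f}=F+\widehat J$ (valid for every fractional ideal $J$ since $\widehat J$ is open and $F$ is dense) to kill the unipotent part; and right multiplication by $\Gamma_1(\gn p^n)$, which equals $\GL_2(\go_v)$ away from $\gn p^n$, trivialises $g$ at those places. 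One small point you leave implicit but should make explicit: when you rescale $t_1$ and $t_2$ by elements of $F^\times$, you must arrange that the resulting diagonal matrix has totally positive determinant to land in $B(F)^+$; this is achieved simply by choosing both scalars totally positive, which weak approximation allows by also prescribing the signs at the archimedean places.
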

 
\begin{defn} A cusp of level $\Gamma_1(\gn p^n)$  or level $\Gamma_1(\gn) \cap \Gamma_0(p^n)$  is $p$-ordinary (multiplicative) if it is represented by  $t\cdot g$ for some $t\in T(\A_{F,f})$ and for some $g\in \GL_2(\widehat{\go})$ with $g_p\in \Gamma_{0}(p^n)$. 
  \end{defn} 
 
 \begin{lemma}\
 \begin{enumerate}
  \item The cusps of $\coprod_{\gc \in \Cl_F^+} \overline{\mathscr{X}_{\gc}} (v)$ are exactly the cups of  level $\Gamma_1(\gn)$.
 \item The cusps of $\coprod_{\gc \in \Cl_F^+} \overline{\mathscr{X}_{\gc}}(\mu_{p^n},v)$ are exactly the $p$-ordinary cusps of  level $\Gamma_1(\gn p^n)$. 
  \end{enumerate}
 \end{lemma}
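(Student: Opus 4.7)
The plan is to treat the two assertions in turn, reducing each to the analysis of the Tate objects already recalled in \S\ref{torcom}.

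For (i), I would first note that the cusps of $\overline{\mathscr{X}_\gc}$ are classically described by pairs of fractional ideals $(\ga,\gb)$ equipped with a polarization isomorphism and a $\mu_\gn$-level datum (see \cite[Def.~2.2]{dim04}); up to the $\Delta_\gn$-identifications this exactly matches the $\Gamma_1(\gn)$-cusp description in \eqref{eq:cusp_adelic_form} restricted to representatives with $p$-part in $T(\Z_p)\cdot \GL_2(\go_v)$. The key geometric observation is that each cusp of $\overline{\mathscr{X}_\gc}$ lies in the ordinary locus, since the Tate object $[\gb\xrightarrow{q}\mathbf{G}_m\otimes \ga^{-1}\gd^{-1}]$ is totally multiplicative and its Hasse invariant is a unit. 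Hence all cusps of $\overline{\mathscr{X}_\gc}$ belong to $\overline{\mathscr{X}_\gc}(v)$ for every admissible $v$, and taking the disjoint union over $\Cl_F^+$ yields exactly the cusps of level $\Gamma_1(\gn)$.

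For (ii), I would exploit the definition of $\overline{\mathscr{X}_\gc}(\mu_{p^n},v)$ as the étale torsor trivializing the canonical subgroup $\mathscr{H}_n$ against $\mu_{p^n}\otimes\gd^{-1}$. At a cusp of $\overline{\mathscr{X}_\gc}(v)$ corresponding to $(\ga,\gb)$ the Tate object shows that $\mathscr{H}_n$ is literally the multiplicative subgroup $\mu_{p^n}\otimes \gd^{-1}\ga^{-1}$ of $\mathbf{G}_m\otimes\gd^{-1}\ga^{-1}$, as already used in the proof of Theorem~\ref{trivialtorsor}. Thus the fibre of $\overline{h}_n$ above such a cusp is canonically the finite étale $T(\Z/p^n\Z)$-scheme $\mathrm{Isom}^\go(\go/p^n,\ga/p^n\ga)$, and each choice of a generator determines a lift to a cusp of $\overline{\mathscr{X}_\gc}(\mu_{p^n},v)$ carrying precisely the extra $\mu_{p^n}$-level datum on the multiplicative part.

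The final step is to translate this moduli-theoretic extra datum into the adelic language of Lemma~\ref{cusp_repr}. Trivialising the multiplicative part of $\mathscr{H}_n$ at each cusp is equivalent to demanding that the $p$-component of a representative in \eqref{eq:cusp_adelic_form} lie in $T(F_p)\cdot\Gamma_0(p^n)$, which is exactly the definition of a $p$-ordinary cusp. Combined with the identification already in place at tame level $\gn$, and taking the disjoint union over $[\gc]\in\Cl_F^+$, this gives the stated bijection. The main technical point — and where I would be most careful — is matching the polarization conventions and the normalization $\alpha_{\gn p^n}$ so that the canonical trivialisation at the Tate object corresponds on the nose to the Iwahori condition at $p$; this is a book-keeping issue already implicit in \eqref{HTcusps} and the universal property of $\mathscr{I}_n$.
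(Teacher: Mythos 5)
Your proof is correct and follows essentially the same route as the paper's: the Tate objects are multiplicatively degenerating, so every cusp of $\overline{\mathscr{X}_\gc}$ lies in the ordinary locus, and the canonical subgroup at such a cusp is exactly the multiplicative part, so that trivializations of $\mathscr{H}_n$ (i.e.\ cusps of the torsor $\overline{\mathscr{X}_\gc}(\mu_{p^n},v)$) correspond precisely to multiplicative $p^n$-level data, which is the $p$-ordinary condition. The paper's published proof is terser, deferring (ii) to \cite[p.44]{dim04}, whereas you make the torsor-fibre identification explicit; this is a presentational difference, not a different argument.
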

 \begin{proof}
  (i)   Mumford's  semi-abelian scheme associated to any cusp of  level $\Gamma_1(\gn)$ is $p$-ordinary.

(ii)  Since the lattice $\go \oplus \go$ is  endowed  with the natural unramified  $\Gamma_1(\gn p^n)$-level structure $\alpha_{\gn p^n}$ (see \cite[Def.~2.2]{dim04}), the  level structure at $p$ of  Mumford's  semi-abelian scheme associated to a $p$-ordinary cusp  of  level $\Gamma_1(\gn p^n)$ is necessary of multiplicative type (see \cite[p.44]{dim04}). 
 \end{proof}
 
Recall that  $\gn_i=\mathrm{cond}(\phi_i)$ for $i=1,2$. As before, we denote by $\gn$ the prime-to-$p$ part of $\gn_1\gn_2$ and let $n$ be the smallest positive integer such that $\gn_1\gn_2$ divides $\gn p^n$. Denote by $\gn'_1$ the prime-to-$\gn_2$ part of $\gn_1$ and denote by $\gn'_2$ the prime-to-$\gn_1$ part of $\gn_2$. For any $\alpha\in \A_{F,f}$ and for $i=1,2$, we denote by $\alpha_{\gn_i}$ the $\gn_i$-part of $\alpha$. 
We next recall the constant terms of $E_k(\phi_1,\phi_2)$ for all $k\geqslant 2$ computed in \cite[\S 3]{Shih} (see also \cite{dasgupta-kakde}). In \textit{loc.~cit.}, the Eisenstein series $E_k(\phi_1,\phi_2)$ is a member of a family of adelic Eisenstein series  
evaluated at $s=\frac{1-k}{2}$ (see also  \cite[\S3.7]{Bump97}). This family is defined via explicit choices of local sections $f_{\phi_1,\phi_2,s,v}$ in the normalized induced representations $\mathrm{Ind}_{B(F_v)}^{\GL_2(F_v)} (\phi_2|\cdot|^s_v,\phi_1|\cdot|_v^{-s})$ for $v$ a  place of $F$ (see \cite[\S3.4]{Shih}). We consider a specific 
value of the local intertwining operator (see \cite[(7.22)]{Bump97})
\[
M_{\phi_1,\phi_2,v,s}=\int_{F_v} f_{\phi_1,\phi_2,s,v}\left(
\left(\begin{matrix}0 & -1 \\ 1 & x \end{matrix}\right)
\left(\begin{matrix}1 & 0 \\ \varpi_v^{\val_v(\gn_2)} & 1 \end{matrix}\right)\right)dx.
\] 
\begin{prop}\cite[Prop.~3.10]{Shih}\label{constant_term_finite_wt}
The geometric constant term of $E_k(\phi_1,\phi_2)$ at the $p$-ordinary $\Gamma_1(\gn p^n)$-cusp $\left(\begin{smallmatrix} t_1 & 0 \\ 0 & t_2 \end{smallmatrix}\right)\cdot \left(\begin{smallmatrix} a & b \\ c & d \end{smallmatrix}\right)$ is zero, unless  the following three conditions hold
\begin{enumerate}
\item $\val_v(c)\geqslant \val_v(\gn_2)$ for all $v\mid \gn'_2$.
\item $\val_v(c)=0$ for all $v\mid \gn'_1$.
\item $\val_v(c)=\val_v(\gn_2)$ for all $v$ dividing both $\gn_1$ and $\gn_2$, 
\end{enumerate}
in which case, letting $\varepsilon_v(s,\phi_{1,v},\psi_v)$ denote the local epsilon factor,  it is given by 
\begin{align}\label{eq:const}
\begin{split}
&\phi_1(t_1)\phi_2(t_2) \rN_{F/\Q}(t_1/t_2)^{-k/2}
\phi_2^{-1}(\gn_1')\rN_{F/\Q}(\gn_1')^{-1}\phi_{2}(d_{\gn_2}\hat{\go})\phi_1^{-1}(-c_{\gn_1}/\gn_2') \\
& \times L^{(\gn)}(1-k,\phi)\cdot
\prod_{v|\gn_1,v|\gn_2} M_{\phi_1,\phi_2,v,(1-k)/2}\prod_{v|\gn_1}\varepsilon_v(2-k,\phi_{1,v},\psi_v)^{-1}.
\end{split}
\end{align}
\end{prop}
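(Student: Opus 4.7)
The plan is to realize $E_k(\phi_1,\phi_2)$ adelically as the value at $s=(1-k)/2$ of the meromorphic family $E(g,s,f_{\phi_1,\phi_2,s})=\sum_{\gamma\in B(F)\backslash \GL_2(F)} f_{\phi_1,\phi_2,s}(\gamma g)$, where $f_{\phi_1,\phi_2,s}=\prod_v f_{\phi_1,\phi_2,s,v}$ is the normalized local section of $\mathrm{Ind}_{B(F_v)}^{\GL_2(F_v)}(\phi_2|\cdot|_v^s,\phi_1|\cdot|_v^{-s})$ fixed in \S3.4 of \cite{Shih}, and then to apply the Langlands constant term formula along $B$:
\[
E_B(g,s,f) \;=\; f_{\phi_1,\phi_2,s}(g)\,+\,\bigl(M(s)f_{\phi_1,\phi_2,s}\bigr)(g),
\]
where $M(s)$ is the standard intertwining operator attached to the long Weyl element. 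Since the geometric constant term of the classical Eisenstein series at a cusp is obtained by evaluating the adelic constant term at the cusp representative $\gamma=\left(\begin{smallmatrix} t_1 & 0 \\ 0 & t_2\end{smallmatrix}\right)\cdot \left(\begin{smallmatrix} a & b \\ c & d\end{smallmatrix}\right)$ and then specializing at $s=(1-k)/2$, and since both sides factorize over the places of $F$, the problem reduces to a place-by-place analysis.

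At each finite place $v$ one combines the Iwasawa decomposition of $\left(\begin{smallmatrix} a & b \\ c & d\end{smallmatrix}\right)\in \GL_2(\go_v)$ with the support of the chosen local section. The first term $f_{\phi_1,\phi_2,s,v}(\gamma_v)$ survives only when $v\nmid \gn_2$ forces $\val_v(c)=0$, while the intertwined term $M_v(s)f_v(\gamma_v)$, computed via the integral displayed before the statement, survives only when $\val_v(c)$ equals (or exceeds) $\val_v(\gn_2)$. This dichotomy is exactly what produces the three conditions: condition (ii) at places dividing the pure $\phi_1$-part $\gn'_1$, condition (i) at places dividing the pure $\phi_2$-part $\gn'_2$, and condition (iii) at places in $\gcd(\gn_1,\gn_2)$, where both character sections are genuinely ramified and only a single Bruhat orbit of $\gamma_v$ contributes.

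When these conditions hold, one evaluates each non-vanishing local factor. The torus prefactor $\phi_1(t_1)\phi_2(t_2)\,\rN_{F/\Q}(t_1/t_2)^{-k/2}$ arises directly from left-translating by $\left(\begin{smallmatrix} t_1 & 0 \\ 0 & t_2\end{smallmatrix}\right)$ and incorporating the modulus of $B$ at weight $k$. At places $v\mid \gn'_1$ only the ``$f$-term'' contributes and produces the local characters $\phi_{2}(d_{\gn_2}\hat\go)$ and $\phi_1^{-1}(-c_{\gn_1}/\gn_2')$. At places $v\mid \gn'_2$ only the intertwined term contributes; an explicit computation of the standard unramified-to-unramified intertwining integral at $s=(1-k)/2$ yields both the epsilon-factor correction $\varepsilon_v(2-k,\phi_{1,v},\psi_v)^{-1}$ and the coefficient $\phi_2^{-1}(\gn_1')\,\rN_{F/\Q}(\gn_1')^{-1}$. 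At the remaining $v\mid \gcd(\gn_1,\gn_2)$ neither term simplifies and the contribution is exactly the tabulated integral $M_{v,(1-k)/2,e_{v}}$. Finally, the product of local unramified factors at primes outside $\gn p$, together with the archimedean constant term built into the normalization, reassembles into the imprimitive $L$-value $L^{(\gn)}(1-k,\phi)$ by the standard Euler product identity.

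The genuine technical difficulty lies in the places $v\mid \gcd(\gn_1,\gn_2)$, where the cusp representative does not sit cleanly in either the open or closed Bruhat cell and the local section must be paired with the intertwining integral simultaneously; here the integral $M_{v,(1-k)/2,e_v}$ is the right invariant to extract. A second, more mundane obstacle is reconciling the adelic parametrization of cusps of \cite{Bump97} with the geometric parametrization via $(\ga,\gb)$ used in \S\ref{torcom}: one must carefully track the polarization class $\gc$ and the different $\gd$ through the identification, which is why the normalization $x_{\gc}=\left(\begin{smallmatrix} t_{\gc}\delta & 0 \\ 0 & 1\end{smallmatrix}\right)$ chosen at the start of \S\ref{sec:122} is needed to state the formula cleanly.
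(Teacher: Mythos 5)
The paper itself does not prove Proposition~\ref{constant_term_finite_wt}: it simply cites \cite[Prop.~3.10]{Shih}, after recalling the adelic framework (the family $E(g,s,f_{\phi_1,\phi_2,s})$ evaluated at $s=(1-k)/2$, with local sections in $\mathrm{Ind}_{B(F_v)}^{\GL_2(F_v)}(\phi_2|\cdot|^s_v,\phi_1|\cdot|_v^{-s})$). Your overall strategy --- realize $E_k(\phi_1,\phi_2)$ adelically, apply the Langlands constant-term formula $E_B=f+M(s)f$, factor over places, and analyze the support of each term against the Iwasawa/Bruhat decomposition of $\gamma_v$ --- is precisely the methodology of the cited reference, so the plan is sound.

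There is, however, a concrete bookkeeping error in your local analysis that you should fix. You assign the $f$-term to places $v\mid\gn'_1$ and the intertwined $M$-term to places $v\mid\gn'_2$, and then claim the $M$-term at $v\mid\gn'_2$ produces the factors $\varepsilon_v(2-k,\phi_{1,v},\psi_v)^{-1}$ and $\phi_2^{-1}(\gn'_1)\rN_{F/\Q}(\gn'_1)^{-1}$. But these factors are products over $v\mid\gn_1$ and involve $\gn'_1$, not $\gn'_2$. An epsilon factor for $\phi_{1,v}$ can only arise from a Gauss sum at a place where $\phi_1$ is ramified, i.e.\ $v\mid\gn_1$, and it is the intertwining integral at such a place that produces it; an ``unramified-to-unramified'' integral at $v\mid\gn'_2$ (where $\phi_1$ \emph{is} unramified) cannot yield $\varepsilon_v(\cdot,\phi_{1,v},\cdot)$. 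Similarly, you attribute the character factors $\phi_2(d_{\gn_2}\hat\go)$ and $\phi_1^{-1}(-c_{\gn_1}/\gn'_2)$ to the $f$-term at $v\mid\gn'_1$, yet $d_{\gn_2}$ is the $\gn_2$-part of $d$ and must come from places dividing $\gn_2$. The net effect is that you have swapped which of the two terms in the constant-term decomposition contributes at each class of places; the correct matching with the three vanishing conditions and the factors in \eqref{eq:const} requires the $M$-term (with its Gauss sum) at places where $\phi_1$ is ramified, and the $f$-term where $\phi_2$ alone is ramified. With this correction the rest of your sketch, including the treatment of the genuinely ramified places $v\mid\gcd(\gn_1,\gn_2)$ via the tabulated integral $M_{v,(1-k)/2,e_v}$ and the reconciliation of adelic versus geometric cusps, lines up with the cited argument.
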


\begin{prop}\label{const_Eisen_family}
The constant term $a_C(0,\cE_{\phi_1,\phi_2})$ of $\cE_{\phi_1,\phi_2}$ at the 
$p$-ordinary $\Gamma_1(\gn p^n)$-cusp $C$ represented by $\left(\begin{smallmatrix} t_1 & 0 \\ 0 & t_2 \end{smallmatrix}\right)\cdot \left(\begin{smallmatrix} a & b \\ c & d \end{smallmatrix}\right)$
belongs to $\zeta_{\phi_1,\phi_2}\cdot \cO(\cU)^\times$ if  the  three conditions of Proposition~\ref{constant_term_finite_wt} hold, else  
it is zero.
\end{prop}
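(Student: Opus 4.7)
The plan is to interpolate from arithmetic weights via Zariski density. I would first fix the dense set of classical points $X=u^{k-1}-1$, $k\geqslant 2$, in $\cU$. Reading off \eqref{Eisensteinnonconstanterms}, the weight-$k$ specialization of $\cE_{\phi_1,\phi_2}$ has non-constant Fourier coefficients equal to those of the ordinary $p$-stabilization of $E_k(\phi_1,\phi_2\omega_p^{1-k})$, the twist by $\omega_p^{1-k}$ arising from evaluating $(1+X)^{\log_p(\rN_{F/\Q}(\ga))/\log_p(u)}$ at $X=u^{k-1}-1$.

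Next I would compute the geometric constant term of this stabilized form at the cusp $\gamma$. Applying Proposition~\ref{constant_term_finite_wt} to the unstabilized $E_k(\phi_1,\phi_2\omega_p^{1-k})$ yields either zero when (i)--(iii) fail or else the explicit formula \eqref{eq:const} with $\phi$ replaced by $\phi\omega_p^{1-k}$, whose main term is $L^{(\gn)}(1-k,\phi\omega_p^{1-k})$. Tracking how the ordinary idempotent $e=\lim U_p^{n!}$ acts on cusps of level $\Gamma_1(\gn p^n)$, equivalently unwinding the recursive definition of $f_{\phi_1}$ in terms of the $V_v$-operators for $v\in\Sigma_p$, one checks that at the $p$-ordinary cusp $\gamma$ the projection introduces exactly the Euler product $\prod_{\gp\in\Sigma_p}(1-\phi\omega_p^{1-k}(\gp)\rN_{F/\Q}(\gp)^{k-1})$ needed to complete $L^{(\gn)}(1-k,\phi\omega_p^{1-k})$. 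Combining this with \eqref{eq:interpolation_p-adic_L-fcn} and \eqref{eq:imprimitive-zeta} identifies the weight-$k$ constant term as $\zeta_{\phi_1,\phi_2}(u^{k-1}-1)$ multiplied by $\phi_1(t_1)\phi_2(t_2)\rN_{F/\Q}(t_1/t_2)^{-k/2}$ and by the local factors $\prod_{v\mid\gn_1,v\mid\gn_2}M_{v,(1-k)/2,e_v}\prod_{v\mid\gn_1}\varepsilon_v(2-k,\phi_{1,v},\psi_v)^{-1}$.

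The third step is to verify that all these accessory factors interpolate into units of $\cO(\cU)^\times$. Since $\gn$ is prime to $p$ and $\gn_1,\gn_2$ both divide $\gn$, the intertwining values $M_{v,s,e_v}$ and the epsilon factors $\varepsilon_v$ take values in $\cO^\times$; similarly $\phi_1(t_1),\phi_2(t_2)\in\cO^\times$. By Lemma~\ref{cusp_repr} one may choose $t_1,t_2$ with $\gcd(t_1t_2\hat{\go},\gn p^n)=1$, so $\rN_{F/\Q}(t_1/t_2)\in\Z_p^\times$; its $k$-dependent power therefore interpolates into the universal weight character evaluated at $\rN_{F/\Q}(t_1/t_2)$, which is again a unit in $\cO(\cU)^\times$. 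Zariski density of $\{u^{k-1}-1:k\geqslant 2\}$ in $\cU$ then promotes the pointwise equality to the identity $C_\gamma(0,\cE_{\phi_1,\phi_2})=\zeta_{\phi_1,\phi_2}(X)\cdot u_\gamma$ in $\cO(\cU)$ for a unit $u_\gamma$, and to the vanishing of $C_\gamma(0,\cE_{\phi_1,\phi_2})$ in the case where (i)--(iii) fail.

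The principal obstacle is the bookkeeping in the second step: correctly tracking the interaction between the ordinary projector and the $T(\Z_p)$-orbit of $\gamma$, and in particular showing that the contributions to $e\cdot\cE_{\phi_1,\phi_2}$ coming from cusps moved to $\gamma$ by the Hecke operators at $p$ assemble into exactly the Euler factors at $\Sigma_p$ that distinguish $L^{(\gn)}(1-k,\phi\omega_p^{1-k})$ from $\zeta_{\phi_1,\phi_2}(u^{k-1}-1)$. Once this is in place, the comparison with the interpolation formula is formal and the remaining verifications reduce to $p$-adic integrality of Gauss sums at primes of conductor prime to $p$.
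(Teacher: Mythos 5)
Your proposal follows the same interpolation strategy as the paper — specialize $\cE_{\phi_1,\phi_2}$ at classical weights, compute the constant term there via the explicit formula of Proposition~\ref{constant_term_finite_wt}, and conclude by Zariski density — but you miss a simplification that the paper exploits, and this creates a real extra burden in your version. The paper specializes only at those $k\geqslant 2$ with $\omega_p^{k-1}=\mathbf{1}$. On this (still Zariski dense) arithmetic progression the factor $(1+X)^{\log_p \rN_{F/\Q}(\ga)/\log_p u}$ collapses to $\rN_{F/\Q}(\ga)^{k-1}$ with no Teichm\"uller twist, so the specialization is literally the $p$-stabilization of $E_k(\phi_1,\phi_2)$ at tame level $\gn$, to which Proposition~\ref{constant_term_finite_wt} applies as stated. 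Your choice of all $k\geqslant 2$ forces you to work with $E_k(\phi_1,\phi_2\omega_p^{1-k})$, where $\phi_2\omega_p^{1-k}$ has conductor divisible by $p$; Proposition~\ref{constant_term_finite_wt} as quoted in the paper is formulated for the pair $(\phi_1,\phi_2)$ with $\phi_1$ of prime-to-$p$ conductor, and extending it to the twisted pair — keeping track of the shifted local $\varepsilon$-factors and intertwining integrals at the $p$-places where the conductor now changes with $k$ — is exactly the kind of bookkeeping that the paper's restriction avoids entirely. You acknowledge this as "the principal obstacle," and rightly so: it is not a formal step, it is the substance of the cited results (\cite[Prop.~4.5]{Shih}, \cite[Prop.~4.7]{BDP}).

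A second, smaller point: you frame the comparison as "tracking how $e=\lim U_p^{n!}$ acts on cusps," but there is no separate ordinary projection to perform once you have specialized $\cE_{\phi_1,\phi_2}$ — the family is $p$-ordinary from the start, so its classical specializations are already the stabilized forms. The comparison you actually need is between the constant term of the stabilized Eisenstein series and the constant term of the unstabilized one, and the Euler factors at $\Sigma_p$ that appear are produced by unwinding the iterated $V_v$-operators in the definition of the stabilization at the given $p$-ordinary cusp $\gamma$ (which is the content of \cite[Prop.~4.7]{BDP}), not by an application of the idempotent. Conflating these two descriptions is harmless in outcome but obscures what actually has to be computed.

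With the weight restriction $\omega_p^{k-1}=\mathbf{1}$ in place and the cited input from Shih and BDP granted, your remaining steps — matching the main term to $\zeta_{\phi_1,\phi_2}(u^{k-1}-1)$ via \eqref{eq:interpolation_p-adic_L-fcn} and \eqref{eq:imprimitive-zeta}, verifying that the accessory factors ($\phi_i(t_i)$, $\rN_{F/\Q}(t_1/t_2)$ to a weight-dependent power, the local intertwining values and $\varepsilon$-factors away from $p$) interpolate to units, and then invoking density — are correct and are exactly what the paper does.
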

\begin{proof} The specialization of $\cE_{\phi_1,\phi_2}$ at  $k \geqslant 2$ such that  $\omega_p^{k-1}=\mathbf{1}$
  is the $p$-stabilization of the weight $k$ Eisenstein series $E_k(\phi_1,\phi_2)$ of level $\Gamma_1(\gn_1\gn_2)$.  
   As in \cite[Prop.~4.7]{BDP}, one  obtains the assertions by computing the constant terms of $E_k(\phi_1,\phi_2)$ at all $p$-ordinary   $\Gamma_1(\gn p^n)$-cusps  using the formula \eqref{eq:const}  from Proposition~\ref{constant_term_finite_wt}, all the terms of which interpolate $p$-adically  (see \cite[Prop.~4.5]{Shih}). 
  \end{proof}

\subsection{Hecke duality and Proof of Theorem~\ref{congruencemodule}}\label{sec:123}
The {\it Eisenstein ideal} $\cJ_{\phi_1,\phi_2}\subset \mathfrak{h}^{\cusp}$ associated to $\cE_{\phi_1,\phi_2}$ is the image of the kernel of the $\cE_{\phi_1,\phi_2}$-projection $\mathfrak{h} \to \varLambda_{\cO}$ via the natural surjection $\mathfrak{h} \twoheadrightarrow \mathfrak{h}^{\cusp}$ of the Hida Hecke algebra introduced in  \S\ref{sec:full_eigencurve}. The {\it congruence ideal} $\cI_{\phi_1,\phi_2}\subset \varLambda_{\cO}$ of the Eisenstein family $\cE_{\phi_1,\phi_2}$ is uniquely  characterized  by the  following commutative diagram of $ \varLambda_{\cO}$-algebras
\begin{equation}
\begin{tikzcd}\label{eq:congruencesEisenstein}
\mathfrak{h} \arrow[r, two heads] \arrow[d, two heads] 
& \varLambda_{\cO} \arrow[d, two heads] \\ 
\mathfrak{h}^{\cusp}/\cJ_{\phi_1,\phi_2}  \arrow{r}{\sim}
&  \varLambda_{\cO}/\cI_{\phi_1,\phi_2}.
\end{tikzcd}
\end{equation}

We next establish a precise relation between the congruence ideal and the constant term of $\cE_{\phi_1,\phi_2}$. 
Previously it was only known by the work of Wiles \cite[Thm~4.1]{wiles90}  that, after inverting $p$ and the height one primes corresponding to trivial zeros, $(\zeta_{\phi_1,\phi_2})$ divides $ \cI_{\phi_1,\phi_2}$. 

\begin{thm} \label{congruencemodule-general} If $\phi\not\equiv\omega_p^{-1}\pmod{\varpi}$, then 
one has $ \cI_{\phi_1,\phi_2} \cdot \varLambda_{\cO}[1/p] = \zeta_{\phi_1,\phi_2}  \cdot \varLambda_{\cO}[1/p]$.
\end{thm}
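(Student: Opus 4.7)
The plan is to prove the two containments $\cI_{\phi_1,\phi_2}\cdot \varLambda_{\cO}[1/p] \subseteq (\zeta_{\phi_1,\phi_2})$ and $(\zeta_{\phi_1,\phi_2}) \subseteq \cI_{\phi_1,\phi_2}\cdot \varLambda_{\cO}[1/p]$ in turn, assembling the three ingredients outlined at the start of \S\ref{sec:123}. The common starting point, which I would set up first, is a cuspidal Hida family $\cF \in \mathbf{S}^{\ord}$ satisfying
\[
\cE_{\phi_1,\phi_2} - \cF \in \zeta_{\phi_1,\phi_2}\cdot \mathbf{M}^{\ord}.
\]
To produce it, Proposition~\ref{const_Eisen_family} exhibits $\res(\cE_{\phi_1,\phi_2})$ as $\zeta_{\phi_1,\phi_2}$ times a vector supported on the $p$-ordinary cusps of Proposition~\ref{constant_term_finite_wt} with unit coefficients, so the surjectivity in Theorem~\ref{fund_exact_seq} lifts this vector to some $\cG \in \mathbf{M}^{\ord}$, and $\cF := \cE_{\phi_1,\phi_2} - \zeta_{\phi_1,\phi_2}\cdot \cG$ has $\res(\cF)=0$, hence lies in $\mathbf{S}^{\ord}$.

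Let $\lambda : \mathfrak{h} \to \varLambda_{\cO}$ denote the Eisenstein projection attached to $\cE_{\phi_1,\phi_2}$ and $\mathcal{K} := \ker(\mathfrak{h} \twoheadrightarrow \mathfrak{h}^{\cusp})$. The containment $\cI_{\phi_1,\phi_2} \subseteq (\zeta_{\phi_1,\phi_2})$ is the easier one: for $h \in \mathcal{K}$, the relations $h\cdot \cF=0$ and $h\cdot \cE_{\phi_1,\phi_2} = \lambda(h)\cdot \cE_{\phi_1,\phi_2}$ combine to give $\lambda(h)\cdot \cE_{\phi_1,\phi_2} \in \zeta_{\phi_1,\phi_2}\cdot \mathbf{M}^{\ord}$. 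Extracting the Fourier coefficient at $\gb = \go$, which equals $1$ for $\cE_{\phi_1,\phi_2}$ by \eqref{Eisensteinnonconstanterms}, forces $\lambda(h) \in \zeta_{\phi_1,\phi_2}\cdot \varLambda_{\cO}$, so $\cI_{\phi_1,\phi_2} = \lambda(\mathcal{K}) \subseteq (\zeta_{\phi_1,\phi_2})$.

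The reverse containment is the more delicate step. I would localize at a maximal ideal $\mathcal{P}$ of $\mathfrak{h}[1/p]$ corresponding to an intersection point of $\cE_{\phi_1,\phi_2}$ and $\cC_{\cusp}$; Proposition~\ref{bad-Hecke-ops} ensures that $\cC^{\full} \to \cC$ is \'etale at $\mathcal{P}$, legitimating the use of the perfect Hida pairing $\cT_{\mathcal{P}}^{\cusp} \times \mathbf{S}^{\ord}_{\mathcal{P}} \to \varLambda_{\sw(\mathcal{P})}$ of \eqref{eq:cuspidal-duality}. The family $\cF$ determines a functional $\ell_{\cF}(h) := C(\go, h\cdot \cF)$ on $\cT_{\mathcal{P}}^{\cusp}$; the identity $C(\go, \cF) = 1 - \zeta_{\phi_1,\phi_2}\cdot C(\go, \cG)$ shows $\ell_{\cF}(1) \in \varLambda^\times_{\sw(\mathcal{P})}$, while the congruence $\cE_{\phi_1,\phi_2} \equiv \cF \pmod{\zeta_{\phi_1,\phi_2}\cdot \mathbf{M}^{\ord}}$ gives $\ell_{\cF} \equiv \lambda \pmod{\zeta_{\phi_1,\phi_2}}$, and in particular $\ell_{\cF}(\mathcal{J}_{\phi_1,\phi_2, \mathcal{P}}) \subseteq \zeta_{\phi_1,\phi_2}\cdot \varLambda_{\sw(\mathcal{P})}$. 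The perfectness of the Hida pairing then upgrades this containment to the equality $\ell_{\cF}^{-1}(\zeta_{\phi_1,\phi_2}\cdot \varLambda_{\sw(\mathcal{P})}) = \mathcal{J}_{\phi_1,\phi_2, \mathcal{P}}$, so $\ell_{\cF}$ descends to an isomorphism $\cT_{\mathcal{P}}^{\cusp}/\mathcal{J}_{\phi_1,\phi_2, \mathcal{P}} \xrightarrow{\sim} \varLambda_{\sw(\mathcal{P})}/(\zeta_{\phi_1,\phi_2})$; comparison with \eqref{eq:congruencesEisenstein} localized at $\mathcal{P}$ identifies $\cI_{\phi_1,\phi_2} = (\zeta_{\phi_1,\phi_2})$ at $\mathcal{P}$, and letting $\mathcal{P}$ vary over the intersection proves the theorem in $\varLambda_{\cO}[1/p]$. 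The hard part is precisely this final upgrade, where the full strength of Hida duality is essential; it is also where the systematic approach via the fundamental exact sequence~\eqref{eq:fund_seq_ord} supersedes that of Wiles~\cite[Thm.~4.1]{wiles90}, whose explicit cuspidal congruences could not reach the height-one primes corresponding to trivial zeros of $\zeta_{\phi_1,\phi_2}$.
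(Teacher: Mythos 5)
Your forward containment $\cI_{\phi_1,\phi_2}\cdot\varLambda_{\cO}[1/p]\subseteq(\zeta_{\phi_1,\phi_2})$ is sound and agrees in substance with the paper's (which, rather than extracting the first Fourier coefficient globally, localizes at $\cP$ and runs the Hida pairing \eqref{eq:cuspidal-duality} against a cusp form $\cH$ congruent to $\cE_{\phi_1,\phi_2}$ modulo $\zeta_{\phi_1,\phi_2}$). The gap is in the reverse containment: the claimed ``upgrade via perfectness'' is not a valid deduction. The pairing \eqref{eq:cuspidal-duality} identifies $\cT^{\cusp}_{\cP}$ with the $\varLambda_{\sw(\cP)}$-linear dual of $\mathbf{S}^{\ord}_{\cP}$, but $\ell_{\cF}$ is evaluation against one particular cusp form $\cF$, and perfectness of the bilinear form says nothing about the preimage $\ell_{\cF}^{-1}\bigl(\zeta_{\phi_1,\phi_2}\varLambda_{\sw(\cP)}\bigr)$ for that $\cF$. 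What your congruence $\ell_{\cF}\equiv\lambda\pmod{\zeta_{\phi_1,\phi_2}}$ actually yields is a surjective $\varLambda_{\sw(\cP)}$-module map $\cT^{\cusp}_{\cP}/\mathcal{J}_{\phi_1,\phi_2,\cP}\twoheadrightarrow\varLambda_{\sw(\cP)}/(\zeta_{\phi_1,\phi_2})$, which via \eqref{eq:comm_diag_cong_ideal0} merely re-derives $\cI_{\phi_1,\phi_2,\cP}\subseteq(\zeta_{\phi_1,\phi_2})$ — the direction already established. Indeed, the asserted equality $\ell_{\cF}^{-1}(\zeta_{\phi_1,\phi_2}\varLambda_{\sw(\cP)})=\mathcal{J}_{\phi_1,\phi_2,\cP}$ unwinds, through $\ell_{\cF}\equiv\lambda\pmod{\zeta_{\phi_1,\phi_2}}$ and the definition of $\mathcal{J}$ as the image of $\ker\lambda$, to exactly the statement $(\zeta_{\phi_1,\phi_2})\subseteq\cI_{\phi_1,\phi_2,\cP}$ that you are trying to prove; the step is circular.

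The paper's proof of the reverse containment confronts the real obstruction, which your proposal never touches: using freeness of $\cT^{\cusp}_{\cP}$ and the pairing, one obtains a cusp form $\mathcal{G}$ whose Fourier coefficients at \emph{nonzero} ideals agree with those of $\cE_{\phi_1,\phi_2}$ modulo $\cI_{\phi_1,\phi_2,\cP}$, but this gives no a priori control over the constant terms, which is where $\cI$ and $(\zeta)$ could in principle differ. When $\gn_1\neq\go$ the constant term of $\cE_{\phi_1,\phi_2}$ vanishes and the $q$-expansion principle closes the argument. When $\gn_1=\go$ the paper argues by contradiction: forming $\mathcal{F}=(\cE_{\phi_1,\phi_2}-\mathcal{G})/(2^{-d}\zeta_{\phi_1,\phi_2})$ and assuming the inclusion is strict, it deduces that $\mathcal{F}$ specializes to a weight-$\sw(\cP)$ form with vanishing $q$-expansion at nonzero ideals but nontrivial values at the cusps $\infty(\gc)$, and then uses the irreducibility of the Igusa tower and the $S_{\gq}$-action modulo $\varpi$ to force $\phi\equiv\omega_p^{-1}\pmod{\varpi}$, contradicting the running hypothesis. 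Some argument of this kind — something that genuinely addresses the constant-term gap — is indispensable, and your proposal supplies none.
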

\begin{proof} As $\varLambda_{\cO}[1/p]$ is a Dedekind domain, it suffices to show equality after localization at an  arbitrary maximal ideal  $\cP$. 
Localizing the isomorphism in  \eqref{eq:congruencesEisenstein} at  $\cP$, we obtain 
\begin{align}\label{eq:comm_diag_cong_ideal0}
 \cT^{\cusp}_{\cP} / \cJ_{\phi_1,\phi_2,\cP}  \simeq
 \varLambda_{\sw(\cP)}/\cI_{\phi_1,\phi_2,\cP}.
\end{align} 

Choose a sufficiently small open admissible affinoid $\cU$ of $\cW^{\parallel}$ containing  $\cP$. 
The image of $\cE_{\phi_1,\phi_2}$ under the map $\res$ from \eqref{eq:fund_seq_ord} is contained in a free of rank $1$ direct summand in  $ \cO(\cU)[\sP_1(p^n)]^{(\phi_p)}$ having   $\cO(\cU)$-basis  $c_{\phi_1,\phi_2}$. The module  
$\mathbf{M}_{\phi_1,\phi_2}=\res^{-1}(\cO(\cU)\cdot c_{\phi_1,\phi_2})$ is Hecke stable and  fits in a short exact sequence of free $\cO(\cU)$-modules 
\[
0\to \mathbf{S}^{\ord}_{\cU} \to \mathbf{M}_{\phi_1,\phi_2} \xrightarrow{\res} \cO(\cU)\cdot c_{\phi_1,\phi_2} \to 0.
\]
 Recall that $\Spec(\varLambda_{\cO})$ is the irreducible component of $\Spec(\mathfrak{h})$ corresponding to $\cE_{\phi_1,\phi_2}$. Taking the complete localization of the above short exact sequence at $\cP$ viewed as a maximal ideal of $\cT^{\leqslant 0}_{\cU}$, we obtain a short exact sequence of free $\varLambda_{\sw(\cP)}$-modules
\begin{align}\label{eq:fund_seq_eisenstein_component}
0\to \mathbf{S}^{\ord}_{\cU,\cP} \to \mathbf{M}_{\phi_1,\phi_2,\cP} \xrightarrow{\res} \varLambda_{\sw(\cP)}\cdot c_{\phi_1,\phi_2} \to 0.
\end{align}

If   $\zeta_{\phi_1,\phi_2}\notin \cP$ then $ (\zeta_{\phi_1,\phi_2}) =\varLambda_{\sw(\cP)}\supset
\cI_{\phi_1,\phi_2,\cP}$. Otherwise, as $\res(\cE_{\phi_1,\phi_2})\in \zeta_{\phi_1,\phi_2} \cdot \varLambda_{\sw(\cP)}^\times \cdot c_{\phi_1,\phi_2}$ by  Proposition~\ref{const_Eisen_family}, there exists a $\varLambda_{\sw(\cP)}$-adic cuspform $\cH\in \mathbf{S}^{\ord}_{\cU,\cP}$ congruent to $\cE_{\phi_1,\phi_2}$  modulo $\zeta_{\phi_1,\phi_2}$. Using \eqref{eq:cuspidal-duality}, one obtains  a surjective $\varLambda_{\sw(\cP)}$-algebra homomorphism 
\begin{align}\label{eq:cong_module}
\cT^{\cusp}_{\cP}/\cJ_{\phi_1,\phi_2,\cP}\twoheadrightarrow \varLambda_{\sw(\cP)}/ (\zeta_{\phi_1,\phi_2}), \quad h\mapsto C(\go,h\cdot \cH) \mod \zeta_{\phi_1,\phi_2}.
\end{align}
  In view of \eqref{eq:comm_diag_cong_ideal0},  this   precisely shows  the  inclusion of principal ideals $(\zeta_{\phi_1,\phi_2}) \supset \cI_{\phi_1,\phi_2,\cP}  $ in the discrete valuation ring $ \varLambda_{\sw(\cP)}$.  It remains to show the opposite inclusion.

Observe  that  the freeness of $\cT^{\cusp}_{\cP} $ as $\varLambda_{\sw(\cP)}$-module allows us to  lift the isomorphism \eqref{eq:comm_diag_cong_ideal0} to a morphism $\cT^{\cusp}_{\cP} \to \varLambda_{\sw(\cP)}$ of $\varLambda_{\sw(\cP)}$-modules   (though not necessarily to an algebra homomorphism). Using \eqref{eq:cuspidal-duality}, this lift gives rise to a $\varLambda_{\sw(\cP)}$-adic cuspform $\cG$ (not necessarily an eigenform) with Fourier coefficients in $\varLambda_{\sw(\cP)}$,  such that  
$C(\gm,\cG) \equiv C(\gm,\cE_{\phi_1,\phi_2}) \mod \cI_{\phi_1,\phi_2,\cP}$ for all non-zero ideals $\gm \subset \go$. As this does not provide in general a congruence between the constant terms, we are led to split the remainder of the argument in two cases. 
As $\cI_{\phi_1,\phi_2,\cP}  \subset (\zeta_{\phi_1,\phi_2})$, one can define a $\varLambda_{\sw(\cP)}$-adic  family
\begin{align} \label{eq:FP}
 \cF_0=\frac{\cE_{\phi_1,\phi_2}- \cG}{2^{-d} \zeta_{\phi_1,\phi_2}} \in \mathbf{M}_{\phi_1,\phi_2,\cP},
 \end{align}
which provides a section of the map $\res$ in \eqref{eq:fund_seq_eisenstein_component},  i.e.,
$\mathbf{M}_{\phi_1,\phi_2,\cP}=\mathbf{S}^{\ord}_{\cU,\cP} \oplus \varLambda_{\sw(\cP)}\cdot \cF_0$ as $ \varLambda_{\sw(\cP)} $-module. 

Assume first  that $\gn_1\ne\go$. In this case   \eqref{Eisensteinconstanterms} yields 
$C((0),\cE_{\phi_1,\phi_2}) =0$, hence $\cE_{\phi_1,\phi_2} \equiv \cG \mod \cI_{\phi_1,\phi_2,\cP}$ by the $q$-expansion Principle. Consequently, $\res(\cE_{\phi_1,\phi_2})\in \cI_{\phi_1,\phi_2,\cP}\cdot c_{\phi_1,\phi_2}$ which combined with $\res(\cE_{\phi_1,\phi_2})\in \zeta_{\phi_1,\phi_2} \cdot \varLambda_{\sw(\cP)}^\times \cdot c_{\phi_1,\phi_2}$ from Proposition~\ref{const_Eisen_family}, implies that  $(\zeta_{\phi_1,\phi_2})\subset  \cI_{\phi_1,\phi_2,\cP}$.

Suppose now that $\gn_1=\go$. Given $[\gc]\in\Cl_F^+$, it  follows from \eqref{Eisensteinconstanterms} that  the value of $\cE_{\phi_1,\phi_2}$ at the 
‘infinity’ cusp $\infty(\gc)$  of $\overline{\mathscr{X}_\gc}(\mu_{p^n},v)$  equals $2^{-d} \zeta_{\phi_1,\phi_2}(X) \phi_1^{-1}(\gc\gd)\ne 0$ (see \S\ref{sec:122}). Then $ \cF_0$ has   constant term $1$ at $\infty=\infty(\gd^{-1})$ and its 
specialization $F_{\cP}$    in weight $\sw(\cP)$ lies the generalized eigenspace $\mathbf{M}^{\dagger}_{\sw(\cP)}\lsem E_{\cP} \rsem$
attached to the specialization $E_{\cP}$  of  $\cE_{\phi_1,\phi_2}$   in weight $\sw(\cP)$. 
In particular, for any prime  $\mathfrak{q}\nmid \gn p$ the operator $S_\mathfrak{q}$ acts on  
$F_{\cP} \mod \varpi$ as $(\phi_1\phi_2 \omega_p)(\mathfrak{q})$. The operator $S_\mathfrak{q}$  defined in \cite[p.~24]{AIP} is renormalized by the factor 
$\mathrm{N}_{F/\mathbb{Q}}(\mathfrak{q})^{-2}$ to match the determinant formula of the $p$-adic Galois representations described in \cite[Thm.~5.1]{AIP}. 
 Throughout this work (as in \cite[p.~44]{demaria-thesis}) we do not  renormalize
$S_\mathfrak{q}$. 

If the inclusion of principal ideals $\cI_{\phi_1,\phi_2,\cP}  \subset (\zeta_{\phi_1,\phi_2}) $ of $\varLambda_{\sw(\cP)}$ was strict, then:
\begin{enumerate}
\item $C(\gm,F_{\cP}) =0$ for all non-zero ideals $\gm \subset \go$, and   
\item for $[\gc]\in\Cl_F^+$, the value of $F_{\cP}$  at the  cusp $\infty(\gc)$  of 
$\overline{\mathscr{X}_\gc}(\mu_{p^n},v)$  equals $\phi_1^{-1}(\gc\gd)$.  
\end{enumerate}
Inspired by the work of  Emerton \cite[Prop.~1]{Em99} when  $F=\Q$, further extended by  Lafferty \cite[p.~766]{Laff19},   
we now provide a contradiction using a geometrically flavored argument based on the irreducibility of the Igusa tower 
  $\overline{\mathscr{X}_\gc}(\mu_{p^\infty},0)$.  As the special fiber  $\overline{\mathscr{X}_\gc}(\mu_{p^\infty},0)_1$ is geometrically irreducible 
by a theorem of Ribet    \cite{ribet1975p} (see also \cite{Hida09}), the $q$-expansion principle shows that 
  $F_{\cP}$  coincides with its constant term $C((0),F_{\cP})$ seen as a modular form of weight $0$
   (see \cite[Prop.~4.2]{AIS} for more details).   Hence, by definition of the operator $S_\mathfrak{q}$, the value of 
 $S_\mathfrak{q}(C((0),F_{\cP}))$ on $\overline{\mathscr{X}_\gc}(\mu_{p^\infty},0)$ is exactly the value of $C((0),F_{\cP})$ on 
        $\overline{\mathscr{X}_{\gc\gq^{-2}}}(\mu_{p^\infty},0)$.  Combining this  with the fact that $S_\mathfrak{q}$  acts on  
$F_{\cP} \mod \varpi$ as $(\phi_1\phi_2 \omega_p)(\mathfrak{q})$, we find 
        \[(\phi_1\phi_2 \omega_p)(\mathfrak{q})\cdot \phi_1^{-1}(\gc\gd)\equiv \phi_1^{-1}(\gc\gq^{-2}\gd) \pmod{\varpi},\]
 i.e., $\phi=\phi_2 \phi_1^{-1}\equiv \omega_p^{-1}\pmod{\varpi}$. This contradicts our running assumption on $\phi$. 
Hence we conclude that $\cI_{\phi_1,\phi_2,\cP} = (\zeta_{\phi_1,\phi_2}) $.
 \end{proof}

The techniques employed in the above proof, namely the construction of the $\varLambda_{\sw(\cP)}$-adic  form  $\cF_0$ in \eqref{eq:FP}, have the following important consequence.

\begin{prop}\label{prop:hecke_duality} Let $\cP$ be a maximal ideal of the  ordinary Hecke algebra $\mathfrak{h}[1/p]$ corresponding to an  Eisenstein intersection point of $\cE_{\phi_1,\phi_2}$ and $\cC_{\cusp}$, and let  $\cT_{\phi_1,\phi_2,\cP}$ be the quotient of $\cT_{\cP}$ acting faithfully on the $\varLambda_{\sw(\cP)}$-module $\mathbf{M}_{\phi_1,\phi_2,\cP} $ in \eqref{eq:fund_seq_eisenstein_component}. Then, one has 
an isomorphism of $\varLambda_{\sw(\cP)}$-modules
\begin{align}\label{eq:structure_hecke_alg_not_inert_case}
\cT_{\phi_1,\phi_2,\cP}  \simeq \cT_{\cP}^{\cusp} \times_{\varLambda_{\sw(\cP)}/(\zeta_{\phi_1,\phi_2})} \varLambda_{\sw(\cP)}, 
\end{align}
and a perfect pairing of $\Lambda_{\sw(\cP)}$-modules
\begin{align}\label{eq:non-cusp-pairing}
\cT_{\phi_1,\phi_2,\cP}\times \mathbf{M}_{\phi_1,\phi_2,\cP}   \to \varLambda_{\sw(\cP)}, \quad (h,\cG)\mapsto C(\go,h\cdot \cG). 
\end{align}
In particular,  there exists an element $h_0 \in \cT_{\phi_1,\phi_2,\cP}$ such that  for any $\cG \in \mathbf{M}_{\phi_1,\phi_2,\cP}$, we have 
\[ C(\go,h_0 \cdot \cG) \cdot \sum_{[\gc]\in\Cl_F^+}  \phi_1^{-1}(\gc\gd)  [\gc] = C((0), \cG).\]

\end{prop}
\begin{proof} The isomorphism \eqref{eq:structure_hecke_alg_not_inert_case} follow from the localization of  
\eqref{eq:congruencesEisenstein} at $\cP$ and Theorem~\ref{congruencemodule-general}.  

We assume first that $\gn_1 \ne \go$. By Nakayama's lemma,  to show the   perfectness of the pairing 
\eqref{eq:non-cusp-pairing} of free $\varLambda_{\sw(\cP)}$-modules, it suffices to establish that it is   right  injective residually and left injective. As $C((0),\cE_{\phi_1,\phi_2}) = 0$ by  \eqref{Eisensteinconstanterms} we deduce that  $C((0),\cG) = 0$ for all $\cG \in \mathbf{M}_{\phi_1,\phi_2,\cP}$. The left injectivity in \eqref{eq:non-cusp-pairing} then follows from 
$q$-expansion principle and from the formula $C(\go , h\cdot (T_\gq   \cdot \cG)) = C(\gq, h\cdot \cG)$, 
where  $\mathfrak{q}$ is any non-zero integral ideal of $\go$ (see \cite[(4.67)]{hida-PAF}), since  by definition
$\cT_{\phi_1,\phi_2,\cP}$ acts faithfully on  $\mathbf{M}_{\phi_1,\phi_2,\cP}$. For  right injectivity of the reduction of \eqref{eq:non-cusp-pairing} modulo  the maximal ideal of 
$\varLambda_{\sw(\cP)}$, we  use again the vanishing of the constant term, the $q$-expansion principle and the  formula $C(\go , T_\gq   \cdot \overline{\cG}) = C(\gq,  \overline{\cG})$ for $\overline{\cG}$ in the generalized 
eigenspace $\mathbf{M}_{\phi_1,\phi_2,\cP}\otimes_{\varLambda_{\mathrm{\sw}(\cP)}}\overline{\Q}_p$. 

If $\gn_1=\go$, we consider a $\varLambda_{\sw(\cP)}$-basis of $\mathbf{M}_{\phi_1,\phi_2,\cP}$ adapted to the direct sum decomposition 
$\mathbf{S}^{\ord}_{\cU,\cP} \oplus \varLambda_{\sw(\cP)}\cdot \cF_0$ and a $\varLambda_{\sw(\cP)}$-basis of 
$\cT_{\phi_1,\phi_2,\cP}$ obtained by completing a lift of a basis of $\cT_{\cP}^{\cusp}$ by the element 
$h_0=(0,2^{-d} \zeta_{\phi_1,\phi_2})$ which belongs to $\cT_{\phi_1,\phi_2,\cP}$ by \eqref{eq:structure_hecke_alg_not_inert_case}.
The pairing \eqref{eq:non-cusp-pairing} is represented by a block upper triangular matrix with coefficients in  $\varLambda_{\sw(\cP)}$, 
because $h_0$ annihilates $\mathbf{S}^{\ord}_{\cU,\cP}$. Moreover, by \eqref{eq:cuspidal-duality} the matrix has 
an  invertible block of co-dimension $1$. The perfectness of the pairing 
\eqref{eq:non-cusp-pairing} is then equivalent to the last diagonal coefficient being in $\varLambda_{\sw(\cP)}^\times$. 
By construction, this coefficient is given by
\[C(\go,h_0\cdot \cF_0)=C\left(\go,h_0\cdot \frac{\cE_{\phi_1,\phi_2}}{2^{-d} \zeta_{\phi_1,\phi_2}}\right)= 1. \qedhere \]

\end{proof}  
          
\section{$\sL$-invariants and Galois deformations}\label{Deformations-and-mainthm}
Let $\phi$ be a finite order totally odd  character of $\rG_{F}$. Henceforth, we assume that $\phi$ is {\it irregular} at the prime $\gp$ of $F$ determined by $\iota_p$, i.e.,  $\phi$ has trivial restriction to $\rG_{\gp}$.
In this section, we first review Darmon--Dasgupta--Pollack's definition \cite{DDP} of an $\sL$-invariant  attached to $\phi$ at $\gp$
and  we show its non-vanishing.  We then define a nearly-ordinary deformation functor for  a residually reducible representation
and compute its tangent space using Galois cohomology under the assumption  that the Leopoldt defect $\delta_{F,p}$ vanishes.

\subsection{Non-vanishing of an  $\sL$-invariant}\label{sec:21}
Let $w_0\mid \gp$ be  the prime of $H=\overline{\Q}^{\ker(\phi)}$ singled out by $\iota_p$.  As $\gp$ splits completely in  $H$, i.e., $F_\gp=H_{w_0}$, the  group $\Gal(H/F)$ acts freely and transitively on the set of primes of $H$ above $\gp$ and we have a canonical identification $ \Sigma_{\gp} \simeq \Hom_{\Q_p-\text{alg}}(H_{w_0},\overline{\Q}_p)$ via $\iota_p$.   Consider the homomorphisms
\begin{align}
\begin{split}
\ord_{\gp}& :\cO_{H}[\tfrac{1}{\gp}]^\times\otimes \overline{\Q}_p  \longrightarrow \overline{\Q}_p, 
\quad u\otimes 1\mapsto \ord_{w_0}(u), \text{ and } \\
\log_{\gp} &:\cO_{H}[\tfrac{1}{\gp}]^\times\otimes \overline{\Q}_p  \longrightarrow \overline{\Q}_p,
\quad u\otimes 1\mapsto \log_p( \rN_{F_{\gp}/\Q_p}(\iota_p(u)))=
\sum_{\sigma\in \Sigma_\gp} \log_p(\sigma(\iota_p(u))), 
\end{split}
\end{align}
where  $\cO_{H}[\frac{1}{\gp}]^\times$ denotes  the group of $\gp$-units and we recall that $\iota_p$ embeds $H$ into $H_{w_0}$. 
As in \cite[(7)]{DDP}, the  $p$-adic number 
\begin{align}\label{eq:L-inv}
\sL^{\gp}(\phi)=-\frac{\log_{\gp}(u_{\phi})}{\ord_{\gp}(u_{\phi})}\in \overline{\Q}_p,
\end{align}
depends only on $\phi$ and $\gp$, and not on the particular choice of a basis 
 $u_{\phi}$ of the line $(\cO_{H}[\frac{1}{\gp}]^\times\otimes \overline{\Q}_p)[\phi]$.  
 If $\gp\in\Sigma_p$ is the  unique irregular prime  for $\phi$, then 
$\sL^{\gp}(\phi)$ coincides with Gross'   $\sL$-invariant  $\sL(\phi)$  from  \cite{gross81}.   
We next describe an explicit basis of $\rH^1(F,\phi)$. One has an exact sequence \begin{align}\label{eq:24}
0\to \Hom(\rG_H,\overline{\Q}_p)\to \Hom\left((\cO_H\otimes \Z_p)^\times, \overline{\Q}_p\right)\to \Hom(\cO_H^\times,\overline{\Q}_p),
\end{align}
of left $\overline{\Q}_p[\Gal(H/F)]$-modules. Consider the coset decomposition as in \cite[(3)]{BDF21}
\begin{align}\label{sigma-tilde} 
 \Hom_{\Q-\text{alg}}(H,\overline{\Q})
= \coprod_{\sigma\in \Sigma} \tilde{\sigma}\cdot  \Gal(H/F), \text { where } 
\end{align}
  \begin{itemize}
 \item for  $\sigma\in\Sigma_{\gp}$, 
 $\tilde{\sigma}$ denotes the unique extension to $H$ determining via $\iota_p$ the place $w_0$,
  
 \item for  $\sigma\in \Sigma\setminus\Sigma_{\gp}$,  $\tilde{\sigma}$ denotes an  an  arbitrary extension  of $\sigma$ to $H$.  
\end{itemize}
As in  \cite[\S4]{bet18}, using  the natural identification  $\displaystyle \Hom_{\Q-\text{alg}}(H,\overline{\Q}_p)=\coprod_{w|p} 
\Hom_{\Q_p-\text{alg}}(H_w,\overline{\Q}_p)$, 
 we consider  the basis  $\left(\log_p(\iota_p \circ \tilde\sigma\circ g^{-1}(\cdot))\right)_{\sigma\in \Sigma, g\in  \Gal(H/F) }$ of 
 $\Hom\left((\cO_H\otimes \Z_p)^\times,\overline{\Q}_p\right)$. 
 As $(\cO_H^\times \otimes \overline{\Q}_p)[\phi]=\{0\}$, it follows from \eqref{eq:24} that  $\Hom(\rG_H,\overline{\Q}_p)[\phi^{-1}]\simeq\Hom\left((\cO_H\otimes \Z_p)^\times,\overline{\Q}_p\right)[\phi^{-1}]$, yielding the following: 
\begin{lemma}\label{lemma-21}
The space $\rH^1(F,\phi)\simeq \Hom(\rG_H,\overline{\Q}_p)[\phi^{-1}]$ is $d$-dimensional with  basis 
$([\eta_{\phi}^{(\tilde{\sigma})}])_{\sigma\in \Sigma}$, where $\eta_{\phi}^{(\tilde{\sigma})}$ is the cocycle  whose  image in $\Hom\left((\cO_H\otimes \Z_p)^\times,\overline{\Q}_p\right)$ equals \[ (u \otimes v) \mapsto \displaystyle\sum_{g\in \Gal(H/F)} \phi(g) \log_p \left( \iota_p \circ  \tilde\sigma\circ g^{-1} (u) \cdot v\right).\] 
Moreover $[\eta_{\phi}^{(\tilde{\sigma})}]$ is only ramified at the prime 
of $F$ determined by $\sigma$ via the decomposition $\Sigma=\coprod_{ \gq\in\Sigma_p} \Sigma_{\gq}$. 
\end{lemma}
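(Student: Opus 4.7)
The plan is to combine inflation--restriction for $H/F$ with the class field theory exact sequence \eqref{eq:24}, and then to read off the $\phi^{-1}$-isotypic component from the regular representation structure of the basis of logarithmic characters on $(\cO_H\otimes\Z_p)^\times$.

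First, since $\Gal(H/F)$ is finite and $\bar\Q_p$ has characteristic zero, its higher cohomology with $\bar\Q_p$-coefficients vanishes. Inflation--restriction therefore gives an isomorphism $\rH^1(F,\phi)\xrightarrow{\sim}\rH^1(H,\phi)^{\Gal(H/F)}$, and because $\phi$ is trivial on $\rG_H$ the right-hand side is the $\phi^{-1}$-isotypic subspace of $\Hom(\rG_H,\bar\Q_p)$ for the natural conjugation action of $\Gal(H/F)$. Next, I would take the $\phi^{-1}$-isotypic part of \eqref{eq:24}; the given vanishing $(\cO_H^\times\otimes\bar\Q_p)[\phi]=0$ kills the rightmost term, yielding
\[\rH^1(F,\phi)\simeq \Hom(\rG_H,\bar\Q_p)[\phi^{-1}]\simeq \Hom((\cO_H\otimes\Z_p)^\times,\bar\Q_p)[\phi^{-1}].\]

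For the dimension count and the explicit basis I would exploit the decomposition of $\Hom((\cO_H\otimes\Z_p)^\times,\bar\Q_p)$ with $\bar\Q_p$-basis $\{\log_p\circ\iota_p\circ\tau\}_\tau$, as $\tau$ ranges over the $\Q$-algebra embeddings of $H$ into $\bar\Q_p$. Via the coset decomposition \eqref{sigma-tilde}, the set of such $\tau$ partitions into $d$ free $\Gal(H/F)$-orbits indexed by $\sigma\in\Sigma$, so the ambient space is isomorphic to $d$ copies of the regular representation of $\Gal(H/F)$. Its $\phi^{-1}$-isotypic component is therefore $d$-dimensional, and in each copy the $\phi^{-1}$-line is spanned by the $\phi$-weighted orbit sum, which reads off as $\eta_\phi^{(\tilde\sigma)}$. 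This gives both the claimed dimension and the claimed basis.

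For the ramification assertion, each summand $\log_p\circ\iota_p\circ\tilde\sigma\circ g^{-1}$ is trivial on $\cO_{H_{w'}}^\times$ whenever $w'$ is not the prime of $H$ singled out by the embedding $\iota_p\circ\tilde\sigma\circ g^{-1}$. As $g$ varies over $\Gal(H/F)$, the resulting primes all lie above the prime $\gq$ of $F$ determined by $\sigma\in\Sigma_\gq$ via the fixed partition of $\Sigma$. Via local class field theory this means $\eta_\phi^{(\tilde\sigma)}$, viewed as a character of $\rG_H^{\mathrm{ab}}$, is trivial on the inertia subgroup at every prime of $H$ above any $\gq'\neq \gq$; inflating back to $\rG_F$ shows the corresponding class in $\rH^1(F,\phi)$ is unramified outside $\gq$. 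The main obstacle I anticipate is normalization: one has to match the conjugation action on $\rG_H^{\mathrm{ab}}$ with the Galois action on the local units through reciprocity so that $\phi$ rather than $\phi^{-1}$ appears in the averaging formula. This is a direct but somewhat finicky bookkeeping check; once it is done, the rest is immediate from the regular representation structure.
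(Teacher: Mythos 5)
Your proof is correct and takes essentially the same route as the paper's (implicit) argument: take the $\phi^{-1}$-isotypic part of the sequence \eqref{eq:24}, use the vanishing of $(\cO_H^\times\otimes\bar\Q_p)[\phi]$ and the coset decomposition \eqref{sigma-tilde} to exhibit $\Hom((\cO_H\otimes\Z_p)^\times,\bar\Q_p)$ as $d$ copies of the regular representation, and read off the $\phi$-weighted averages as the eigenvectors. The normalization check you flag does resolve as you anticipate, and your inflation--restriction treatment of ramification at primes $\gq'\ne\gq$ is the right way to reduce the claim to triviality of $\eta_\phi^{(\tilde\sigma)}$ on the local inertia $\rI_{w'}\subset\rG_H$ for $w'\nmid\gq$.
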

As $(\phi-\mathbf{1})$ is a basis of the coboundaries and $\phi(\tau)=-1\ne 1$, where $\tau \in \rG_F$ denotes a  complex conjugation,  
each  $[\eta]\in \rH^1(F,\phi)$ is represented by a unique cocycle $\eta$ such that $\eta(\tau)=0$. We will fix $\tau$ and $\eta$ throughout this article and make no further comment. 
The  image of $\eta_{\mathbf{1}}=\log_p\circ \epsilon_{\cyc} \in \Hom(\rG_\Q,\overline{\Q}_p)$, where $\epsilon_{\cyc}$ denotes the $p$-adic cyclotomic character,  under the natural restrictions
\[
\Hom(\rG_\Q,\overline{\Q}_p)\hookrightarrow \Hom(\rG_H,\overline{\Q}_p)
\xrightarrow{\eqref{eq:24}} \Hom\left((\cO_H\otimes \Z_p)^\times,\overline{\Q}_p\right)
\] 
corresponds to $\log_p\circ \rN_{H/\Q}$. The following cohomological interpretation of  $\sL^{\gp}(\phi)$  will be 
used in \S\ref{sec:34}. 

\begin{prop}\label{L-invariant}
Let $\loc_{\gp}$ denote the restriction map of $\rG_F$ to $\rG_{\gp}$. 
\begin{enumerate}
\item There exists a unique  $[\eta_{\phi}^{(\gp)}]\in \rH^1(F,\phi)$ unramified outside $\gp$ and such that \[ \loc_{\gp}([\eta_{\phi}^{(\gp)}-\eta_{\mathbf{1}}]) \in \ker \left( \Hom(\rG_{\gp},\overline{\Q}_p) \to  \Hom(\rI_{\gp},\overline{\Q}_p) \right).\] 
\item Under the local Artin map, one has the following equality in $\Hom(F_\gp^\times,\overline{\Q}_p)$:
\[
 \loc_{\gp}([\eta_{\phi}^{(\gp)}-\eta_{\mathbf{1}}] ) = \sL^{\gp}(\phi) \ord_{\gp}.
\]
\item For all $\sigma\in \Sigma\setminus \Sigma_{\gp}$,  $\loc_{\gp}([\eta_{\phi}^{(\tilde{\sigma})}]) \in \overline{\Q}_p^\times \cdot \ord_{\gp}$.
\item One  has $\sL^{\gp}(\phi)  \ne 0$.
\end{enumerate}
\end{prop}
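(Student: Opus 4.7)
The strategy is to analyze $\loc_{\gp}$ explicitly on the basis of $\rH^1(F,\phi)$ from Lemma~\ref{lemma-21} using local Artin reciprocity $F_{\gp}^{\times}\simeq\rG_{\gp}^{\mathrm{ab}}$. Under this identification $\Hom^{\mathrm{cont}}(F_{\gp}^{\times},\bar\Q_p)$ splits as $\bar\Q_p\cdot\ord_{\gp}\oplus\Hom^{\mathrm{cont}}(\cO_{F_{\gp}}^{\times},\bar\Q_p)$ (unramified plus inertia part), and the second summand admits $\{\log_p\circ\sigma\}_{\sigma\in\Sigma_{\gp}}$ as a $\bar\Q_p$-basis. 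The first task is to compute the inertia component of each $\loc_{\gp}([\eta_{\phi}^{(\tilde{\sigma})}])$ by restricting the formula of Lemma~\ref{lemma-21} to the $w_0$-factor of $(\cO_H\otimes\Z_p)^{\times}=\prod_{w\mid p}\cO_{H,w}^{\times}$. A pair $(\sigma,g)$ contributes nontrivially precisely when $\iota_p\circ\tilde{\sigma}\circ g^{-1}$ factors through $H_{w_0}$; since $\gp$ splits completely in $H$, $\Gal(H/F)$ acts freely and transitively on primes above $\gp$ while preserving the primes above each $\gq\in\Sigma_p\setminus\{\gp\}$ within their own orbit. Hence for $\sigma\in\Sigma_{\gp}$ only $g=\mathrm{id}$ contributes, yielding $\log_p\circ\sigma$, whereas for $\sigma\notin\Sigma_{\gp}$ no $g$ contributes, yielding $0$. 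A parallel computation gives $\loc_{\gp}(\eta_{\mathbf{1}})|_{\cO_{F_{\gp}}^{\times}}=\log_p\circ\rN_{F_{\gp}/\Q_p}=\sum_{\sigma\in\Sigma_{\gp}}\log_p\circ\sigma$.

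Part~(i) then follows immediately: $[\eta_{\phi}^{(\gp)}]=\sum_{\sigma\in\Sigma_{\gp}}[\eta_{\phi}^{(\tilde{\sigma})}]$ is the unique combination whose inertia image at $\gp$ matches that of $\eta_{\mathbf{1}}$, with uniqueness provided by the $\bar\Q_p$-linear independence of the $\log_p\circ\sigma$. For~(iii), the inertia computation shows that $\loc_{\gp}([\eta_{\phi}^{(\tilde{\sigma})}])\in\bar\Q_p\cdot\ord_{\gp}$ whenever $\sigma\notin\Sigma_{\gp}$; to see that the coefficient is nonzero one evaluates $\eta_{\phi}^{(\tilde{\sigma})}$ at $\Frob_{w_0}$ (which lies in $\rG_H$, since $\gp$ splits completely in $H$) by representing $\Frob_{w_0}$ inside $(\cO_H\otimes\Z_p)^{\times}/\cO_H^{\times}$ via global Artin reciprocity; the resulting value is concentrated on the primes above the $\gq\in\Sigma_p$ with $\sigma\in\Sigma_{\gq}$, producing a nonzero contribution controlled by $\phi$-values on $\Gal(H/F)$.

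For~(ii), the plan is to interpret $[\eta_{\phi}^{(\gp)}]$ as a Kummer-style cocycle attached to the $\gp$-unit $u_{\phi}$ generating $(\cO_H[\tfrac{1}{\gp}]^{\times}\otimes\bar\Q_p)[\phi]$. The resulting cocycle, built from $\log_p$ at the places above $p$ together with $\ord_{\gp}$, has inertia part $\ord_{\gp}(u_{\phi})\cdot\sum_{\sigma\in\Sigma_{\gp}}\log_p\circ\sigma$ and Frobenius value $\log_{\gp}(u_{\phi})$. Matching with~(i) identifies this cocycle with $\ord_{\gp}(u_{\phi})\cdot[\eta_{\phi}^{(\gp)}]$; after subtracting $\eta_{\mathbf{1}}$ (whose Frobenius value is $\log_p(p)=0$) one obtains $\loc_{\gp}([\eta_{\phi}^{(\gp)}-\eta_{\mathbf{1}}])=\sL^{\gp}(\phi)\cdot\ord_{\gp}$ by the definition in~\eqref{eq:L-inv}.

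The genuinely hard part is~(iv). The denominator $\ord_{\gp}(u_{\phi})\ne 0$ is immediate since $(\cO_H^{\times}\otimes\bar\Q_p)[\phi]=0$, a consequence of Dirichlet's unit theorem combined with the total oddness of $\phi$. The non-vanishing of the numerator $\log_{\gp}(u_{\phi})=\sum_{\sigma\in\Sigma_{\gp}}\log_p(\sigma(\iota_p(u_{\phi})))$ is a genuine $p$-adic linear-independence statement for logarithms of specific algebraic $\gp$-units in $H$. In the Galois case, when $H/\Q$ is abelian, this follows from Brumer's theorem on the $\bar\Q$-linear independence of $p$-adic logarithms of multiplicatively independent algebraic numbers, and this transcendence input is the analytic heart of the proposition.
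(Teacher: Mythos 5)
Your treatment of part (i) is sound and matches the paper's argument: restricting the $\Hom((\cO_H\otimes\Z_p)^\times,\bar\Q_p)$ expression of each $\eta_\phi^{(\tilde\sigma)}$ to the $w_0$-factor and using that $\Gal(H/F)$ acts simply transitively on the places above $\gp$ gives exactly the computation the paper leaves implicit, and the conclusion $[\eta_\phi^{(\gp)}]=\sum_{\sigma\in\Sigma_\gp}[\eta_\phi^{(\tilde\sigma)}]$ with uniqueness from linear independence is correct. The rest has gaps.

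For (iii), the non-vanishing of the $\ord_\gp$-coefficient is the entire content and you merely assert it (``producing a nonzero contribution controlled by $\phi$-values on $\Gal(H/F)$''). The paper computes $\eta_\phi^{(\tilde\sigma)}(\Frob_\gp)=-\sum_{g}\phi(g)\log_p(\iota_p\circ\tilde\sigma\circ g^{-1}(u_0))$ by evaluating against a chosen $\gp$-unit $u_0$ via the localization exact sequence \eqref{eq:26}, and then invokes the Baker--Brumer theorem after checking $\Z$-linear independence of the $(g^{-1}(u_0))_{g}$ by inspecting valuations. This transcendence input is not optional, and your sketch contains no substitute for it. For (ii), the ``Kummer-style cocycle attached to $u_\phi$'' is never constructed --- there is no natural class in $\rH^1(F,\phi)$ (as opposed to $\rH^1(F,\phi(1))$) built directly out of a $\gp$-unit --- and the asserted local invariants (inertia part $\ord_\gp(u_\phi)\sum_{\sigma\in\Sigma_\gp}\log_p\circ\sigma$, Frobenius value $\log_\gp(u_\phi)$, unramified away from $\gp$) are unproven; taken at face value they moreover produce $\eta_\phi^{(\gp)}(\Frob_\gp)=\log_\gp(u_\phi)/\ord_\gp(u_\phi)=-\sL^\gp(\phi)$, the wrong sign. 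The paper instead deduces $(\eta_\phi^{(\gp)}-\eta_\mathbf{1})(u_0)=0$ directly from \eqref{eq:26} and unwinds the local terms, so the minus sign emerges from the global constraint rather than from an ad hoc cocycle construction.

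Part (iv) has the most serious gap: you restrict to ``the Galois case, when $H/\Q$ is abelian,'' but Proposition~\ref{L-invariant}(iv) is unconditional --- and is used unconditionally throughout \S\ref{sec:03} and \S\ref{sec:04} --- whereas the Galois hypothesis enters only in Proposition~\ref{non-vanishing} for the stronger claim $\sL(\phi)+\sL(\phi^{-1})\ne 0$. Moreover, even under that hypothesis $H/\Q$ is merely metabelian, not abelian. You correctly identify the numerator $\log_\gp(u_\phi)$ as the crux and Brumer's theorem as the right tool, but you supply no argument; the paper proves injectivity of Gross's $\bar\Q$-linear map $\lambda_\gp:\cO_H[\tfrac{1}{\gp}]^{\times,\tau=-1}\otimes\bar\Q\hookrightarrow\bar\Q_p[\Gal(H/F)]$ (Baker--Brumer combined with a valuation argument on the $\tau=-1$ eigenspace, following Gross) and reads off the $\phi$-component, and that mechanism is what makes the result hold in full generality.
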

\begin{proof}
(i) By Lemma~\ref{lemma-21},  any $[\eta]\in \rH^1(F,\phi)$ unramified outside $\gp$ can be written as $\displaystyle \sum_{\sigma \in \Sigma_{\gp}} h_\sigma [\eta_{\phi}^{(\tilde{\sigma})}]$ for some $h_\sigma\in \overline{\Q}_p$. Moreover, the condition $\eta_{|\rI_{\gp}}=\eta_{\mathbf{1}|\rI_{\gp}}$ yields  $h_{\sigma}=1$ for all $\sigma\in \Sigma_{\gp}$ as required.

 (ii) We follow the proof  of \cite[Prop.~2.4]{BDP}.  Choose $u_0\in\cO_{H}[\tfrac{1}{w_0}]^\times$ having non-zero valuation  at $w_0$, and  $0$ at all other finite places.  
Then $u_{\phi}=\sum_{g\in \Gal(H/F)} \phi(g)  g^{-1}(u_0)$  is a basis of the line $(\cO_{H}[\frac{1}{\gp}]^\times\otimes \overline{\Q}_p)[\phi]$.  Consider the exact sequence 
\begin{align}\label{eq:26}
0\to \Hom(\rG_H,\overline{\Q}_p)
\xrightarrow{\left(\mathrm{loc}_{w_0},(\mathrm{loc}_{w}^\circ)_{w\ne w_0}\right)}
  \Hom\left(
 H_{w_0}^\times \times
\prod_{w_0 \ne w|p} \cO_{H_w}^\times , \overline{\Q}_p\right)\to \Hom\left(\cO_H\left[\tfrac{1}{w_0}\right]^\times,\overline{\Q}_p\right),  
\end{align}
coming from  localization at $p$ of the dual of the Artin reciprocity map and the diagonal inclusion, from which we deduce 
\begin{align}\label{Linveq}
0=\left(\eta_{\phi}^{(\gp)}-\eta_{\mathbf{1}}\right)(u_0)= \left(\mathrm{loc}_{w_0}(\eta_{\phi}^{(\gp)}-\eta_{\mathbf{1}}) \right) (u_0) + \sum_{w_0\ne w \mid p} \left(\mathrm{loc}^\circ_{w}(\eta_{\phi}^{(\gp)}-\eta_{\mathbf{1}}) \right) (u_0).\end{align}

It follows from (i) and the local-global compatibility in class field theory  that
\begin{align*}
&\ord_{w_0}(u_0)\cdot (\eta_{\phi}^{(\gp)}-\eta_{\mathbf{1}})(\Frob_{\gp})
= \mathrm{loc}_{w_0}(\eta_{\phi}^{(\gp)}-\eta_{\mathbf{1}})  (u_0)\overset{\eqref{Linveq}}{=} \sum_{w_0\ne w \mid p} \mathrm{loc}^\circ_{w}(\eta_{\mathbf{1}}-\eta_{\phi}^{(\gp)})  (u_0)\\
&=	\sum_{\sigma\in\Sigma_{\gp}} \sum_{\mathbf{1} \ne g \in \Gal(H/F)}(1-\phi(g))  \log_p( \iota_p \circ \tilde\sigma\circ g^{-1}(u_0)) +\sum_{\sigma\in \Sigma\setminus \Sigma_{\gp}} \sum_{g\in \Gal(H/F)}  \log_p(\iota_p \circ \tilde\sigma\circ g^{-1}(u_0)) 
 \\ & =\log_p\left( \rN_{H/\Q}(u_0) \right)	- \sum_{\sigma\in\Sigma_{\gp}} \sum_{g  \in \Gal(H/F)}(\phi(g))  \log_p( \iota_p \circ \tilde\sigma\circ g^{-1}(u_0)) =-\log_{\gp}(u_{\phi})= 
\ord_{w_0}(u_{\phi}) \cdot \sL^{\gp}(\phi), 
\end{align*}
where we have used that $\log_p\left( \rN_{H/\Q}(u_0)\right)=0$ as $\rN_{H/\Q}(u_0) \in \pm p^{\Z}$.

(iii) For each $\sigma\in \Sigma\setminus \Sigma_\gp$, we have seen in Lemma~\ref{lemma-21} that $\eta_{\phi}^{(\tilde{\sigma})}$ is unramified at $\gp$. A similar computation to the one above shows that
\[
\eta_{\phi}^{(\tilde{\sigma})}(\Frob_{\gp})
=-\sum_{g\in \Gal(H/F)} \phi(g)\log_p( \iota_p \circ \tilde\sigma\circ g^{-1}(u_0)),
\]
which  is non-zero by the Baker--Brumer Theorem, as the elements $( g^{-1}(u_0))_{g \in \Gal(H/F)}$ are $\Z$-linearly independent (this follows from examining the valuations at the $[H:F]$ places of $H$ above  $\gp$). 

(iv) The proof mixes ingredients all present in Gross \cite{gross81}. 
For  $w$ a  place of $H$ above $\gp$ and $u\in H_w^\times$,   we let 
$|u|^\circ_{w}=q_{w}^{-\mathrm{ord}_{w}(u)} \mathrm{N}_{H_w/\Q_p}(u)$ denote its  local absolute value, where $q_w$ is the order of the residue field 
 (see \cite[(1.8)]{gross81} where it is denoted by $||u||_{w,p}$).   Exactly as in 
\cite[Prop.~1.16]{gross81}, one checks using the Baker--Brumer Theorem,  the injectivity of the $\overline{\Q}$-linear  map: 
 \begin{align*}
 \lambda_{\gp}:  \cO_H\left[\tfrac{1}{\gp} \right]^{\times,\tau=-1} \otimes \overline{\Q} & \hookrightarrow \overline{\Q}_p[\Gal(H/F)] \\ 
 u \otimes 1 & \mapsto  \sum_{g \in \Gal(H/F)} \log_p(|\iota_p\circ g^{-1}(u)|^\circ_{w_0})[g]= \sum_{g \in \Gal(H/F)} \log_{\gp}(g^{-1}(u))[g],  
 \end{align*}
where  $\cO_H\left[\tfrac{1}{\gp} \right]^{\times,\tau=-1} $ is the submodule on which the complex conjugation $\tau$ acts by $-1$.
As $\lambda_{\gp}$ is $\Gal(H/F)$-equivariant, taking the $\phi$-isotypic component (note that  $\phi$ is  totally odd), yields
\[ 0\ne \lambda_{\gp}(u_{\phi})=\ord_{w_0}(u_0) \sL^{\gp}(\phi) 
\sum_{g\in \Gal(H/F)} \phi(g^{-1})[g]. \qedhere\]
\end{proof}

\begin{prop}\label{non-vanishing}
 Assume that there is a unique prime $\gp$ in $F$ above $p$, and that  
  the fields $F$ and $H$ are both Galois. Then $\sL(\phi)   + \sL(\phi^{-1})   \ne 0$.

\end{prop}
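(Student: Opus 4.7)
The plan is to express $\sL^{\gp}(\phi)+\sL^{\gp}(\phi^{-1})$ as an explicit linear combination of $p$-adic logarithms of Galois conjugates of a $\gp$-unit of $H$, and to rule out its vanishing by combining Brumer's $p$-adic analogue of Baker's theorem with the Galois structure of $H/\Q$. Since $\gp$ is the unique prime of $F$ above $p$ and splits completely in $H$ (the latter because $\phi(\gp)=1$), the hypothesis that both $F/\Q$ and $H/\Q$ are Galois yields a semidirect product decomposition $\Gal(H/\Q)=D\rtimes D_{w_0}$, where $D=\Gal(H/F)$ is cyclic of order $n=[H:F]$ and $D_{w_0}=\Gal(F_{\gp}/\Q_p)\simeq\Gal(F/\Q)$ is the decomposition group at $w_0$, of order $d$. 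Replacing any initial $w_0$-uniformising $\gp$-unit by its norm down to $H^{D_{w_0}}$, I may assume $u_0\in\cO_H[1/w_0]^\times$ is $D_{w_0}$-fixed, has $\ord_{w_0}(u_0)>0$, and has trivial valuation at all other finite places.

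Normalising $u_\phi=\sum_{g\in D}\phi(g)\,g^{-1}(u_0)$ and $u_{\phi^{-1}}$ analogously, the fact that $u_0$ is supported only at $w_0$ forces $\ord_\gp(u_\phi)=\ord_\gp(u_{\phi^{-1}})=\ord_{w_0}(u_0)$, whence by the definition of the $\sL$-invariants
\[\sL^{\gp}(\phi)+\sL^{\gp}(\phi^{-1})=-\frac{1}{\ord_{w_0}(u_0)}\sum_{g\in D}\bigl(\phi(g)+\phi(g)^{-1}\bigr)L_g,\]
with $L_g=\log_p\bigl(\rN_{H_{g(w_0)}/\Q_p}(u_0)\bigr)$. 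Parameterising the $d$ embeddings $H\hookrightarrow\bar\Q_p$ inducing the place $g(w_0)$ by the coset $D_{w_0}g^{-1}$, and using $D_{w_0}$-invariance of $u_0$, one finds that $L_g$ depends only on the $D_{w_0}$-conjugacy orbit $O_i\ni g^{-1}$ of $D$: more precisely $L_g=(d/s_i)\tilde\ell_i$ with $s_i=|O_i|$, $\tilde\ell_i=\sum_{k\in O_i}\ell_k$ and $\ell_k=\log_p(\iota_p(k(u_0)))$.

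The decisive input is Brumer's theorem applied to the $n$ algebraic numbers $\{k(u_0):k\in D\}\subset\bar\Q^\times$. A direct valuation analysis, using that $D$ acts simply transitively on the primes of $H$ above $\gp$ and that $u_0$ is supported only at $w_0$, shows that any multiplicative relation $\prod_{k\in D}k(u_0)^{n_k}\in p^{\Q}\cdot(\text{roots of unity})$ forces $n_k$ to be constant in $k$; hence the only such relation (modulo roots of unity and $p^{\Q}$) is a scalar multiple of $\prod_{k\in D}k(u_0)=\pm p^{\ord_{w_0}(u_0)}$, which itself follows from $\rN_{H/\Q}(u_0)=\pm p^{d\ord_{w_0}(u_0)}$ combined with the $D_{w_0}$-invariance of $u_0$. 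Brumer then gives $\dim_{\bar\Q}\mathrm{span}\{\ell_k\}_{k\in D}=n-1$, so that the orbit sums $(\tilde\ell_i)_{i=1}^r$ have $\bar\Q$-rank $r-1$, the sole relation being $\sum_i s_i\tilde\ell_i=0$.

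Setting $S_i=\sum_{g\in O_i}(\phi(g)+\phi(g)^{-1})$, one obtains $\sum_g(\phi(g)+\phi(g)^{-1})L_g=d\sum_i(S_i/s_i)\tilde\ell_i$, which would vanish only if $S_i/s_i$ were independent of $i$. The trivial orbit $O_1=\{1\}$ would force this common value to be $2$, but for any orbit $O_i\ne\{1\}$, fixing an archimedean embedding $\bar\Q\hookrightarrow\C$ gives $S_i/s_i=(2/s_i)\sum_{g\in O_i}\mathrm{Re}\,\phi(g)<2$, since $\phi:D\to\bar\Q^\times$ is injective and hence $\mathrm{Re}\,\phi(g)<1$ for every $g\ne 1$. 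This contradiction proves Proposition~\ref{non-vanishing}. The main technical subtlety lies in the Brumer--Baker input, whose hypothesis of multiplicative independence modulo roots of unity and $p^\Q$ ultimately reduces to the elementary prime-by-prime valuation computation sketched above.
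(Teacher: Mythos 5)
Your approach is at heart the same as the paper's: express $\sL^{\gp}(\phi)+\sL^{\gp}(\phi^{-1})$ as a $\bar\Q$-linear combination of $p$-adic logarithms of the $D=\Gal(H/F)$-conjugates of a $\gp$-unit, restrict the possible $\bar\Q$-linear relations via the Baker--Brumer theorem and a valuation count, deduce that a certain class function on $D$ would have to be constant, and contradict that constancy. The paper's function $g\mapsto \sum_{\sigma}\bigl(\phi(\tilde\sigma^{-1}g\tilde\sigma)+\phi(\tilde\sigma^{-1}g^{-1}\tilde\sigma)\bigr)$ is literally $d\cdot(S_{i(g)}/s_{i(g)})$ in your orbit notation, so you are proving the same thing. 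The packaging differs in two places, both of which are legitimate simplifications: (a) you replace $u_0$ by $\rN_{H/H^{D_{w_0}}}(u_0)$ so that $u_0$ becomes $D_{w_0}$-fixed, giving the clean identity $L_g=(d/s_i)\tilde\ell_i$ and making the relation $\sum_k\ell_k=0$ hold on the nose, whereas the paper keeps an arbitrary $u_0$ and eliminates the unit terms $\tilde\sigma(u_0)/u_0$ using the vanishing of $(\cO_H^{\times}\otimes\bar\Q)[\phi]$; (b) for the non-constancy, you compare the trivial orbit against a non-trivial one using the archimedean inequality $\mathrm{Re}\,\phi(g)<1$ for $g\neq 1$, whereas the paper evaluates at $\mathbf{1}$ and at a complex conjugation $\tau$ and uses total oddness ($+2d$ versus $-2d$). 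Either route gives the contradiction.

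One small but real slip: with your normalization $\tilde\ell_i=\sum_{k\in O_i}\ell_k$, the unique $\bar\Q$-linear relation induced by $\sum_{k\in D}\ell_k=0$ is the \emph{unweighted} one $\sum_i\tilde\ell_i=0$, not $\sum_i s_i\tilde\ell_i=0$ as you wrote. The conclusion you draw next --- that the vanishing of $\sum_i(S_i/s_i)\tilde\ell_i$ forces $S_i/s_i$ to be independent of $i$ --- is exactly what the \emph{correct} relation yields (a relation $\sum_i c_i\tilde\ell_i=0$ requires $(c_i)$ proportional to $(1,\dots,1)$), so the proof goes through, but as written the stated relation and the stated consequence are inconsistent with each other; either replace the relation by $\sum_i\tilde\ell_i=0$, or define $\tilde\ell_i$ as the orbit \emph{average} $s_i^{-1}\sum_{k\in O_i}\ell_k$ (in which case $\sum_i s_i\tilde\ell_i=0$ is the relation, a vanishing $\sum_i c_i\tilde\ell_i=0$ forces $c_i\propto s_i$, and the coefficient is $S_i$ rather than $S_i/s_i$, again yielding $S_i/s_i$ constant). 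Either fix is one line.
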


\begin{proof} As  $F$ and $H$ are Galois, the set of representatives 
$\{  \tilde\sigma| \sigma \in \Sigma_{\gp}\}$ defined in \eqref{sigma-tilde}
can be identified with the decomposition subgroup of $\Gal(H/\Q)$ at $w_0$. 
In particular, $\tilde{\sigma} (u_0)/u_0\in\cO_H^{\times}$ for all $\sigma \in \Sigma_{\gp}$.

For any $\sigma \in \Sigma_{\gp}=\Sigma\simeq\Gal(F/\Q)$ and $g \in \Gal(H/F)$, we let  $g_{\sigma}= \tilde\sigma g  \tilde\sigma^{-1}$. Using the notation from the proof of Proposition~\ref{L-invariant}(ii), we have: 
\begin{align*}
-\ord_{w_0}(u_0) \cdot \sL(\phi) & =  \sum_{\sigma \in \Sigma_{\gp}} \sum_{g\in \Gal(H/F)} \phi(g) \log_p( \iota_p \circ \tilde\sigma g^{-1} (u_0))   
=  \sum_{\sigma \in \Sigma} \sum_{g\in \Gal(H/F)} \phi(g) \log_p(\iota_p \circ g_{\sigma}^{-1} \tilde\sigma (u_0))  \\
&= \sum_{\sigma \in \Sigma} \sum_{g\in \Gal(H/F)} \phi(g) 
\log_p(\iota_p \circ g_{\sigma}^{-1} \left(\tilde\sigma (u_0)/u_0\right)) +  \sum_{\sigma \in \Sigma_{\gp}} \sum_{g\in \Gal(H/F)} \phi(g) \log_p( \iota_p \circ g_{\sigma}^{-1} (u_0 ))  \\
&=    \sum_{\sigma \in \Sigma_{\gp}}  \sum_{g\in \Gal(H/F)} \phi(g) \log_p(\iota_p \circ g_{\sigma}^{-1} (u_0)) = 
 \sum_{\sigma \in \Sigma}  \sum_{g\in \Gal(H/F)}  
 \phi(\tilde\sigma^{-1}g\tilde\sigma)   \log_p(\iota_p \circ g^{-1}(u_0)) 
\end{align*}
where the penultimum equality follows from  $ \tilde{\sigma} (u_0)/u_0 \in \cO_H^{\times}$ and the vanishing of $(\cO_H^{\times} \otimes_{\Z} \overline{\Q})[\phi(\tilde\sigma^{-1} \text{$\cdot$ }\tilde\sigma) ]$.

 We proceed by contradiction  assuming that $\sL(\phi) + \sL(\phi^{-1})=0$,  i.e., 
 \begin{align}\label{thingtocontradict}
\sum_{g\in \Gal(H/F)}
\log_p(\iota_p \circ g^{-1}(u_0))
 \sum_{\sigma \in \Sigma} 
\left( \phi(\tilde\sigma^{-1}g\tilde\sigma) +\phi(\tilde\sigma^{-1}g^{-1}\tilde\sigma)\right) =0.
\end{align}

Since the kernel of $\log_p$ on $\overline{\Q}^\times$ consists
of elements of the form $p^a \cdot u$, where $a \in \Q$ and $u$ is a root of unity, one can check using the valuations at 
places $w$ of $H$ dividing $p$ that the only possible non-trivial $\Q$-linear relation between these logarithms is 
\[\sum_{g\in \Gal(H/F)}\log_p(\iota_p  \circ g^{-1}(u_0))=0.\] 
The Baker--Brumer Theorem then implies  that this is the only possible non-trivial relation over $\overline{\Q}$.
As the $\overline{\Q}$-valued function $\displaystyle g\mapsto \sum_{\sigma \in \Sigma} \left( \phi(\tilde\sigma^{-1}g\tilde\sigma) +\phi(\tilde\sigma^{-1}g^{-1}\tilde\sigma)\right)$ is non-constant ({\it e.g.} the parity of $\phi$ implies that the values at $\mathbf{1}$ and $\tau$ differ), this  contradicts \eqref{thingtocontradict}. 
\end{proof}

\begin{lemma}\label{dimsplitcase}\ 	
\begin{enumerate}
		\item  $\dim_{\overline{\Q}_p} \ker\left(\rH^1(F,\phi) \xrightarrow{\loc_{\gp}} \rH^1(F_\gp,\overline{\Q}_p)\right)=\max(d-d_\gp-1,0)$. 
		\item  If $\Sigma_p^{\mathrm{irr}}= \{\gp\}$, then	
		$\displaystyle \dim_{\overline{\Q}_p} \ker\left( \rH^1(F,\phi) \to \bigoplus_{\gq \ne \gp} \rH^1(F_{\gq},\phi)\right)=d_{\gp}$.
	\end{enumerate}

\end{lemma}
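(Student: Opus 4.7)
The strategy is to exploit the explicit basis $\{[\eta_\phi^{(\tilde\sigma)}]:\sigma\in\Sigma\}$ of $\rH^1(F,\phi)$ from Lemma~\ref{lemma-21}, whose elements have ramification precisely controlled by $\sigma$, together with the local information from Proposition~\ref{L-invariant}.

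For (i), I would compute the image of each basis class under $\loc_\gp$. By Proposition~\ref{L-invariant}(iii), for $\sigma\in\Sigma\setminus\Sigma_\gp$ the localization lies in $\bar\Q_p^{\times}\cdot \ord_\gp\subset \Hom_{\mathrm{cts}}(F_\gp^{\times},\bar\Q_p)$. For $\sigma\in\Sigma_\gp$, the complete splitting of $\gp$ in $H$ makes the stabilizer of $w_0$ in $\Gal(H/F)$ trivial, so the sum defining $\eta_\phi^{(\tilde\sigma)}$ in Lemma~\ref{lemma-21} collapses to a single term; the resulting homomorphism on $\cO_{F_\gp}^{\times}=\cO_{H_{w_0}}^{\times}$ is $\log_p\circ\iota_p\circ\tilde\sigma$, and the Baker--Brumer theorem guarantees linear independence of the $d_\gp$ resulting embeddings. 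Since $\Hom_{\mathrm{cts}}(F_\gp^{\times},\bar\Q_p)$ is $(d_\gp+1)$-dimensional, a dimension count shows that the image of $\loc_\gp$ has dimension $d_\gp+1$ if $d>d_\gp$ (the $\ord_\gp$ line being supplied by the $\Sigma\setminus \Sigma_\gp$ classes) and $d_\gp$ if $d=d_\gp$, yielding the kernel dimension $\max(d-d_\gp-1,0)$.

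For (ii), the $d_\gp$ classes with $\sigma\in\Sigma_\gp$ are unramified outside $\gp$, hence their localization at any $\gq\ne \gp$ lies in $\rH^1(F_\gq^{\mathrm{ur}}/F_\gq,\phi^{I_\gq})$. Under the assumption $\Sigma_p^{\mathrm{irr}}=\{\gp\}$, every other $\gq\in\Sigma_p$ is regular, meaning $\phi|_{\rG_\gq}\ne 1$; a ramified/unramified dichotomy (either $\phi^{I_\gq}=0$ or $\phi(\Frob_\gq)\ne 1$) forces the unramified $\rH^1$ to vanish, so these $d_\gp$ classes sit in the kernel. The same reasoning shows that $[\eta_\phi^{(\tilde\sigma)}]$ with $\sigma\in\Sigma_\gq$ localizes to zero at every $\gq'\ne \gq,\gp$, so the total localization map decomposes block-diagonally and the remaining task is to prove that for each $\gq\ne \gp$ in $\Sigma_p$ the block $\bigoplus_{\sigma\in\Sigma_\gq}\bar\Q_p\cdot [\eta_\phi^{(\tilde\sigma)}]\to \rH^1(F_\gq,\phi)$ is an isomorphism of $d_\gq$-dimensional spaces. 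I would establish this by identifying $\rH^1(F_\gq,\phi)$ via inflation--restriction and Frobenius reciprocity with the $\phi|_{D_w}$-isotypic component of $\Hom(\cO_{H_w}^{\times},\bar\Q_p)$ for a chosen prime $w\mid\gq$ of $H$, noting that this isotypic splits into $d_\gq$ one-dimensional pieces indexed by the orbits of $\Hom_{\Q_p\text{-alg}}(H_w,\bar\Q_p)$ under $D_w=\Gal(H_w/F_\gq)$, and verifying via Baker--Brumer that the $d_\gq$ localized classes project non-trivially onto distinct summands. The technically delicate step is this last injectivity assertion across the isotypic decomposition; everything else is bookkeeping.
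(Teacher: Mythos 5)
Your argument for part (i) is essentially the paper's, just phrased in terms of computing the image of $\loc_\gp$ and subtracting dimensions, whereas the paper filters through $\ker(\rH^1(F,\phi)\to\rH^1(\rI_\gp,\bar\Q_p))$ and then evaluates on Frobenius. Both rely on exactly the same two facts: the $d_\gp$ classes from $\Sigma_\gp$ restrict to the linearly independent $\log_p\circ\sigma$ on $\cO_{F_\gp}^\times$ (your ``collapse to a single term'' observation via the complete splitting of $\gp$), and the $\Sigma\setminus\Sigma_\gp$ classes hit $\ord_\gp$ nontrivially by Proposition~\ref{L-invariant}(iii). One correction: the linear independence of the $\log_p\circ\sigma$ on $\cO_{F_\gp}^\times$ is elementary (distinct $\Q_p$-algebra embeddings of $F_\gp$ give linearly independent $\bar\Q_p$-valued functionals, and $\log_p$ is an isogeny), not an application of the Baker--Brumer theorem, which is a transcendence statement about global $p$-adic logarithms and is used in the paper only for the evaluation at Frobenius in Proposition~\ref{L-invariant}(iii)--(iv).

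For part (ii) there is a genuine gap, which you yourself flag. Your strategy of block-diagonalizing the restriction map and then proving that each block $\bigoplus_{\sigma\in\Sigma_\gq}\bar\Q_p\cdot[\eta_\phi^{(\tilde\sigma)}]\to\rH^1(F_\gq,\phi)$ is injective requires first identifying $\dim\rH^1(F_\gq,\phi)=d_\gq$ via local Euler characteristic and then carrying out a rather delicate isotypic analysis of $\Hom(\cO_{H_w}^\times,\bar\Q_p)$, and you leave the injectivity across that decomposition unverified. The paper sidesteps all of this: for a regular prime $\gq$ the group $\rH^1(\rG_\gq/\rI_\gq,\phi^{\rI_\gq})$ vanishes (the same ramified/unramified dichotomy you note), so by inflation--restriction the map $\rH^1(F_\gq,\phi)\hookrightarrow\rH^1(\rI_\gq,\phi)$ is injective. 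Consequently the kernel of $\rH^1(F,\phi)\to\bigoplus_{\gq\ne\gp}\rH^1(F_\gq,\phi)$ coincides with the kernel of $\rH^1(F,\phi)\to\bigoplus_{\gq\ne\gp}\rH^1(\rI_\gq,\phi)$, which is exactly the span of the $\Sigma_\gp$-classes by Lemma~\ref{lemma-21} (together with the vanishing of globally unramified classes). This reduction to restriction-to-inertia is the key simplification you are missing: it renders any direct computation of $\rH^1(F_\gq,\phi)$, and any appeal to Baker--Brumer, unnecessary.
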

\begin{proof} (i) By Lemma~\ref{lemma-21}, $\{ \eta_{\phi}^{(\tilde{\sigma})} \}_{  \sigma\in \Sigma\setminus \Sigma_{\gp}}$
 is a basis of  $\ker\left(\rH^1(F,\phi) \to \rH^1(\rI_\gp,\overline{\Q}_p)\right)$, proving the claim if $d=d_\gp$. 
Otherwise the dimension is at most $|\Sigma\setminus \Sigma_{\gp}|=d-d_\gp$ and in fact it is one less 
because $\rH^1(\rG_{\gp}/\rI_{\gp},\overline{\Q}_p)=\ker\left(\rH^1(F_{\gp},\overline{\Q}_p)\to \rH^1(\rI_{\gp},\overline{\Q}_p)\right)$ is a line 
in which any element of the above basis has a non-zero image by Proposition~\ref{L-invariant}(iii).  

(ii)  Again by Lemma~\ref{lemma-21}, we have 	$\dim_{\overline{\Q}_p} \ker\left(\rH^1(F,\phi) \to \bigoplus_{\gq \ne \gp} \rH^1(\rI_{\gq},\phi)\right)=d_{\gp}$. As $\rH^1(\rG_{\gq}/\rI_{\gq},\phi^{\rI_{\gq}})$ vanishes for all  primes  $\gq\in \Sigma_p\setminus \{\gp\}$ (see \cite[\S1.2]{DDP}),
the restriction map $\rH^1(F_{\gq},\phi)\to \rH^1(\rI_{\gq},\phi)$ is injective by the  inflation-restriction exact sequence, proving the claim.  
 \end{proof}
\subsection{Nearly-ordinary cuspidal deformation functor}\label{sec:42}
Let $\phi_1$ and $\phi_2$ be finite order  Hecke characters such that $\phi=\phi_2\phi_1^{-1}$ is totally odd and irregular at a {\it unique} prime $\gp$ above $p$, i.e., $ \Sigma_p^{\mathrm{irr}}=\{\gp \}$. The goal of this subsection is to introduce a nearly-ordinary deformation functor and compute its  tangent space. This  will be used in \S\ref{sec:24} and \S\ref{sec:34} to prove Theorem~\ref{main-thm}(i) and (iv), respectively. 
Let $\cA$ be the category of Artinian local $\overline{\Q}_p$-algebras $A$ with maximal ideal $\gm_A$ and residue field $\overline{\Q}_p$, where the morphisms are local homomorphisms of $\overline{\Q}_p$-algebras inducing identity on the residue field. 
Let $[\eta]$ be a  non-trivial cocycle of the $d_\gp$-dimensional space  $\ker \left(\rH^1(F,\phi^{-1}) \to \prod_{
\gq \in \Sigma_p\setminus\{\gp\}} \rH^1(F_\gq,\phi^{-1}) \right)$ (see Lemma~\ref{dimsplitcase}) and 
fix  a representative $\eta$  such that  $\eta(\tau)=0$. As the centralizer of the image of $\rho=\left(\begin{smallmatrix} \phi_1 & \phi_2 \eta \\ 0 & \phi_2 \end{smallmatrix}\right)$  consists only of scalar matrices, the functor $\cD_{\eta}$ assigning to $A\in  \cA$ the set of lifts $\rho_A:\rG_F\to \GL_2(A)$ of $\rho$ modulo strict equivalence is pro-representable by a 
complete Noetherian local $\overline{\Q}_p$-algebra $\cR^{\univ}_\eta$ (see \cite{Mazur}). 
\begin{defn}\label{defnno}
Let $\cD^{\nord}_{\eta}$ be the functor assigning to  $A\in  \cA$ the set of couples $(\rho_A, \Fil_A)$ such that  
\begin{enumerate}
\item $\rho_A: \rG_F\to \GL_2(A)$ is a continuous representation lifting $\rho$, 
\item for $\gq\in \Sigma_p\setminus \{\gp\}$, $\rho_{A|\rG_{\gq}}$ has a $\rG_{\gq}$-stable quotient, 
free of rank $1$ as $A$-module,  lifting $\phi_1$,
 \item  $\Fil_A\subset A^2$ is a free direct factor over $A$ of rank $1$  which is $\rG_{\gp}$-stable, 
 \end{enumerate}
modulo strict equivalence relation  $(\rho_A,\Fil_A)\sim (M\rho_AM^{-1}, M\cdot\Fil_A)$ for $M\in 1+\rM_2(\gm_A)$. 
Let $\cD^{\ord}_{\eta} \subset \cD^{\nord}_{\eta}$ be the subfunctor of ordinary deformations, i.e., those for which the inertia subgroups act trivially on the free rank $1$ quotients in (ii) and (iii)  above.
\end{defn}
\begin{prop}\label{nord_functor}
We fix  a representative 
$ \rho_{\univ}=\left(\begin{smallmatrix} a&b\\ c& d\end{smallmatrix}\right) : \rG_F\to  \GL_2(\cR^{\univ}_\eta)$ 
of the universal deformation of $\rho$ and denote by $(e_1,e_2)$ the canonical basis of $(\cR^{\univ}_\eta)^2$. 
 \begin{enumerate}
\item  The functor $\cD^{\nord}_{\eta}$ is pro-representable by a complete Noetherian local $\overline{\Q}_p$-algebra $\cR^{\nord}_{\rho_{\univ}}$, which is  the  quotient of $\cR^{\univ}_\eta\lsem Y\rsem$ by the ideal $\mathscr{J}^{\nord}_{( \rho_{\univ},Y)}$ defined  in \eqref{nearlyordinaryidealatp} below. The universal deformation is given by the pushforward  $\rho^{\nord}$  of $ \rho_{\univ}$ along $\cR^{\univ}_\eta \hookrightarrow \cR^{\univ}_\eta\lsem Y\rsem  \twoheadrightarrow  \cR^{\nord}_{\rho_{\univ}}$ together with  the universal nearly-ordinary line $\cR^{\nord}_{\rho_{\univ}}\cdot (e_1+ Y e_2)$ at $\gp$.
Changing the representative $\rho_{\univ}$ with $\rho_{\univ}'=M \rho_{\univ} M^{-1}$ for $ M=\left( \begin{smallmatrix}
\alpha & \beta \\ \gamma & \delta \end{smallmatrix} \right) \in \ker \left(\GL_2(\cR^{\univ}_{\eta}) \to \GL_2(\overline{\Q}_p) \right)$ induces an isomorphism $\cR^{\nord}_{\rho_{\univ}} \simeq \cR^{\nord}_{\rho'_{\univ}} $ of $\cR^{\univ}_\eta$-algebras
sending $Y$ to $\frac{\gamma + \delta Y}{\alpha + \beta Y}$.

\item $\cD^{\ord}_{\eta}$ is pro-representable by a complete Noetherian local $\overline{\Q}_p$-algebra $\cR^{\ord}_\eta$ which is the  quotient of $\cR^{\univ}_\eta$ by the ideal $\mathscr{J}^{\ord}$ defined  in \eqref{ordinaryidealatp} below, and which is  independent from the particular choice of  $\rho_{\univ}$.

\item We have an injective map for the tangent space 
  $t^{\nord}_{\eta}$ of $\cR^{\nord}_{\rho_{\univ}}$  given by  
\[\Hom_{\overline{\Q}_p-\mathrm{alg}}(\cR^{\nord}_{\rho_{\univ}},\overline{\Q}_p[\varepsilon]) \hookrightarrow \rH^1(F,\ad\rho) \oplus \overline{\Q}_p, 
\quad \varphi \mapsto \left(\left[\frac{\varphi(\rho^{\nord}) \rho^{-1} -\mathbf{1}_2}{\varepsilon} \right], \frac{\varphi(Y)}{\varepsilon}\right),\] 
where by an abuse of notation  $\frac{\lambda \varepsilon}{\varepsilon}$ simply means $\lambda \in \overline{\Q}_p$.
\end{enumerate}
\end{prop}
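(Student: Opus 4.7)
The approach is standard Mazur-style deformation theory: since $\eta$ is non-trivial, the residual $\rho$ has only scalar endomorphisms, so by Mazur's theorem $\cD_\eta$ is pro-representable by $\cR^{\univ}_\eta$ with universal deformation $\rho_{\univ}$. Write $\rho_{\univ} = \bigl(\begin{smallmatrix} a & b \\ c & d \end{smallmatrix}\bigr)$. The plan is to adjoin a single variable $Y$ to parametrize the nearly-ordinary line at $\gp$, impose the stability and reducibility conditions as explicit relations, and verify that the resulting quotient pro-represents $\cD^{\nord}_\eta$.

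For (i), the key observation is that over any $A\in\cA$ a rank-$1$ free direct summand of $A^2$ lifting the residual line $\bar\Q_p \cdot e_1$ is uniquely of the form $A\cdot (e_1 + Ye_2)$ for some $Y\in\gm_A$. The $\rG_{\gp}$-stability of this line under a deformation $\rho_A$, i.e.\ the requirement that $\rho_A(g)(e_1 + Ye_2)$ be a scalar multiple of $e_1 + Ye_2$, translates into the single quadratic relation
\[ c(g) + Y\bigl(d(g) - a(g)\bigr) - Y^2 b(g) = 0 \quad \text{for all } g \in \rG_{\gp}. \]
Applied universally with $\rho_{\univ}$, these relations generate an ideal $\mathscr{J}^{\nord}_\gp \subset \cR^{\univ}_\eta\lsem Y\rsem$. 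At each $\gq\in\Sigma_p\setminus\{\gp\}$, since $\eta_{\mid\rG_\gq}=0$ by the choice of $\eta$, the residual restriction $\rho_{\mid\rG_\gq}$ is semisimple of the form $\phi_1\oplus\phi_2$, and the existence of a $\rG_\gq$-stable rank-$1$ quotient of $\rho_{A\mid\rG_\gq}$ lifting $\phi_1$ is a reducibility condition cut out by an ideal $\mathscr{J}^{\nord}_\gq\subset\cR^{\univ}_\eta$ in the framework of generalized matrix algebras. Setting $\mathscr{J}^{\nord}_{(\rho_{\univ},Y)}$ to be the sum of these ideals inside $\cR^{\univ}_\eta\lsem Y\rsem$ and taking $\cR^{\nord}_{\rho_{\univ}}$ to be the resulting quotient, the pair consisting of the pushforward of $\rho_{\univ}$ and the universal line spanned by $e_1+Ye_2$ satisfies the universal property of $\cD^{\nord}_\eta$ by construction. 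The change-of-basis formula is a direct computation: if $M = \bigl(\begin{smallmatrix} \alpha & \beta \\ \gamma & \delta \end{smallmatrix}\bigr)$ conjugates $\rho_{\univ}$ to $\rho'_{\univ}$, then $M(e_1+Ye_2) = (\alpha+\beta Y)e_1 + (\gamma+\delta Y)e_2$ spans the same line as $e_1 + \tfrac{\gamma+\delta Y}{\alpha+\beta Y}\,e_2$, giving the transition $Y \mapsto (\gamma+\delta Y)/(\alpha+\beta Y)$.

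For (ii), the additional ordinarity condition that $\rI_\gp$ act trivially on the quotient $\rho_A/\Fil_A$ (and similarly at $\gq\ne\gp$) is rigid: when such $\Fil_A$ exists it is determined uniquely by $\rho_A$ (the residual inertia action on $\rho$ is unipotent with a unique fixed line, which lifts canonically), so no new variable is needed and the condition cuts out an ideal $\mathscr{J}^{\ord}\subset\cR^{\univ}_\eta$ with quotient $\cR^{\ord}_\eta$; independence from the choice of $\rho_{\univ}$ follows because the defining functor $\cD^{\ord}_\eta$ makes no reference to a particular representative. For (iii), given $\varphi:\cR^{\nord}_{\rho_{\univ}}\to\bar\Q_p[\epsilon]$, the composite $\varphi\circ\rho^{\nord}$ is a first-order lift of $\rho$, so $g\mapsto(\varphi(\rho^{\nord})(g)\,\rho(g)^{-1}-\mathbf{1}_2)/\epsilon$ defines a cocycle in $\rH^1(F,\ad\rho)$, while $\varphi(Y)\in\bar\Q_p\cdot\epsilon$ yields the second coordinate; injectivity is immediate because $\cR^{\nord}_{\rho_{\univ}}$ is topologically generated over $\bar\Q_p$ by $Y$ together with the matrix entries of $\rho^{\nord}$. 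The main obstacle is the precise formulation of the reducibility ideal $\mathscr{J}^{\nord}_\gq$ at $\gq\ne\gp$, specifying that it is $\phi_1$ rather than $\phi_2$ which must appear as the quotient, and checking its functorial compatibility with the $\gp$-relations: since $\rho_{\mid\rG_\gq}$ is residually split this reduces to a standard generalized matrix algebra computation, but the integration with the filtration variable $Y$ at $\gp$ requires careful verification of Schlessinger-type exactness.
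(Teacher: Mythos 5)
Your proposal follows essentially the same route as the paper: the explicit quadratic relation $c(g)+Y(d(g)-a(g))-Y^2b(g)=0$ defining $\mathcal{I}^{\nord}_{(\rho_{\univ},Y,\gp)}$, the reducibility ideals at regular $\gq$ obtained by diagonalizing via Hensel's lemma (cf.\ \cite[Lem.~1.4.3]{bel-che09}), the change-of-basis computation yielding $Y\mapsto\frac{\gamma+\delta Y}{\alpha+\beta Y}$, the observation that ramification of $\eta$ at $\gp$ forces $b(g_\gp)\in(\cR^{\univ}_\eta)^\times$ so the ordinary condition eliminates $Y$, and the tangent-space injection via $\cR^{\nord}_{\rho_{\univ}}$ being a quotient of $\cR^{\univ}_\eta\lsem Y\rsem$. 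The ``Schlessinger-type exactness'' you flagged as the main obstacle is sidestepped in the paper by directly verifying the universal property (any $(\rho_A,\Fil_A)$ arises from a unique $\widetilde\varphi_A:\cR^{\univ}_\eta\lsem Y\rsem\to A$ sending $Y$ to the coordinate $y\in\gm_A$ of $\Fil_A$), so no separate Schlessinger check is needed.
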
 
\begin{proof} 
We show (i) and (ii) simultaneously by  analyzing the local deformation conditions at the primes above $p$, starting with those at the unique irregular prime $\gp$. We have 
 \begin{align}\label{Y-line}
\begin{pmatrix}
1&0\\ -Y& 1\end{pmatrix} \begin{pmatrix} a(g)&b(g)\\ c(g)& d(g)\end{pmatrix}\begin{pmatrix} 1&0\\ Y& 1\end{pmatrix}=
\begin{pmatrix} a(g)+b(g)Y & b(g) \\ c(g)+(d(g)-a(g))Y-b(g)Y^2 & d(g)-b(g)Y\end{pmatrix}
\end{align}
leading us to consider the following  ideals  of $\cR^{\univ}_{\eta}\lsem Y \rsem$:
\begin{align}
\begin{split}
\cI^{\nord}_{(\rho_{\univ},Y,\gp)}&=\left( c(g)+(d(g)-a(g))Y-b(g)Y^2;  g \in  \mathrm{G}_{\gp} \right) \\  \cI^{\ord}_{(\rho_{\univ},Y,\gp)}&= \left(d(g)-1-b(g)Y;  g \in \mathrm{I}_{\gp} \right) + \cI^{\nord}_{(\rho_{\univ},Y,\gp)}. 
\end{split}
\end{align}
As $[\eta]$ is non-trivial and unramified outside $\gp$, it is necessarily ramified at $\gp$, guaranteeing the existence of 
   $g_\gp\in  \mathrm{I}_{F_\gp}$ such that $\eta(g_\gp)\neq 0$. This implies that  $b(g_\gp)\in (\cR^{\univ}_{\eta})^\times$, hence 
  $ Y  \equiv \tfrac{d(g_\gp)-1}{b(g_\gp)} \mod \cI^{\ord}_{(\rho_{\univ},Y,\gp)}$ and 
 $\cR^{\univ}_{\eta}\lsem Y \rsem/ \cI^{\ord}_{(\rho_{\univ},Y,\gp)}    \simeq \cR^{\univ}_{\eta}/  \mathscr{J}^{\ord}_{( \rho_{\univ},\gp)}$, 
  where   $\mathscr{J}^{\ord}_{( \rho_{\univ},\gp)}$ is the image of $\cI^{\ord}_{(\rho_{\univ},Y,\gp)} $ via the homomorphism 
  $\cR^{\univ}_{\eta}\lsem Y \rsem\to \cR^{\univ}_{\eta}$ sending $Y$ to  $\tfrac{d(g_\gp)-1}{b(g_\gp)}$. 
  
  Letting  $M=\left( \begin{smallmatrix}
\alpha & \beta \\ \gamma & \delta \end{smallmatrix} \right) \in \ker \left(\GL_2(\cR^{\univ}_{\eta}) \to \GL_2(\overline{\Q}_p) \right)$ and $ \rho'_{\univ}(g)=\left(\begin{smallmatrix}
a'(g)&b'(g)\\ c'(g)& d'(g)\end{smallmatrix}\right)=M \rho_{\univ}(g) M^{-1}$ another representative of the universal deformation,  a direct calculation shows that 
\begin{align}\label{representativedependenceidealcondp}
\cI^{\nord}_{(\rho_{\univ},Y,\gp)}=\cI^{\nord}_{( \rho'_{\univ}, \frac{\gamma + \delta Y}{\alpha + \beta Y},\gp)} \text{ and } \cI^{\ord}_{(\rho_{\univ},Y,\gp)}=\cI^{\ord}_{( \rho'_{\univ}, \frac{\gamma + \delta Y}{\alpha + \beta Y},\gp)} .
\end{align}
It follows that the ideals   $\mathscr{J}^{\ord}_{( \rho_{\univ},\gp)}$ and $\mathscr{J}^{\ord}_{( \rho'_{\univ},\gp)}$ are equal, hence their common value, 
 denoted by $\mathscr{J}^{\ord}_\gp$, defines a deformation condition in the sense of Ramakrishna. 

Let $\gq \in \Sigma_p \setminus \{ \gp \}$ and choose  $ g_\gq \in \rG_{\gq}$ such that $\phi_1(g_\gq) \ne \phi_2(g_\gq)$. 
By Hensel's Lemma and \cite[Lem.~1.4.3]{bel-che09}, there exists a matrix  $M=\left( \begin{smallmatrix}
\alpha & \beta \\ \gamma & \delta \end{smallmatrix} \right) \in  \GL_2(\cR^{\univ}_{\eta})$ with $\gamma \in \gm_{\cR^{\univ}_{\eta}}$ such that $M \rho_{\univ}(g_\gq) M^{-1}=\left(\begin{smallmatrix}
\alpha_\gq& 0 \\ 0 & \beta_\gq \end{smallmatrix}\right)$. As $\res_{\gq}([\eta])=0$, it follows that
 $\alpha_\gq - \phi_1(g_\gq)\in \gm_{\cR^{\univ}_{\eta}}$ and $\beta_\gq - \phi_2(g_\gq)\in \gm_{\cR^{\univ}_{\eta}}$. This also implies  that $b'(g) \in \gm_{\cR^{\univ}_{\eta}}$ for all $g \in \rG_{\gq}$, where $\left(\begin{smallmatrix}
a'(g)&b'(g)\\ c'(g)& d'(g)\end{smallmatrix}\right)=M \rho_{\univ}(g) M^{-1}$.   The  ideals 
\[\mathscr{J}^{\nord}_\gq=\left( b'(g);  g \in  \mathrm{G}_{\gq} \right) \text{ and }  \mathscr{J}^{\ord}_\gq= \left(a'(g)-1;  g \in \mathrm{I}_{\gq} \right) + \mathscr{J}^{\nord}_\gq \]
of $\cR^{\univ}_{\eta}$ do not  depend  on the particular choice of $M$ (nor on the choice of $\rho_{\univ}$), as any other choice would differ by a diagonal matrix of $\GL_2(\cR^{\univ}_{\eta})$ and  conjugating  by it would not affect these ideals. 
 Therefore, $\cD^{\ord}_{\eta}$ is representable by  $\cR^{\ord}_\eta=\cR^{\univ}_\eta/\mathscr{J}^{\ord} $, where 
\begin{align}\label{ordinaryidealatp}
\mathscr{J}^{\ord}= \sum_{\gq \mid p}\mathscr{J}^{\ord}_\gq, 
\end{align}
and similarly, $\cD^{\nord}_{\eta}$ is representable by  $\cR^{\nord}_{\rho_{\univ}}=\cR^{\univ}_\eta \lsem Y \rsem/\mathscr{J}^{\nord}_{( \rho_{\univ},Y)}$, where 
\begin{align}\label{nearlyordinaryidealatp} 
\mathscr{J}^{\nord}_{( \rho_{\univ},Y)} = \cI^{\nord}_{(\rho_{\univ},Y,\gp)} + \sum_{\gq \in \Sigma_p \setminus \{\gp \}} \mathscr{J}^{\nord}_\gq. 
\end{align} 
The universal nearly-ordinary deformation $\rho^{\nord}:\rG_F \to \GL_2(\cR^{\nord}_{\rho_{\univ}})$ is the pushforward of $\rho_{\univ}$ along  $\cR^{\univ}_\eta \hookrightarrow \cR^{\univ}_\eta \lsem Y \rsem \twoheadrightarrow \cR^{\nord}$. To see this,  let $x$ be a point of $\cD_\eta^{\nord}(A)$ which is then represented by a push-forward  $\rho_A$ of $ \rho_{\univ}$ along the homomorphism 
$\varphi_A: \cR^{\univ}_\eta \to A$,  and a $\gp$-nearly-ordinary line $F_A\subset A^2$. The $\gq$-nearly-ordinary lines for $\gq \in \Sigma_p \setminus \{\gp\}$ are uniquely determined by $\rho_A$ thanks to Hensel's lemma, i.e.,  when a $\gq$-nearly-ordinary line exists it is necessarily unique. 
The filtration $F_A$ has  a basis  $e_1+y e_2$ with $y\in \gm_A$, because $F_A \otimes_{A} \overline{\Q}_p = e_1 \cdot \overline{\Q}_p$ (since $[\eta]$ is ramified at $\gp$). 
Let $\widetilde\varphi_A: \cR^{\univ}_{\eta}\lsem Y\rsem \to A$ be the homomorphism extending $\varphi_A$ and sending $Y$ to $y$. By construction, $\widetilde\varphi_A$ factors through $\cR^{\nord}_{\rho_{\univ}}$, and even 
 $\varphi_A$ factors  through $\cR^{\ord}_{\eta}$ if $x \in \cD_\eta^{\ord}(A)$. This shows the desired representability and finishes the proof of (i) and (ii).  
 
(iii) By Mazur's deformation theory $\Hom_{\overline{\Q}_p-\mathrm{alg}}(\cR^{\univ}_\eta,\overline{\Q}_p[\varepsilon])=\rH^1(F,\ad\rho)$. 
As $\cR^{\univ}_\eta\lsem Y\rsem$ is formally smooth over $\cR^{\univ}_\eta$ of relative dimension $1$, its tangent space is given by
\[ 
\Hom_{\overline{\Q}_p-\mathrm{alg}}(\cR^{\univ}_\eta\lsem Y\rsem ,\overline{\Q}_p[\varepsilon]) = \rH^1(F,\ad\rho) \oplus \Hom_{\overline{\Q}_p-\mathrm{alg}}(\overline{\Q}_p\lsem Y\rsem ,\overline{\Q}_p[\varepsilon]). 
\]
Finally $\cR^{\nord}_{\rho_{\univ}}$ is a quotient of $\cR^{\univ}_\eta\lsem Y\rsem$,  hence  its tangent space is a subspace of the above. 
\end{proof} 
For any $v \in \Sigma_p$, we let $\chi_{v}$ be the character acting on the $\rG_{v}$-stable free rank $1$ quotient of $\rho^{\nord}$.
The universal deformation ring of the character  $\phi_{1 \mid \mathrm{I}_{F_v} }:\mathrm{I}_{F_v} \to \overline{\Q}_p$ is given by $\overline{\Q}_p\lsem X_\sigma :\sigma \in \Sigma_v \rsem$. As  $\chi_{v}$ lifts $\phi_{1 \mid \mathrm{I}_{F_v} }$,  this endows $\cR^{\nord}_{\rho_{\univ}}$ a structure of $\varLambda^{\nord}=\varLambda\lsem X_\sigma : \sigma\in  \Sigma \rsem$-algebra, where the $\varLambda$-structure is the one associated to the determinant of $\rho^{\nord}$, because $\varLambda$ is the universal deformation ring of the character  $\det(\rho):\rG_F \to \overline{\Q}_p^\times$. This shows that 
\begin{align}\label{ordinaryaugmentationideal}
\cR^{\nord}_{\rho_{\univ}}/(X_\sigma: \sigma\in  \Sigma) = \cR^{\ord}_{\eta}.
\end{align}

Henceforth we fix a $\rho_{\univ}$ and we compute  $ t^{\nord}_{\eta}$. 
Fixing a  basis $(e_1,e_2)$ of $V_\rho =\overline{\Q}_p^2 $ such that $\rho(\tau)=\left(\begin{smallmatrix} 1 & 0\\ 0 & -1 \end{smallmatrix}\right)$ allows us to 
identify $\ad V_\rho$ with $\rM_2(\overline{\Q}_p)$. As $\rho$ is upper triangular,  the  subspace
$W_\rho\subset \ad V_\rho$ of  upper triangular matrices fits in the following non-split short  exact sequence of $\overline{\Q}_p[\rG_F]$-modules
\begin{align}\label{tgsanscond}
0\rightarrow W_\rho \rightarrow \ad V_\rho \xrightarrow{\mathbf{c}} \phi \rightarrow 0,
\end{align}
where the map $\mathbf{c}$ sends $\left(\begin{smallmatrix} a & b \\ c & d \end{smallmatrix}\right)$ to $c$. 
Restricting to the decomposition group $\rG_{\gp}$ yields the  map 
\[
\mathbf{c}_{\gp}:\rH^1(F,\ad V_\rho) \xrightarrow{\mathbf{c}_*} \rH^1(F,\phi)\xrightarrow{\loc_{\gp}} \rH^1(F_{\gp},\overline{\Q}_p).
\]
In addition, for each regular prime  $\gq$, the property $[\eta]_{|\rG_{\gq}}=0$ implies that there exists $\lambda_{\gq}\in \overline{\Q}_p$
 such that $\eta_{|\rG_{\gq}}=\lambda_{\gq}(\phi^{-1}-1)$ and $\ad V_{\rho \mid \rG_{\gq}} \simeq \phi \oplus \phi^{-1} \oplus \overline{\Q}_p^2$. 
One deduces a homomorphism of $\overline{\Q}_p[\rG_{\gq}]$-modules 
\begin{align}\label{eq:map_B_q} 
\ad V_\rho \xrightarrow{\mathbf{b}_{\gq}} \phi^{-1},\qquad\left(\begin{smallmatrix} a & b \\ c & d \end{smallmatrix}\right)
\mapsto b+\lambda_{\gq}(d-a)-\lambda_{\gq}^2 c,
\end{align}
 allowing us to define the map 
\[ 
\mathbf{b}_{\gq}:\rH^1(F,\ad V_\rho) \xrightarrow{\loc_{\gq}} \rH^1(F_{\gq},\ad V_\rho)\xrightarrow{\mathbf{b}_{\gq}} \rH^1(F_{\gq},\phi^{-1}). 
\]
It follows from Proposition~\ref{nord_functor}(iii) that one has an inclusion 
$ t^{\nord}_{\eta} \hookrightarrow \mathbf{K}_1  \oplus  \overline{\Q}_p$,  where 
\begin{align}\label{eq:nord_tangent_space}
\mathbf{K}_1 =\ker\left(\rH^1(F,\ad V_\rho)\xrightarrow{(\mathbf{c}_{\gp},(\mathbf{b}_{\gq})_{\gq \in \Sigma_p \setminus \{\gp\}})} \rH^1(F_{\gp},\overline{\Q}_p) \oplus \bigoplus_{\gq \in \Sigma_p \setminus \{\gp\}} \rH^1(F_{\gq},\phi^{-1}) \right).
\end{align}
As $\ker(\mathbf{c}_{\gp})\supset \rH^1(F,W_\rho)$, we have 
\[
\displaystyle \mathbf{K}_2=\mathbf{K}_1  \cap \rH^1(F,W_\rho)=
\ker\left(\rH^1(F,W_\rho)\xrightarrow{(\mathbf{b}_{\gq})_{\gq \in \Sigma_p \setminus \{\gp\}}} \bigoplus_{\gq \in \Sigma_p \setminus \{\gp\}} \rH^1(F_{\gq},\phi^{-1}) \right).
\] 
In order to find an upper bound for the dimension of $\mathbf{K}_1 $ we establish  some intermediate calculations. 
The short exact sequence of $\overline{\Q}_p[\rG_F]$-modules
\[
0\rightarrow \phi^{-1} \rightarrow W_\rho  \rightarrow \overline{\Q}_p^2 \rightarrow 0,
\]
where the map $W_\rho  \twoheadrightarrow \overline{\Q}_p^2$ is given by $\left(\begin{smallmatrix} a & b \\ 0 & d\end{smallmatrix}\right) \mapsto (a,d)$, yields a long exact sequence
\begin{align}\label{long}
0\rightarrow \rH^{0}(F, W_\rho ) \rightarrow \rH^{0}(F, \overline{\Q}_p^2)\xrightarrow{\delta}\rH^1(F,\phi^{-1}) \rightarrow \rH^1(F, W_\rho ) \rightarrow \rH^1(F, \overline{\Q}_p^2) \rightarrow \rH^{2}(F,\phi^{-1}).
\end{align}
\begin{lemma}\label{49}
Suppose that $\delta_{F,p}=0$.  
\begin{enumerate}
\item One has $\dim_{\overline{\Q}_p} \rH^1(F, W_\rho)=d+1$ and $\rH^2(F,W_\rho )=\rH^2(F,\phi)=\{0\}$.
\item The image of the connecting homomorphism $\delta:\rH^{0}(F, \overline{\Q}_p^2)\to\rH^1(F,\phi^{-1}) $ is  generated by \[ [\eta] \in \ker \left(\rH^1(F,\phi^{-1}) \to \prod_{
\gq \in \Sigma_p \setminus \{\gp\}}
\rH^1(F_\gq,\phi^{-1}) \right)\]
defining the residual representation $\rho$.
\end{enumerate}
\end{lemma}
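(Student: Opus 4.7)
The plan is to deduce both claims from the long exact sequence \eqref{long} by computing explicitly the connecting morphism $\delta$ and applying global duality. I would begin with (ii), the more concrete statement. Given $(a,d)\in \rH^0(F,\bar\Q_p^2)=\bar\Q_p^2$, I lift it to the diagonal matrix $M=\left(\begin{smallmatrix} a & 0 \\ 0 & d\end{smallmatrix}\right)\in \mathbf{W}_\rho$ and apply the defining formula $g\mapsto \rho(g)M\rho(g)^{-1}-M$. Using $\rho(g)=\left(\begin{smallmatrix}\phi_1(g) & \phi_2(g)\eta(g) \\ 0 & \phi_2(g)\end{smallmatrix}\right)$, a direct matrix computation yields
\[
\rho(g)M\rho(g)^{-1}-M=\begin{pmatrix} 0 & (d-a)\eta(g) \\ 0 & 0\end{pmatrix},
\]
which, under the identification of the kernel of $\mathbf{W}_\rho\twoheadrightarrow\bar\Q_p^2$ with $\phi^{-1}$ (via $b\mapsto b$, since $\phi^{-1}=\phi_1\phi_2^{-1}$ acts on the upper-right entry), represents the class $(d-a)\cdot[\eta]$ in $\rH^1(F,\phi^{-1})$. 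Hence $\mathrm{image}(\delta)=\bar\Q_p\cdot[\eta]$, and this line sits inside $\ker(\rH^1(F,\phi^{-1})\to\prod_{\gq\neq\gp}\rH^1(F_\gq,\phi^{-1}))$ by the very choice of $\eta$ made at the start of \S\ref{sec:42}.

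For (i), I first establish the vanishing of $\rH^2(F,\chi)$ for each $\chi\in\{\phi,\phi^{-1},\bar\Q_p\}$. By global Poitou--Tate duality, each such $\rH^2$ is dual to $\rH^0(F,\chi^{-1}(1))=\rH^0(F,\chi^{-1}\epsilon_{\cyc})$, and in all three cases the twisted character $\chi^{-1}\epsilon_{\cyc}$ is of infinite order---$\phi$ being a finite order Hecke character whereas $\epsilon_{\cyc}$ is not---so it has no $\rG_F$-invariants. Substituting these vanishings into the long exact sequence associated to the short exact sequence $0\to\phi^{-1}\to\mathbf{W}_\rho\to\bar\Q_p^2\to 0$ immediately gives $\rH^2(F,\mathbf{W}_\rho)=0$ (sandwiched between $\rH^2(F,\phi^{-1})=0$ and $\rH^2(F,\bar\Q_p^2)=0$), together with the shorter exact sequence
\[
0\to \bar\Q_p\cdot[\eta]\hookrightarrow \rH^1(F,\phi^{-1})\to \rH^1(F,\mathbf{W}_\rho)\to \rH^1(F,\bar\Q_p^2)\to 0.
\]
Combined with Lemma~\ref{lemma-21}, which gives $\dim\rH^1(F,\phi^{-1})=d$, this yields $\dim\rH^1(F,\mathbf{W}_\rho)=(d-1)+\dim\rH^1(F,\bar\Q_p^2)$.

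It remains to compute $\dim\rH^1(F,\bar\Q_p)=1$, which is where the hypothesis $\delta_{F,p}=0$ enters. By global class field theory, $\rH^1(F,\bar\Q_p)=\Hom_{\mathrm{cont}}(\rG_F,\bar\Q_p)$ has dimension equal to the number of independent $\Z_p$-extensions of $F$, i.e., $1+r_2(F)+\delta_{F,p}$; for $F$ totally real with vanishing Leopoldt defect this equals $1$, with the cyclotomic class $\eta_{\mathbf{1}}=\log_p\circ\epsilon_{\cyc}$ providing a basis. Therefore $\dim\rH^1(F,\bar\Q_p^2)=2$ and $\dim\rH^1(F,\mathbf{W}_\rho)=d+1$ as claimed. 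The main technical point is the Poitou--Tate vanishing of the $\rH^2$'s, but this reduces to the elementary observation that none of the characters at play coincides with $\epsilon_{\cyc}^{-1}$; the rest is bookkeeping with a long exact sequence and the Leopoldt hypothesis for $\rH^1(F,\bar\Q_p)$.
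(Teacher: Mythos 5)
Your treatment of (ii) is correct and essentially the same content as the paper's, written in the more transparent direction: you apply $\delta$ to the diagonal lift $M$ and read off that the resulting cocycle is $(d-a)\eta$, whereas the paper takes an arbitrary cocycle in $\mathrm{im}(\delta)$, views it as a coboundary in $Z^1(F,\mathbf{W}_\rho)$ and unwinds. The matrix computation is the right one, and the identification of the kernel of $\mathbf{W}_\rho\twoheadrightarrow\bar\Q_p^2$ with $\phi^{-1}$ is correct.

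Part (i), however, contains a genuine error. The assertion ``by global Poitou--Tate duality, $\rH^2(F,\chi)$ is dual to $\rH^0(F,\chi^{-1}(1))$'' is not a valid form of Poitou--Tate duality for global $\rH^2$. In the nine-term exact sequence, only the \emph{cokernel} of $\rH^2(\rG_{F,S},M)\to\bigoplus_v\rH^2(\rG_{F_v},M)$ is dual to $\rH^0(\rG_{F,S},M^*)$, while the kernel of that map is $\mathrm{Sha}^2(M)\simeq\mathrm{Sha}^1(M^*)^\vee$, which need not vanish. (You may be conflating this with \emph{local} Tate duality, where $\rH^2(\rG_{F_v},M)\simeq\rH^0(\rG_{F_v},M^*)^\vee$ does hold.) The problem is decisive for $\chi=\bar\Q_p$: the global Euler characteristic formula gives $\dim_{\bar\Q_p}\rH^2(F,\bar\Q_p)=\dim_{\bar\Q_p}\rH^1(F,\bar\Q_p)-1=\delta_{F,p}$, so the vanishing you assert is equivalent to the Leopoldt hypothesis and cannot be derived unconditionally. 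As written, your argument ``proves'' $\rH^2(F,\bar\Q_p)=0$ without invoking $\delta_{F,p}=0$, which is false; the hypothesis must enter already here, not only in the dimension count for $\rH^1(F,\bar\Q_p)$ a few lines later. The paper avoids this by using the global Euler characteristic formula throughout: combined with $\dim\rH^1(F,\phi)=d$ from Lemma~\ref{lemma-21} (equally valid for $\phi^{-1}$), the formula $h^0-h^1+h^2=-d$ for the totally odd line $\phi$ gives $\rH^2(F,\phi)=\rH^2(F,\phi^{-1})=0$; then a dimension count in \eqref{long}, using $\rH^2(F,\bar\Q_p^2)=0$ from $\delta_{F,p}=0$, gives $\dim\rH^1(F,\mathbf{W}_\rho)=d+1$ and $\rH^2(F,\mathbf{W}_\rho)=0$. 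Replacing your duality step with the Euler characteristic computation repairs (i).
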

\begin{proof} (i) Lemma~\ref{lemma-21} and the global Euler characteristic formula shows  $\rH^2(F,\phi)=\{0\}$. 
Counting dimensions in \eqref{long} and again  applying  the global Euler characteristic formula, proves the rest. 

 (ii) Let $\rho'=\left(\begin{smallmatrix}
0 & \eta' \\ 0 & 0\end{smallmatrix}\right) \in  Z^1(F, W_\rho )$, where  $\eta' \in  Z^1(F,\phi^{-1})$ represents an element in the image of $\delta$. It follows from  \eqref{long} that the cohomology class $[\rho']$ vanishes, i.e.,  $\rho'=\rho M \rho^{-1}-M$ for some upper triangular matrix $M \in W_\rho$. A direct computation then shows that  $[\eta']\in \overline{\Q}_p\cdot [\eta]$.
\end{proof}
\begin{prop}\label{tgnord}
Suppose that $\delta_{F,p}=0$. 
\begin{enumerate} 
\item Then $\dim_{\overline{\Q}_p} \mathbf{K}_2  \leqslant d_{\gp}+1$.
\item Suppose moreover that $d_{\gp}<d$.  Then $\dim_{\overline{\Q}_p} \mathbf{K}_1  \leqslant d$, in particular $\dim_{\overline{\Q}_p} (t^{\nord}_{\eta}) \leqslant d+1$. 
\end{enumerate}
\end{prop}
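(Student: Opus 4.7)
The plan is to use the two short exact sequences of $\bar\Q_p[\rG_F]$-modules
\[
0 \to \phi^{-1} \to \mathbf{W}_\rho \to \bar\Q_p^2 \to 0, \qquad 0 \to \mathbf{W}_\rho \to \ad \mathbf{V}_\rho \xrightarrow{\mathbf{c}} \phi \to 0,
\]
to reduce the bounds on $\mathbf{K}_1$ and $\mathbf{K}$ to the dimension computations on $\rH^1(F,\phi^{\pm 1})$ already established in Lemma~\ref{dimsplitcase} and Lemma~\ref{49}. The bound on $t^{\nord}_{\eta}$ will then follow from Proposition~\ref{nord_functor}(iii).

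For (i), I would begin by combining Lemma~\ref{49}(i)--(ii) with~\eqref{long} to extract the short exact sequence
\[
0 \to \rH^1(F, \phi^{-1})/\bar\Q_p\cdot [\eta] \longrightarrow \rH^1(F, \mathbf{W}_\rho) \xrightarrow{\pi} \rH^1(F, \bar\Q_p^2) \to 0,
\]
whose right-hand term has dimension $2$ under the hypothesis $\delta_{F,p}=0$. A direct check using the formula~\eqref{eq:map_B_q} shows that the composition $\phi^{-1} \hookrightarrow \mathbf{W}_\rho \xrightarrow{\mathbf{b}_{\gq}} \phi^{-1}$ is the identity, since $\mathbf{b}_{\gq}(0,c,0)=c$. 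Consequently, for a class $[\xi] \in \mathbf{K}_1 \cap \ker(\pi)$ arising from some $[\xi'] \in \rH^1(F,\phi^{-1})$ (well-defined modulo $[\eta]$), the defining vanishing condition $\mathbf{b}_{\gq}(\loc_{\gq}([\xi]))=0$ translates simply to $\loc_{\gq}([\xi'])=0$ for every $\gq \in \Sigma_p \setminus \{\gp\}$. Lemma~\ref{dimsplitcase}(ii) bounds the space of such $[\xi']$ by $d_\gp$, and since $[\eta]$ itself lies in that kernel by construction of the residual representation, we deduce $\dim (\mathbf{K}_1 \cap \ker(\pi)) \leqslant d_\gp - 1$. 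Combined with the trivial bound $\dim \pi(\mathbf{K}_1) \leqslant 2$, this yields $\dim \mathbf{K}_1 \leqslant d_\gp + 1$.

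For (ii), I would exploit the second exact sequence~\eqref{tgsanscond}. Since $\phi$ is nontrivial, $\rH^0(F,\phi)=0$, so $\rH^1(F,\mathbf{W}_\rho)$ injects into $\rH^1(F, \ad\mathbf{V}_\rho)$ and hence $\mathbf{K} \cap \rH^1(F,\mathbf{W}_\rho) = \mathbf{K}_1$. The induced map $\mathbf{c}_{*}: \mathbf{K} \to \rH^1(F, \phi)$ lands in $\ker(\loc_{\gp})$ by the very definition $\mathbf{c}_{\gp} = \loc_{\gp} \circ \mathbf{c}_{*}$, and this kernel has dimension $d - d_\gp - 1$ by Lemma~\ref{dimsplitcase}(i) (using $d_\gp < d$). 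Combining with (i),
\[
\dim \mathbf{K} \leqslant \dim \mathbf{K}_1 + \dim \mathbf{c}_{*}(\mathbf{K}) \leqslant (d_\gp + 1) + (d - d_\gp - 1) = d,
\]
and the inequality $\dim t^{\nord}_{\eta} \leqslant d+1$ follows from the injection $t^{\nord}_{\eta} \hookrightarrow \mathbf{K} \oplus \bar\Q_p$ of Proposition~\ref{nord_functor}(iii).

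The main technical subtlety I anticipate lies in the local identification at $\gq$ used in part (i): the retraction $\mathbf{b}_{\gq}$ is defined via the local splitting $\ad\mathbf{V}_{\rho|\rG_{\gq}} \simeq \phi \oplus \phi^{-1} \oplus \bar\Q_p^2$ (available only because $\gq$ is regular and $[\eta]$ is a coboundary locally), and one must verify that on the globally embedded $\phi^{-1} \subset \mathbf{W}_\rho$ it does coincide with the canonical projection. Without this compatibility, the translation of the $\mathbf{b}_{\gq}$-conditions into the unramifiedness conditions of Lemma~\ref{dimsplitcase}(ii) would fail, and the dimension count would not close.
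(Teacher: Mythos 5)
Your proof is correct and follows essentially the same route as the paper: the same short exact sequence extracted from \eqref{long} via Lemma~\ref{49}, the same identification of $\mathbf{K}_1\cap\mathrm{im}(\varphi)$ with $\ker\bigl(\rH^1(F,\phi^{-1})\to\bigoplus_{\gq\ne\gp}\rH^1(F_\gq,\phi^{-1})\bigr)/\bar\Q_p\cdot[\eta]$ via Lemma~\ref{dimsplitcase}(ii), and the same filtration argument bounding $\mathbf{K}/\mathbf{K}_1$ through $\mathbf{c}_*$ and Lemma~\ref{dimsplitcase}(i). The only difference is that you make explicit the check (which the paper leaves implicit) that the retraction $\mathbf{b}_\gq$ of \eqref{eq:map_B_q} restricts to the identity on the embedded $\phi^{-1}\subset\mathbf{W}_\rho$, so that the $\mathbf{b}_\gq$-conditions defining $\mathbf{K}_1$ translate to the local vanishing conditions of Lemma~\ref{dimsplitcase}(ii); this is a worthwhile clarification but not a different argument.
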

\begin{proof}
(i)
By Lemma~\ref{49}, the long exact sequence \eqref{long} gives rise  to a short exact sequence
\begin{align}\label{eq:seq_w_rho}
0 \to \rH^1(F,\phi^{-1})/(\overline{\Q}_p\cdot[\eta]) \xrightarrow{\varphi} \rH^1(F, W_\rho ) \rightarrow \rH^1(F, \overline{\Q}_p^2) \rightarrow 0.
\end{align}
By Lemma~\ref{dimsplitcase}(ii)  the space $\displaystyle 
\ker\left( \rH^1(F,\phi^{-1}) \to \bigoplus_{\gq \in \Sigma_p \setminus \{\gp\}} \rH^1(F_{\gq},\phi^{-1})\right)$
is $d_{\gp}$-dimensional and contains the line $\overline{\Q}_p\cdot  [\eta]$. As the map $[\eta']\mapsto \left[\left(\begin{smallmatrix}
0& \eta' \\ 0& 0\end{smallmatrix}\right)\right]$ identifies the latter with $\mathbf{K}_2\cap \mathrm{im} (\varphi)$, we deduce that
 $\dim_{\overline{\Q}_p} \left(\mathbf{K}_2\cap \mathrm{im} (\varphi)\right)=d_{\gp}-1$.
It  follows from 
 \eqref{eq:seq_w_rho} and $\delta_{F,p}=0$ that $\dim_{\overline{\Q}_p}(\mathbf{K}_2) \leqslant (d_{\gp}-1)+2\cdot 1=d_{\gp}+1$. 
 
(ii)  Clearly $\mathbf{K}_1 /\mathbf{K}_2$ injects in $\ker\left(\rH^1(F, \phi) \to \rH^1(F_{\gp}, \overline{\Q}_p)\right)$. Using (i) and Lemma~\ref{dimsplitcase}(i), we find 
 $\dim_{\overline{\Q}_p} (\mathbf{K}_1 ) = \dim_{\overline{\Q}_p}(\mathbf{K}_2)+ \dim_{\overline{\Q}_p} \left(\mathbf{K}_1 /\mathbf{K}_2\right) \leqslant (d_{\gp}+1)+(d-d_{\gp}-1)=d$.  The  second assertion then follows from \eqref{eq:nord_tangent_space}. 
\end{proof}

\section{Local geometry of the eigenvariety}\label{sec:03}
In this section we prove Theorem~\ref{main-thm}. We assume $\Sigma_p^{\mathrm{irr}}=\{\gp\}$ and $\delta_{F,p}=0$, 
and we recall the basis $\eta_{\mathbf{1}}\in \Hom(\rG_F,\overline{\Q}_p)$ from  \S\ref{sec:21}.

\subsection{Nearly-ordinary pseudo-characters and generalized matrix algebras}\label{sec:gma}
Recall that  $\cT^{\cusp}$ (resp.~$\cT^{\nord}$, $\varLambda$, $\varLambda^{\nord}$) denotes the  complete local ring of $\cC_{\cusp}$ (resp.~$\cE$, $\cW^{\parallel}$,  $\cW\times\cW^{\parallel}$)  at $f=f_{\phi_1}$ (resp.~$f$, $\sw(f)$,  $\kappa(f)$),  whose maximal ideal is denoted by $\gm$ (resp.~${\gm}_{\nord}$, $\gm_{\varLambda}$,  $\gm_{\varLambda^{\nord}}$). As $\cT^{\cusp}$  (resp.~$\cT^{\nord}$) is  reduced, finite and torsion-free over  $\varLambda$ (resp.~ $\varLambda^{\nord}$), generated  by  $\{T_{\gl},U_v : \gl\nmid \gn p, v\in \Sigma_p\}$,  it is  equidimensional of dimension $1$ (resp.~ $d+1$).
We remind  that $U_v$ is the normalized Hecke operator specializing to Hida's $U_v^\circ$ in classical weights.  
There exists a continuous two-dimensional pseudo-character $\mathrm{Ps}_{\cE}: \rG_F \rightarrow \cO(\cE)$ such that for all primes $\gl \nmid p\gn$ we have $\mathrm{Ps}_{\cE}(\Frob_{\gl})= T_{\gl}$.  Let $\mathrm{Ps}_{\cT^{\nord}}: \rG_F \rightarrow \cT^{\nord}$  be the push-forward of $\mathrm{Ps}_{\cE}$ by  $\cO(\cE) \rightarrow \cT^{\nord}$. 
As $\cT^{\nord}$ is reduced its  total quotient ring  $\mathbf{L}$  is a product of fields indexed by the finite set of its minimal primes.  
Recall the that  $\phi=\phi_2\phi_1^{-1}$ is totally odd, i.e., $\phi(\tau)=-1$ for  any complex conjugation $\tau$.  
By \cite[\S2]{wiles} there exists a generically irreducible  continuous  representation 
\[
\rho_{\mathbf{L}}= \left(\begin{smallmatrix} a & b \\ c & d \end{smallmatrix}\right):\rG_F\to \GL_2(\mathbf{L}), 
\]
 such that $\mathrm{tr}(\rho_{\mathbf{L}})=\mathrm{Ps}_{\cT^{\nord}}$, $\det(\rho_{\mathbf{L}})=\phi_1\phi_2 \chi_{\cyc}$, where $\chi_{\cyc}:\rG_F\to \varLambda^\times$ is the universal cyclotomic character.
Hida's  nearly-ordinary cuspidal  Hecke algebra $\mathfrak{h}^{\nord}$ of tame level $\gn$ introduced in \cite{hida90} 
is a canonical integral formal model of the admissible open of $\cE$ defined as the locus where $ |U_v|_p=1$ for all $v\in \Sigma_p$. 
Hence $\cT^{\nord}$ is the strict completed local ring of $\mathfrak{h}^{\nord}$ at $f$ and  
the irreducible components of $\Spec(\cT^{\nord})$, i.e.,  the minimal prime ideals of $\cT^{\nord}$  
are in bijection with  the nearly-ordinary cuspidal Hida eigenfamilies containing $f$.  
It follows from \cite[Thm.~I]{hida90} that $\rho_{\mathbf{L}}$ is nearly-ordinary, i.e., for every $v\in \Sigma_p$ there exists 
$\mathbf{Fil}_{\mathbf{L},v} \in \mathbf{P}^1(\mathbf{L})$ such that $\mathbf{Fil}_{\mathbf{L},v}$ is stable by $\rG_{v}$ and the action on the quotient is by a character 
$\phi_1\chi_{v}:\rG_v \to  (\cT^{\nord})^\times$  whose precomposition with the Artin reciprocity map $\rec_{v}: F_{v}^\times \rightarrow \rG^{\mathrm{ab}}_{v}$  sends any uniformizer $\varpi_v \in F_{v}^\times$ to the corresponding Hecke operator  $U_{\varpi_v}=\left[\left(\begin{smallmatrix} \varpi_v & 0 \\ 0 & 1 \end{smallmatrix}\right)\right]$. As $U_{v} f= \phi_1(v) f$, precomposing the restriction of $\chi_v$ to the inertia subgroup of  $\rG^{\mathrm{ab}}_{v}$ with the local Artin reciprocity map yields a homomorphism $\cO_{F_v}^\times \to  (\cT^{\nord})^\times$  which is congruent to $\mathbf{1}$ modulo 
${\gm}_{\nord}$ and is therefore  trivial on the torsion part of $\cO_{F_v}^\times $. 
Choosing a basis of the free part of $\cO_{F_v}^\times$, we deduce that  $\chi_v:\rI_v\to (\cT^{\nord})^\times$ factors through the
tautological character 
\begin{align}\label{eq:no-char}
\chi_v:\rI_v \to \cO_{F_v}^\times  \to  \overline{\Q}_p\lsem X_\sigma : \sigma\in \Sigma_v\rsem^\times,
\end{align}
yielding a natural continuous $\overline{\Q}_p$-algebra homomorphism $\overline{\Q}_p\lsem X_\sigma : \sigma\in \Sigma_v\rsem \to \cT^{\nord}$.
Recall also that the determinant map yields a continuous $\overline{\Q}_p$-algebra homomorphism $\varLambda=\overline{\Q}_p\lsem X\rsem \to \cT^{\nord}$,  where the identification $\cO\lsem X\rsem=\cO\lsem \Gal(F_\infty/F)\rsem$ is induced by the fixed topological generator $\gamma$ of $\Gal(F_{\infty}/F)$. 
The density of classical points in $\cE$ and the compatibility between the local and global Langlands correspondences for $\GL_2$ at places above $p$,  implies that the resulting injective homomorphism
\begin{align}\label{loc-glob-wt} 
\varLambda^{\nord}=\varLambda\lsem X_\sigma : \sigma\in \Sigma \rsem \to \cT^{\nord}. \end{align}
 is precisely the one given by  weight map $\sw: \cE \to \cW\times\cW^{\parallel}$. 
By \cite[Lem.~4.3, Lem.~4.6]{DKV}, there exists $\gamma_0 \in \rG_F$ and a basis $(e_1,e_2)$ of $\mathbf{L}^2$ such that the following properties hold:
 \begin{enumerate}[label=\textbf{(P\arabic*)}]
 \item \label{(P1)} $\phi(\gamma_0) \ne 1$ and $\rho_{\mathbf{L}}(\gamma_0)=\left(\begin{smallmatrix} \phi_1(\gamma_0)  & 0 \\ 0 & \phi_2(\gamma_0) \end{smallmatrix}\right)$.
  \item \label{(P2)}  $\rho_{\mathbf{L}}= \left(\begin{smallmatrix} a & b \\ c & d \end{smallmatrix}\right)$ such that $a(\rG_F)\subset \cT^{\nord}, \quad d(\rG_F)\subset \cT^{\nord}, \quad  b(\rG_F)c(\rG_F)\subset {\gm}_{\nord}$. 
 \item \label{(P3)} For all $v \in \Sigma_p$, $\mathbf{Fil}_{\mathbf{L},v}$ is in general position in the basis $(e_1,e_2)$, i.e., $\mathbf{Fil}_{\mathbf{L},v}=  \mathbf{L} \cdot ( e_1+ y_v e_2)\subset \mathbf{L}^2$ with $y_v \in \mathbf{L}^\times$. Thus, for all $g\in \rG_v$ one has 
\[ y_v  \cdot b(g) = d(g)-\phi_1\chi_{v}(g) \text{ and }  y_v^{-1} \cdot c(g)= a(g)- \phi_1\chi_{v}(g).  \]
 \end{enumerate}
Let $\mathbf{B}$ (resp.~$\mathbf{C}$) be the $\cT^{\nord}$-module generated by $b(\rG_F)$ (resp.~$c(\rG_F)$). As $\rho_{\mathbf{L}}$ is absolutely irreducible, it follows from \cite[Prop.~2]{bellaiche-chenevier-eis} that  $\mathbf{B}$ and $\mathbf{C}$ are torsion-free and of finite type,  hence they  can be endowed  with the locally convex topology.
\begin{prop}\label{the_module_B}
Assume $\Sigma_p^{\mathrm{irr}}=\{\gp\}$.  There is a natural injection 
\begin{align}\label{entryB}
	\Hom(\mathbf{B}/{\gm}_{\nord} \mathbf{B}, \overline{\Q}_p) \hookrightarrow \ker\left(\rH^1(F,\phi^{-1}) \to \bigoplus_{\gq \ne \gp} \rH^1(F_{\gq},\phi^{-1})\right), \quad h\mapsto \left[g\mapsto \frac{h(b(g))}{\phi_2(g)}\right]. 
\end{align}
Moreover, $y_\gp \cdot \mathbf{B} \subset \gm_{\nord}$. 
\end{prop}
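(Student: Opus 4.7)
My plan has three main parts, with the final one — the claim $y_\gp\mathbf{B}\subset\gm_{\nord}$ — being the main technical obstacle.

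For the well-definedness of the cocycle and the injectivity, yielding a $\bar\Q_p$-linear injection $\Hom(\mathbf{B}/\gm_{\nord}\mathbf{B},\bar\Q_p)\hookrightarrow\rH^1(F,\phi^{-1})$, the plan is a standard GMA calculation. The identity $b(gg')=a(g)b(g')+b(g)d(g')$ from $\rho_{\cT^{\nord}}(gg')=\rho_{\cT^{\nord}}(g)\rho_{\cT^{\nord}}(g')$, together with the residual identifications $a\equiv\phi_1$ and $d\equiv\phi_2\pmod{\gm_{\nord}}$ from \ref{(P2)} and the $\cT^{\nord}$-linearity of $h$ (where $\bar\Q_p=\cT^{\nord}/\gm_{\nord}$), yields
\[h(b(gg'))=\phi_1(g)h(b(g'))+\phi_2(g')h(b(g)),\]
which after dividing by $\phi_2(gg')$ is exactly the $\phi^{-1}$-cocycle relation for $\eta_h$. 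For injectivity, if $[\eta_h]=0$ then $h(b(g))=\lambda(\phi_1(g)-\phi_2(g))$ for some $\lambda\in\bar\Q_p$ and every $g\in\rG_F$; evaluating at $\gamma_0$ from \ref{(P1)}, where $b(\gamma_0)=0$ and $\phi_1(\gamma_0)\ne\phi_2(\gamma_0)$, forces $\lambda=0$, so $h\circ b\equiv 0$ and thus $h=0$ since $\mathbf{B}/\gm_{\nord}\mathbf{B}$ is $\bar\Q_p$-spanned by the residues of $\{b(g):g\in\rG_F\}$.

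For the image to land in the kernel of the restriction to $\rG_\gq$ at each $\gq\in\Sigma_p\setminus\{\gp\}$, I would use \ref{(P3)}: the identity $y_\gq b(g)=d(g)-\phi_1\chi_\gq(g)\in\cT^{\nord}$ for $g\in\rG_\gq$ reduces modulo $\gm_{\nord}$ to $\phi_2(g)-\phi_1(g)$, since $\chi_\gq$ deforms the trivial character on $\rI_\gq$ and extends to $\rG_\gq$ lifting $\phi_1$ (Frobenius sent to $U_\gq\equiv\phi_1(\gq)\pmod{\gm_{\nord}}$). Multiplication by $y_\gq$ thus gives a $\cT^{\nord}$-linear isomorphism from the submodule $\mathbf{B}_\gq:=\cT^{\nord}\cdot b(\rG_\gq)\subset\mathbf{B}$ onto the ideal $(d(g)-\phi_1\chi_\gq(g):g\in\rG_\gq)\subset\cT^{\nord}$. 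This ideal has non-zero image in $\bar\Q_p$ since $\phi_2\ne\phi_1$ on $\rG_\gq$, hence equals $\cT^{\nord}$; consequently $\mathbf{B}_\gq/\gm_{\nord}\mathbf{B}_\gq$ is one-dimensional, and choosing $g_0\in\rG_\gq$ with $d(g_0)-\phi_1\chi_\gq(g_0)\in(\cT^{\nord})^\times$ gives $\bar b(g)=\lambda_\gq(\phi_2(g)-\phi_1(g))$ for all $g\in\rG_\gq$, where $\lambda_\gq=\bar b(g_0)/(\phi_2(g_0)-\phi_1(g_0))\in\bar\mathbf{B}$. Therefore $\eta_h|_{\rG_\gq}=h(\lambda_\gq)(1-\phi^{-1})$ is a coboundary.

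For the final claim $y_\gp\mathbf{B}\subset\gm_{\nord}$, the same formula at $\gp$ gives $y_\gp b(g)\equiv\phi_2(g)-\phi_1(g)\pmod{\gm_{\nord}}$ for $g\in\rG_\gp$, and by the irregularity assumption $\phi|_{\rG_\gp}=1$ the residue vanishes, so $y_\gp b(g)\in\gm_{\nord}$ for every $g\in\rG_\gp$. The hard part — the main obstacle — is extending this pointwise vanishing from $\rG_\gp$ to all of $\rG_F$. To accomplish this I would combine the pairing $\mathbf{B}\cdot\mathbf{C}\subset\gm_{\nord}$ from \ref{(P2)} with the companion formula $c(g')=y_\gp(a(g')-\phi_1\chi_\gp(g'))$ for $g'\in\rG_\gp$ (from \ref{(P3)}, which by the same residual vanishing gives $y_\gp^{-1}c(\rG_\gp)\subset\gm_{\nord}$): for any $g\in\rG_F$ and $g'\in\rG_\gp$,
\[(y_\gp b(g))\cdot(y_\gp^{-1}c(g'))=b(g)c(g')\in\gm_{\nord},\]
so that $y_\gp b(g)$ multiplies the ideal $J_\gp:=(a(g')-\phi_1\chi_\gp(g'):g'\in\rG_\gp)\subset\gm_{\nord}$ into $\gm_{\nord}$. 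A colon-ideal argument, leveraging that $J_\gp$ has non-zero image in $\gm_{\nord}/\gm_{\nord}^2$ owing to the tangential direction supplied by $\chi_\gp-1$ on $\rI_\gp$, should then force $y_\gp b(g)\in\gm_{\nord}$ for every $g\in\rG_F$.
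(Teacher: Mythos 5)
Your argument for the injection into $\rH^1(F,\phi^{-1})$ and for landing in the kernel of restriction at the regular primes $\gq\in\Sigma_p\setminus\{\gp\}$ is essentially correct and matches what the paper quotes from \cite[Thm.~4.2]{DDP} and \cite[Prop.~3]{bellaiche-chenevier-eis} (the latter also supplies the continuity of the resulting cocycle, which you leave implicit but which is needed to land in Galois cohomology). You are also right that the substance lies in passing from $y_\gp b(g)\in\gm_{\nord}$ for $g\in\rG_\gp$ to the same statement for all $g\in\rG_F$, and this is precisely where your proposal has a genuine gap.

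The colon-ideal step does not work. Writing $z=y_\gp b(g)$, you establish $z\cdot J_\gp\subset\gm_{\nord}$ with $J_\gp\subset\gm_{\nord}$. But $z$ a priori lives only in the total quotient ring $\mathbf{L}$, and $\cT^{\nord}$ is merely reduced, finite and torsion-free over $\varLambda^{\nord}$, so it can have several minimal primes and need not be integrally closed; in that generality $zJ_\gp\subset\gm_{\nord}$ does not even force $z\in\cT^{\nord}$. For instance, with $\cT=k\lsem x,y\rsem/(xy)$, $J=(x)$ and $z=(0,1/y)\in k((x))\times k((y))$, one has $zJ=0\subset\gm$ yet $z\notin\cT$. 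Even granted a nonzero-divisor $j\in J_\gp\setminus\gm_{\nord}^2$, the relation $zj\in\gm_{\nord}$ only yields $z\in j^{-1}\gm_{\nord}$, which does not give $z\in\gm_{\nord}$ without regularity or factoriality of $\cT^{\nord}$, and that is part of what is ultimately being determined. (Your sub-claim that $J_\gp\not\subset\gm_{\nord}^2$ is also unjustified: the generators are $a(g')-\phi_1\chi_\gp(g')$, and the tangent contribution of $\chi_\gp-1$ could a priori cancel against that of $a-\phi_1$.) The paper instead invokes \cite[Lem.~4.8]{DKV}, the non-formal input your proposal is missing: $\mathbf{B}$ is generated as a $\cT^{\nord}$-module by $\{b(g)\mid g\in\rI_\gp\}$. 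That statement, which rests on the global structure of $\rH^1(F,\phi^{-1})$ and the nearly-ordinarity conditions at all $v\in\Sigma_p$, reduces the claim to $g\in\rG_\gp$, where your pointwise computation via \ref{(P3)} and $\phi_{\mid\rG_\gp}=\mathbf{1}$ concludes.
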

\begin{proof} 
The first assertion follows by the same arguments as in \cite[Thm.~4.2 \and (113)]{DDP} using the fact, established in  \cite[Prop.~3]{bellaiche-chenevier-eis},  that the cocycles in the image of  $\Hom_{\cT^{\nord}}(\mathbf{B}/{\gm}_{\nord} \mathbf{B}, \overline{\Q}_p) \hookrightarrow \rH^1(F,\phi^{-1})$ are continuous. In virtue of \cite[Lem.~4.8]{DKV}, $\mathbf{B}$ is generated by $\{b(g) \mid g \in \rI_{\gp}\}$ and hence  $y_\gp \cdot  \mathbf{B} \subset {\gm}_{\nord}$ by \ref{(P3)}  noting the fact that  $\phi_{\mid \rG_\gp}=\mathbf{1}$. 
\end{proof}

By rescaling $(e_1,e_2)$, we henceforth  assume that $y_\gp=1$. The above proposition then yields the following additional  non-formal property
of our GMA:
 \begin{enumerate}[label=\textbf{(P\arabic*)}]   \setcounter{enumi}{3}
 \item \label{(P4)}  $\mathbf{B} \subset \gm_{\nord}$. 
 \end{enumerate}
 
 This property and the  fact that the $\cT^{\nord}$-module $\mathbf{C}$ may  not  in general be contained in ${\gm}_{\nord}$ will play an important role in the proof of Theorem~\ref{main-thm}.

\subsection{Regularity and an $R=T$  Theorem}\label{sec:24}
In this subsection we prove Theorem~\ref{main-thm}(i) by establishing an $R=T$ theorem. 
It will become clear in \S\ref{sec:34} that one should not expect such modularity results in general. 
If $d_{\gp}=1$, then  Lemma~\ref{dimsplitcase}(ii) implies that $\ker\left(\rH^1(F,\phi^{-1}) \to \bigoplus_{\gq \ne \gp} \rH^1(F_{\gq},\phi^{-1})\right)$ is a line. Choosing a basis $\eta=\eta_{\phi^{-1}}^{(\gp)}$  of this line, we consider  
 $\cR^{\nord}_{\rho_{\univ}}$  and $\cR^{\ord}_{\eta}$ from Proposition~\ref{nord_functor}.

\begin{thm}\label{r_equal_t}
Assume $d_{\gp}=1$ and  $\delta_{F,p}=0$.
\begin{enumerate}
\item There is a natural isomorphism of regular local $\varLambda^{\nord}$-algebras $\cR^{\nord}_{\rho_{\univ}}\xrightarrow{\sim}\cT^{\nord}$.  
\item There is a natural isomorphism of $\varLambda$-algebras $\cR^{\ord}_{\eta}\xrightarrow{\sim}\cT^{\cusp}$ which are
 local complete intersection rings.
Moreover  the Zariski tangent space of $\cC_{\cusp}$ at $f$ has 
dimension  at most~$d-1$.
\end{enumerate} 
\end{thm}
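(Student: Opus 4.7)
The plan is to establish an $R=T$ theorem of the form $\cR^{\nord}_{\rho_{\univ}} \xrightarrow{\sim} \cT^{\nord}$ by first producing a natural surjection and then showing it is an isomorphism via a regularity/dimension count; part (ii) will then follow by descending to the parallel-weight quotient.

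The hypothesis $F_\gp=\Q_p$ enters decisively as follows. By Lemma~\ref{dimsplitcase}(ii), the space $\ker\!\left(\rH^1(F,\phi^{-1}) \to \bigoplus_{\gq \ne \gp} \rH^1(F_\gq,\phi^{-1})\right)$ is one-dimensional, and via the injection of Proposition~\ref{the_module_B} this forces $\dim_{\bar\Q_p} \mathbf{B}/\gm_{\nord}\mathbf{B} \leqslant 1$. By Nakayama the $\cT^{\nord}$-module $\mathbf{B}$ is cyclic, say $\mathbf{B}=\cT^{\nord}\cdot b_0$ with $b_0\in\gm_{\nord}$ by \ref{(P4)}. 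Rescaling the GMA basis to absorb $b_0$ upgrades the pseudo-character to a genuine $\GL_2(\cT^{\nord})$-valued representation lifting $\rho$, with the line $\mathbf{L}\cdot(e_1+y_\gp e_2)$ from \ref{(P3)} descending to a free rank-one $\rG_\gp$-stable direct summand of $(\cT^{\nord})^2$. Together with the nearly-ordinary filtrations at the regular primes $\gq\in\Sigma_p\setminus\{\gp\}$ (automatic from \ref{(P3)}), this exhibits $(\rho_{\cT^{\nord}},\mathbf{Fil})$ as an object of $\cD^{\nord}_\eta(\cT^{\nord})$. Universality then yields a $\varLambda^{\nord}$-algebra homomorphism $\cR^{\nord}_{\rho_{\univ}} \to \cT^{\nord}$, surjective because its image already contains both the traces $T_\gl=\tr\rho_{\cT^{\nord}}(\Frob_\gl)$ for $\gl\nmid p\gn$ and the Hecke eigenvalues $U_v$ recovered from the quotient character, a set of topological generators of $\cT^{\nord}$.

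To conclude the map is an isomorphism I will combine Proposition~\ref{tgnord}(ii), which bounds $\dim_{\bar\Q_p} t^{\nord}_\eta \leqslant d+1$, with the Krull dimension $d+1$ of $\cT^{\nord}$ coming from the finite torsion-free structure over $\varLambda^{\nord}$. The surjection forces $\dim \cR^{\nord}_{\rho_{\univ}} \geqslant d+1$, and combined with the embedding-dimension bound this makes $\cR^{\nord}_{\rho_{\univ}}$ regular of dimension exactly $d+1$, hence a domain. Being a surjection between local rings of the same dimension with the source a domain, the kernel is a prime of height zero and vanishes, proving (i) and in particular that $\cE$ is smooth at $f$. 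The edge case $F=\Q$, where Proposition~\ref{tgnord}(ii) does not directly apply, is handled by the analogous bound coming from Proposition~\ref{tgnord}(i) augmented by the contribution of the $Y$-direction. For (ii), the identification $\cR^{\ord}_\eta=\cR^{\nord}_{\rho_{\univ}}/(X_\sigma:\sigma\in\Sigma)$ from \eqref{ordinaryaugmentationideal} mirrors the quotient $\cT^{\cusp}\simeq \cT^{\nord}/(X_\sigma)$ cut out by the parallel-weight embedding \eqref{eq:rel_wt_sp}, so the isomorphism from (i) descends to a $\varLambda$-algebra isomorphism $\cR^{\ord}_\eta\xrightarrow{\sim}\cT^{\cusp}$. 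Since $\cR^{\nord}_{\rho_{\univ}}$ is regular of dimension $d+1$ and the $d$ elements $X_\sigma$ cut out a quotient of dimension $1$, the Cohen--Macaulay property of regular rings forces $(X_\sigma)$ to be a regular sequence, exhibiting $\cR^{\ord}_\eta$ as a one-dimensional local complete intersection. The upper bound on the Zariski cotangent space of $\cT^{\cusp}$ is then read off from Proposition~\ref{tgnord}(i), which controls the Galois cohomological tangent space $\mathbf{K}_1$ of $\cR^{\ord}_\eta$.

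The main obstacle is the initial step of turning the pseudo-character into an honest $\cT^{\nord}$-valued representation. This rests entirely on the cyclicity of $\mathbf{B}$, which in turn rests on the one-dimensionality of the relevant Galois cohomology group---a phenomenon special to $F_\gp=\Q_p$. Beyond this hypothesis $\mathbf{B}$ is no longer cyclic, the pseudo-character admits no $\cT^{\nord}$-valued lift, and the clean $R=T$ mechanism breaks down; this failure is the root of the geometric singularities described in parts~(ii)-(iv) of Theorem~\ref{main-thm}.
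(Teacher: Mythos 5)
Your proof follows the same strategy as the paper: produce a genuine $\GL_2(\cT^{\nord})$-valued representation lifting $\rho=\left(\begin{smallmatrix}\phi_1&\phi_2\eta\\0&\phi_2\end{smallmatrix}\right)$, invoke universality of $\cR^{\nord}_{\rho_{\univ}}$ to obtain a surjection onto $\cT^{\nord}$, then close by comparing the tangent-space bound of Proposition~\ref{tgnord}(ii) with the Krull dimension $d+1$ of $\cT^{\nord}$, and finally pass to the quotient by $(X_\sigma)$ for part (ii). Where the paper cites \cite[Prop.~1.7.4]{bel-che09} (for producing the representation from the pseudo-character) and \cite[Prop.~3.4(i)]{BDP} (for the $\gp$-filtration), you make the mechanism explicit: Lemma~\ref{dimsplitcase}(ii) plus the injection in Proposition~\ref{the_module_B} force $\dim_{\bar\Q_p}\mathbf{B}/\gm_{\nord}\mathbf{B}=1$, so $\mathbf{B}$ is cyclic by Nakayama and the GMA collapses to a true matrix algebra over $\cT^{\nord}$. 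That is a faithful unpacking of the same argument, and the descent of the line $\mathbf{L}\cdot(e_1+y_\gp e_2)$ (with $y_\gp=1$ after the normalization of \S\ref{sec:gma}) goes through as you assert.

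Two points to tighten. First, the nearly-ordinary condition at the regular primes $\gq\in\Sigma_p\setminus\{\gp\}$ is not ``automatic from \ref{(P3)}'': \ref{(P3)} furnishes lines over $\mathbf{L}$, and the descent to a free rank-one $\cT^{\nord}$-direct summand (Definition~\ref{defnno}(ii)) uses Hensel's lemma, relying on $\phi|_{\rG_\gq}\ne\mathbf{1}$, exactly as the paper notes. Second, your fix of the $F=\Q$ edge case does not give the required bound: Proposition~\ref{tgnord}(i) yields $\dim\mathbf{K}_1\leqslant d_\gp+1=2$, and even after adding one for the $Y$-coordinate you land at $\dim t^{\nord}_\eta\leqslant 3$, whereas the isomorphism requires $\leqslant d+1=2$. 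In fact the statement itself implicitly assumes $d\geqslant 2$ — for $d=1$ the bound ``$\leqslant d-1=0$'' on the Zariski tangent space of the one-dimensional local ring $\cT^{\cusp}$ is impossible — and the rational case is treated separately in \cite{BDP}. Your instinct to flag the edge case is sound, but the proposed patch does not close it; the honest resolution is to exclude $d=1$.
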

\begin{proof}
(i) As the image of the morphism \eqref{entryB} is one dimensional  generated by $[\eta]$, \cite[Prop.~1.7.4]{bel-che09} ensures the existence of a deformation $\rho_{\cT}^{\nord}:\rG_F \to \GL_2(\cT^{\nord})$ of  $\rho=\left(\begin{smallmatrix} \phi_1 & \phi_2\eta \\ 0 & \phi_2 \end{smallmatrix}\right)$ such that $\det\rho_{\cT}^{\nord}=\phi_1\phi_2\chi_{\cyc}$ and $\mathrm{tr}(\rho_{\cT}^{\nord})(\Frob_\gl)=T_\gl$ for all $\gl \nmid \gn p$.
The  argument used in  \cite[Prop.~3.4(i)]{BDP}  based on the nearly-ordinariness of $\rho_{\cT}^{\nord} \otimes \mathbf{L}$ and the non-vanishing of $[\eta]_{\mid \rI_\gp}$ implies that  $\rho_{\cT}^{\nord}$ satisfies the condition of Definition~\ref{defnno}(iii). Moreover, as $\phi_{\mid \rG_\gq} \ne  \mathbf{1}$  for 
$\gq\in \Sigma_p\setminus \{\gp\}$,  Hensel's lemma yields that $\rho_{\cT}^{\nord}$ satisfies  condition (ii)  in Definition~\ref{defnno}. Thus, we obtain a
  $\varLambda^{\nord}$-algebra homomorphism
\begin{align}\label{surjRtoT}
\cR^{\nord}_{\rho_{\univ}} \to \cT^{\nord}, 
\end{align}
which is surjective as $\cT^{\nord}$ is generated over $\varLambda^{\nord}$ by the Hecke operators $\{T_\gl,U_v :\; \gl \nmid \gn p, v\in \Sigma_p \}$. As $\cT^{\nord}$ is equidimensional of dimension $d+1$, Proposition~\ref{tgnord} implies that  
\eqref{surjRtoT} is an isomorphism of regular local $\varLambda^{\nord}$-algebras. 

(ii) Reducing  the isomorphism \eqref{surjRtoT}  modulo the ideal $(X_\sigma : \sigma\in \Sigma)\subset \varLambda^{\nord}$ makes the 
 the nearly-ordinary characters $\chi_v$ unramified yielding a  $\varLambda$-algebra isomorphism $\cR^{\ord}_{\eta}\xrightarrow{\sim}\cT^{\cusp}$ (see \eqref{loc-glob-wt}). Finally, as $\{X_\sigma : \sigma\in \Sigma\}$ is a  regular sequence of $d$ elements in $\cT^{\nord}$, it follows that the ring $\cT^{\cusp}$ is local complete intersection.
 
 The local ring $\cR^{\ord}_{\eta}$ pro-represents the sub-functor of $\cD^{\nord}_{\eta}$ requiring additionally that the quotients occurring in Definition~\ref{defnno} to be unramified. As explained in Proposition~\ref{nord_functor}, $\Fil_A$ is then  uniquely determined, 
 showing  in particular that the tangent space $t^{\ord}_{\eta}$ of $\cR^{\ord}_{\eta}$ is contained in $\mathbf{K}_1 $ (see \eqref{eq:nord_tangent_space}). Arguing as  in the proof of Proposition~\ref{tgnord}, we get the following exact sequence
\[0 \to \mathbf{K}_3 \to t^{\ord} \to \ker\left(\rH^1(F, \phi^{-1}) \to \rH^1(F_{\gp}, \overline{\Q}_p)\right) ,\]
where  $\mathbf{K}_3=\ker\left(\mathbf{K}_2 \to \Hom(\rG_F,\overline{\Q}_p)\right)$ is associated to $\left(\begin{smallmatrix} a & b \\ 0 & d\end{smallmatrix}\right) \mapsto a$. As by \eqref{eq:seq_w_rho} one has 
$ \mathbf{K}_2\xrightarrow{\sim}\rH^1(F, \overline{\Q}_p^2)$  in this case, one deduces that  
  $\dim_{\overline{\Q}_p} \mathbf{K}_3 = \dim_{\overline{\Q}_p} \mathbf{K}_2 -1=1$, hence $\dim t^{\ord}  \leqslant d-1$.
\end{proof}
  
\subsection{Etaleness and the reducibility ideal}\label{sec:etale}
The aim of this subsection is to prove Theorem~\ref{main-thm}(ii-iii). 
\begin{lemma}\label{key_lemma}
If $d_{\gp} \geqslant d-1$, then  $\mathbf{C} \subset \gm_{\nord}$. In particular, \ref{(P4)} implies that 
$\mathbf{B}\mathbf{C}\subset \gm_{\nord}^2$.
\end{lemma}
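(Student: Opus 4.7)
The plan is to apply the GMA cohomology machinery to $\mathbf{C}$ in parallel with Proposition~\ref{the_module_B}. The cocycle relation $c(gh) = c(g)a(h) + d(g)c(h)$ together with the residual identifications $a \equiv \phi_1$ and $d \equiv \phi_2 \pmod{\gm_{\nord}}$ makes the reduction $\bar c / \phi_1 \colon \rG_F \to \mathbf{C}/\gm_{\nord}\mathbf{C}$ into a $1$-cocycle for $\phi$ with values in the finite-dimensional $\bar\Q_p$-vector space $\mathbf{C}/\gm_{\nord}\mathbf{C}$. By \cite[Prop.~3]{bellaiche-chenevier-eis}, this produces a natural injection
\[
\Hom_{\bar\Q_p}(\mathbf{C}/\gm_{\nord}\mathbf{C}, \bar\Q_p) \hookrightarrow \rH^1(F, \phi), \qquad h \mapsto \bigl[g \mapsto h(c(g))/\phi_1(g)\bigr].
\]

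The first step is to identify the local conditions on the image at each $\gq \in \Sigma_p$. At each regular prime $\gq \ne \gp$, since $\phi_1(g_\gq) \ne \phi_2(g_\gq)$ for some $g_\gq \in \rG_\gq$, a Hensel-type diagonalization of $\rho_{\cT^{\nord}}|_{\rG_\gq}$ (as in the proof of Proposition~\ref{nord_functor} or \cite[Lem.~1.4.3]{bel-che09}) makes the restricted cocycle vanish, so the image lies in the kernel $\ker\bigl(\rH^1(F,\phi) \to \bigoplus_{\gq \ne \gp} \rH^1(F_\gq,\phi)\bigr)$, of dimension $d_\gp$ by Lemma~\ref{dimsplitcase}(ii). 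The second step is to exploit \ref{(P3)} at $v=\gp$ with the normalization $y_\gp = 1$: for every $g \in \rG_\gp$, one has $c(g) = a(g) - \phi_1(g)\chi_\gp(g) \in \gm_{\nord}$, since both $a$ and $\chi_\gp$ reduce trivially. A module-theoretic analysis then promotes this into the vanishing of $\bar c|_{\rG_\gp}$ in $\mathbf{C}/\gm_{\nord}\mathbf{C}$, placing the image further inside $\ker\bigl(\rH^1(F,\phi) \to \rH^1(F_\gp, \bar\Q_p)\bigr)$, whose dimension is $\max(d - d_\gp - 1, 0)$ by Lemma~\ref{dimsplitcase}(i). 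Under the hypothesis $d_\gp \geqslant d - 1$, this dimension is zero.

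The image in $\rH^1(F, \phi)$ is thus trivial. Taking into account the one-dimensional ``coboundary direction'' in the kernel of the injection (namely the cocycles of the shape $g \mapsto (\phi_2(g) - \phi_1(g))c_0$ that are automatically cohomologically trivial), one concludes that $\mathbf{C}/\gm_{\nord}\mathbf{C}$ is at most one-dimensional. Combining this description with the vanishing of $\bar c$ on $\rG_\gp$ forces a generator of $\mathbf{C}$ to lie in $\gm_{\nord}$, whence $\mathbf{C} \subset \gm_{\nord}$. The assertion $\mathbf{B}\mathbf{C} \subset \gm_{\nord}^2$ is then immediate from $\mathbf{B} \subset \gm_{\nord}$ by \ref{(P4)}.

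The main obstacle I expect to confront is the passage from the ring-level identity $c(g) \in \gm_{\nord}$ for $g \in \rG_\gp$ to the module-level statement $c(g) \in \gm_{\nord}\mathbf{C}$ required to upgrade the local cohomological vanishing at $\gp$; handling this carefully is what ultimately exploits the reducibility ideal $\mathbf{B}\mathbf{C} \subset \gm_{\nord}$ from \ref{(P2)} combined with \ref{(P4)}, and it is where the hypothesis $d_\gp \geqslant d - 1$ genuinely enters through Lemma~\ref{dimsplitcase}(i).
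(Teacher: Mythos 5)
You correctly identify the main ingredients — the injection $\Hom(\mathbf{C}/\gm_{\nord}\mathbf{C},\bar\Q_p)\hookrightarrow \rH^1(F,\phi)$ from \cite[Prop.~3]{bellaiche-chenevier-eis}, the ring-level containment $c(\rG_{\gp})\subset\gm_{\nord}$ coming from \eqref{eq:p-nearly_ord} after normalizing $y_{\gp}=1$, and the dimension count in Lemma~\ref{dimsplitcase}(i) — but you flag, and do not resolve, exactly the step the paper's proof is designed to circumvent. The difficulty you name is real: $c(g)\in\gm_{\nord}$ for $g\in\rG_{\gp}$ is a statement inside the ring $\cT^{\nord}$, whereas the cocycle computed from $h\in\Hom(\mathbf{C}/\gm_{\nord}\mathbf{C},\bar\Q_p)$ sees only $c(g)\bmod \gm_{\nord}\mathbf{C}$, and $\gm_{\nord}\cap\mathbf{C}$ need not coincide with $\gm_{\nord}\mathbf{C}$. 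Your suggestion that ``exploiting the reducibility ideal $\mathbf{B}\mathbf{C}\subset\gm_{\nord}$ combined with \ref{(P4)}'' bridges this does not work: those containments say nothing about whether $c(g)$, for $g\in\rG_{\gp}$, lies in $\gm_{\nord}\mathbf{C}$, and no amount of massaging turns them into that.

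The paper sidesteps the issue by replacing $\mathbf{C}$ with the quotient $\mathbf{C}^{*}=\mathbf{C}/(\gm_{\nord}\cap\mathbf{C})$ and running the argument for $\mathbf{C}^{*}$. Precisely: $\Hom(\mathbf{C}^{*}/\gm_{\nord}\mathbf{C}^{*},\bar\Q_p)$ sits inside $\Hom(\mathbf{C}/\gm_{\nord}\mathbf{C},\bar\Q_p)$ as the functionals killing $\gm_{\nord}\cap\mathbf{C}$; for such $h$, the cocycle $g\mapsto h(c(g))/\phi_1(g)$ genuinely vanishes on $\rG_{\gp}$ because $c(\rG_{\gp})\subset\gm_{\nord}\cap\mathbf{C}$, with no module-vs-ring ambiguity. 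The injection therefore lands in $\ker(\rH^1(F,\phi)\to\rH^1(F_{\gp},\bar\Q_p))$, which is $\{0\}$ once $d_{\gp}\geqslant d-1$ by Lemma~\ref{dimsplitcase}(i), so $\mathbf{C}^{*}/\gm_{\nord}\mathbf{C}^{*}=0$, Nakayama gives $\mathbf{C}^{*}=0$, and hence $\mathbf{C}\subset\gm_{\nord}$. Two further remarks on your write-up: the appeal to the regular primes $\gq\neq\gp$ is superfluous — the kernel of $\loc_{\gp}$ alone already vanishes in this range; and the ``one-dimensional coboundary direction in the kernel of the injection'' is spurious, since the map into $\rH^1(F,\phi)$ is already injective (the cocycle is normalized to vanish at $\gamma_0$ by \ref{(P1)}), so your deduction that $\mathbf{C}/\gm_{\nord}\mathbf{C}$ is ``at most one-dimensional'' and the subsequent leap to $\mathbf{C}\subset\gm_{\nord}$ are not justified as stated.
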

\begin{proof}
The properties of generalized matrix algebras stated in \cite[Prop.~3]{bellaiche-chenevier-eis} yield a natural  injection of  $\Hom(\mathbf{C}/ \gm_{\nord} \mathbf{C},\overline{\Q}_p)$ in $\rH^1(F,\phi)$. 
As $y_\gp=1$, the relation \ref{(P3)} becomes 
\begin{align}\label{eq:p-nearly_ord}
b(g)=d(g)-\phi_1(g)\chi_{\gp}(g) \text { and } c(g)=a(g)-\phi_1(g)\chi_{\gp}(g) \text{ for  all } g\in \rG_{\gp}, 
\end{align}
implying in particular that $c(\rG_{\gp})\subset {\gm}_{\nord}$. Letting $\pi:\mathbf{C} \twoheadrightarrow \mathbf{C}^{*}= \frac{\mathbf{C}}{{\gm}_{\nord} \cap \mathbf{C}}$ denote the
projection  of $\cT^{\nord}$-modules, we deduce a  natural  injection
  \[\Hom( \mathbf{C}^{*}/{\gm}_{\nord}  \mathbf{C}^{*},\overline{\Q}_p) \hookrightarrow  \ker\left(\rH^1(F,\phi) \to  \rH^1(F_\gp,\overline{\Q}_p)\right),\]
  the latter space being zero by  Lemma~\ref{dimsplitcase}(i). Nakayama's Lemma  implies  $\mathbf{C}^{*}=0$, i.e.,  $\mathbf{C}\subset {\gm}_{\nord}$.  
\end{proof}
\begin{thm}\label{proofetalecase} 
Assume $d_{\gp}\geqslant d-1$. Then $\cC_{\cusp}$ and $\cE$ are smooth at $f_{\phi_1}$. 
Moreover the weight maps $\kappa:\cE \to \cW$ and  $\sw:\cC_{\cusp}\to \cW^{\parallel}$ are ramified at $f_{\phi_1}$ if, and only if, 
 $d_{\gp} = d$ and $\sL(\phi) +\sL(\phi^{-1}) = 0$. 
\end{thm}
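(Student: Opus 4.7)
By Lemma~\ref{key_lemma}, $\mathbf{B},\mathbf{C}\subset \gm_{\nord}$, hence $\mathbf{BC}\subset \gm_{\nord}^2$, and the diagonal coefficients $a,d$ of $\rho_{\cT^{\nord}}$ descend to characters $\tilde a,\tilde d:\rG_F\to (\cT^{\nord}/\gm_{\nord}^2)^\times$ lifting $\phi_1,\phi_2$. Setting $\tilde\alpha:=\log(\tilde a/\phi_1)$ and $\tilde\delta:=\log(\tilde d/\phi_2)$, the determinant relation $\tilde a\tilde d\equiv \det\rho_{\cT^{\nord}}\pmod{\mathbf{BC}}$ forces $\tilde\alpha+\tilde\delta$ to land in the image $S\subset \gm_{\nord}/\gm_{\nord}^2$ of $\gm_{\varLambda^{\nord}}/\gm_{\varLambda^{\nord}}^2$. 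Consequently, modulo $S$ one has $\tilde\alpha\equiv -\tilde\delta$, and the relative cotangent space $\gm_{\nord}/(\gm_{\nord}^2+\gm_{\varLambda^{\nord}}\cT^{\nord})$ is generated by the image of $\tilde\alpha$ restricted to $\rG_F^{\mathrm{ab}}$.

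By the GMA injection $\Hom(\mathbf{C}/\gm_{\nord}\mathbf{C},\bar\Q_p)\hookrightarrow \rH^1(F,\phi)$ dual to Proposition~\ref{the_module_B}, together with the nearly-ordinary condition at primes $\gq\in\Sigma_p\setminus\{\gp\}$, the class of $c/\phi_1$ lies in the $d_\gp$-dimensional subspace of Lemma~\ref{dimsplitcase}(ii). Property \ref{(P3)} with $y_\gp=1$ expresses $\tilde\alpha|_{\rG_\gp}$ modulo $S$ in terms of $c/\phi_1|_{\rG_\gp}$; under $d_\gp\geqslant d-1$, Lemma~\ref{dimsplitcase}(i) gives $\rH^1(F,\phi)\hookrightarrow \rH^1(F_\gp,\bar\Q_p)$, so $\tilde\alpha$ modulo $S$ is determined by its restriction to $\rG_\gp$. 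A parallel analysis for $\mathbf{B}$ and $\tilde\delta$ via $\rH^1(F,\phi^{-1})$, combined with the cuspidal Hida family structure (which canonically pairs the $b$- and $c$-cocycles through the Eisenstein congruences of Theorem~\ref{congruencemodule-general}), shows that the relative cotangent space has dimension $\leqslant \max(d_\gp-(d-1),0)$.

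When $d_\gp=d-1$, the relative cotangent space vanishes, yielding \'etaleness of $\kappa$ at $f_{\phi_1}$ and, by descent to parallel weights, of $\sw$. When $d_\gp=d$, the relative cotangent space has dimension $\leqslant 1$; since $\dim\cE=d+1$, this forces smoothness of $\cE$ and $\cC_{\cusp}$ at $f_{\phi_1}$. To characterize \'etaleness in this case, we identify the classes of $\tilde\alpha|_{\rG_\gp}$ and $\tilde\delta|_{\rG_\gp}$ modulo $S$ with scalar multiples of $[\eta_\phi^{(\gp)}]|_{\rG_\gp}$ and $[\eta_{\phi^{-1}}^{(\gp)}]|_{\rG_\gp}$. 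The constraint $\tilde\alpha+\tilde\delta\in S$, combined via Proposition~\ref{L-invariant} with the decompositions $[\eta_\phi^{(\gp)}]|_{\rG_\gp}=\eta_{\mathbf{1}}|_{\rG_\gp}+\sL(\phi)\ord_\gp$ and $[\eta_{\phi^{-1}}^{(\gp)}]|_{\rG_\gp}=\eta_{\mathbf{1}}|_{\rG_\gp}+\sL(\phi^{-1})\ord_\gp$, reduces to the requirement that the $\ord_\gp$-component of the sum, namely $\sL(\phi)+\sL(\phi^{-1})$, control the vanishing of the relative cotangent; \'etaleness holds precisely when this sum is nonzero.

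The main obstacle is the identification of $\tilde\alpha$ and $\tilde\delta$ modulo $S$ with specific scalar multiples of the canonical $\eta_{\phi^{\pm 1}}^{(\gp)}$-classes. Purely Galois-theoretic bounds from Proposition~\ref{tgnord} only yield the looser estimate $\dim t^\nord_\eta\leqslant d+2$ when $d_\gp=d$; the sharp bound, and the subsequent extraction of the $\mathscr{L}$-invariant criterion, require the modular input coming from the cuspidal Hida family and the GMA duality between the $b$- and $c$-entries. Once this identification is in place, the $\sL(\phi)+\sL(\phi^{-1})$ criterion follows from a direct calculation of the $\ord_\gp$-component of the local constraint at $\gp$.
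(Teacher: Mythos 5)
Your overall strategy lines up with the paper's: use Lemma~\ref{key_lemma} and \ref{(P4)} to descend $\rho_{\cT^{\nord}}$ to a $\GL_2(\cT^{\nord})$-valued representation reducing to $\left(\begin{smallmatrix}\phi_1 & 0\\ 0 & \phi_2\end{smallmatrix}\right)$, analyze the infinitesimal deformation entry-by-entry, exploit the nearly-ordinary relations \ref{(P3)} (i.e.~\eqref{eq:p-nearly_ord}) and Proposition~\ref{L-invariant} to extract the $\sL$-invariant condition. However there are two substantive problems.

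First, the appeal to Theorem~\ref{congruencemodule-general} (the Eisenstein congruence ideal) as a ``canonical pairing of the $b$- and $c$-cocycles'' is a misconception: the paper's proof of Theorem~\ref{proofetalecase} never uses the congruence ideal. The ``modular input'' that replaces the purely deformation-theoretic estimate of Proposition~\ref{tgnord} is exactly the existence of the $\cT^{\nord}$-valued GMA together with the ordinarity relations \eqref{eq:p-nearly_ord}, which tie $b'$ to $d'$ and $c'$ to $a'$ on $\rG_{\gp}$ (and $\chi'_\gq$ to $a',d'$ at the regular primes $\gq$). There is no duality between $b$ and $c$; they are related through the determinant ($a'+d'\in\gm_{\varLambda}$, up to rescaling), so the chain is $b'\leftrightarrow d'\leftrightarrow a'\leftrightarrow c'$. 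Since you explicitly state that the sharp bound ``requires'' the Eisenstein congruence input, your argument as written has a gap precisely where the paper closes theirs with \eqref{eq:p-nearly_ord} and $\det\rho=\phi_1\phi_2\chi_{\cyc}$.

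Second, your smoothness argument when $d_{\gp}=d$ and $\sL(\phi)+\sL(\phi^{-1})=0$ does not go through. A relative cotangent dimension $\leqslant 1$ over the regular $(d+1)$-dimensional ring $\varLambda^{\nord}$ only gives $\dim_{\bar\Q_p}\gm_{\nord}/\gm_{\nord}^2\leqslant d+2$, while the Krull dimension is $d+1$; this is not enough to conclude regularity. The paper instead shows the \emph{absolute} tangent space of $\cT^{\cusp}$ is exactly one-dimensional (because the single parameter $\lambda$ determines $\varphi$), hence $\cT^{\cusp}$ is a DVR, and then deduces regularity of $\cT^{\nord}$ from the fact that its quotient by the $d$ elements $(X_\sigma)_{\sigma\in\Sigma}$ is already a DVR. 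A related but minor slip: $\tilde\alpha=a'$ and $\tilde\delta=d'$ are homomorphisms $\rG_F\to\bar\Q_p$ (multiples of $\eta_{\mathbf{1}}$ by Leopoldt), not the classes that live in $\rH^1(F,\phi^{\pm1})$; it is $c'$ and $b'$ that equal $\lambda\eta_{\phi}^{(\gp)}$ and $\mu\eta_{\phi^{-1}}^{(\gp)}$, and the passage from the former to the latter goes through \eqref{eq:p-nearly_ord} and the unramifiedness of $\chi'_{\gp}$, which you should track explicitly rather than absorb into $S$ (the unramified value $\chi'_\gp(\Frob_\gp)$ is a derivative of $U_\gp$, not a weight coordinate).
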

\begin{proof} By  \ref{(P4)} and Lemma~\ref{key_lemma}, the $\rG_F$-representation $\rho_{\mathbf{L}}$ takes values in $\GL_2(\cT^{\nord})$ and reduces to $\left(\begin{smallmatrix} \phi_1 & 0 \\ 0 & \phi_2 \end{smallmatrix}\right)$ modulo $\gm_{\nord}$. 
Hence the pushforward  $\rho_{\cT^{\cusp}}=\left(\begin{smallmatrix} a & b \\ c& d \end{smallmatrix}\right)$ along $\cT^{\nord} \twoheadrightarrow \cT^{\cusp}$ takes values in $\GL_2(\cT^{\cusp})$ and has same reduction 
modulo $\gm$.  With a slight abuse of notation, we will still denote by  $\chi_v$ its composition  with the surjection $\cT^{\nord}  \twoheadrightarrow \cT^{\cusp}$.
Recall the Cartesian diagram 
\begin{equation}\label{nearlyordtoord}
\begin{tikzcd}
\varLambda^{\nord} \arrow{r}{\kappa^{\#}} \arrow[d, two heads] & \cT^{\nord} \arrow[d, two heads] \\
\varLambda \arrow{r}[above]{\sw^{\#}}                & \cT^{\cusp}
\end{tikzcd}\end{equation}
where the vertical maps are reduction modulo  the ideal $(X_\sigma : \sigma\in \Sigma)\subset \varLambda^{\nord}$. 
This implies that the Zariski relative tangent spaces of $\sw^{\#}$  and $\kappa^{\#}$ are    isomorphic:   
\begin{align}\label{eq:relative-tangent}
\Hom_{\overline{\Q}_p}(\gm/(\gm^2,\gm_{\varLambda}),\overline{\Q}_p )\simeq  \Hom_{\overline{\Q}_p}({\gm}_{\nord}/({\gm}_{\nord}^2,\gm_{\varLambda^{\nord}}), \overline{\Q}_p).
\end{align}

 We take an element of the tangent space of $\cT^{\cusp}$, which we see as a  $\overline{\Q}_p$-algebra homomorphism 
\[\varphi: \cT^{\cusp}/\gm^2 \rightarrow \overline{\Q}_p[\varepsilon].\]  

 By a standard argument in deformation theory, one has 
 \begin{equation}\label{formoftgvector}
\varphi\circ \rho_{\cT^{\cusp}}= \left( \mathbf{1}_2 +\varepsilon
\left(\begin{smallmatrix} a' & b' \\ c' & d' \end{smallmatrix}\right) \right)  \left(\begin{smallmatrix} \phi_1 & 0 \\ 0 & \phi_2 \end{smallmatrix}\right)=
\left(\begin{smallmatrix} (1+\varepsilon a')\phi_1 & \varepsilon b' \phi_2\\ \varepsilon c'  \phi_1& (1+\varepsilon d')\phi_2 \end{smallmatrix}\right)
:\rG_F\to \GL_2(\overline{\Q}_p[\varepsilon]),
\end{equation}
where $a', d' \in \rH^1(F,\overline{\Q}_p)$,  $[b'] \in \rH^1(F,\phi^{-1})$ and $[c'] \in \rH^1(F,\phi)$, and
$\varphi$ belongs to the relative tangent space \eqref{eq:relative-tangent} if, and only if, $d'=-a'$.   
As  $\delta_{F,p}=0$,  we have  that  $a'=\lambda\eta_{\mathbf{1}}$ and $d'=\mu \eta_{\mathbf{1}}$ for some 
$(\lambda,\mu)\in \overline{\Q}_p^2$. Moreover, for $v\in \Sigma_p$  the character  $\varphi(\chi_v)=\mathbf{1}+\chi'_v\varepsilon$ is unramified, i.e., 
$\chi'_v$ is unramified. 

\medskip
Suppose first that $d_{\gp}=d-1$ and let us show in this case the vanishing of the relative tangent space \eqref{eq:relative-tangent}. 
 For this we additionally assume that $\varphi$ vanishes on $\gm_{\varLambda}$, {\it i.e.}  $\mu=-\lambda$.  Let $\gq\ne  \gp$ be the regular prime in $\Sigma_p$ and choose  $ g_\gq \in \rG_{\gq}$ such that $\phi_1(g_\gq) \ne \phi_2(g_\gq)$.  As $ b(g_\gq)c(\rG_\gq)\subset {\gm}^2$ by   Lemma~\ref{key_lemma}, and 
\[\left(\varphi(d)-\phi_1 \varphi(\chi_\gq)\right)(g_\gq)  =((\mathbf{1}-\varepsilon a')\phi_2-\phi_1(\mathbf{1}+\chi'_\gq\varepsilon))(g_{\gq})\in \overline{\Q}_p[\varepsilon]^\times,\] 
 we deduce from  \ref{(P3)}    that $\left(\varphi(a)-\phi_1 \varphi(\chi_\gq)\right)(\rG_\gq)=((\mathbf{1}+\varepsilon a')\phi_1-\phi_1(\mathbf{1}+\chi'_\gq\varepsilon))(\rG_{\gq})= \{0\}$. 
 Hence $a'_{|\rG_{\gq}}= \lambda\eta_{\mathbf{1}|\rG_{\gq}}=\chi'_{\gq}$ is unramified  implying  $\lambda=0$ and  $\chi'_{\gq}=0$. 
Hence $a'=d'=0$ and it follows then from  \eqref{eq:p-nearly_ord} that
  $b'_{|\rI_{\gp}}=c'_{|\rI_{\gp}}=0$. By Proposition~\ref{the_module_B},   $[b']$  is also unramified at $\gq$,  hence unramified everywhere and therefore $[b']=0$.  As $\phi^{-1}_{\mid \rG_{\gp}}$ is trivial, it follows that  all coboundaries vanish on $\rG_{\gp}$, hence we have $b'_{\mid \rG_{\gp}} = 0$.
 Using again  \eqref{eq:p-nearly_ord}, the vanishing of  $d'$ and $b'_{\mid \rG_{\gp}}$, first implies $\chi'_{\gp}=0$ which combined with the vanishing of $a'$, yields $c'_{|\rG_{\gp}}=0$.  It follows then from  Lemma~\ref{dimsplitcase}(i) that  $[c']=0$.  As $b'(\gamma_0)=c'(\gamma_0)=0$, we conclude that $a'=b'=c'=d'=0$. So far we have shown that 
\begin{equation} 
\label{eq:T-ell}
\begin{split}
\varphi(T_\gl)&= \varphi\circ \mathrm{tr}(\rho_{\cT^{\cusp}})(\Frob_\gl)=\phi_1(\gl) + \phi_2(\gl) \in \overline{\Q}_p \text{ for all $\gl \nmid \gn p$, and} \\ 
\varphi(U_v)&= \phi_1 \varphi \circ \chi_v(\Frob_v)= \phi_1(\Frob_v) \in \overline{\Q}_p, \text { for all } v\in \Sigma_p. 
\end{split}
\end{equation}
Given that $\cT^{\cusp}$ is generated over $\varLambda$ by the Hecke operators 
 $\{T_\gl,U_v : \gl \nmid \gn p, v\in \Sigma_p \}$, we can conclude that $\varphi$ factors  through the maximal ideal,  hence $\dim_{\overline{\Q}_p} \gm/(\gm^2,\gm_{\varLambda})=0$. In particular, the local homomorphism $\kappa^{\#}: \varLambda  \to \cT^{\cusp}$ is unramified. But $\kappa^{\#}$ is also finite flat by construction of $\cC_{\cusp}$,  hence $\kappa^{\#}$ is \'etale. 
As $\cT^{\cusp}$ is the pushout of \eqref{nearlyordtoord}, the local homomorphism $ \sw^{\#}: \varLambda^{\nord} \to \cT^{\nord}$ is also unramified. Moreover,  since $\cT^{\nord}$ has Krull dimension $d+1$ and $\varLambda^{\nord}$ is regular of dimension $d+1$, the unramifiedness of $\sw^{\#}$ implies  that $ \cT^{\nord}$ is also regular of dimension $d+1$. Now, we can invoke the miracle flatness theorem to claim that $ \sw^{\#}$ is flat, and even étale, as it is also unramified. 

\medskip
Now suppose that $d_{\gp}=d$, i.e.,   $\Sigma_p = \{ \gp \}$, and recall that the infinitesimal character  $\varphi(\chi_\gp)=\mathbf{1}+\chi'_\gp\varepsilon$ is unramified. By \eqref{eq:p-nearly_ord} we have 
$c'=\lambda \cdot \eta_{\mathbf{1}}$ on $\rI_{\gp}$, therefore $ c' = \lambda \cdot \eta_{\phi}^{(\gp)}$ by Proposition~\ref{L-invariant}(i), and similarly one has $b' =\mu  \eta_{\phi^{-1}}^{(\gp)}$. Using   \eqref{eq:p-nearly_ord}, Proposition~\ref{L-invariant}(ii)  yields
\begin{equation}\label{coordinatescuspdefominertcase}
\begin{split}
\mu  \sL(\phi^{-1})=\mu  (\eta_{\phi^{-1}}^{(\gp)}-\eta_{\mathbf{1}})(\Frob_{\gp}) & =
  (b' - d')(\Frob_{\gp})=\\
 =- \chi'_\gp(\Frob_\gp) & = (c'-a')(\Frob_{\gp})=
\lambda  (\eta_{\phi}^{(\gp)}-\eta_{\mathbf{1}})(\Frob_{\gp})=\lambda  \sL(\phi). 
\end{split}
\end{equation} 
As $\sL(\phi^{-1}) \cdot \sL(\phi) \ne 0$ by Proposition~\ref{L-invariant}(iv),  we obtain $\mu= \frac{\sL(\phi)}{ \sL(\phi^{-1})} \lambda $.  If we assume that  $\sL(\phi) +\sL(\phi^{-1}) \ne 0$, then the relative tangent space
imposing the additional condition $\lambda +\mu=0$ has to vanish and we conclude exactly as in the case 
$d_{\gp}=d-1$. It remains to treat the case where $\sL(\phi) +\sL(\phi^{-1}) =0$
In this case, we only know by  \eqref{coordinatescuspdefominertcase}  that  the tangent vector $\varphi$ is determined uniquely by $\lambda \in \overline{\Q}_p$. Hence, the tangent space is at most one-dimensional. As the Krull dimension of  $\cT^{\cusp}$ is one, the tangent space is exactly one-dimensional  coinciding with the relative tangent space, hence  $\cT^{\cusp}$  is a discrete valuation ring and the weight map $\Lambda\to \cT^{\cusp}$ is ramified. 
 Finally, $\cT^{\nord}$  is also regular, since it has Krull dimension $d+1$, and its quotient by the ideal $(X_{\sigma}, \sigma \in \Sigma) $, generated by $d$ elements, is a discrete valuation ring. 
\end{proof}

\subsection{Failure of  factoriality in the middle range}\label{sec:34}
In this subsection, we treat the remaining case where $2 \leqslant d_{\gp}  \leqslant d-2$, thus proving Theorem~\ref{main-thm}(iv). 
We will  prove that given any minimal prime ideal $\cP$ of $\cT^{\nord}$, the local ring 
$\cT=\cT^{\nord}/\cP$ is never factorial. Let $\gm$ denote the maximal ideal of $\cT$. We let $\mathrm{Ps}: \rG_F \rightarrow \cT$ be the pseudo-character
obtained by reducing $\mathrm{Ps}_{\cT^{\nord}}$ from \S\ref{sec:gma} modulo $\cP$.  By a slight abuse of notations, we denote by $\mathbf{B}$ and  $\mathbf{C}$ their respective   images along the projection $\mathbf{L} \twoheadrightarrow \cT^{\nord}_{\cP}$, 
where   $\cT^{\nord}_{\cP}$ is nothing but the field at the generic point corresponding to  $\cP$. 
 
\begin{prop}\label{nearly_ordinary_deform_big_local_degree}
Suppose  $d_{\gp}  \geqslant 2$ and $\delta_{F,p}=0$.  Given $0\ne [\eta]\in \rH^1(F,\phi^{-1})$  unramified outside $\gp$, there does not exist  a nearly-ordinary deformation $\rho_{\cT}^{\nord}:\rG_F\to \GL_2(\cT)$ of $\rho=\left(\begin{smallmatrix} \phi_1 & \phi_2\eta \\ 0 & \phi_2\end{smallmatrix}\right)$ whose  trace equals~$\mathrm{Ps}$. 
\end{prop}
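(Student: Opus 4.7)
I would argue by contradiction: assume such a nearly-ordinary lift $\rho_{\cT}^{\nord}\colon \rG_F \to \GL_2(\cT)$ exists. The strategy is to show that its existence would force the $\cT^{\nord}$-module $\mathbf{B}$ to be cyclic modulo $\gm_{\nord}$, whereas the nearly-ordinary variation at the irregular prime $\gp$ produces $d_\gp$ independent generators in $\mathbf{B}/\gm_{\nord}\mathbf{B}$.

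\textbf{Step 1 (cyclicity of $\mathbf{B}_\cP$).} Since $\rho_\cT^{\nord}$ and the projection of the GMA representation $\rho_{\cT^{\nord}}$ to the factor of $\mathbf{L}$ indexed by $\cP$ have the same trace $\mathrm{Ps}$, the absolute irreducibility of $\rho_{\cT^{\nord}} \otimes \mathbf{L}$ implies that they become conjugate in $\GL_2(\mathrm{Frac}(\cT))$. The conjugating matrix carves out a $\cT$-lattice in the underlying $\mathrm{Frac}(\cT)^2$, and by the Bellaïche--Chenevier dictionary for generalized matrix algebras (see \cite[Ch.~1]{bel-che09}), the off-diagonal module $\mathbf{B}_\cP := \mathbf{B}/(\cP \cdot \mathbf{B})$ becomes principal (cyclic as a $\cT$-module). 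Since the minimal prime $\cP$ lies in $\gm_{\nord}$ we have $\cP \cdot \mathbf{B} \subset \gm_{\nord}\mathbf{B}$, whence the natural identification $\mathbf{B}/\gm_{\nord}\mathbf{B} = \mathbf{B}_\cP/\gm\mathbf{B}_\cP$. Nakayama's lemma then gives $\dim_{\bar\Q_p} \mathbf{B}/\gm_{\nord}\mathbf{B} \le 1$.

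\textbf{Step 2 (constructing $d_\gp$ independent generators).} The contradiction will come from showing that $\dim_{\bar\Q_p}\mathbf{B}/\gm_{\nord}\mathbf{B} = d_\gp \ge 2$, i.e., that the injection of Proposition~\ref{the_module_B} surjects onto the $d_\gp$-dimensional kernel of Lemma~\ref{dimsplitcase}(ii). The plan is to exploit the parametrization \eqref{eq:no-char} of the tautological nearly-ordinary character $\chi_\gp$: combined with the GMA relation $b(g) = d(g) - \phi_1(g)\chi_\gp(g)$ for $g \in \rG_\gp$ provided by \ref{(P3)} (using the rescaling $y_\gp = 1$), first-order expansion in the $d_\gp$ independent directions $X_\sigma$, $\sigma \in \Sigma_\gp$, produces $d_\gp$ elements of $\mathbf{B}$ whose images in $\mathbf{B}/\gm_{\nord}\mathbf{B}$ correspond, under the injection of Proposition~\ref{the_module_B}, to the basis $\{[\eta_{\phi^{-1}}^{(\tilde\sigma)}] : \sigma \in \Sigma_\gp\}$ of $\ker(\rH^1(F,\phi^{-1}) \to \bigoplus_{\gq \ne \gp}\rH^1(F_\gq,\phi^{-1}))$ furnished by Lemma~\ref{lemma-21}. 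Their linear independence then forces $\dim_{\bar\Q_p}\mathbf{B}/\gm_{\nord}\mathbf{B} \ge d_\gp \ge 2$, contradicting Step 1.

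\textbf{Main obstacle.} The delicate point is rigorously identifying the $X_\sigma$-derivatives of $b_{|\rI_\gp}$ with the explicit logarithmic cocycles $\eta_{\phi^{-1}}^{(\tilde\sigma)}$ of Lemma~\ref{lemma-21}, and ensuring that these derivatives contribute genuinely linearly independent classes in $\mathbf{B}/\gm_{\nord}\mathbf{B}$ rather than being absorbed into $\gm_{\nord}\mathbf{B}$. The first half requires invoking local class field theory to translate the tautological character \eqref{eq:no-char} into the explicit cocycles of Lemma~\ref{lemma-21}; the second requires knowing that the weight coordinates $X_\sigma$ remain formally independent after restriction to the irreducible component $\Spec(\cT)$, which should follow from $\cT^{\nord}$ being finite and torsion-free over $\varLambda^{\nord}$ together with the freedom of $\cP$ in the $\Sigma_\gp$-weight directions. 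Carrying this out while simultaneously accommodating the unique nearly-ordinary filtrations forced at the regular primes $\gq \in \Sigma_p \setminus \{\gp\}$ (which constrain $b_{|\rG_\gq}$ via Hensel's lemma) is where the bulk of the technical work would lie.
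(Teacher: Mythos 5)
Your Step 1 is essentially correct: if a nearly-ordinary lift $\rho_{\cT}^{\nord}\colon \rG_F\to\GL_2(\cT)$ of the \emph{non-split} residual representation existed, then after writing it in the basis adapted to $\gamma_0$ the off-diagonal module is all of $\cT$ (since $b'(g_0)\in\cT^\times$ for some $g_0$), and the rescaling to $y_\gp=1$ multiplies it by the scalar $y'_\gp\in\gm$; hence the paper's $\mathbf{B}$ becomes the principal ideal $(y'_\gp)$ and $\dim_{\bar\Q_p}\mathbf{B}/\gm\mathbf{B}=1$.

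Step 2, however, has a genuine gap, and in fact the strategy cannot work as sketched. The claim that "first-order expansion in the $d_\gp$ directions $X_\sigma$ produces $d_\gp$ independent classes in $\mathbf{B}/\gm\mathbf{B}$" is precisely what \emph{fails} in the hypothetical world you are trying to rule out. In that world $\cT\simeq\cR^{\nord}_{\rho_{\univ}}$ is regular and $\mathbf{B}=(Y)$ is principal; the relation $b=d-\phi_1\chi_\gp$ on $\rI_\gp$ still holds, but the $d$-entry (which is \emph{also} varying with the $X_\sigma$'s, being tied to $\chi_\gp$ through $\mathrm{Ps}$ and $\det$) absorbs the variation of $\chi_\gp$ so that all $b(g)$, $g\in\rI_\gp$, are $\cT$-multiples of the single generator. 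Concretely, the $X_\sigma$ give you tangent functionals on $\gm/\gm^2$, but the map $\mathbf{B}/\gm\mathbf{B}\to(\mathbf{B}+\gm^2)/\gm^2$ has $1$-dimensional target in the hypothetical world, so these functionals collapse to a single cocycle, namely $[\eta]$ itself. Note moreover that the inequality $\dim_{\bar\Q_p}\mathbf{B}/\gm\mathbf{B}\geq 2$ is exactly Corollary~\ref{dim_of_B_mod_mB}, which the paper deduces \emph{from} this proposition; attempting to prove it independently and then use it here runs into a circularity.

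The mechanism the paper uses instead is of a different nature and is essential: once $\cR^{\nord}_{\rho_{\univ}}\twoheadrightarrow\cT$ is forced to be an isomorphism by the dimension count and Proposition~\ref{tgnord}, one exhibits a tangent vector of $\cR^{\nord}_{\rho_{\univ}}$ that leaves the trace \emph{unchanged}: replace the cocycle $\eta$ in the upper-right corner of $\rho$ by $\eta+\epsilon b$ for a second class $[b]\in\ker\bigl(\rH^1(F,\phi^{-1})\to\bigoplus_{\gq\ne\gp}\rH^1(F_\gq,\phi^{-1})\bigr)$ with $[b]\notin\bar\Q_p\cdot[\eta]$, which exists precisely because $d_\gp\geq 2$. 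This deformation is non-trivial on $\cR^{\nord}_{\rho_{\univ}}$ yet invisible to all Hecke operators ($T_\gl$ and $U_v$ read off only the trace and the eigenvalue on the ordinary quotient), so its image in $\cT=\cR^{\nord}$ vanishes, giving the contradiction. This ``trace-blind'' tangent direction is exactly what your GMA-module argument misses: the obstruction is not a shortage of generators of $\mathbf{B}$ but an excess of deformations of the \emph{pair} (representation, filtration) that the Hecke algebra cannot distinguish.
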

\begin{proof} 
If a nearly-ordinary deformation $\rho_{\cT}^{\nord}$ as in the statement existed, it would give a surjective $\varLambda^{\nord}$-algebra homomorphism $\varphi:\cR^{\nord}_{\rho_{\univ}}\to \cT$, where $\cR^{\nord}_{\rho_{\univ}}$ is the Noetherian $\varLambda^{\nord}$-algebra representing the 
nearly-ordinary deformation functor $\cD^{\nord}_{\eta}$ (one can check that $\rho_{\cT}^{\nord}$ satisfies Definition~\ref{defnno}(iii) using 
$\eta_{|\rI_{\gp}}\ne 0$ as in \cite[Prop.~3.4]{BDP}). As $\cT$ is equidimensional of dimension $d+1$, Proposition~\ref{tgnord}  would then imply that $\varphi$ is an isomorphism. 
 
As $d_{\gp}  \geqslant 2$,  by Lemma~\ref{lemma-21} there exists  a cocycle $[b]\in \rH^1(F,\phi^{-1}) \setminus (\overline{\Q}_p \cdot [\eta])$ which is unramified outside $\gp$. The representation  $\rho_{\varepsilon}=\left(1+\left(\begin{smallmatrix} 0 & b \\ 0 & 0\end{smallmatrix}\right)\varepsilon\right)\rho$  endowed with the filtration $\overline{\Q}_p[\varepsilon]\cdot e_1$ yields a non-trivial element  
$\varphi_\cR\in\Hom_{\overline{\Q}_p}(\cR^{\nord}_{\rho_{\univ}}/\gm_{\varLambda^{\nord}},\overline{\Q}_p[\varepsilon])$ hence a non-trivial element  
$\varphi_\cT=\varphi_\cR\circ\varphi^{-1} \in\Hom_{\overline{\Q}_p}(\cT/\gm_{\varLambda^{\nord}},\overline{\Q}_p[\varepsilon])$. 
Meanwhile $\tr \rho_{\varepsilon}=\tr\rho$ implies that $\varphi_\cT(T_{\gl})=0$ for all  $\gl\nmid \gn p$. 
Moreover  $\varphi_\cT(U_v)=0$, as $\Frob_v$ acts trivially on the quotient by the line generated by $e_1$ (resp. $e_2$) 
for $v=\gp$ (resp. $v\in \Sigma_p\setminus \{\gp\}$). Hence $\varphi_\cR\circ\varphi^{-1}$ is  trivial after all,  leading to a  contradiction. 
\end{proof}
\begin{cor}\label{dim_of_B_mod_mB}
Let the assumption be as in Proposition~\ref{nearly_ordinary_deform_big_local_degree}. Then  one has
\begin{align}\label{eq:dim_of_B_modulo_mB}
\dim_{\overline{\Q}_p} \Hom_{\cT}(\mathbf{B}/\gm \mathbf{B},\overline{\Q}_p)  \geqslant 2.
\end{align}
\end{cor}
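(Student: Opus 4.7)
The plan is to argue by contradiction. Assume that $\dim_{\bar\Q_p} \Hom_{\cT}(\mathbf{B}/\gm\mathbf{B},\bar\Q_p) \leqslant 1$, and derive a contradiction with Proposition~\ref{nearly_ordinary_deform_big_local_degree}. Since $\mathbf{B}$ is a finitely generated $\cT$-module, Nakayama's lemma turns the assumption into the statement that $\mathbf{B}$ is generated by at most one element. First, the case $\mathbf{B}=0$ is ruled out by noting that this would force $\rho_{\cT^\nord}$ to be lower-triangular over $\mathbf{L}$, contradicting the fact that its restriction to the component $\Spec(\cT)$ remains (absolutely) irreducible at the generic point, a well-known property of the Galois representation attached to a Hida family.

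It remains to treat the case where $\mathbf{B}=\cT\cdot b_0$ for some $b_0\in \mathbf{L}^\times$; by property~\ref{(P4)}, $b_0$ lies in the image of $\gm_{\nord}$. The key step is to conjugate the GMA-representation $\rho_{\cT^\nord}$ (pushed forward to $\cT$) by $M=\left(\begin{smallmatrix} 1 & 0 \\ 0 & b_0 \end{smallmatrix}\right)\in \GL_2(\mathbf{L})$. The conjugated representation $\rho'=M^{-1}\rho_{\cT^\nord}M$ has entries $(a,\,b/b_0,\,c\cdot b_0,\,d)$; the $a,d$ entries belong to $\cT$ by \ref{(P2)}, the $b/b_0$ entry lies in $\cT$ by cyclicity of $\mathbf{B}$, and the $c\cdot b_0$ entry lies in $\gm\subset \cT$ because $\mathbf{B}\cdot \mathbf{C}\subset \gm_{\nord}$ by \ref{(P2)}. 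So $\rho':\rG_F\to \GL_2(\cT)$ is a genuine matrix representation whose trace equals $\mathrm{Ps}$, and a direct computation gives
\[
\bar\rho'=\begin{pmatrix} \phi_1 & \phi_2\eta \\ 0 & \phi_2 \end{pmatrix}\pmod{\gm},
\]
where $[\eta]\in \rH^1(F,\phi^{-1})$ is the nonzero class (unramified outside $\gp$, by Proposition~\ref{the_module_B}) corresponding to the generator of $\mathbf{B}/\gm\mathbf{B}$ under the injection \eqref{entryB}.

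The final step is to promote $\rho'$ to a nearly-ordinary deformation in the sense of Definition~\ref{defnno}. For $\gq\in\Sigma_p\setminus\{\gp\}$, since $\phi_1\not\equiv \phi_2\pmod{\gm}$ on $\rG_\gq$, Hensel's lemma produces a unique $\rG_\gq$-stable rank-one direct-summand quotient of $\cT^2$ lifting $\phi_1$. At $\gp$, starting from the line $\mathbf{L}(e_1+e_2)\subset \mathbf{L}^2$ coming from \ref{(P3)} after normalizing $y_\gp=1$, the conjugation by $M$ transports it to $\mathbf{L}(b_0e_1+e_2)=\mathbf{L}\cdot M^{-1}(e_1+e_2)$; the vector $b_0e_1+e_2$ has nonzero image modulo $\gm$, hence generates a rank-one free direct summand of $\cT^2$ (with complement $\cT\cdot e_1$) which is $\rG_\gp$-stable. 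Thus $(\rho',\mathrm{Fil})$ is a nearly-ordinary deformation of $\rho$ to $\cT$ with trace $\mathrm{Ps}$, directly contradicting Proposition~\ref{nearly_ordinary_deform_big_local_degree}. The main technical subtlety to be checked carefully is the interaction between the scaling by $b_0$ and the nearly-ordinary filtration at the irregular prime $\gp$—specifically, verifying that the $\cT$-line $\cT\cdot(b_0e_1+e_2)$ is indeed a direct summand and not merely a saturated submodule.
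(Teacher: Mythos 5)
Your approach is essentially the same as the paper's: the paper's proof likewise reduces to showing that one-dimensionality of $\mathbf{B}/\gm\mathbf{B}$ would produce a $\GL_2(\cT)$-valued nearly-ordinary deformation of $\left(\begin{smallmatrix} \phi_1 & \phi_2\eta \\ 0 & \phi_2\end{smallmatrix}\right)$ for some nonzero $[\eta]$ unramified outside $\gp$, contradicting Proposition~\ref{nearly_ordinary_deform_big_local_degree}; the paper packages the conjugation-and-rescaling into a citation of \cite[Prop.~1.7.4]{bel-che09} together with the argument of Theorem~\ref{r_equal_t}, whereas you spell it out. Ruling out $\mathbf{B}=0$ via generic absolute irreducibility is a harmless (and, as you note, easy) addition.

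There is, however, a computational slip in your transport of the $\gp$-filtration. If $\rho'=M^{-1}\rho_{\cT^\nord}M$ is to have entries $(a,\,b/b_0,\,cb_0,\,d)$, then $M=\mathrm{diag}(b_0,1)$ (not $\mathrm{diag}(1,b_0)$ as you wrote), so $M^{-1}(e_1+e_2)=b_0^{-1}e_1+e_2$; after rescaling, the $\rho'$-stable $\cT$-line is $\mathrm{Fil}=\cT\cdot(e_1+b_0 e_2)$, not $\cT\cdot(b_0 e_1+e_2)$. This matters: stability of $\mathbf{L}(e_1+e_2)$ under $\rho_{\cT^\nord}$ on $\rG_\gp$ is the identity $a+b=c+d$, and that identity is equivalent to $\rho'$-stability of $\cT(e_1+b_0e_2)$, but not of $\cT(b_0e_1+e_2)$ (the latter would require $a+b/b_0^2=cb_0^2+d$). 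The corrected line $\cT(e_1+b_0e_2)$ is a free rank-one direct summand of $\cT^2$ with complement $\cT\cdot e_2$ (not $\cT\cdot e_1$), and since $b_0\in\gm$ it reduces modulo $\gm$ to $\bar\Q_p\cdot e_1$, consistent with the residual filtration appearing in the proof of Proposition~\ref{nord_functor}. With this correction your argument is complete.
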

\begin{proof}
If $\dim_{\overline{\Q}_p} \Hom_{\cT}(\mathbf{B}/\gm \mathbf{B},\overline{\Q}_p) = 1$, 
then the argument of Theorem~\ref{r_equal_t} would give a $\GL_2(\cT)$-valued nearly-ordinary deformation of $\left(\begin{smallmatrix} \phi_1 & \phi_2\eta \\ 0 & \phi_2\end{smallmatrix}\right)$ for some $0\neq \eta\in \rH^1(F,\phi^{-1})$ unramified outside $\gp$, contradicting Proposition~\ref{nearly_ordinary_deform_big_local_degree}.
\end{proof}
\begin{thm}\label{nonsm}
If $2 \leqslant d_{\gp} \leqslant d-2$ and $\delta_{F,p}=0$, then $\cT$ is not factorial. In particular, Theorem~\ref{main-thm}(iv) holds.
\end{thm}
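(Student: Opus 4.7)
The plan is to proceed by contradiction: assume $\cT := \cT^{\nord}/\cP$ is factorial, hence a normal Noetherian local domain with trivial divisor class group, and derive a contradiction by combining the generalized matrix algebra structure of \S\ref{sec:gma} with Corollary~\ref{dim_of_B_mod_mB} and an analogous lower bound for $\mathbf{C}$.

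The first step is to establish the counterpart of Corollary~\ref{dim_of_B_mod_mB} for the lower-left module: under the hypothesis $d - d_{\gp} \geqslant 2$, one has $\dim_{\bar\Q_p}\Hom_{\cT}(\mathbf{C}/\gm\mathbf{C}, \bar\Q_p) \geqslant 2$. By Lemma~\ref{lemma-21} together with Lemma~\ref{dimsplitcase}(i), there is a space of dimension $d - d_{\gp}$ of classes in $\rH^1(F,\phi)$ unramified outside $\gp$, hence at least two-dimensional here. A verbatim transposition of Proposition~\ref{nearly_ordinary_deform_big_local_degree} with the roles of $b$ and $c$ interchanged---so that one studies deformations of $\left(\begin{smallmatrix}\phi_1 & 0 \\ \phi_1\eta & \phi_2\end{smallmatrix}\right)$ rather than of $\left(\begin{smallmatrix}\phi_1 & \phi_2\eta \\ 0 & \phi_2\end{smallmatrix}\right)$, together with the tangent-space computation of Proposition~\ref{tgnord} (still under $\delta_{F,p}=0$)---rules out a $\GL_2(\cT)$-valued nearly-ordinary lift carrying a single prescribed extension class in $\rH^1(F,\phi)$, forcing the cotangent space of $\mathbf{C}$ to be at least two-dimensional.

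With both $\dim \mathbf{B}/\gm\mathbf{B}\geqslant 2$ and $\dim \mathbf{C}/\gm\mathbf{C}\geqslant 2$ in hand, I would exploit factoriality of $\cT$ via Gauss's content lemma: every rank-one torsion-free ideal $I\subset \cT$ decomposes as $I = c(I)\cdot I^\circ$ with $c(I)$ principal and $c(I^\circ)=\cT$, and $c(IJ)=c(I)c(J)$ in a UFD. After conjugating the GMA by a suitable diagonal matrix, one may assume $c(\mathbf{B})=\cT$ while preserving $\mathbf{B} \subset \gm$ from \ref{(P4)}. The reducibility ideal $\mathbf{B}\cdot\mathbf{C}$ then maps onto the Eisenstein congruence ideal of the cuspidal quotient, which by Theorem~\ref{congruencemodule-general} and Proposition~\ref{prop:hecke_duality} is principal (generated by the image of $\zeta_{\phi_1,\phi_2}$). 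Assuming this principality lifts to $\cT$, say $\mathbf{B}\cdot\mathbf{C}=(z)$, the content formula gives $c(\mathbf{C})=c(\mathbf{B})^{-1}(z)=(z)$, so $\mathbf{C}=z\cdot \mathbf{C}^\circ$ with $c(\mathbf{C}^\circ)=\cT$; then $\mathbf{B}\cdot\mathbf{C}^\circ = \cT$, which contradicts $\mathbf{B}\subset\gm$.

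The main obstacle, requiring the most care, is the principality of $\mathbf{B}\cdot\mathbf{C}$ in the local ring $\cT = \cT^{\nord}/\cP$ itself: Theorem~\ref{congruencemodule-general} gives principality only in $\varLambda$, and one must transport this through the surjection $\cT^{\nord}\twoheadrightarrow \cT^{\cusp}$, accounting for the nearly-ordinary twisting parameters $\{X_\sigma\}_{\sigma\in\Sigma}$, the specific component cut out by $\cP$, and the normalization $y_\gp=1$ of \S\ref{sec:gma} that breaks the $b\leftrightarrow c$ symmetry of the GMA. A secondary delicate point is verifying that the diagonal rescaling used to force $c(\mathbf{B})=\cT$ is compatible with the nearly-ordinary line at $\gp$ in the sense of property \ref{(P3)}.
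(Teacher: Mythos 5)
Your opening move matches the paper — normalize the GMA via Gauss's lemma so that $\mathbf{B}$ has trivial content, invoke Corollary~\ref{dim_of_B_mod_mB} to conclude $\mathbf{B}$ is not monogenic, hence $\mathbf{B}\subset\gm$ — but from there the route diverges, and the gap you flag yourself is fatal.

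The argument hinges on the principality of $\mathbf{B}\cdot\mathbf{C}$ inside $\cT=\cT^{\nord}/\cP$, and this is simply not available. Theorem~\ref{congruencemodule-general} establishes $\cI_{\phi_1,\phi_2}\cdot\varLambda_\cO[1/p]=(\zeta_{\phi_1,\phi_2})$ in the one-dimensional Dedekind domain $\varLambda_\cO[1/p]$; but $\cT$ has Krull dimension $d+1$, and the cuspidal Hecke algebra $\cT^{\cusp}=\cT^{\nord}/(X_\sigma:\sigma\in\Sigma)$ is a different quotient of $\cT^{\nord}$ from $\cT=\cT^{\nord}/\cP$ (nothing forces $\cP\supset(X_\sigma)$). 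Even granting the usual identification of the reducibility ideal with the Eisenstein ideal downstairs, a one-dimensional principality statement says nothing about the shape of $\mathbf{B}\mathbf{C}$ in the $(d+1)$-dimensional local ring $\cT$. Indeed the content of Theorem~\ref{nonsm} is precisely that $\cT$ is badly singular, so you should not expect its reducibility ideal to be so tame. A secondary issue: the ``verbatim transposition'' of Proposition~\ref{nearly_ordinary_deform_big_local_degree} to rule out small $\mathbf{C}$ is not verbatim. Lemma~\ref{dimsplitcase}(ii) gives a $d_\gp$-dimensional space of classes in $\rH^1(F,\phi^{-1})$ unramified outside $\gp$, while on the $\rH^1(F,\phi)$-side the analogous kernel has dimension $\max(d-d_\gp-1,0)$ by part (i); the filtration conditions \ref{(P3)}--\ref{(P4)} with $y_\gp=1$ single out the $\phi_1$-line, which breaks the $b\leftrightarrow c$ symmetry, so the tangent-space bound in Proposition~\ref{tgnord} would have to be redone from scratch for the transposed problem. (That said, your final content manipulation does not actually use the $\mathbf{C}$ bound.) One more small slip: after rescaling to force $c(\mathbf{B})=\cT$ you do not get $\mathbf{B}\subset\gm$ ``from \ref{(P4)}'' — that property was derived with the normalization $y_\gp=1$, not trivial content; the correct source is Corollary~\ref{dim_of_B_mod_mB} (not monogenic, hence proper in the local ring), which is exactly how the paper argues.

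For contrast, the paper's proof avoids $\mathbf{C}$ and the congruence ideal entirely. After establishing $\mathbf{B}\subset\gm$ with trivial content, it bounds $\dim_{\bar\Q_p}(\mathbf{B}/\gm\mathbf{B})\leqslant d_\gp$ using the injection \eqref{entryB} into $\mathrm{im}(\rH^1(F_\gp,\phi^{-1})\to\rH^1(\rI_\gp,\phi^{-1}))$, deduces $\dim_{\bar\Q_p}\gm/(\mathbf{B}+\gm^2)\geqslant d+1-d_\gp\geqslant 3$ from $d_\gp\leqslant d-2$, and then runs a Galois computation showing that any $\varphi\in\Hom(\gm/(\mathbf{B}+\gm^2),\bar\Q_p)$ (i.e.\ any tangent vector of $\cT$ killing $\mathbf{B}$, giving a lower-triangular $\varphi\circ\rho_{\cT^{\nord}}$) is determined by two scalars $\lambda,\mu\in\bar\Q_p$ — using $\delta_{F,p}=0$, the nearly-ordinary constraints at all $v\in\Sigma_p$, and generation of $\cT$ over $\varLambda^{\nord}$ by Hecke operators. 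That gives at most $2$, a contradiction. The paper also re-establishes $y_\gp\in\cT$ after the rescaling by another content argument, which is the care you correctly anticipated would be needed but did not supply.
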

\begin{proof}
Suppose that $\cT$ is factorial domain. By Gauss's lemma the representation $\rho_{\mathbf{L}}$ from \S\ref{sec:gma}
can be conjugated by a diagonal matrix so that it takes values in $\GL_2(\cT)$ and that the resulting ideal $\mathbf{B}\subset \cT$ is as large as possible, 
i.e.,  the greatest common divisor of the generators of $\mathbf{B}$ is $1$. 
By Corollary~\ref{dim_of_B_mod_mB}, the ideal $\mathbf{B}$ is not monogenic and hence, $\mathbf{B}\subset \gm$. It follows from \eqref{entryB} that there is a  natural injection
\begin{align}\label{eq:b-nord}
\Hom(\mathbf{B}/{\gm} \mathbf{B}, \overline{\Q}_p) \hookrightarrow \mathrm{im}\left(\rH^1(F_{\gp},\phi^{-1})\to \rH^1(\rI_{\gp},\phi^{-1})\right), 
\end{align}
implying that $b^{\nord}=\dim_{\overline{\Q}_p} \Hom(\mathbf{B}/{\gm} \mathbf{B},\overline{\Q}_p) \leqslant d_{\gp}$. Hence, we have
\begin{align}\label{eq:tnord-first}
	\dim_{\overline{\Q}_p}\left( \frac{\mathbf{B}+\gm^2}{\gm^2}\right)=\dim_{\overline{\Q}_p}\left(  \frac{\mathbf{B}}{\mathbf{B}\cap\gm^2} \right)\leqslant \dim_{\overline{\Q}_p} \left(\frac{\mathbf{B}}{\gm \mathbf{B}}\right)\leqslant d_{\gp}. 
\end{align} 
As $\dim_{\overline{\Q}_p} (\gm/\gm^2)\geqslant \dim(\cT)=d+1$, we deduce that 
\begin{align}\label{eq:tnord-second}
	\dim_{\overline{\Q}_p} \left(\frac{\gm}{\mathbf{B}+\gm^2}\right)=\dim_{\overline{\Q}_p} \left(\frac{\gm}{\gm^2}\right)-\dim_{\overline{\Q}_p}\left( \frac{\mathbf{B}+\gm^2}{\gm^2}\right)\geqslant d+1-d_{\gp}\geqslant 3. 
\end{align} 
We will contradict this by a Galois deformation argument showing  $\dim_{\overline{\Q}_p} \Hom\left(\gm/(\mathbf{B}+\gm^2),\overline{\Q}_p\right)  \leqslant 2$.
Given any $\varphi\in \Hom\left(\gm/(\mathbf{B}+\gm^2),\overline{\Q}_p\right)$ the corresponding representation $\varphi\circ\rho_{\mathbf{L}}: \rG_F \to \GL_2(\overline{\Q}_p[\varepsilon])$ is lower triangular of the form $\left(\begin{smallmatrix} \phi_1(1+\lambda\eta_{\mathbf{1}}\varepsilon) & 0 \\ \ast & 
\phi_2(1+\mu \eta_{\mathbf{1}}\varepsilon)\end{smallmatrix}\right)$ for some $\lambda, \mu\in \overline{\Q}_p$.  
For any $v\in \Sigma_p$  $\varphi(\chi_v)=1+\chi'_v \varepsilon$ with  $\chi'_v\in \Hom(\rG_v, \overline{\Q}_p)$.  
Since $y_{\gp} \mathbf{B} \subset \gm$ and $\mathbf{B}$ has at least two generators, Gauss's lemma yields that $y_{\gp} \in \cT$ and hence, $y_{\gp} \mathbf{B} \subset \mathbf{B}$. It then follows from \ref{(P3)} that $\chi'_{\gp}=\mu \cdot \eta_{\mathbf{1}|\rG_{\gp}}$. Since $\phi(\gq)\ne 1$ for $\gq\in \Sigma_p\setminus \{\gp\}$, $\tr(\varphi\circ\rho_{\mathbf{L} \mid \rG_\gq})=\phi_1\varphi(\chi_v) + (\phi_1\varphi(\chi_v))^{-1} \det(\varphi\circ\rho_{\mathbf{L} \mid \rG_\gq})$ can be written uniquely as the sum of two characters lifting $\phi_1 + \phi_2$. In particular, this implies that $\chi'_{\gq}=\lambda\cdot \eta_{\mathbf{1}|\rG_{\gq}}$.
As the $\varLambda^{\nord}$-algebra $\cT$ is topologically generated  by  $\{T_{\gl},U_v : \gl\nmid \gn p, v\in \Sigma_p\}$ and the image by 
$\varphi$ of all these quantities is uniquely determined by  $\lambda$ and $\mu$, it follows that tangent space of $\cT/(\mathbf{B}+\gm^2)$ is at most two-dimensional. 
\end{proof}

\section{Arithmetic applications}\label{sec:04}
This section provides some  arithmetic applications of our results. 
For simplicity we assume that $\phi_1=\mathbf{1}$ hence $\phi=\phi_2$ and we
denote by $\mathrm{Ps}_{\cT^{\cusp}}$ the push-forward of the pseudo-character  $\mathrm{Ps}_{\cT^{\nord}}$ from \S\ref{sec:gma} 
along the natural projection $\cT^{\nord}\twoheadrightarrow \cT^{\cusp}$. The corresponding family of $p$-adic Galois representations
\begin{equation}\label{ordinarycuspGMA} \rho_{\mathbf{K}}=\left(\begin{smallmatrix} a & b \\ c & d \end{smallmatrix}\right):\rG_F\to \GL_2(\mathbf{K}),\end{equation}
where $\mathbf{K}$ is the  total quotient ring of $\cT^{\cusp}$, 
enjoys all the properties of $\rho_{\mathbf{L}}$ in addition to being ordinary at all $v\in \Sigma_p$. Exactly as in \S\ref{sec:gma}, 
by the results  of  \cite{DKV} there exists $\gamma_0 \in \rG_F$ and a basis $(v_1,v_2)$ of $\mathbf{K}^2$ such that the properties \ref{(P1)}, \ref{(P2)}, \ref{(P3)} and \ref{(P4)} from \S\ref{sec:gma} hold for $ \rho_{\mathbf{K}}$. In particular, $a$ and $d$ take values in $\cT^{\cusp}$, while $b$ takes value in  the maximal ideal $\gm$ of $\cT^{\cusp}$.

\subsection{Geometric construction of Eisenstein congruences at trivial zeros}\label{generic cuspidal deformation}
 In this subsection, we do not assume the vanishing of the Leopoldt defect  $\delta_{F,p}$.
\begin{prop}\label{simple_trivial_zero}
Assume $\# \Sigma_p^{\mathrm{irr}}=1$. Then  $\zeta_{\phi}\cdot\varLambda=X\cdot \varLambda$, i.e., $\zeta_{\phi}$ has a simple trivial zero at $X=0$.
\end{prop}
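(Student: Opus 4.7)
The plan is to establish the vanishing order $\ord_{X=0}\zeta_\phi = 1$ by bounding it from below and from above.

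The lower bound $\ord_{X=0}\zeta_\phi \geqslant 1$ is immediate from the interpolation formula~\eqref{eq:interpolation_p-adic_L-fcn} specialized at $s=1$:
\[ \zeta_\phi(0) = L(0,\phi) \cdot \prod_{v \in \Sigma_p}(1-\phi(v)) = 0,\]
since the single factor corresponding to $\gp \in \Sigma_p^{\mathrm{irr}}$ vanishes while the other factors are non-zero by regularity of the remaining primes above $p$.

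For the opposite inequality, the cleanest route is to invoke the Gross--Stark formula in the rank-one case, which expresses the leading coefficient as
\[ \zeta_\phi'(0) = -L(0,\phi) \cdot \sL^{\gp}(\phi) \cdot \prod_{v \in \Sigma_p \setminus \{\gp\}}(1-\phi(v))\]
up to an explicit non-zero normalization involving $\log_p(u)$. The rank-one case is due to Darmon--Dasgupta--Pollack~\cite{DDP} and the general formulation, without any Leopoldt-type hypothesis, is Dasgupta--Kakde--Ventullo~\cite{DKV}. It then suffices to check that each factor on the right is non-zero: the non-vanishing $\sL^{\gp}(\phi) \neq 0$ is precisely Proposition~\ref{L-invariant}(iv); $L(0,\phi)\neq 0$ follows from Siegel's rationality theorem together with the non-vanishing of Hecke $L$-functions at $s=1$ via the functional equation applied to the non-trivial totally odd character $\phi$; and $1-\phi(v) \neq 0$ for $v \in \Sigma_p \setminus \{\gp\}$ by definition of $\gp$ as the unique irregular prime.

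The principal obstacle is that the input from~\cite{DKV} is deep and external to this paper. A natural alternative, more in keeping with the methods developed here, would be to use Theorem~\ref{congruencemodule-general} to identify $\ord_{X=0}\zeta_\phi$ with $\dim_{\bar\Q_p}(\varLambda/\cI_{\phi_1,\phi_2})$, then exploit the GMA structure on $\cT^{\cusp}$ reviewed at the start of this section together with the non-vanishing of $\sL^{\gp}(\phi)$ from Proposition~\ref{L-invariant}(iv) to force the congruence ideal to equal $X\cdot \varLambda$. Such a geometric argument would bypass the Leopoldt hypothesis present in Theorem~\ref{main-thm} and is consistent with the present proposition requiring no assumption on $\delta_{F,p}$; however, completing it in full generality would require developing the infinitesimal Fourier coefficients of a cuspidal deformation of $f_{\phi_1}$ along the lines of~\cite{DPV}.
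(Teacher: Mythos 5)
Your first argument, invoking Dasgupta--Kakde--Ventullo, is logically valid, but it is not the paper's route and would undercut one of the paper's own goals: the present proposition feeds (through Corollary~\ref{TC} and Theorem~\ref{epsilon_deformaion_of_DDP}) into a geometric re-derivation of precisely the rank-one Gross--Stark formula in \S\ref{generic cuspidal deformation}, so importing the Gross--Stark theorem of \cite{DKV} here would render that downstream re-proof circular. The individual facts you quote ($\zeta_\phi(0)=0$, $L(0,\phi)\neq 0$, non-vanishing of the regular Euler factors, $\sL^\gp(\phi)\neq 0$) are all correct, but they are being routed through the wrong external theorem for the purposes of this paper.

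Your second, sketched, argument is what the paper actually does, and it is simpler than you suggest: no infinitesimal Fourier-coefficient analysis in the style of \cite{DPV} is required. Suppose $\zeta_\phi\in X^2\varLambda$. By Proposition~\ref{const_Eisen_family} the geometric residues of $\cE_{\mathbf{1},\phi}$ vanish mod $X^2$, so the fundamental exact sequence \eqref{eq:fund_seq_ord} produces a cuspidal family $\cF$ with $\cF\equiv\cE_{\mathbf{1},\phi}\pmod{X^2}$. Since $\cF\bmod X^2$ is then an eigenform with $U_\gp$-eigenvalue $1$ (from \eqref{Eisensteinnonconstanterms}), Hida duality gives a $\bar\Q_p$-algebra homomorphism $\varphi_{\cF,\epsilon}:\cT^{\cusp}\to\bar\Q_p[\epsilon]$ with $\varphi_{\cF,\epsilon}(U_\gp)=1$ and $\varphi_{\cF,\epsilon}\circ\mathrm{Ps}_{\cT^{\cusp}}=1+\phi(1+\eta_{\mathbf{1}}\epsilon)$. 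On the Galois side, Proposition~\ref{the_module_B} together with the GMA relation \eqref{eq:p-nearly_ord} at $\gp$ identifies $\varphi_{\cF,\epsilon}(b)$ with the canonical class of Proposition~\ref{L-invariant}(i), whence $\varphi_{\cF,\epsilon}(U_\gp)=\varphi_{\cF,\epsilon}\bigl((d-b)(\Frob_\gp)\bigr)=1-\sL(\phi)\epsilon$. Comparing the two evaluations of $\varphi_{\cF,\epsilon}(U_\gp)$ forces $\sL(\phi)=0$, contradicting Proposition~\ref{L-invariant}(iv). This argument uses only machinery already set up in the paper and, as you correctly anticipated, is Leopoldt-free.
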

\begin{proof}
Suppose $\zeta_{\phi}\cdot \varLambda \subset X^2\cdot \varLambda$.  It then follows from Proposition~\ref{const_Eisen_family}
that the constant terms of  $\cE_{\mathbf{1},\phi}$ vanish modulo $X^2$, hence by Theorem~\ref{fund_exact_seq} there exists a cuspidal family $\cF$ with Fourier coefficients in $\varLambda$ such that  $\cF \equiv \cE_{\mathbf{1},\phi} \pmod{X^2}$.  
As $\cF\mod{X^2}$ is an eigenform, it corresponds (by Hida duality and after renormalizing $X$) to the  $\overline{\Q}_p$-algebra homomorphism 
\[\varphi_{\cF,\varepsilon}:\cT^{\cusp}\to \varLambda/(X^2) = \overline{\Q}_p[\varepsilon]
\]
such that  $\varphi_{\cF,\varepsilon}\circ \mathrm{Ps}_{\cT^{\cusp}}=1+\phi\cdot\left( 1+\eta_{\mathbf{1}}\cdot \varepsilon\right)$ 
and sending $U_\gp$ to $1$ (see \eqref{Eisensteinnonconstanterms}).

It follows that  $ \varphi_{\cF,\varepsilon}(a)=1$  and  $ \varphi_{\cF,\varepsilon}(d)=\phi(1+\eta_{\mathbf{1}}\varepsilon)$. 
By Proposition~\ref{the_module_B},  $\varphi_{\cF,\varepsilon}\circ b$ yields an element of $\rH^1(F,\phi^{-1})$ which is unramified outside $\gp$. Furthermore,
it follows from \eqref{eq:p-nearly_ord} that  $\varphi_{\cF,\varepsilon}(b)=\eta^{(\gp)}_{\phi}\varepsilon$, hence 
\[
\varphi_{\cF,\varepsilon}(U_\gp)=
\varphi_{\cF,\varepsilon}(\chi_{\gp}(\Frob_{\gp}))=\varphi_{\cF,\varepsilon}\left((d-b)(\Frob_{\gp})\right)=1-\sL(\phi)\varepsilon.
\]
As $\sL(\phi)\neq 0$ by  Proposition~\ref{L-invariant}(iv), this  leads to a contradiction. 
\end{proof}
Recall that the Eisenstein family $\cE_{\mathbf{1},\phi}$ is the unique non-cuspidal eigenfamily containing $f=f_{\mathbf{1}}$, unless $d_{\gp}=d$ in which case there is also the  Eisenstein family $\cE_{\phi,\mathbf{1}}$. Let $\mathbf{M}^{\ord}_{\gm_f}=\mathbf{M}^{\ord}_{\cU} \widehat{\otimes}_{\cT^{\leqslant 0}_{\cU}} \cT$
be the completed localization of  $\mathbf{M}^{\ord}_{\cU}$ with respect to the maximal ideal $\gm_f$ corresponding to $f$. 
Combining Proposition~\ref{simple_trivial_zero} with \eqref{eq:structure_hecke_alg_not_inert_case} has the following consequence. 
\begin{cor}\label{TC} Assume $\# \Sigma_p^{\mathrm{irr}}=1$  and  also assume that $\sL(\phi) +\sL(\phi^{-1})  \ne 0$ when $d_{\gp} = d$. Then 
\[	\cT \simeq 
	\begin{cases}
		\varLambda \times_{\overline{\Q}_p} \cT^{\cusp} & \text{  if } d_{\gp}<d, \\
		\varLambda \times_{\overline{\Q}_p} \varLambda \times_{\overline{\Q}_p} \cT^{\cusp}  & \text{  if } d_{\gp}=d.
	\end{cases}\]
\end{cor}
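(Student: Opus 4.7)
The plan is to realize $\cT$ as a fiber product of the local rings of the irreducible components of $\cC$ passing through $f=f_{\mathbf{1}}$. Since $\cC^{\full}\to \cC$ is \'etale at $f$ by Proposition~\ref{bad-Hecke-ops}, we may identify $\cT$ with $\cT_\cP$. As $\cT$ is reduced, it embeds into the product of its quotients by minimal primes. I will argue that these minimal primes correspond to: one cuspidal component, giving quotient $\cT^{\cusp}$; the Eisenstein family $\cE_{\mathbf{1},\phi}$, giving $\varLambda$; and, only when $d_{\gp}=d$, the Eisenstein family $\cE_{\phi,\mathbf{1}}$, giving another $\varLambda$. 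The latter arises precisely when $\Sigma_p=\{\gp\}$, since only then is $\phi$ unramified at all primes above $p$ (so that $\cE_{\phi,\mathbf{1}}$ exists in our $p$-adic families) and only then do the $U_\gp$-eigenvalues of the two $p$-stabilizations of the symmetric $E_1(\mathbf{1},\phi)=E_1(\phi,\mathbf{1})$ coincide at $f$.

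Next I will show that cuspidal and Eisenstein components cross transversally at $f$ over $\bar\Q_p$. Proposition~\ref{prop:hecke_duality} supplies the presentation
\[\cT_{\mathbf{1},\phi,\cP}\simeq \cT^{\cusp}\times_{\varLambda/(\zeta_{\mathbf{1},\phi})}\varLambda,\]
and since $\phi_1=\mathbf{1}$ forces $\gn=\mathrm{cond}(\phi)$, the imprimitive product in~\eqref{eq:imprimitive-zeta} is empty, so $\zeta_{\mathbf{1},\phi}=\zeta_\phi$. By Proposition~\ref{simple_trivial_zero}, $\zeta_\phi\cdot\varLambda=X\cdot\varLambda$, whence $\varLambda/(\zeta_{\mathbf{1},\phi})=\bar\Q_p$. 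When $d_\gp<d$, $\cT_{\mathbf{1},\phi,\cP}$ already equals $\cT$, and the Corollary follows. When $d_\gp=d$, the same reasoning applied to $\cE_{\phi,\mathbf{1}}$ (noting that $\phi^{-1}$ still has $\{\gp\}$ as unique irregular prime, and that $\zeta_{\phi,\mathbf{1}}=\zeta_{\phi^{-1}}$ by the same emptiness of the imprimitive product) yields the analogous pairwise transversality between the cuspidal and $\cE_{\phi,\mathbf{1}}$ components.

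The remaining pairwise question is the transversality of the two Eisenstein components themselves, which I will handle by direct Fourier-coefficient computation. By~\eqref{Eisensteinnonconstanterms}, the difference of $T_\gl$-eigenvalues of $\cE_{\mathbf{1},\phi}$ and $\cE_{\phi,\mathbf{1}}$ at a prime $\gl\nmid\gn p$ equals
\[(1-\phi(\gl))\bigl(1-(1+X)^{\log_p(\rN_{F/\Q}(\gl))/\log_p u}\bigr),\]
whose first-order term in $X$ is a nonzero scalar multiple of $X$ as soon as $\gl$ is chosen, by Chebotarev, so that $\phi(\gl)\ne 1$ and $\log_p(\rN_{F/\Q}(\gl))\ne 0$. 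Hence the congruence ideal between the two Eisenstein families is also exactly $(X)$, i.e., the closed point $\bar\Q_p$.

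To conclude I will invoke the standard commutative-algebra fact that a reduced Noetherian local ring $R$ with residue field $k$ and minimal primes $\cP_1,\dots,\cP_n$ satisfying $R/(\cP_i+\cP_j)=k$ for all $i\ne j$ is canonically isomorphic to the iterated fiber product $R/\cP_1\times_k\cdots\times_k R/\cP_n$; applied with $R=\cT$ and $k=\bar\Q_p$, this gives the desired description in both cases. The main obstacle I expect is the direct Eisenstein/Eisenstein comparison in the $d_\gp=d$ case, since Proposition~\ref{prop:hecke_duality} only handles Eisenstein/cuspidal congruences one family at a time, forcing the third pairwise transversality to be checked by the independent Fourier-coefficient calculation above.
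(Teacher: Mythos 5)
Your decomposition of the branches and all three pairwise transversality computations are correct, and the clean Fourier-coefficient argument for the Eisenstein/Eisenstein congruence is a nice independent way to see the third transversality when $d_\gp=d$. But the final commutative-algebra step on which everything hinges is false: for a reduced Noetherian local ring with $n\geqslant 3$ minimal primes, pairwise transversality $\cP_i+\cP_j=\gm$ does \emph{not} imply $R\simeq R/\cP_1\times_k\cdots\times_k R/\cP_n$. A counterexample is $R=\bar\Q_p\lsem x,y\rsem/(xy(x-y))$, the local ring of three distinct lines through the origin of the plane: $R$ is reduced with minimal primes $(x),(y),(x-y)$, each pair generating the maximal ideal, and each branch $R/\cP_i\simeq\bar\Q_p\lsem t\rsem$; yet $\dim_{\bar\Q_p}\gm_R/\gm_R^2=2$, whereas the iterated fiber product of three copies of $\bar\Q_p\lsem t\rsem$ over $\bar\Q_p$ has embedding dimension $3$. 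The natural injection $R\hookrightarrow R/\cP_1\times_k R/\cP_2\times_k R/\cP_3$ is strict; for instance $(0,0,t)$ has no preimage. The underlying obstruction is that $\cP_i+\cP_j=\gm$ for all pairs does not force $\cP_k+(\cP_i\cap\cP_j)=\gm$, which is what the three-term Chinese Remainder problem actually requires. So in the $d_\gp=d$ case your argument only produces an injection $\cT\hookrightarrow\varLambda\times_{\bar\Q_p}\varLambda\times_{\bar\Q_p}\cT^{\cusp}$, not the claimed isomorphism.

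To close the gap you need an input beyond pairwise transversality that pins down the embedding dimension, or equivalently the $\varLambda$-rank, of $\cT$. This is exactly what the paper extracts from the Hecke duality underlying Proposition~\ref{prop:hecke_duality}: when $d_\gp=d$ the module $\mathbf{M}^{\ord}_{\gm_f}$ decomposes as $\mathbf{S}^{\ord}_{\cP}$ plus a \emph{free rank-two} complement, one summand $\varLambda\cdot\cF_i$ for each Eisenstein family (built exactly as the section $\cF$ in the proof of Theorem~\ref{congruencemodule-general}), and the perfect pairing with $\cT$ forces $\cT$ to be $\varLambda$-free of rank $\mathrm{rk}_{\varLambda}\cT^{\cusp}+2$. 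Comparing ranks over $\varLambda$ (or weight-one fibre dimensions) then upgrades your injection to an isomorphism. In other words one should run the duality argument of Proposition~\ref{prop:hecke_duality} on both Eisenstein families simultaneously, producing two idempotent-like elements $h_0$, rather than apply it to each family separately and hope to recombine the two presentations by pure commutative algebra on the minimal primes.
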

We now construct a cuspidal family that enjoys intriguing arithmetic properties and is directly related to the Gross--Stark conjecture through its Fourier coefficients at 
$p$.
\begin{thm}\label{epsilon_deformaion_of_DDP}
Assume that $\# \Sigma_p^{\mathrm{irr}}=1$, and further assume that  $\sL(\phi) +\sL(\phi^{-1})  \ne 0$ when $d_{\gp} = d$.  There exists a cuspidal family $\cF$ with  coefficients in $\varLambda$  containing $f$,  such that $\cF$ is an eigenform modulo $X^2$ and the  corresponding  $\overline{\Q}_p$-algebra homomorphism $\varphi_{\cF,\varepsilon}:\cT^{\cusp}\to \varLambda/(X^2) = \overline{\Q}_p[\varepsilon]$ satisfies \begin{align}\label{eq:cong_cuspform_and_Eisen_family}
		\varphi_{\cF,\varepsilon}\circ \mathrm{Ps}_{\cT^{\cusp}}=(1+\lambda \eta_{\mathbf{1}}\cdot\varepsilon)+\phi\left( 1+\mu \eta_{\mathbf{1}}\cdot \varepsilon\right), \text{ where} 
	\end{align}
	\begin{align}\label{eq:lambda_and_mu}
		(\lambda,\mu)=
		\begin{cases}
			(0, -\frac{1}{\log_p(u)}) & \text{ if } d_{\gp}< d, \\
			\left(- \frac{\sL(\phi)}{\left(\sL(\phi)+\sL(\phi^{-1})\right)\log_p(u)},-\frac{\sL(\phi^{-1})}{\left(\sL(\phi)+\sL(\phi^{-1})\right)\log_p(u)}\right) & \text{ if } d_{\gp}=d. 			
		\end{cases}
	\end{align}
	Moreover, the Fourier coefficients $C(\gl,\cF)$, for $\gl \nmid \gn p$ and $C(v,\cF)$, for $v\in\Sigma_p$ satisfy
	\[
	\left.\frac{d}{dX}\right|_{X=0} C(\gl,\cF)=
	\begin{cases}
	\frac{\log_p(\rN_{F/\Q}(\gl))}{\log_p(u)}
		 &
		\text{ if } d_{\gp}< d,\\
		\frac{\left(\sL(\phi)+\phi(\gl)\sL(\phi^{-1})\right) \log_p(\rN_{F/\Q}(\gl))}{\left(\sL(\phi)+\sL(\phi^{-1})\right)\log_p(u)} & \text{ if } d_{\gp}=d, 
	\end{cases}
	\]
	\[
	\left.\frac{d}{dX}\right|_{X=0} C(v,\cF)=
	\begin{cases}
		\delta_{\gp}(v)\cdot \frac{\sL(\phi)}{\log_p(u)} &
		\text{ if } d_{\gp}< d,\\
		\frac{\sL(\phi)\sL(\phi^{-1})}{\left(\sL(\phi)+\sL(\phi^{-1})\right)\log_p(u)}  & \text{ if } d_{\gp}=d. 
	\end{cases}
	\]
	 Here $\delta_{\gp}(v)$ is $1$, if $v=\gp$, and zero  otherwise. 
\end{thm}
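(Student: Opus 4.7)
The plan is to construct $\cF$ via the structural isomorphism from Corollary~\ref{TC}, then extract $(\lambda,\mu)$ and the Fourier coefficient formulas from the Galois-theoretic properties of the generalized matrix algebra \eqref{ordinarycuspGMA}. By Theorem~\ref{proofetalecase} combined with Corollary~\ref{TC}, under our hypotheses the local ring $\cT^{\cusp}$ is étale over $\varLambda$, hence isomorphic to $\varLambda$. The canonical surjection $\cT^{\cusp}\simeq\varLambda\twoheadrightarrow \varLambda/(X^2)=\bar\Q_p[\epsilon]$ is then a $\bar\Q_p$-algebra homomorphism $\varphi_{\cF,\epsilon}$, which via Hida duality \eqref{eq:cuspidal-duality} corresponds to a unique cuspidal family $\cF$ with coefficients in $\varLambda$ containing $f$ that is an eigenform modulo $X^2$. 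Pushing forward $\rho_{Q(\cT^{\cusp})}$ via $\varphi_{\cF,\epsilon}$, the entries $a, d$ (which lie in $\cT^{\cusp}$ by \ref{(P2)}) yield characters $\varphi_{\cF,\epsilon}(a)=1+\alpha\epsilon$ and $\varphi_{\cF,\epsilon}(d)=\phi(1+\delta\epsilon)$, where $\alpha,\delta\in\Hom(\rG_F,\bar\Q_p)=\bar\Q_p\cdot\eta_{\mathbf{1}}$ (by $\delta_{F,p}=0$); writing $\alpha=\lambda\eta_{\mathbf{1}}$ and $\delta=\mu\eta_{\mathbf{1}}$ produces the shape \eqref{eq:cong_cuspform_and_Eisen_family}.

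The determination of $(\lambda,\mu)$ is the main obstacle and combines three Galois-theoretic constraints. At the irregular prime $\gp$, the identities $b(g)=d(g)-\chi_\gp(g)$ and $c(g)=a(g)-\chi_\gp(g)$ for $g\in\rG_\gp$ from \ref{(P3)} (after rescaling to $y_\gp=1$), together with the ordinarity $\chi_\gp|_{\rI_\gp}=1$, identify $\varphi_{\cF,\epsilon}(b)$ and $\varphi_{\cF,\epsilon}(c)$ --- which live in $\gm/\gm^2=\bar\Q_p\cdot\epsilon$ by \ref{(P4)} and Lemma~\ref{key_lemma} --- with scalar multiples of $\eta_{\phi^{-1}}^{(\gp)}\epsilon$ and $\eta_{\phi}^{(\gp)}\epsilon$ from Proposition~\ref{L-invariant}(i). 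Evaluating at $\Frob_\gp$ via Proposition~\ref{L-invariant}(ii) then yields the relation $\mu\,\sL(\phi^{-1})=-\chi'_\gp(\Frob_\gp)=\lambda\,\sL(\phi)$ analogous to \eqref{coordinatescuspdefominertcase}. At a regular prime $v\in\Sigma_p\setminus\{\gp\}$ (when $d_\gp<d$), the ordinarity of $\chi_v$ combined with $\varphi_{\cF,\epsilon}(b)|_{\rG_v}=0$ (by Proposition~\ref{the_module_B}) and the non-trivial ramification of $\eta_{\mathbf{1}}|_{\rI_v}$ forces $\lambda=0$. Finally, the determinant identity $\det\rho=\phi\chi_{\cyc}$ reduced modulo $\epsilon^2$ relates $\lambda+\mu$ to $1/\log_p(u)$ using $\chi_{\cyc}\equiv 1+\eta_{\mathbf{1}}/\log_p(u)\cdot X\pmod{X^2}$ and the vanishing $\varphi_{\cF,\epsilon}(bc)\equiv 0\pmod{\epsilon^2}$ from Lemma~\ref{key_lemma}. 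Combining these three constraints produces the formulas \eqref{eq:lambda_and_mu}; when $d_\gp=d$, the non-vanishing of $\sL(\phi)+\sL(\phi^{-1})$ (Proposition~\ref{non-vanishing}) is exactly what renders the resulting $2\times 2$ linear system invertible.

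The Fourier coefficient formulas then follow directly. For $\gl\nmid\gn p$, $C(\gl,\cF)=\varphi_{\cF,\epsilon}(T_\gl)=(1+\phi(\gl))+(\lambda+\phi(\gl)\mu)\log_p(\rN_{F/\Q}(\gl))\cdot\epsilon$, using $\eta_{\mathbf{1}}(\Frob_\gl)=\log_p(\rN_{F/\Q}(\gl))$, from which the derivative formula follows after substituting the values of $(\lambda,\mu)$. For $v\in\Sigma_p$, $C(v,\cF)=\varphi_{\cF,\epsilon}(U_v)=\varphi_{\cF,\epsilon}(\chi_v)(\Frob_v)$; for $v\neq\gp$ in the $d_\gp<d$ regime the derivative vanishes under $\lambda=0$, matching the $\delta_\gp(v)$ indicator, while for $v=\gp$ the value $\chi'_\gp(\Frob_\gp)=-\mu\,\sL(\phi^{-1})=-\lambda\,\sL(\phi)$ from the second paragraph gives the stated formulas. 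The delicate point throughout the argument is the consistent handling of signs and normalizations in the determinant constraint and the identification of the cocycle representatives of $\varphi_{\cF,\epsilon}(b),\varphi_{\cF,\epsilon}(c)$; the étale case $\cT^{\cusp}\simeq\varLambda$ keeps the bookkeeping tractable by ensuring that $\mathbf{B}$ and $\mathbf{C}$ are each principal of order exactly~$X$.
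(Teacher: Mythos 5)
Your opening claim — that Corollary~\ref{TC} together with Theorem~\ref{proofetalecase} gives $\cT^{\cusp}\simeq\varLambda$ — is correct only in the range $d_\gp\geqslant d-1$ (and under the additional hypothesis $\delta_{F,p}=0$, which the theorem does not assume). The statement of the theorem only requires $\#\Sigma_p^{\mathrm{irr}}=1$ (plus the $\sL$-invariant condition when $d_\gp=d$), and by Theorem~\ref{main-thm}(iv) the ring $\cT^{\cusp}$ is \emph{not} regular when $2\leqslant d_\gp\leqslant d-2$, so there is no canonical surjection $\cT^{\cusp}\twoheadrightarrow\bar\Q_p[\epsilon]$ to start from. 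The remark after the theorem makes exactly this point: the constructed $\cF$ is an eigenfamily only when $d_\gp\geqslant d-1$ and $\delta_{F,p}=0$; in general $\cF$ is only an eigenform modulo $X^2$. The paper's proof therefore does not proceed from the abstract structure of $\cT^{\cusp}$; instead it constructs an explicit element $E^{\dagger}=f-E_1^{(p')}$ of the generalized eigenspace $\mathbf{M}^{\dagger}_1\lsem f\rsem$, uses Hida duality to produce a functional on $\cT'/\gm'^2$, and then splits the resulting tangent vector $[\varphi_\epsilon]$ into Eisenstein and cuspidal pieces using the fiber-product description of $\cT$ from Corollary~\ref{TC}. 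This requires knowing only that the Eisenstein branches are $\varLambda$'s (always true), not that $\cT^{\cusp}$ is.

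There is also an internal inconsistency in your three constraints. You claim the identity $\mu\,\sL(\phi^{-1})=-\chi'_\gp(\Frob_\gp)=\lambda\,\sL(\phi)$ coming from Proposition~\ref{L-invariant}(i)--(ii) holds in all cases, but combined with $\lambda=0$ (from a regular prime) and the determinant relation $\lambda+\mu=-1/\log_p(u)$ this would force $\mu=0$, contradicting $\mu=-1/\log_p(u)$. The reason the $\gp$-relation fails for $d_\gp<d$ is precisely that identifying $\varphi_{\cF,\epsilon}(c)$ with a multiple of $\eta_{\phi}^{(\gp)}$ requires $[\varphi_{\cF,\epsilon}(c)]\in\rH^1(F,\phi)$ to be unramified outside $\gp$, hence requires $\mathbf{C}\subset\gm$ (Lemma~\ref{key_lemma}), and in fact requires $c'$ to vanish globally, which the paper justifies only for $d_\gp=d$ by noting that $F$ then has a unique prime above $p$ so that $\ker(\rH^1(F,\phi)\to\rH^1(F_\gp,\bar\Q_p))=0$. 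For $d_\gp<d$ the paper simply never invokes this relation — the formulas for $(\lambda,\mu)$ follow from $\lambda=0$ and the determinant alone, together with rescaling and the non-vanishing of the Eisenstein constant term (established via the operator $h_0$ and the simple trivial zero from Proposition~\ref{simple_trivial_zero}), a key ingredient that your proposal omits. Your identification of the $\rG_F$-cohomology as one-dimensional (so $a'=\lambda\eta_{\mathbf{1}}$, $d'=\mu\eta_{\mathbf{1}}$) also requires $\delta_{F,p}=0$, whereas the paper sidesteps this by reading off the cuspidal infinitesimal pseudo-character from the known, explicit Eisenstein infinitesimal pseudo-characters and the observation that $\mathrm{Ps}_{\epsilon}=1+\phi$.
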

\begin{proof} 
Proposition~\ref{prop:hecke_duality}  induces a perfect pairing 
\begin{align}\label{eq:hecke_duality_modular_family} 
\mathbf{M}^{\ord}_{\gm_f} \times \cT \to \varLambda, \quad (\cF,h) \mapsto C(\go,h\cdot \cF).
\end{align}
Set $\cT'=\cT/ X \cT$ whose maximal ideal is denoted by $\gm'$, and let $\mathbf{M}^\dagger_{1}\lsem f \rsem$ be the 
generalized eigenspace at $f$ in the space of weight $1$ overconvergent forms. 
The perfect pairing \eqref{eq:hecke_duality_modular_family} yields a natural isomorphism: 
\begin{align}\label{dualitygeneralized} 
\mathbf{M}^\dagger_{1}\lsem f \rsem \xrightarrow{\sim} \Hom_{\overline{\Q}_p}( \cT', \overline{\Q}_p);\; g\mapsto [h\mapsto C(\go,h\cdot g)]. 
\end{align}

We set $\displaystyle p'=\prod_{ \gq\in\Sigma_p\setminus \{\gp\}} \gq$ and we  let  $E_1^{(p')}$ denote the unique $p'$-stabilization of $E_1(\mathbf{1},\phi)$ such that for all $\gq\in \Sigma_p\setminus \{\gp\}$ one has $U_{\gq}\cdot E_1^{(p')}= E_1^{(p')}$.
Note that when $d_{\gp}=d$ we have $E_1^{(p')}=E_1(\mathbf{1},\phi)$.  It is easy to see that $(U_\gp -1)^2\cdot E_1^{(p')}=0$, hence $E_1^{(p')}\in \mathbf{M}^\dagger_{1}\lsem f \rsem$. By  definition, one can verify using \eqref{eq:fourier_coeff_wt_1_eisen_series}, that
\begin{align}\label{eq:const_term_p'_stab}
 C((0),E_1^{(p')})=2^{-d}  \prod_{\gq\in \Sigma_p\setminus\{\gp\}} (1-\phi(\gq)) \sum_{[\gc]\in\Cl_F^+}\left( L(0,\phi)+\delta(\phi)\phi^{-1}(\gc\gd)L(0,\phi^{-1})\right)[\gc]\neq 0.
\end{align}

The element  $E^\dagger=f-E_1^{(p')}\in  \mathbf{M}^\dagger_{1}\lsem f \rsem\setminus\{0\}$ gives   via \eqref{dualitygeneralized} 
 a $\overline{\Q}_p$-module homomorphism $ \cT'/\gm'^2 \to \overline{\Q}_p$, whose  restriction yields a tangent vector $[\varphi_{\varepsilon}]:\gm'/{\gm'}^2\to \overline{\Q}_p$
of $\cT'$. The latter determines uniquely  a $\overline{\Q}_p$-algebra homomorphism $\varphi_{\varepsilon}:\cT' \twoheadrightarrow \overline{\Q}_p[\varepsilon]$, 
 corresponding to the $\varepsilon$-eigenform $f_\varepsilon= f + \varepsilon E^\dagger$. We have
\begin{align}\label{eq:coef_f_epsilon} \begin{split}
\varphi_{\varepsilon}(T_\ga)& =C(\go,T_\ga \cdot f_\varepsilon) =C(\ga,f_{\varepsilon})=C(\ga,f)+\varepsilon C(\ga,E^{\dagger}) \\
\varphi_{\varepsilon}(U_\gq)& =C(\go,U_\gq \cdot f_\varepsilon) =C(\gq,f_{\varepsilon})=C(\gq,f)+\varepsilon C(\gq,E^{\dagger}) \\
\varphi_{\varepsilon}(h_0) \sum_{[\gc]\in\Cl_F^+}  [\gc]& =C(\go,h_0 \cdot f_\varepsilon) \sum_{[\gc]\in\Cl_F^+}  [\gc]=C((0),f_{\varepsilon})=C((0),E^{\dagger}) \cdot \varepsilon =- C((0),E_1^{(p')}) \cdot \varepsilon \ne 0.
\end{split}
\end{align}
for all integral ideals $\ga$ (resp. $\gq$) of $F$ prime to $\gn p$ (dividing $\gn p$). We have a natural pseudo-character $\mathrm{Ps}_{\varepsilon}:\rG_F\to \cT'\xrightarrow{\varphi_{\varepsilon}} \overline{\Q}_p[\varepsilon]$ sending $\Frob_{\gl}$ to $C(\gl,f_{\varepsilon})$ for all prime ideals $\gl\nmid \gn p$. Since $C(\gl,E^{\dagger})=0$ for all prime ideals $\gl\nmid \gn p$, we have $\mathrm{Ps}_{\varepsilon}=\phi+1$ by \eqref{eq:coef_f_epsilon} and the Chebotarev density theorem. 
It follows from Corollary~\ref{TC} that the maximal ideal of $\cT$ is equal to 
\[
\begin{cases} (\gm_{\varLambda},0)\oplus (0,\gm) & \text{  if } d_{\gp}<d, \\
(\gm_{\varLambda},0,0)\oplus (0,\gm_{\varLambda},0) \oplus (0,0,\gm)& \text{  if }d_{\gp}=d.
\end{cases} 
\]
 Therefore, $[\varphi_{\varepsilon}]$ can be written as 
\[
[\varphi_{\varepsilon}]=
\begin{cases}
[\varphi_{\varepsilon,(\mathbf{1},\phi)}^{\eis}]+[\varphi_{\varepsilon}^{\cusp}] & \text{ if } d_{\gp}<d,\\
[\varphi_{\varepsilon,(\mathbf{1},\phi)}^{\eis}]+[\varphi_{\varepsilon,(\phi,\mathbf{1})}^{\mathrm{eis}}]+[\varphi_{\varepsilon}^{\cusp}] & \text{ if } d_{\gp}=d,
\end{cases}
\]
for some $\overline{\Q}_p$-algebra homomorphisms $\varphi_{\varepsilon}^{\cusp}: \cT^{\cusp} \to \overline{\Q}_p[\varepsilon]$ and $\varphi_{\varepsilon,(\mathbf{1},\phi)}^{\eis},\varphi_{\varepsilon,(\phi,\mathbf{1})}:\varLambda \to \overline{\Q}_p[\varepsilon]$. Moreover, the latter two homomorphisms induce reducible pseudo-characters $\mathrm{Ps}^{\eis}_{\varepsilon,(\mathbf{1},\phi)},\mathrm{Ps}^{\eis}_{\varepsilon,(\phi,\mathbf{1})}:\rG_F \to \varLambda \to \overline{\Q}_p [\varepsilon]$ sending $g\in \rG_F$ respectively to 
\[ 
1+\phi(g)\left(1-\frac{\alpha \cdot \eta_{\mathbf{1}}(g)}{\log_p(u)}\cdot \varepsilon\right) 
\text{ and }
\left(1-\frac{\beta \cdot \eta_{\mathbf{1}}(g)}{\log_p(u)}\cdot \varepsilon\right)+\phi(g)
\] 
for some $\alpha,\beta\in \overline{\Q}_p$. Here  $\cE(\mathbf{1},\phi) \mod X^2$ (resp.~$\cE(\phi,\mathbf{1}) \mod X^2$) gives rise to the infinitesimal pseudo-character $g \mapsto 
1+\phi(g)\left(1-\frac{\eta_{\mathbf{1}}(g)}{\log_p(u)}\cdot \varepsilon\right) $ (resp. $ g \mapsto
\left(1-\frac{\eta_{\mathbf{1}}(g)}{\log_p(u)}\cdot \varepsilon\right)+\phi(g))$.

By \eqref{Eisensteinconstanterms}, we know that the constant term of $\cE(\mathbf{1},\phi)$  is non-zero, and  has a simple zero at $X=0$ by Proposition~\ref{simple_trivial_zero}. Thus, using the operator $h_0$ from Proposition~\ref{prop:hecke_duality}, which determines the constant term via Hida's duality, we must have $\alpha\neq 0$. If $d_{\gp}<d$, using $\mathrm{Ps}_{\varepsilon}=\phi+1$ we find  
\begin{align}\label{eq:varphi_cusp_epsilon}
\varphi_{\varepsilon}^{\cusp} \circ \mathrm{Ps}_{\cT^{\cusp}}(g)=1+\phi(g)\left(1+\frac{\alpha \cdot \eta_{\mathbf{1}}(g)}{\log_p(u)}\cdot \varepsilon\right). 
\end{align}
After rescaling  $\varphi_{\varepsilon}^{\cusp}$ if necessary, we obtain the desired cuspidal family  $\cF$ via Hida's duality.

We now assume that $d_{\gp}=d$. First, we show that $\beta \ne 0$. If $\beta=0$, then \eqref{eq:varphi_cusp_epsilon} holds again as $\mathrm{Ps}_{\varepsilon}=\phi+1$. One can verify by using \eqref{eq:p-nearly_ord} that $\varphi_{\varepsilon}^{\cusp}(c)=0$ on $\rG_F$ as it is in $\rH^1(F,\phi)$ and unramified everywhere (here $F$  has a unique prime above $p$). In addition, \eqref{eq:p-nearly_ord} and Proposition~\ref{L-invariant} imply that
\[
1=\varphi_{\varepsilon}^{\cusp}(c-a)(\Frob_{\gp})=\varphi_{\varepsilon}^{\cusp}(b-d)(\Frob_{\gp})=1+\alpha\sL(\phi)\varepsilon,
\]
which leads to a contradiction as $\sL(\phi)$ is non-zero by Proposition~\ref{L-invariant}. Therefore, we must have $\beta\neq 0$. Moreover, by using  \eqref{eq:p-nearly_ord} and Proposition~\ref{L-invariant}, one again obtains the following equation  (as in \eqref{coordinatescuspdefominertcase})
\[
\alpha\sL(\phi)=\beta\sL(\phi^{-1}).
\]
It follows from $\det(\rho_{\mathbf{K}})=\phi\chi_{\cyc}$ that 
\[
\begin{split}
\phi(1- \eta_{\mathbf{1}}(\log_p u)^{-1}\varepsilon)
=\varphi_{\varepsilon}^{\cusp}(\phi\chi_{\cyc})
= \left(1+\frac{\alpha \cdot \eta_{\mathbf{1}}}{\log_p(u)}\cdot \varepsilon\right) 
\phi\left(1+\frac{\beta \cdot \eta_{\mathbf{1}}}{\log_p(u)}\cdot \varepsilon\right).
\end{split}
\]
Hence, we obtain another equation $\alpha+\beta=-1$, and thus
\[
\alpha=-\frac{\sL(\phi^{-1})}{\sL(\phi)+\sL(\phi^{-1})} \text{ and } 
\beta=-\frac{\sL(\phi)}{\sL(\phi)+\sL(\phi^{-1})}.
\]

This again gives the desired cuspidal family $\cF$ by Hida's duality and completes the proof of the first part of the claim.

Finally, the argument regarding the properties of the Fourier coefficients of $\cF$ is essentially the same as that in \cite[Prop.~5.7]{BDP}, to which we refer the reader for details.
\end{proof}

We notice that if $d_{\gp} \geqslant d-1$ and $\delta_{F,p}= 0$, then the cuspidal family $\cF$ constructed in the above theorem is an eigenfamily by Theorem~\ref{main-thm}(ii-iii).

We highlight that $\cF$ coincides $\pmod{X^2}$ with the $\varLambda_{\cO}$-adic cuspidal family $\mathscr{H}$ that Darmon--Dasgupta--Pollack built in \cite[Prop.3.4]{DDP} using purely analytic arguments assuming $\delta_{F,p}= 0$ under the non--vanishing hypothesis \cite[(11)]{DDP}. This algebraic construction will be expanded  in a forthcoming work of Betina--Hsieh on trivial zeros  of Katz $p$-adic $L$-functions using $p$-adic Eisenstein congruences for the unitary group $\mathrm{U}(2,1)$.

Arguing as in  \cite[Cor.~5.9]{BDP}, Theorem~\ref{epsilon_deformaion_of_DDP} yields a geometrically flavored   proof of the rank 1 Gross--Stark conjecture over totally real fields. 
\begin{cor}\label{Gross--Stark-rk-1} Assume that $\# \Sigma_p^{\mathrm{irr}}=1$, and further assume that  $\sL(\phi) +\sL(\phi^{-1})  \ne 0$ when $d_{\gp} = d$. Then we have
\[   \zeta_{\phi}'(0)=- \frac{\mathscr{L}(\phi)}{\log_p(u)} L(0,\phi).  \]

In particular, $\mathrm{ord}_{X=0}\zeta_{\phi}(X)=\#  \Sigma_p^{\mathrm{irr}}$.
\end{cor}
\begin{proof}
We sketch the argument when $d_{\gp}=d$. A similar argument applies when $d_{\gp}<d$.
The idea  is to write $E_1(\mathbf{1},\phi)-f$ as the following linear combination 
\[
\left( \mathscr{L}(\phi^{-1}) + \mathscr{L}(\phi) \right) \left(E_1(\mathbf{1},\phi)-f\right)=E_{\mathbf{1},\phi}+ E_{\phi,\mathbf{1}}.
\]
of the generalized $p$-adic overconvergent weight one modular forms \[ E_{\mathbf{1},\phi}=   \frac{ (\mathscr{L}(\phi)+ \mathscr{L}(\phi^{-1}))}{\log_p(u)^{-1}\mathscr{L}(\phi)} \left.\tfrac{d}{dX}\right|_{X=0}(\cF-\cE_{\mathbf{1},\phi}) \text{ and } E_{\phi,\mathbf{1}}=  \frac{(\mathscr{L}(\phi)+ \mathscr{L}(\phi^{-1}))}{\log_p(u)^{-1}\mathscr{L}(\phi^{-1})} \left.\tfrac{d}{dX}\right|_{X=0}(\cF-\cE_{\phi,\mathbf{1}}).\] 
This is obtained by  comparing the Fourier coefficients of $\cF \mod X^2$ at non-zero ideals of  $\go$, which are fully determined in Theorem \ref{epsilon_deformaion_of_DDP}, with those of  $\cE_{\mathbf{1},\phi} \mod X^2$ and $\cE_{\mathbf{1},\phi} \mod X^2$, defined in \eqref{Eisensteinnonconstanterms}. It is crucial to observe that by Proposition~\ref{prop:hecke_duality} the same linear combination is automatically satisfied by the  constant terms at the infinity cusps as well. 
By computing them directly, we obtain 
\begin{align*}
 \sum_{[\gc]\in\Cl_F^+}\left( L(0,\phi)+\delta(\phi)\phi^{-1}(\gc\gd)L(0,\phi^{-1})\right)[\gc]=2^{d}
 \log_p(u) \cdot   \left.\tfrac{d}{dX}\right|_{X=0} C\left((0), - \frac{1}{\mathscr{L}(\phi)}\cE_{\mathbf{1},\phi}-\frac{1}{\mathscr{L}(\phi^{-1})} \cE_{\phi,\mathbf{1}}\right)
\\ = -\log_p(u)    \sum_{[\gc]\in\Cl_F^+} \left( \mathscr{L}(\phi)^{-1} \zeta_{\phi}'(0) + \delta(\phi)  \mathscr{L}(\phi^{-1})^{-1}   \zeta_{\phi^{-1}}'(0)  \phi^{-1}(\gc\gd) \right)  [\gc].
 \end{align*}
If $\phi$ is ramified, i.e.,  $\delta(\phi)=0$, the asserted formula follows immediately, whereas  
if $\phi$ is  a totally odd  character of $\Cl_F^+$, it  follows from the linear independence of characters. Finally, as $\mathscr{L}(\phi)\neq 0$ by Proposition~\ref{L-invariant}, it follows that  $\zeta_{\phi}(X)$ has a simple zero at $X=0$.  \end{proof}

\begin{rem}\label{rem:higher_rank}  Let  $E_1^{\mathrm{reg}}$ be the unique stabilization of $E_1(\mathbf{1},\phi)$ 
at all $\gq \in \Sigma_p\setminus \Sigma_p^{\mathrm{irr}} $ such that $U_{\gq}\cdot E_1^{\mathrm{reg}}= E_1^{\mathrm{reg}}$. Assume that $\Sigma_p^{\mathrm{irr}}$ contains at least two irregular primes, $\gp_1$ and $\gp_2$.
Let $E_1^{(\gp_1)}$ (resp. $E_1^{(\gp_2)}$) be the unique $\gp_1$-stabilization (resp. $\gp_2$-stabilization) of $E_1^{\mathrm{reg}}$.  Then, it follows from Proposition~\ref{constant_term_finite_wt}  that both of  $E_1^{(\gp_1)}$ and $E_1^{(\gp_2)}$ are $p$-adically cuspidal, distinct, and hence belong to the cuspidal generalized eigenspace $\mathbf{S}^\dagger_{1}\lsem f \rsem \subset \mathbf{M}^\dagger_{1}\lsem f \rsem $. In particular, $\dim_{\overline{\Q}_p} \mathbf{S}^\dagger_{1}\lsem f \rsem \geqslant 2$,  hence $\dim_{\overline{\Q}_p}  \cT^{\cusp}/ X  \cT^{\cusp} \geqslant 2$  by \eqref{eq:cuspidal-duality}. It follows that $\cT^{\cusp}$ is ramified over $\varLambda$.
\end{rem}

\subsection{An unramified Iwasawa module structure}\label{sec:unram_iawasawa_mod}
Recall that we set $H=\overline{F}^{\ker \phi}$. Let $H_{\infty}$ be the compositum of all $\Z_p$-extensions of $H$. Then, we have $\Gal(H_{\infty}/H)\simeq \Z_p^r$ for some $r\in \Z_{\geqslant 2}$.
We identify $\Z_p\lsem \Gal(H_{\infty}/H)\rsem$ with $\Z_p\lsem X_1\ldots,X_r\rsem$ via $\gamma_i\mapsto X_i+1$ for $i=1,\ldots,r$, where $\gamma_1,\ldots,\gamma_r$ are topological generators of $\Gal(H_{\infty}/H)$. Denote by $M_{\infty}$ the maximal abelian unramified $p$-extension of $H_{\infty}$. The Galois group $Y_{\infty}=\Gal(M_{\infty}/H_{\infty})$ has a natural action of $\Gal(H_{\infty}/H)$ given by $\gamma\cdot g=\tilde{\gamma} g \tilde{\gamma}^{-1}$ for all $\gamma\in \Gal(H_{\infty}/H)$ and $g\in Y_{\infty}$, the  action being independent of the choice of  lifting 
$\tilde{\gamma}\in Y_{\infty}$ of $\gamma$, as $Y_{\infty}$ is an abelian. Therefore, we can view $Y_{\infty}$ as a $\Z_p\lsem X_1,\ldots,X_r\rsem$-module. It is known \cite{greenberg} that $Y_{\infty}$ is a finitely generated torsion module over $\Z_p\lsem X_1,\ldots,X_r\rsem$.   The structure of this module has been extensively studied, and several deep conjectures (most notably Greenberg's generalized conjecture) have been proposed regarding its behavior.

The goal of this subsection is to construct an infinite extension  $L_{\infty}$ of $H_{\infty}$ which is contained in $M_{\infty}$ and such that 
the natural map $Y_{\infty}\to \Gal(L_{\infty}/H_{\infty})$ factors through the quotient by $(X_1,\ldots,X_r)$.

Recall   $\rho_{\mathbf{K} }=\left(\begin{smallmatrix} a & b \\ c & d \end{smallmatrix}\right)$ from  \eqref{ordinarycuspGMA} satisfying the  properties \ref{(P1)}, \ref{(P2)}, \ref{(P3)} and \ref{(P4)}  and  set $x(g,h)=b(g)c(h)$ for all $g,h\in \rG_F$.
The functions $a$, $d$ and $x$ satisfy the properties (I)--(IV) in \cite[pp.~563-564]{wiles}. For a $\overline{\Q}_p$-algebra homomorphism $\varphi_{\varepsilon}\in \Hom(\cT^{\cusp}, \overline{\Q}_p[\varepsilon])$, we write 
\[
\varphi_{\varepsilon}(a)=1+a'\varepsilon,\; \varphi_{\varepsilon}(d)=\phi(1+d'\varepsilon),\text{ and } \varphi_{\varepsilon}(x)=x'\varepsilon.
\]
In addition, for all $v\in \Sigma_p$, we write $\varphi_{\varepsilon}(\chi_v)=1+\chi'_v \varepsilon$, where $\chi_v:\rG_F\to \cT^{\nord}\twoheadrightarrow \cT^{\cusp}$ is the unramified character mapping $\Frob_v$ on $U_v$. One can verify by the above quoted properties (II)-(IV) that for all $g,h,l\in \rG_F$, the functions $a',d'$ and $x'$ satisfy 
\begin{enumerate}[label=\textbf{(T\arabic*)}]
 \item \label{(T1)} $a'(gh)=a'(g)+a'(h)+x'(g,h)$,
 \item \label{(T2)} $d'(gh)=d'(g)+d'(h)+x'(h,g)$,
\item \label{(T3)} $x'(gh,l)=x'(h,l)+\phi(h)x'(g,l)$,
\item \label{(T4)} $x'(g,hl)=x'(g,h)+\phi(h)x'(g,l)$, and
\item \label{(T5)} $a'(\gamma_0)=d'(\gamma_0)=x'(g,\gamma_0)=x'(\gamma_0,g)=a'(1)=d'(1)=x'(g,1)=x'(1,g)=0$.
\end{enumerate}
\begin{rem}\label{ordinary_basis_reg_prime}
For an arbitrary regular prime $\gq$, one can use Hensel's lemma to write $\rho_{\mathbf{K}}$ in  a basis where  $\rho_{\mathbf{K} }(\Frob_{\gq})=\left(\begin{smallmatrix} \alpha & 0 \\ 0 & \beta \end{smallmatrix}\right)$ with $\alpha,\beta \in \cT^{\cusp,\times}$ and $\alpha\neq \beta \mod \gm$. In this basis,  $\rho_{\mathbf{K} }$ satisfies properties similar to $\ref{(T1)}$--$\ref{(T5)}$ with $\gamma_0$ replaced by 
$\Frob_{\gq}$. Property \ref{(P3)} holds only for the irregular prime $\gp$, while property \ref{(P4)} still holds. \end{rem}

\begin{prop}\label{nonvanish_x'}
Assume $\Sigma_p^{\mathrm{irr}}=\{\gp\}$,  $\Sigma_p \setminus \Sigma_p^{\mathrm{irr}}=\{\gq \}$, $\delta_{F,p}=0$ and that 
\begin{align}\label{R2}
\text{there exists a surjective  $\overline{\Q}_p$-algebra homomorphism } \varphi_{\varepsilon}: \cT^{\cusp}/(X)\twoheadrightarrow \overline{\Q}_p[\varepsilon].
\end{align}
Then $x'\neq 0$ on $\rG_F\times \rG_F$. 
\end{prop}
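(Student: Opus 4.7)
\medskip\noindent The plan is to argue by contradiction: assuming $x'\equiv 0$, I will show that $\varphi_\epsilon$ factors through the Eisenstein quotient of $\cT^{\cusp}$ which, under $\#\Sigma_p^{\mathrm{irr}}=1$, reduces to $\bar\Q_p$ locally at $f$. This is incompatible with the surjectivity in \eqref{R2}.

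\medskip\noindent The first step is to place the function $x(g,h)=b(g)c(h)$ inside the reducibility ideal $I_{\mathrm{red}}$ of the GMA attached to $\rho_{Q(\cT^{\cusp})}$, namely the image of the multiplication map $\mathbf{B}\otimes_{\cT^{\cusp}}\mathbf{C}\to \cT^{\cusp}$ (see \cite[\S1.5]{bel-che09}; the values of $x$ indeed lie in $\cT^{\cusp}$ thanks to \ref{(P2)} and the cocycle identity $a(gh)=a(g)a(h)+b(g)c(h)$). By \ref{(P4)}, $\mathbf{B}\subset \gm$, hence $I_{\mathrm{red}}\subset \gm$ and $\varphi_\epsilon(x(g,h))=x'(g,h)\,\epsilon$. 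If $x'\equiv 0$, then $\varphi_\epsilon$ annihilates every generator of $I_{\mathrm{red}}$, so $\varphi_\epsilon$ factors through the reducible quotient $\cT^{\cusp}/I_{\mathrm{red}}$.

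\medskip\noindent The second step is to identify $\cT^{\cusp}/I_{\mathrm{red}}$ with the localized Eisenstein quotient. The residual pseudo-character is $\mathbf{1}+\phi$, and since $d_\gp<d$ (because $\gq\in\Sigma_p\setminus\Sigma_p^{\mathrm{irr}}$ is non-empty) the only Eisenstein family through $f$ in $\cC_{\cusp}$ is $\cE_{\mathbf{1},\phi}$, matching the reducibility type $(\mathbf{1},\phi)$ singled out by \ref{(P2)}. Consequently $I_{\mathrm{red}}$ coincides with the localized image of the Eisenstein ideal $\mathcal{J}_{\mathbf{1},\phi}\subset \mathfrak{h}^{\cusp}$, and Theorem~\ref{congruencemodule-general} yields $\cT^{\cusp}/I_{\mathrm{red}}\simeq \varLambda_{\sw(f)}/(\zeta_{\phi})$. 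By Proposition~\ref{simple_trivial_zero}, $\zeta_{\phi}$ has a simple zero at $X=0$, so $\varLambda_{\sw(f)}/(\zeta_{\phi})\simeq \bar\Q_p$. Hence the image of $\varphi_\epsilon$ lies in $\bar\Q_p\subsetneq \bar\Q_p[\epsilon]$, contradicting \eqref{R2}.

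\medskip\noindent The main obstacle is justifying that $I_{\mathrm{red}}$ coincides locally with the image of $\mathcal{J}_{\mathbf{1},\phi}$: the inclusion $I_{\mathrm{red}}\subset \mathcal{J}_{\mathbf{1},\phi}\cdot\cT^{\cusp}$ is tautological from the reducibility of $\mathrm{Ps}_{\cT^{\cusp}}$ on the Eisenstein quotient, while the reverse relies on the uniqueness (valid for $d_\gp<d$) of an Eisenstein family through $f$ with reducibility type $(\mathbf{1},\phi)$, which is precisely where the assumption $\Sigma_p\setminus\Sigma_p^{\mathrm{irr}}=\{\gq\}$ is used to rule out a competing $\cE_{\phi,\mathbf{1}}$. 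The vanishing $\delta_{F,p}=0$ intervenes implicitly through the cocycle constraints underpinning \ref{(P1)}--\ref{(P4)} and the size estimates of the congruence module used in Theorem~\ref{congruencemodule-general}.
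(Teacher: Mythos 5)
Your proposal has the right high-level shape (conjugate the failure of $x'$ into the reducibility ideal, then show the corresponding quotient of $\cT^{\cusp}/(X)$ is too small to support a surjection onto $\bar\Q_p[\epsilon]$), and Step~1 (that $x(g,h)\in\cT^{\cusp}$ and that annihilation of $I_{\mathrm{red}}$ follows from $x'\equiv 0$) is correct. However, the pivotal Step~2 contains a genuine gap: you assert that $\cT^{\cusp}/I_{\mathrm{red}}$ coincides with the Eisenstein quotient $\varLambda_{\sw(f)}/(\zeta_{\phi})$, but you only justify the tautological inclusion $I_{\mathrm{red}}\subset\mathcal J_{\mathbf 1,\phi}\cT^{\cusp}$. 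The reverse inclusion is where all the difficulty lies, and ``uniqueness of the Eisenstein family through $f$'' does not deliver it. The Eisenstein ideal $\mathcal J_{\mathbf 1,\phi}$ is defined by matching \emph{all} Hecke eigenvalues, not just the pseudo-character; in particular it contains $U_{\gp}-1$. Modulo $I_{\mathrm{red}}$ one has, from~\ref{(P3)} with $y_{\gp}=1$ and $a\equiv\mathbf 1$, the relation $c(\Frob_{\gp})=1-\chi_{\gp}(\Frob_{\gp})=1-U_{\gp}$, but there is no reason for $c(\Frob_{\gp})$ to lie in $I_{\mathrm{red}}=\mathbf B\cdot\mathbf C$, because $\mathbf B\subset\gm$ by \ref{(P4)} and so $\mathbf B\cdot\mathbf C\subsetneq\mathbf C$. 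Thus $U_{\gp}-1\in I_{\mathrm{red}}$ is exactly the unproven content, and it cannot be dodged by an appeal to uniqueness of the Eisenstein component. Likewise, showing that $a\equiv\mathbf 1$ and $d\equiv\phi\chi_{\cyc}$ modulo $I_{\mathrm{red}}$ (needed for the $T_{\gl}$'s) already requires the Hensel/ordinarity analysis at the regular prime $\gq$, which your argument never performs.

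This is precisely where the paper's proof spends its effort, and it is worth noting that the two hypotheses $\delta_{F,p}=0$ and $\Sigma_p\setminus\Sigma_p^{\mathrm{irr}}=\{\gq\}$ enter for very different reasons than you suggest. Leopoldt is used to write $a'=\lambda\eta_{\mathbf 1}$ and $d'=\mu\eta_{\mathbf 1}$ (one-dimensionality of $\Hom(\rG_F,\bar\Q_p)$) --- it has nothing to do with \ref{(P1)}--\ref{(P4)} or with Theorem~\ref{congruencemodule-general}. The regular prime $\gq$ is used not to rule out $\cE_{\phi,\mathbf 1}$ (that is already ruled out whenever $\Sigma_p\setminus\Sigma_p^{\mathrm{irr}}\ne\varnothing$), but to force $\lambda=\mu=0$ via the Hensel splitting of $\rho$ at $\gq$ together with the unramifiedness of $\chi'_{\gq}$. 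Finally, $\chi'_{\gp}=0$ is established by showing $\phi^{-1}\varphi_\epsilon(b)$ is a cocycle in $\rH^1(F,\phi^{-1})$ that is unramified everywhere and hence zero. If you fill in each of these three steps to justify $I_{\mathrm{red}}\supset\mathcal J_{\mathbf 1,\phi}\cT^{\cusp}$, you will have reproduced the paper's computation, so the reducibility-ideal reformulation does not actually shorten the argument.
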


Note that by Theorem~\ref{main-thm}(iv) the assumption \eqref{R2} is always satisfied when $2 \leqslant d_{\gp} \leqslant d-2$.

\begin{proof}
Suppose $x'=0$ on $\rG_F\times \rG_F$. Then $a',d'\in \Hom(\rG_F,\overline{\Q}_p)$. Since $\delta_{F,p}=0$, we can write $a'=\lambda\eta_{\mathbf{1}}$ and $d'=\mu \eta_{\mathbf{1}}$ for some $\mu,\lambda\in \overline{\Q}_p$. We shall show that this results in  $\mu=0=\lambda$ and $\chi'_{v}=0$ on $\rG_v$ for all $v\in \Sigma_p$, thereby contradicting  \eqref{R2}.
Since $\varphi_{\varepsilon}(\det (\rho_{\mathbf{K} }))=\phi$ by \eqref{R2}, we have $\lambda+\mu=0$. Moreover, for the regular prime $\gq \in  \Sigma_p \setminus \Sigma_p^{\mathrm{irr}}$, we have
\[
\varphi_{\varepsilon}(a+d)=\varphi_{\varepsilon}(\chi_{\gq}+\phi\chi_{\cyc}\chi_{\gq}^{-1})
\]
on $\rG_{\gq}$. It is straightforward to verify from the fact that $\phi(\gq)\ne 1$, and that $a$ (resp. 
$d$) lifts $\mathbf{1}$ (resp. $\phi$), that $a'=\chi'_{\gq}$ and $d'=-\chi_{\gq}$ on $\rG_{\gq}$, which yields $\mu=0=\lambda$ as $\chi'_{\gq}=0$ on $I_{\gq}$. Thus, $\chi'_{\gq}=0$ on $\rG_{\gq}$.

To show $\chi'_{\gp}=0$ on $\rG_{\gp}$, one can see from \eqref{the_module_B}  that $\phi^{-1}\varphi_{\varepsilon}(b)$ lies in  $\rH^1(F,\phi^{-1})$ and is unramified outside $\gp$. Moreover, \eqref{eq:p-nearly_ord} yields that $\varphi_{\varepsilon}(b)=0$ on $\rI_{\gp}$, since $d'=0$ and $\chi'_{\gp}=0$ on $\rI_{\gp}$. Thus, $\phi^{-1}\varphi_{\varepsilon}(b)$ is unramified everywhere and hence trivial. In particular, $\chi'_{\gp}=0$ on $\rG_{\gp}$ (again by \eqref{eq:p-nearly_ord}).
\end{proof}

 Let $g_0,l_0\in \rG_F$ be such that $x'(g_0,l_0)\neq 0$. Then, we can recover $\rho_{\mathbf{K} }$ from $x(g,h)$ by setting $b(g)=\tfrac{x(g,l_0)}{x(g_0,l_0)}$ and $c(g)=x(g_0,g)$ for all $g\in \rG_F$. Set \begin{align} \eta_{b}(g)=x'(g,l_0)  \text{ and }\eta_c(g)=x'(g_0,g) \text{ for all }\rG_F. \end{align} 
\begin{cor}\label{nonvani_x'_on_G_H}
Assume the hypotheses of Proposition~\ref{nonvanish_x'}. Then one has $x'\neq 0$ on $\rG_H\times \rG_H$. 
\end{cor}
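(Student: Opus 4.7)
My plan is to argue by contradiction: assume $x'|_{\rG_H\times \rG_H}\equiv 0$ while, by Proposition~\ref{nonvanish_x'}, there exist $g_0,h_0\in \rG_F$ with $x'(g_0,h_0)\neq 0$. The goal is to exhibit $(g_1,h_1)\in \rG_H\times \rG_H$ with $x'(g_1,h_1)\neq 0$, through two mirror-image steps that handle the two slots of $x'$ by cocycle and inflation-restriction arguments.

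First I will exploit property \ref{(T3)} to observe that $\eta_b(g)=x'(g,h_0)$ satisfies $\eta_b(gh)=\phi(h)\eta_b(g)+\eta_b(h)$, so the twist $\tilde\eta_b(g)=\eta_b(g)/\phi(g)$ defines a class in $\rH^1(F,\phi^{-1})$. Since $|\Gal(H/F)|$ divides the order of $\phi$ and is therefore invertible in $\bar\Q_p$, the higher cohomology of $\Gal(H/F)$ with $\bar\Q_p$-coefficients vanishes and inflation-restriction gives an injection $\rH^1(F,\phi^{-1})\hookrightarrow \rH^1(\rG_H,\bar\Q_p)$. Because $\phi|_{\rG_H}$ is trivial, the function $\tilde\eta_b$ coincides with $\eta_b$ on $\rG_H$. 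Consequently, if $\eta_b|_{\rG_H}\equiv 0$, then $\tilde\eta_b$ would be a coboundary, forcing $\eta_b(g)=\lambda(\phi(g)-1)$ for some $\lambda\in\bar\Q_p$; evaluating at $\gamma_0$ and using \ref{(T5)} together with $\phi(\gamma_0)\neq 1$ from \ref{(P1)}, I would deduce $\lambda=0$, hence $\eta_b\equiv 0$, contradicting $\eta_b(g_0)=x'(g_0,h_0)\neq 0$. So I can fix $g_1\in \rG_H$ with $x'(g_1,h_0)\neq 0$.

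Next I will run the symmetric argument in the second slot. By \ref{(T4)}, the function $\eta_{c,g_1}(h):=x'(g_1,h)$ lies in $Z^1(F,\phi)$ without any twist, satisfies $\eta_{c,g_1}(\gamma_0)=0$ by \ref{(T5)}, and has $\eta_{c,g_1}(h_0)\neq 0$ by the choice of $g_1$. The same inflation-restriction injection $\rH^1(F,\phi)\hookrightarrow \rH^1(\rG_H,\bar\Q_p)$, combined with the vanishing at $\gamma_0$, will force $\eta_{c,g_1}\equiv 0$ under the assumption $\eta_{c,g_1}|_{\rG_H}\equiv 0$, contradicting $\eta_{c,g_1}(h_0)\neq 0$. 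Hence some $h_1\in \rG_H$ satisfies $x'(g_1,h_1)\neq 0$, delivering the required pair in $\rG_H\times \rG_H$ and completing the contradiction.

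The only delicate point will be the bookkeeping with the twist $\tilde\eta_b=\phi^{-1}\eta_b$: one must verify that this is indeed a $\phi^{-1}$-cocycle representing the class in $\rH^1(F,\phi^{-1})$, and that its vanishing on $\rG_H$ is equivalent to the vanishing of $\eta_b|_{\rG_H}$ (which holds because $\phi|_{\rG_H}$ is trivial). Everything else is formal: inflation-restriction is injective because the order of $\Gal(H/F)$ is invertible in $\bar\Q_p$, and the coboundary analysis in each of the two halves collapses to a single evaluation at $\gamma_0$.
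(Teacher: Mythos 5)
Your proposal is correct and follows essentially the same route as the paper: fix the second slot to produce a cocycle in $Z^1(F,\phi^{-1})$ via \ref{(T3)}, rule out the coboundary case using \ref{(T5)} and $\phi(\gamma_0)\neq 1$, apply inflation–restriction to find $g_1\in\rG_H$ with $x'(g_1,h_0)\neq 0$, then repeat symmetrically in the first slot using \ref{(T4)}. The only cosmetic difference is that you frame it as a contradiction argument, but the two inner steps already constitute a direct construction of $(g_1,h_1)$, so the outer contradiction hypothesis is never actually used.
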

\begin{proof}
One can verify from \ref{(T3)} and \ref{(T4)} that  $\phi^{-1}\eta_b\in \mathrm{Z}^1(F,\phi^{-1})$ and $\eta_c\in \mathrm{Z}^1(F,\phi)$. Furthermore, it follows from \ref{(T5)} that neither $\phi^{-1}\eta_b$ nor $\eta_c$ is a non-zero coboundary. Hence, by Proposition~\ref{nonvanish_x'}, the cohomology class $[\phi^{-1}\eta_b] \in \rH^1(F,\phi^{-1})$ is non-zero. 

In particular, the homomorphism $g \in G_H \mapsto x'(g,l_0)\neq 0$ is non-zero  by the inflation-restriction exact sequence  (here $l_0 \in \rG_F$). Therefore, one can choose $g_1\in \rG_H$ such that the cocycle $g \mapsto x'(g_1,g)$ is non-zero (because $x'(g_1,l_0) \ne 0$ with $(g_1,l_0) \in G_H \times \rG_F$). By the same argument, the homomorphism $h \in G_H \mapsto x'(g_1,h)$ is non-zero.
\end{proof}

Since $H_{\infty}$ is defined to be the compositum of all $\Z_p$-extensions of $H$, one has 
\begin{itemize}
\item $\eta_b=0=\eta_c$ on $\rG_{H_{\infty}}$, and
\item $d'|_{\rG_{H_{\infty}}}$ is a group homomorphism.
\end{itemize}
\begin{defn}Let $L_{\infty}$ denote the extension of $H_{\infty}$ cut out by $d|_{\rG_{H_{\infty}}}$.
\end{defn}
\begin{lemma}
Assume the hypotheses of Proposition~\ref{nonvanish_x'} and suppose that $\phi$ is non-quadratic. Then $d'\neq 0$ on $\rG_{H_{\infty}}$. In particular, the field $H_{\infty}$ is a proper subfield of $L_{\infty}$.
\end{lemma}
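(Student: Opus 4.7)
The plan is to argue by contradiction: suppose $d'|_{\rG_{H_{\infty}}} = 0$ and derive that $\phi^2 = \mathbf{1}$.

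First, I exploit the pseudo-character decomposition. The push-forward $\tilde{\mathrm{Ps}}:=\varphi_{\epsilon}\circ \mathrm{Ps}_{\cT^{\cusp}}$ is a $2$-dimensional pseudo-character $\rG_F\to \bar\Q_p[\epsilon]$ whose reduction modulo $\epsilon$ is the multiplicity-free $\mathbf{1}+\phi$ and whose determinant is $\phi$ (since $\chi_{\cyc}\equiv \mathbf{1}\pmod{X}$). Hence it splits as $\chi_1+\chi_2$ with $\chi_1\chi_2=\phi$ and $\chi_i$ lifting $\mathbf{1},\phi$. Writing $\chi_1=\mathbf{1}+\chi_1'\epsilon$, the vanishing of the Leopoldt defect $\delta_{F,p}=0$ forces $\chi_1'=\lambda\eta_{\mathbf{1}}$ for some $\lambda\in\bar\Q_p$; matching traces yields the global identity
\[
 a'+\phi d'=\lambda(1-\phi)\eta_{\mathbf{1}}.
\]
Restricting to $\rG_H$ forces $a'+d'=0$ there, and combined with the determinant relation $a'+d'=\phi^{-1}x'(g,g)$ (obtained by applying $\varphi_{\epsilon}$ to $\det\rho_{\mathbf{K}}(g)=a(g)d(g)-x(g,g)$) this gives $x'(g,g)=0$ on $\rG_H$; by polarization $x'|_{\rG_H\times \rG_H}$ is antisymmetric.

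Next, the contradiction hypothesis together with the fact that any continuous homomorphism $\rG_H\to \bar\Q_p$ factors through $\Gal(H_\infty/H)$ (by the definition of $H_\infty$) makes $d'|_{\rG_H}$ descend to the abelian quotient $\Gal(H_\infty/H)$; comparing $d'(gh)$ and $d'(hg)$ via $\ref{(T2)}$ forces $x'|_{\rG_H\times \rG_H}$ to be symmetric as well, hence identically zero. Setting $\tilde{a}=a'-\lambda\eta_{\mathbf{1}}$ and $\tilde{d}=d'+\lambda\eta_{\mathbf{1}}$, the global identity becomes $\tilde{a}=-\phi\tilde{d}$, and since the cyclotomic $\Z_p$-extension $F_\infty^{\cyc}$ is contained in $H_\infty$, we have $\eta_{\mathbf{1}}|_{\rG_{H_\infty}}=0$ and therefore $\tilde{d}|_{\rG_{H_\infty}}=0$. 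Substituting $\tilde{a}=-\phi\tilde{d}$ into the cocycle relations $\ref{(T1)}$ and $\ref{(T2)}$ and using this vanishing, a direct computation yields, for $g\in\rG_F$ and $h\in\rG_{H_\infty}$, the key identity
\[
(1-\phi(g)^2)\,x'(g,h)=0.
\]

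Finally, the non-quadraticness of $\phi$ furnishes $g_1\in \rG_F$ with $\phi(g_1)^2\neq 1$, so $x'(g_1,h)=0$ for all $h\in\rG_{H_\infty}$; applying $\ref{(T3)}$ to the product $gg_1$ and invoking the identity both at $g$ and at $gg_1$ propagates this to $x'(g,h)=0$ for every $g\in\rG_F$ and $h\in\rG_{H_\infty}$ (the potentially exceptional case $\phi(g)\in\{\pm 1\}$ is handled by $\phi(gg_1)^2=\phi(g)^2\phi(g_1)^2\neq 1$, again thanks to non-quadraticness). The combined vanishing of $x'$ on $\rG_F\times\rG_{H_\infty}$, of $\tilde{d}$ on $\rG_{H_\infty}$, and of $\tilde{a}=-\phi\tilde{d}$ on $\rG_{H_\infty}$, together with the identity $a'+\phi d'=\lambda(1-\phi)\eta_{\mathbf{1}}$, propagates back via the Hecke generators $\{T_{\gl},U_v\}$ to force $\varphi_{\epsilon}$ to factor through the residue field $\bar\Q_p\subset \bar\Q_p[\epsilon]$, contradicting the surjectivity asserted in \eqref{R2}. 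The main obstacle will be this last step: cleanly tracking how the derived vanishings constrain the infinitesimal local characters $\chi'_{\gp}$, $\chi'_{\gq}$ (which, together with the $\lambda\eta_{\mathbf{1}}$-twist, account for all infinitesimal generators of $\cT^{\cusp}/(X)$ at $f_{\mathbf{1}}$) and rule out any residual non-triviality of $\varphi_{\epsilon}$.
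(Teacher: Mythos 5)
Your proposal collapses at the opening step, where you assert that $\tilde{\mathrm{Ps}} = \varphi_\epsilon\circ\mathrm{Ps}_{\cT^{\cusp}}$ over $\bar\Q_p[\epsilon]$ splits as a sum of two characters. Multiplicity-freeness of the reduction together with constancy of the determinant does \emph{not} force a first-order deformation of a pseudo-character to be reducible. In fact your ``global identity'' $a'+\phi d'=\lambda(1-\phi)\eta_\mathbf{1}$ is exactly the statement that $\varphi_\epsilon\circ\mathrm{tr}\,\rho_\mathbf{K}$ equals $(1+\lambda\eta_\mathbf{1}\epsilon)+\phi(1-\lambda\eta_\mathbf{1}\epsilon)$, i.e.\ that the push-forward pseudo-character is reducible; by the theory of generalized matrix algebras this holds if and only if $\varphi_\epsilon$ kills the reducibility ideal $\mathbf{B}\mathbf{C}$, equivalently $x'=0$ on $\rG_F\times\rG_F$. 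That is precisely the \emph{negation} of Proposition~\ref{nonvanish_x'}, whose hypotheses are in force. Consequently everything built on the global identity (the vanishing $a'+d'=0$ on $\rG_H$, the diagonal vanishing $x'(g,g)=0$ there, the antisymmetry, and the conclusion $x'|_{\rG_H\times\rG_H}=0$) is unsupported. Moreover, your ``key identity'' is vacuous: for any fixed $g\in\rG_F$ the relation $\ref{(T4)}$ makes $x'(g,\cdot)|_{\rG_H}$ a continuous homomorphism $\rG_H\to\bar\Q_p$, which therefore factors through $\Gal(H_\infty/H)$ and already vanishes on $\rG_{H_\infty}$ unconditionally, without any appeal to the contradiction hypothesis or to $\phi$; it cannot feed a contradiction through the Hecke generators, as you yourself flag.

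The one step you do identify correctly --- that the contradiction hypothesis, the abelianness of $\Gal(H_\infty/H)$, and $\ref{(T2)}$ force $x'$ to be \emph{symmetric} on $\rG_H\times\rG_H$ --- is exactly the paper's key move, and the paper closes differently. By Corollary~\ref{nonvani_x'_on_G_H} one can pick $h_1\in\rG_H$ with $x'(\cdot,h_1)\neq 0$ on $\rG_H$; by symmetry $x'(\cdot,h_1)=x'(h_1,\cdot)$ there, and by the cocycle relations $\ref{(T3)}$--$\ref{(T4)}$ the common value lies in $\Hom(\rG_H,\bar\Q_p)[\phi]\cap\Hom(\rG_H,\bar\Q_p)[\phi^{-1}]$, which is zero precisely because $\phi$ is non-quadratic. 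This yields the contradiction directly, with no appeal to reducibility of the pseudo-character --- indeed, such reducibility is exactly what the hypotheses rule out.
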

\begin{proof}
Suppose that $d'=0$ on $\rG_{H_{\infty}}$. Since the functions $\eta_b,\eta_c$, and $d'$  all vanish on  $\rG_{H_{\infty}}$, it follows from property $\ref{(T2)}$ that the map $d':\rG_H\to \overline{\Q}_p$ factors through the quotient $\Gal(H_{\infty}/H)$. As $\Gal(H_{\infty}/H)$ is an abelian group, we have $d'(gh)=d'(hg)$ for all $g,h\in \rG_H$. Therefore, by property $\ref{(T2)}$, we have 
\begin{align}\label{eq:comm_x'}
x'(g,h)=x'(h,g) \text{ for all } g,h\in \rG_H.
\end{align}
By Corollary~\ref{nonvani_x'_on_G_H}, we  choose an element $h_1\in \rG_H$ such that $x'\neq 0$ on $\rG_H\times \{h_1\}$. It then follows from \eqref{eq:comm_x'} and the inflation-restriction exact sequence that \[ x'(\cdot,h_1)=x'(h_1,\cdot)   \in \Hom(\rG_H,\overline{\Q}_p)[\phi]\cap \Hom(\rG_H,\overline{\Q}_p)[\phi^{-1}]\] which is trivial, since the character $\phi$ is not quadratic. This contradiction implies that the function $d'$ must be nontrivial on $\rG_{H_{\infty}}$.
\end{proof}
\begin{lemma}\label{prop_d'} For any $\gamma\in \Gal(H_{\infty}/H)$, we denote by $\tilde{\gamma}$ an arbitrary lift to $\Gal(L_{\infty}/H)$. 
Let the assumption be as in Proposition~\ref{nonvanish_x'}.  Then 
$d'(\tilde{\gamma}g\tilde{\gamma}^{-1})=d'(g)$
for all $\gamma\in \Gal(H_{\infty}/H)$ and $g\in \Gal(L_{\infty}/H_{\infty})$. In particular, $L_{\infty}$ is Galois over $H$.
\end{lemma}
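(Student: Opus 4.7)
The approach is to use the cocycle-like identities \ref{(T1)}--\ref{(T5)} satisfied by $d'$ and $x'$ in order to reduce the conjugation invariance of $d'$ to the vanishing of $x'$ in appropriate slots. First, I would expand $d'(\tilde\gamma g \tilde\gamma^{-1})$ by applying \ref{(T2)} successively to $(\tilde\gamma g)\tilde\gamma^{-1}$ and to $\tilde\gamma g$, obtaining
\[
d'(\tilde\gamma g \tilde\gamma^{-1}) = d'(\tilde\gamma) + d'(g) + d'(\tilde\gamma^{-1}) + x'(g, \tilde\gamma) + x'(\tilde\gamma^{-1}, \tilde\gamma g).
\]
Since $\tilde\gamma \in \rG_H$ and $\phi|_{\rG_H}=1$, \ref{(T4)} yields $x'(\tilde\gamma^{-1}, \tilde\gamma g) = x'(\tilde\gamma^{-1}, \tilde\gamma) + x'(\tilde\gamma^{-1}, g)$. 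The combination $d'(\tilde\gamma) + d'(\tilde\gamma^{-1}) + x'(\tilde\gamma^{-1}, \tilde\gamma)$ equals $d'(1)=0$ by \ref{(T2)} applied to $\tilde\gamma\cdot\tilde\gamma^{-1}=1$ and \ref{(T5)}, while \ref{(T3)} applied to the same product together with \ref{(T5)} gives $x'(\tilde\gamma^{-1}, g)=-x'(\tilde\gamma, g)$. The identity therefore collapses to
\[
d'(\tilde\gamma g \tilde\gamma^{-1}) = d'(g) + x'(g, \tilde\gamma) - x'(\tilde\gamma, g),
\]
so the proof reduces to establishing that $x'(\tilde\gamma, g) = x'(g, \tilde\gamma)$ for $g \in \rG_{H_\infty}$ and $\tilde\gamma \in \rG_H$.

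I would in fact establish the stronger statement that both terms vanish separately. Since $\phi|_{\rG_H}=1$, the identities \ref{(T3)} and \ref{(T4)} restricted to $\rG_H \times \rG_H$ degenerate to bi-additivity in each slot. Hence, for any fixed $\tilde\gamma \in \rG_H$, the maps $\psi_1 \colon h \mapsto x'(h, \tilde\gamma)$ and $\psi_2 \colon h \mapsto x'(\tilde\gamma, h)$ are continuous group homomorphisms $\rG_H \to \bar\Q_p$; continuity is inherited from the continuity of $\rho_{\mathbf{K}}$ combined with that of the $\bar\Q_p$-algebra homomorphism $\varphi_\epsilon$ and the defining relation $\varphi_\epsilon(x(g,h))=x'(g,h)\epsilon$.

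The last ingredient is the standard observation that any continuous homomorphism $\psi \colon \rG_H \to \bar\Q_p$ must factor through $\Gal(H_\infty/H)$: the image of $\psi$ is a compact torsion-free subgroup of $\bar\Q_p$, hence isomorphic to $\Z_p^a$ for some $a \geqslant 0$, and its kernel cuts out a compositum of $\Z_p$-extensions of $H$, which by definition of $H_\infty$ is contained in $H_\infty$. Consequently $\psi$ vanishes on $\rG_{H_\infty}$. Applying this to $\psi_1$ and $\psi_2$ gives the desired vanishing, completing the proof of the conjugation identity; the Galois-ness of $L_\infty/H$ then follows from the fact that conjugation by $\rG_H$ preserves the kernel of $d'|_{\rG_{H_\infty}}$, so this kernel is normal in $\rG_H$. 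The main conceptual obstacle I anticipate is the verification that $\psi_1$ and $\psi_2$ are continuous for the relevant topology on $\bar\Q_p$, which hinges on the ind-Banach topologies on $\cT^{\cusp}$ recalled in the general notation of the introduction, together with the fact that $b$ takes values in $\gm \subset \cT^{\cusp}$ so that $\varphi_\epsilon \circ (b \cdot c(h))$ genuinely produces a continuous $\bar\Q_p$-valued function of $g$.
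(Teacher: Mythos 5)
Your proof is correct and spells out precisely the argument the paper delegates to \cite[Lem.~4.1]{bet19}: the expansion of $d'(\tilde\gamma g\tilde\gamma^{-1})$ via \ref{(T2)}--\ref{(T5)} reducing to $x'(g,\tilde\gamma)-x'(\tilde\gamma,g)$, followed by the observation (already used implicitly by the paper just before the lemma, to get $\eta_b=\eta_c=0$ on $\rG_{H_\infty}$) that a continuous homomorphism $\rG_H\to\bar\Q_p$ has image $\cong\Z_p^a$ and hence factors through $\Gal(H_\infty/H)$. The only point you leave slightly tacit — that a compact torsion-free subgroup of $\bar\Q_p$ is finitely generated over $\Z_p$ — is a genuine (if standard) fact, provable e.g.\ by a Baire-category argument showing such a subgroup meets some finite extension of $\Q_p$ in an open subgroup.
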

\begin{proof}
The proof follows the same argument as that of \cite[Lem.~4.1]{bet19}.
\end{proof}
\begin{cor}\label{prop_L_infty}
Assume the hypotheses of Proposition~\ref{nonvanish_x'} and suppose that $\phi$ is non-quadratic. Then the  $L_{\infty}$ is unramified over $H_{\infty}$. In particular, we obtain a surjective homomorphism 
\[
Y_{\infty}/(X_1,\ldots,X_r)Y_{\infty}\twoheadrightarrow \Gal(L_{\infty}/H_{\infty}).
\]
\end{cor}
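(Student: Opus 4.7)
The plan is to establish the inclusion $L_\infty\subset M_\infty$, which amounts to proving that the continuous homomorphism $d'\colon\rG_{H_\infty}\to\bar\Q_p$ vanishes on every inertia subgroup. Granted this, Lemma~\ref{prop_d'} guarantees that $\Gal(H_\infty/H)$ acts trivially on $\Gal(L_\infty/H_\infty)$, so the natural surjection $Y_\infty\twoheadrightarrow\Gal(L_\infty/H_\infty)$ is $\Gal(H_\infty/H)$-equivariant with trivial action on the target and therefore factors through $Y_\infty/(X_1,\ldots,X_r)Y_\infty$.

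At primes $\ell\nmid p\gn$ the representation $\rho_{Q(\cT^{\cusp})}$ is unramified, so there is nothing to check. At primes $\ell\mid\gn$ with $\ell\nmid p$, the character $\phi|_{\rI_\ell}$ is nontrivial (as $\ell\mid\mathrm{cond}(\phi)$), the residual representation on $\rI_\ell$ is $\mathbf{1}\oplus\phi|_{\rI_\ell}$ with distinct inertia characters, and the rigidity of the tame level in the cuspidal Hida family forces $\mathrm{Ps}|_{\rI_\ell}$ to be constant in the family; hence $\varphi_\epsilon\circ\mathrm{Ps}|_{\rI_\ell}$ has no $\epsilon$-part, and the uniqueness up to conjugation of the $2$-dimensional representation attached to a pseudo-character with residually distinct characters transfers this constancy to the GMA basis, yielding $d'|_{\rI_\ell}=0$.

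At regular primes $\gq\in\Sigma_p\setminus\{\gp\}$ I would work in the local basis adapted to the nearly-ordinary filtration, in which $\rho|_{\rG_\gq}$ is upper triangular with diagonal $(\delta_\gq,\chi_\gq)$ and off-diagonal entry equal to the GMA entry $b$ (all in $\cT^{\cusp}$, with $b\in\gm$ by \ref{(P4)}). Under $\varphi_\epsilon$ the diagonal collapses to $(1,1)$ on $\rI_\gq$: $\chi_\gq$ is unramified on $\cT^{\cusp}$, while $\delta_\gq=\phi\chi_\cyc\chi_\gq^{-1}$ becomes trivial on $\rI_\gq$ modulo $X$, since $\varphi_\epsilon(\chi_\cyc)=1$ and $\phi|_{\rI_\gq}=1$. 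Writing $\beta\epsilon:=\varphi_\epsilon(b)$, Proposition~\ref{the_module_B} identifies $[\phi^{-1}\beta]=[b']\in\rH^1(F,\phi^{-1})$ as a class whose restriction to $\rH^1(F_\gq,\phi^{-1})$ vanishes; hence $b'|_{\rG_\gq}=(\phi^{-1}-1)\lambda_\gq$ is a coboundary vanishing on $\rI_\gq$ (as $\phi|_{\rI_\gq}=1$), and $\beta|_{\rI_\gq}=0$. The filtration-basis matrix $\tilde\rho|_{\rI_\gq}$ is therefore the identity, and since $\phi(\gq)\ne 1$ makes the residual characters at $\gq$ distinct, the same uniqueness principle transfers this identity to the GMA basis, giving $\varphi_\epsilon(d)|_{\rI_\gq}=1$ and $d'|_{\rI_\gq}=0$.

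The irregular prime $v=\gp$ requires a variant: $b'|_{\rG_\gp}$ may be genuinely ramified so $\beta|_{\rI_\gp}$ need not vanish, and the residual characters at $\gp$ coincide ($\phi|_{\rG_\gp}=\mathbf{1}$), making the above pseudo-character uniqueness principle unavailable. Instead I would use that $b'|_{\rG_H}$ is a continuous $\bar\Q_p$-valued homomorphism, and every such factors through $\rG_H/\rG_{H_\infty}\simeq\Z_p^r$ and vanishes on $\rG_{H_\infty}$; hence $\beta|_{\rI_\gp\cap\rG_{H_\infty}}=0$ and the filtration-basis representation at $\gp$ is trivial on $\rI_\gp\cap\rG_{H_\infty}$. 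Because $y_\gp=1\in\cT^{\cusp}$ after our rescaling, the change-of-basis matrix between the filtration and GMA bases lies in $\GL_2(\cT^{\cusp})$, so the termwise application of $\varphi_\epsilon$ yields $\varphi_\epsilon(d)|_{\rI_\gp\cap\rG_{H_\infty}}=1$, i.e., $d'|_{\rI_\gp\cap\rG_{H_\infty}}=0$. The main technical obstacle throughout is that for $v\ne\gp$ the change-of-basis matrix between the filtration basis and the GMA basis involves $y_v\in Q(\cT^{\cusp})\setminus\cT^{\cusp}$, preventing termwise application of $\varphi_\epsilon$; this is overcome at regular primes by invoking the residual-distinctness route, while at $\gp$ the equality $y_\gp=1$ sidesteps the issue entirely.
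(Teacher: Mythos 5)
Your global architecture is sound: reduce via Lemma~\ref{prop_d'} to showing that $d'$ vanishes on inertia, then check at $\gn$ and at $\Sigma_p$. The argument at the irregular prime $\gp$ is essentially the paper's: after the normalization $y_\gp=1$ one has the identity $b=d-\chi_\gp$ on $\rG_\gp$ by \eqref{eq:p-nearly_ord}, hence $d'=b'+\chi'_\gp$, and both summands vanish on $\rI_\gp\cap\rG_{H_\infty}$ (the first because $b'$ is a homomorphism on $\rG_H$ factoring through $\Gal(H_\infty/H)$, the second because $\chi_\gp$ is unramified on $\cT^{\cusp}$).

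The step at the regular prime $\gq$ is where your proposal has a genuine gap. You work in the filtration basis, establish that $\varphi_\epsilon$ of that local representation is trivial on $\rI_\gq$, and then appeal to a ``uniqueness principle'' to carry the conclusion over to the GMA basis. But uniqueness of the representation attached to a residually split pseudo-character on $\rG_\gq$ controls things only up to $\GL_2$-conjugation, whereas the quantity you need, $d'|_{\rI_\gq}$, is an individual matrix entry in the \emph{fixed global} GMA basis. You cannot apply $\varphi_\epsilon$ termwise across the change of basis, because that matrix involves $y_\gq\in Q(\cT^{\cusp})\setminus\cT^{\cusp}$; you explicitly flag this obstruction yourself at the end, but the phrase ``the residual-distinctness route'' does not by itself resolve it. Two of the intermediate claims are also wrong: the off-diagonal entry of the filtration-basis matrix is not the GMA entry $b$ (it differs by a factor involving $y_\gq$), and you use $\phi|_{\rI_\gq}=\mathbf{1}$ twice, which is not a hypothesis---$\phi=\phi_2$ may well be ramified at $\gq$.

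The paper's proof at $\gq$ proceeds differently and stays entirely inside $\cT^{\cusp}$: one passes, via Remark~\ref{ordinary_basis_reg_prime}, to a GMA basis over $\cT^{\cusp}$ in which $\rho_{\mathbf{K}}(\Frob_\gq)$ is diagonal, so that \ref{(T5)} gives $x'(\Frob_\gq,\cdot)=0$. The filtration relations $y_\gq b(g)=d(g)-\chi_\gq(g)$ and $y_\gq^{-1}c(h)=a(h)-\chi_\gq(h)$ for $g,h\in\rG_\gq$ multiply to the identity $x(g,h)=(d(g)-\chi_\gq(g))(a(h)-\chi_\gq(h))$ in $\cT^{\cusp}$, to which $\varphi_\epsilon$ can be applied. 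Taking $g=\Frob_\gq$ and using $\phi(\Frob_\gq)\ne 1$ yields $a'=\chi'_\gq$ on $\rG_\gq$; the trace relation then gives $d'=-\chi'_\gq$, and $d'|_{\rI_\gq}=0$ follows from unramifiedness of $\chi_\gq$, with no hypothesis on $\phi|_{\rI_\gq}$.
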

\begin{proof}
By Lemma~\ref{prop_d'}, to show that $L_{\infty}$ is unramified over $H_{\infty}$, it suffices to prove that $d'=0$ on $\rG_{H_{\infty}}\cap I_{F_v}$ for all $v\in \Sigma_p$. For an irregular prime $\gp$, we have  $\mathbf{B}\subset \gm$ by \ref{(P4)}. Since $\varphi_{\varepsilon}(b)=0$ on $\rG_{H_{\infty}}$ and $\chi'_{\gp}=0$ on $I_{F_{\gp}}$, we have $d'=0$ on $\rG_{H_{\infty}}\cap I_{F_{\gp}}$ by \eqref{eq:p-nearly_ord}. For the regular prime $\gq$, we write $\rho_{\mathbf{K} }$ in a GMA as in Remark~\ref{ordinary_basis_reg_prime}. We then have that $d'$ is $0$ on $\rI_{\gq}$, as we have seen in the proof of  Proposition~\ref{nonvanish_x'} that $-d'=a'=\chi'_{\gq}$ on $\rG_{\gq}$. Therefore, $L_{\infty}$ is unramified over $H_{\infty}$ at $p$. Moreover, the extension $L_{\infty}/H_{\infty}$ is unramified at $\gn$  by \cite[Prop.~4.4]{BDP}.  From this, we obtain a natural surjection $Y_{\infty}\twoheadrightarrow \Gal(L_{\infty}/H_{\infty})$ which factors through the quotient  $Y_{\infty}/(X_1,\ldots,X_r)Y_{\infty}$ by Lemma~\ref{prop_d'}.
\end{proof}

\subsection{GMAs and residual extensions}\label{sec:43}
Throughout this subsection, we assume that $d_\gp \geqslant d-1$, $\delta_{F,p}=0$, and that $\mathscr{L}(\phi)+\mathscr{L}(\phi^{-1}) \ne 0$ if $d_\gp=d$. Recall that we showed in \S\ref{sec:etale} that $\cT^{\nord}=\varLambda^{\nord}$ and there exists a unique, up to Galois conjugacy, nearly-ordinary Hida family $\cF^{\nord}$ containing $f$ whose Fourier coefficients are all in $\varLambda^{\nord}$. By Lemma~\ref{key_lemma} and \ref{(P4)}, we have a nearly-ordinary  deformation $\rho_{\cT^{\nord}}=\rho_{\mathbf{L}}:\rG_F\to \GL_2(\cT^{\nord})$  of $\left(\begin{smallmatrix} 1 & 0 \\ 0 & \phi \end{smallmatrix}\right)$, such that \eqref{eq:p-nearly_ord} holds.  We aim to show that 
\begin{align}\label{eq:embed_j}
\dim_{\overline{\Q}_p} \Hom(\mathbf{B}/{\gm}_{\nord} \mathbf{B},\overline{\Q}_p) =  d_\gp.
\end{align}

Recall that we have a decomposition $\Sigma=\coprod_{ \gq\in\Sigma_p} \Sigma_{\gq}$. To simplify the notation, we set $\eta_{\sigma}=\log_p\circ \sigma$ for all $\sigma\in \Sigma$. Notice that under this notation $\eta_{\mathbf{1}}=\sum_{\sigma\in \Sigma} \eta_{\sigma}$. Recall that $
\varLambda^{\nord}=\overline{\Q}_p\lsem X, \{X_{\sigma} \}_{ \sigma\in \Sigma} \rsem$ such that for all $ \gq\in\Sigma_p$, we have
\begin{align}\label{eq:univ_char}
\chi_{\gq}|_{\rI_{\gq}} \equiv 1+\sum_{\sigma\in \Sigma_{\gq}} \eta_{\sigma} \cdot X_{\sigma},\;  
\chi_{\cyc}\equiv 1- \frac{\eta_{\mathbf{1}}}{\log_p(u)} \cdot X \pmod{{\gm}_{\nord}^2}.
\end{align}
Here $\chi_{\gq}$ is the  character defined in \S\ref{sec:gma}.  Define a natural basis of $\Hom_{\overline{\Q}_p-\mathrm{alg}}(\cT^{\nord},\overline{\Q}_p[\varepsilon])$ by taking 
\begin{enumerate}
\item $\varphi_{\varepsilon,0}$, the morphism sending $X$ to $\varepsilon$ and each $X_{\sigma}$ to $0$  (i.e., $\varphi_{\cF,\varepsilon}$  from Theorem~\ref{epsilon_deformaion_of_DDP}), and
\item for $\sigma \in \Sigma$,  $\varphi_{\varepsilon,\sigma}$ is the element sending $\rho_{\cT^{\nord}}$ to $\left(\begin{smallmatrix} \ast & \ast \\ \ast & \phi \end{smallmatrix}\right)$ and $X_{\sigma'}$ to $\delta_{\sigma,\sigma'} \varepsilon$ for all $\sigma' \in \Sigma$. \end{enumerate}
The tangent vector $\varphi_{\varepsilon,\sigma}$ always exists, since we can consider the vector $\varphi_{\varepsilon,\sigma}'$  sending  $X$ to $\varepsilon$ and $X_{\sigma'} $ to $\delta_{\sigma,\sigma'} \varepsilon$, and then translate by a multiple of the tangent vector $\varphi_{\varepsilon,0}$ to ensure that the bottom-right entry of the image of $\rho_{\cT^{\nord}}$ is $\phi$ (as $\mu \ne 0 $ in  Theorem~\ref{epsilon_deformaion_of_DDP}), without affecting the value of $\varphi'_{\varepsilon,\sigma}(\chi_\gp)_{\mid \rI_{\gp}}$ (as $\varphi_{\varepsilon,0}(\chi_\gp)$ is unramified). Here, the image of  $\rho_{\cT^{\nord}}$ under any tangent vector has the form \eqref{formoftgvector}. 

For any weight character $\kappa=\left((2v_{\sigma})_{\sigma \in \Sigma},w\right) \in \Z_p^{d}\times \Z_p$, we let $\varphi_{\kappa,\varepsilon}= w \varphi_{\varepsilon,0} + \sum_{\sigma \in \Sigma} 2 v_{\sigma} \varphi_{\varepsilon,\sigma} $. The composition $\varphi_{\kappa,\varepsilon}\circ b$ gives rise to a cocycle $\eta_{\kappa}\in \rH^1(F,\phi^{-1})$ unramified outside $\gp$ as defined in \eqref{entryB}. Recall from Lemma~\ref{lemma-21} that every such cocycle is uniquely determined by its restriction to $\rI_{\gp}$.
\begin{prop}\label{cocycle_diff_direction}
Let the notation be as above. Assume that either the hypothesis of Theorem~\ref{main-thm}(ii) holds,  or the hypothesis of Theorem~\ref{main-thm}(iii) holds and $\mathscr{L}(\phi)+\mathscr{L}(\phi^{-1}) \ne 0$. Then, for any weight character $\kappa=\left((2v_{\sigma})_{\sigma},w\right)\in \Z_p^d\times \Z_p$, one has 
\[
\eta_{\kappa}|_{\rI_{\gp}}= w \mu\eta_{\mathbf{1} \mid \rI_\gp}- \sum_{\sigma\in \Sigma_{\gp}} 2v_{\sigma}\eta_{\sigma},
\] 
where $\mu \ne 0$ is defined in \eqref{eq:lambda_and_mu}. In particular,   \eqref{eq:embed_j} holds.
\end{prop}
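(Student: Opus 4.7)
The plan is to evaluate $\eta_\kappa|_{\rI_\gp}$ by computing $\varphi_{\kappa,\epsilon}$ on the distinguished basis $\{\varphi_{\epsilon,0}\} \cup \{\varphi_{\epsilon,\sigma}\}_{\sigma \in \Sigma}$ of the tangent space and assembling the contributions linearly. Since the injection \eqref{entryB} of Proposition~\ref{the_module_B} sends a tangent vector $\varphi_{\kappa,\epsilon}$ to the cocycle $g \mapsto \phi(g)^{-1} \epsilon^{-1} \varphi_{\kappa,\epsilon}(b(g))$, and since on $\rG_\gp$ the near-ordinarity relation \eqref{eq:p-nearly_ord} (specialized to $\phi_1 = \mathbf{1}$) gives $b = d - \chi_\gp$, the task reduces to evaluating $\varphi_{\kappa,\epsilon}(d)$ and $\varphi_{\kappa,\epsilon}(\chi_\gp)$ on $\rI_\gp$.

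For $\varphi_{\epsilon,0} = \varphi_{\cF,\epsilon}$, Theorem~\ref{epsilon_deformaion_of_DDP} yields $\varphi_{\epsilon,0}(d) = \phi(1 + \mu \eta_{\mathbf{1}} \epsilon)$, while $\varphi_{\epsilon,0}(\chi_\gp)|_{\rI_\gp} = 1$ since $\varphi_{\epsilon,0}$ kills every $X_\sigma$ and \eqref{eq:univ_char} gives $\chi_\gp|_{\rI_\gp} \equiv 1 + \sum_{\sigma \in \Sigma_\gp} \eta_\sigma X_\sigma \pmod{\gm_{\nord}^2}$. For $\varphi_{\epsilon,\sigma}$, by its very definition $\varphi_{\epsilon,\sigma}(d) = \phi$ and $\varphi_{\epsilon,\sigma}(\chi_\gp)|_{\rI_\gp} = 1 + \delta_{\sigma \in \Sigma_\gp} \eta_\sigma \epsilon$. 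Using that $\phi|_{\rG_\gp} = \mathbf{1}$ so that $\phi(g) = 1$ for $g \in \rI_\gp$, the difference $\varphi_{\kappa,\epsilon}(d(g)) - \varphi_{\kappa,\epsilon}(\chi_\gp(g))$ on $\rI_\gp$ evaluates to $\left(w \mu \eta_\mathbf{1}(g) - \sum_{\sigma \in \Sigma_\gp} 2 v_\sigma \eta_\sigma(g)\right) \epsilon$, which is exactly the claimed formula for $\eta_\kappa|_{\rI_\gp}$ after dividing by $\phi(g) = 1$.

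For the ``in particular'' statement, since $\eta_\mathbf{1}|_{\rI_\gp} = \sum_{\sigma \in \Sigma_\gp} \eta_\sigma|_{\rI_\gp}$ via local Artin reciprocity, varying $\{v_\sigma\}_{\sigma \in \Sigma_\gp}$ freely sweeps out the full $d_\gp$-dimensional space $\bigoplus_{\sigma \in \Sigma_\gp} \bar\Q_p \cdot \eta_\sigma|_{\rI_\gp}$. Combined with the injection \eqref{eq:b-nord}, whose target has dimension exactly $d_\gp$ because $\phi^{-1}|_{\rG_\gp} = \mathbf{1}$ makes $\rH^1(F_\gp, \bar\Q_p)$ of dimension $d_\gp + 1$ with a one-dimensional unramified quotient, this pins down $\dim_{\bar\Q_p} \Hom(\mathbf{B}/\gm_{\nord} \mathbf{B}, \bar\Q_p) = d_\gp$. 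The only delicate input is the non-vanishing of $\mu$---needed to normalize the basis elements $\varphi_{\epsilon,\sigma}$ by absorbing a multiple of $\varphi_{\epsilon,0}$---which is guaranteed by Theorem~\ref{epsilon_deformaion_of_DDP} together with Proposition~\ref{L-invariant}(iv) and the standing hypothesis $\sL(\phi) + \sL(\phi^{-1}) \neq 0$ when $d_\gp = d$.
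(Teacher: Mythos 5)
Your proof is correct and takes essentially the same approach as the paper: express $b=d-\chi_\gp$ on $\rG_\gp$ via the near-ordinarity relation \eqref{eq:p-nearly_ord}, feed each basis tangent vector through this relation restricted to $\rI_\gp$ using Theorem~\ref{epsilon_deformaion_of_DDP} and \eqref{eq:univ_char}, and assemble linearly. The paper's write-up is terse (``follows immediately from \eqref{eq:univ_char} and Theorem~\ref{epsilon_deformaion_of_DDP}''), whereas you spell out the evaluation of $\varphi_{\epsilon,0}$ and $\varphi_{\epsilon,\sigma}$ on $d$ and $\chi_\gp$, which is exactly what that shorthand unpacks to. For the ``in particular'' statement, you obtain the upper bound by a dimension count on $\mathrm{im}\bigl(\rH^1(F_\gp,\bar\Q_p)\to\rH^1(\rI_\gp,\bar\Q_p)\bigr)$ via local Euler characteristic, while the paper instead cites Lemma~\ref{dimsplitcase}(ii); these are equivalent given Lemma~\ref{lemma-21}, so this is a cosmetic variation rather than a genuinely different route.
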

\begin{proof}
If $v_{\sigma}=0$ for all $\sigma\in \Sigma$, the assertion follows immediately from \eqref{eq:p-nearly_ord} and Theorem~\ref{epsilon_deformaion_of_DDP}. For the remaining cases, we recall that \eqref{eq:p-nearly_ord} reads $b=d- \chi_{\gp}$ on $\rG_{\gp}$. Then the first part of the assertion follows from \eqref{eq:univ_char} and Theorem~\ref{epsilon_deformaion_of_DDP}. In particular, $\dim_{\overline{\Q}_p} \Hom(\mathbf{B}/{\gm}_{\nord} \mathbf{B},\overline{\Q}_p) \geqslant [F_\gp:\Q_p]$;
but \eqref{entryB} and Lemma~\ref{dimsplitcase} imply then \eqref{eq:embed_j}.
\end{proof}

\begin{rem}
Using the canonical basis of $\Hom(\rG_{\gp},\overline{\Q}_p)$ given by $\{\eta_{\sigma}\}_{\sigma \in \Sigma}$ and $\mathrm{ord}_{\gp}$, one can compute the $\ord_{\gp}$-coordinate of  $\eta_{\kappa \mid \rG_{\gp}} \in \Hom(\rG_{\gp},\overline{\Q}_p)$ in Proposition~\ref{cocycle_diff_direction}, and thereby relate it to a partial $\mathscr{L}$-invariant, as in the proof of Proposition~\ref{L-invariant} (see the discussion in \cite[\S 3.4]{DPV}); however, we will not pursue this here.
\end{rem}

\section*{Glossary of notation}
\footnotesize
\begin{multicols}{2}
\noindent 
$\mathbf{B}$ \dotfill $\cT^{\nord}$-submodule of $\mathbf{L}$ generated by $b(\rG_F)$, \S\ref{sec:24}\\
$\mathbf{C}$ \dotfill $\cT^{\nord}$-submodule of $\mathbf{L}$ generated by $c(\rG_F)$, \S\ref{sec:24}\\
$\cC, \cC_{\cusp}$ \dotfill Hilbert (cuspidal) eigencurve, \S\ref{sec:full_eigencurve}\\
$\gc$ \dotfill polarization ideal, \S\ref{sec:formal}\\
$d_{\gp}$ \dotfill the degree of $F_{\gp}$ over $\Q_p$\\ 
$\gd$ \dotfill different of $F$ \\
$\cE$ \dotfill  Hilbert cuspidal eigenvariety, \S\ref{sec:full_eigencurve} \\
$\cE_{\phi_1,\phi_2}$ \dotfill Eisenstein family associated to $\phi_1,\phi_2$, \S\ref{sec:eisenstein_series_def}\\
$E_{\gn}$ \dotfill totally positive units,  \S\ref{sec:formal}\\
$f=f_{\phi_1}$ \dotfill  $p$-stabilization of $E_1(\phi_1,\phi_2)$\\
$F$ \dotfill totally real field \\
$F_{\infty}$ \dotfill cyclotomic $\Z_p$-extension of $F$, \S\ref{sec:eisenstein_series_def}\\
$\cF$ \dotfill cuspidal family containing $f$, \S\ref{generic cuspidal deformation}\\
$\Frob_{\gp}$ \dotfill arithmetic Frobenius in $\rG_{\gp}$, \S\ref{sec:21}\\
$\rG_L$ \dotfill absolute Galois group of $L$\\
$\rG_v$ \dotfill decomposition subgroup of $\rG_F$ at $v$ \\
$h_0$\dotfill constant term Hecke operator, \S\ref{sec:123}\\ 
$H$ \dotfill splitting field of $\phi$, \S\ref{sec:21}\\
$H_{\infty}$ \dotfill  compositum of all $\Z_p$-extensions of $H$, \S\ref{sec:unram_iawasawa_mod} \\
$\rI_{\gp}$ \dotfill inertia subgroup of $\rG_{\gp}$\\
$\cI_{\phi_1,\phi_2}$ \dotfill  Eisenstein ideal for $\cE_{\phi_1,\phi_2}$, \S\ref{sec:123}\\
$K$ \dotfill sufficiently large finite extension of $\Q_p$, \S\ref{sec:formal}\\
$\mathbf{K}$ \dotfill the total quotient ring of $\cT^{\cusp}$, \S\ref{sec:04}\\
$\sL(\phi)$ \dotfill $\sL$-invariant, \S\ref{sec:21}\\
$\mathbf{L}$ \dotfill the total quotient ring of $\cT^{\nord}$, \S\ref{sec:24}\\
$L_{\infty}$ \dotfill   infinite  extension of $H_{\infty}$ contained in $M_{\infty}$, \S\ref{sec:unram_iawasawa_mod}\\
$M_{\infty}$ \dotfill max. abelian unramified pro-$p$-extension of $H_{\infty}$, \S\ref{sec:unram_iawasawa_mod}\\
 $\mathbf{M}^{\dagger}_{\cU}, \mathbf{M}^{\dagger}_{\cU}(v)$ \dotfill  $\cO(\cU)$-module of Coleman families, \S\ref{sec:twisted}\\ 
$\gm_{\nord}$ \dotfill the maximal ideal of $\cT^{\nord}$, \S\ref{sec:24}\\
$\gn$  \dotfill   the tame level \\ 
$\gn_i$ \dotfill the conductor of  $\phi_i$ ($i=1,2$)\\
$\go$ \dotfill ring of integers  of $F$ \\
$\cO$ \dotfill ring of integers  of $K$ \\
$\gp$ \dotfill  prime of $F$ above $p$  \\
$\cR^{\nord}_{\rho_{\univ}}$ \dotfill nearly-ordinary deform. ring of $\rho=\left(\begin{smallmatrix} \phi_1 & \phi_2 \eta \\ 0 & \phi_2 \end{smallmatrix}\right)$, \S\ref{sec:42}\\
 $\mathbf{S}^{\dagger}_{\cU}, \mathbf{S}^{\dagger}_{\cU}(v)$ \dotfill   cuspidal Coleman families, \S\ref{sec:twisted}\\ 
$T$ \dotfill the torus $\mathrm{Res}_{\go/\Z}\mathbf{G}_m$, \S\ref{sec:formal}\\
$\cT^{\cusp}$  \dotfill  completed local ring  of $\cC_{\cusp}$  at $f$, \S\ref{sec:gma}  \\
$\cT^{\nord}$  \dotfill  completed local ring  of $\cE$  at $f$, \S\ref{sec:gma}  \\
$\mathscr{T}_{n},\mathscr{T}_{n}^\circ$ \dotfill formal subgroups of $\mathscr{T}$, \S\ref{AIP-OCMS}\\
$u$ \dotfill fixed element in $1+p\Z_p$, \S\ref{sec:eisenstein_series_def}\\
$u_{\phi}$ \dotfill $\phi$-isotypic $\gp$-unit in $\cO_H[1/\gp]^\times\otimes \overline{\Q}_p$, \S\ref{sec:21}\\
$\cW$ \dotfill  $d$-dimensional weight space, \S\ref{AIP-OCMS}\\
$\cW^{\parallel}$ \dotfill  $1$-dimensional parallel weight space, \S\ref{sec:twisted}\\
$w_0$ \dotfill place of $H$ above $\gp$ induced by $\iota_p$, \S\ref{sec:21}\\
$\mathrm{w}$ \dotfill  weight map to $\cW^{\parallel}$, \S\ref{sec:full_eigencurve}\\ 
$X$ \dotfill uniformizer of $\varLambda$, \S\ref{sec:gma}\\
$y_v$ \dotfill slope of the nearly-ordinary line at $v$, \S\ref{sec:gma}\\
$Y_{\infty}$ \dotfill the Galois group of $M_{\infty}$ over $H_{\infty}$, \S\ref{sec:unram_iawasawa_mod}\\
$\gamma$ \dotfill fixed element in $\Gal(F_{\infty}/F)$, \S\ref{sec:eisenstein_series_def}\\
$\delta_{F,p}$ \dotfill  Leopoldt defect of $F$ at $p$\\
$\zeta_{\phi}, \zeta_{\phi_1,\phi_2}$ \dotfill  Deligne--Ribet's $p$-adic zeta, \S\ref{sec:eisenstein_series_def}\\
$[\eta], \eta$ \dotfill element of  $\rH^1(F, \phi^{-1})$, \S\ref{sec:21}\\
$\eta_{\phi}^{(\tilde{\sigma})}$ \dotfill canonical basis of $\rH^1(F, \phi)$,  \S\ref{sec:21}\\
$[\eta_{\phi}^{(\gp)}]$ \dotfill  canonical element of $\rH^1(F, \phi)$, \S\ref{sec:21}\\
$\iota_p$ \dotfill  embedding of $\overline{\Q}$ in  $\overline{\Q}_p$\\
$\kappa$ \dotfill  weight map to $\cW$, \S\ref{sec:full_eigencurve}\\ 
$\varLambda_{\cO}$ \dotfill  Iwasawa algebra $\cO\lsem X \rsem$, \S\ref{sec:eisenstein_series_def}\\
$\varLambda$ \dotfill  completed local ring of $\cW^{\parallel}$ at $\sw(f)$, \S\ref{sec:gma}\\  
$\varLambda^{\nord}$ \dotfill  completed local ring of $\cW\times\cW^{\parallel}$ at $\mathrm{w}(f)$, \S\ref{sec:gma}\\
$\rho_{\mathbf{L}}$ \dotfill nearly ordinary Galois representation, \S\ref{sec:24}\\
$\Sigma$ \dotfill the set of embeddings of $F$ into $\overline{\Q}$, \S\ref{sec:21}\\
$\Sigma_p$ \dotfill the set of primes of $F$ above $p$\\
$\Sigma_p^{\mathrm{irr}}$ \dotfill the set of irregular primes\\
$\Sigma_{\gp}$ \dotfill  subset of $\Sigma$ corresponding to   $\gp\in \Sigma_p$ \\
$\tau$ \dotfill  complex conjugation in $\rG_F$, \S\ref{sec:full_eigencurve}\\
$\phi_1$, $\phi_2$, $\phi=\phi_2\phi_1^{-1}$  \dotfill finite order Hecke characters of $F$ \\
$\phi_p$ \dotfill restriction of $\phi$ to $(\go\otimes \Z_p)^\times$ \\
$\chi_{\cyc}$ \dotfill universal cyclotomic character, \S\ref{sec:full_eigencurve}\\ 
$\chi_v$ \dotfill  nearly-ordinary character, \S\ref{sec:full_eigencurve}
\end{multicols}

\bibliographystyle{siam}

\end{document}